\pdfoutput=1
\documentclass[english]{shinybook}
\usepackage{tikz}
\usepackage{pgfplots}
\usepgfplotslibrary{groupplots}
\pgfplotsset{compat=1.18}
\usetikzlibrary{intersections,calc}
\usetikzlibrary{arrows.meta, decorations.markings}
\usepackage{glossaries-extra}

\newcommand{\N}{\mathbb{N}}

\newcommand{\R}{\mathbb{R}}
\newcommand{\Rb}{\overline{\mathbb{R}}}
\newcommand{\bR}{\overline{\mathbb{R}}}
\newcommand{\F}{\mathbb{F}}
\newcommand{\1}{\mathbb{1}}

\newcommand{\E}{\mathbb{E}}
\usepackage{pgffor}
\foreach \x in {A,...,Z}{%
    \expandafter\xdef\csname cal\x\endcsname{\noexpand\ensuremath{\noexpand\mathcal{\x}}}
}

\renewcommand{\phi}{\varphi}

\newcommand{\dom}{\operatorname{\mathrm{dom}}}
\renewcommand{\ker}{\operatorname{\mathrm{ker}}}
\newcommand{\conv}{\operatorname{\mathrm{conv}}}

\newcommand{\tr}{\operatorname{\mathrm{tr}}}
\newcommand{\epi}{\operatorname{\mathrm{epi}}}
\renewcommand{\span}{\operatorname{\mathrm{span}}}
\newcommand{\sign}{\operatorname{\mathrm{sign}}}
\DeclareMathOperator{\Id}{\mathrm{Id}}
\newcommand{\prox}{\mathrm{prox}}
\newcommand{\proj}{\mathrm{proj}}

\newtheorem{theorem}{Theorem}[chapter]
\newtheorem{lemma}[theorem]{Lemma}
\newtheorem{cor}[theorem]{Corollary}

\theoremstyle{definition}
\newtheorem{defn}[theorem]{Definition}
\newtheorem{prop}[theorem]{Proposition}
\newtheorem{example}[theorem]{Example}

\newtheorem{exercise}[theorem]{Exercise}
\newtheorem*{rem}{Remark}

\crefname{theorem}{Theorem}{Theorems}
\crefname{cor}{Corollary}{Corollaries}
\crefname{prop}{proposition}{Proposition}
\crefname{defn}{Definition}{Definitions}
\crefname{example}{Example}{Examples}
\crefname{assumption}{Assumption}{Assumptions}
\crefname{rem}{Remark}{Remarks}
\crefname{exercise}{Exercise}{Exercises}

\newcommand{\norm}[1]{\|#1\|}

\newcommand{\inner}[2]{\left\langle#1 , #2\right\rangle}

\usepackage{enumerate}

\AtBeginEnvironment{remark}{\small}

\usepackage{mdframed}
\mdfdefinestyle{examplebg}{
    backgroundcolor=structure!7,%
    hidealllines=true,%
    splittopskip=\topskip,
    frametitleaboveskip=0,
    frametitlebelowskip=0,
    innertopmargin=-.3em,%
    innerbottommargin=.7em,%
    innerleftmargin=.7em,%
    innerrightmargin=.7em,%
}
\surroundwithmdframed[style=examplebg]{example}

\usepackage{tablefootnote}
\makeatletter
\AfterEndEnvironment{mdframed}{%
 \tfn@tablefootnoteprintout%
 \gdef\tfn@fnt{0}%
}
\makeatother

\usepackage{enumitem}
\setlist[enumerate]{label=(\roman*)}
\usepackage{scrhack}

\newcommand{\ie}{\textit{i.e.}}
\newcommand{\eg}{\textit{e.g.}}
\newcommand{\cf}{\textit{cf.}}

\newcommand{\emptyarg}{\,\cdot\,}

\newcommand{\dd}{\mathrm{d}}

\newcommand{\Oc}{\mathcal{O}}
\newcommand{\Lc}{\mathcal{L}}
\newcommand{\Gc}{\mathcal{G}}
\newcommand{\Ic}{\mathcal{I}}
\newcommand{\Pc}{\mathcal{P}}
\newcommand{\Mc}{\mathcal{M}}

\renewcommand{\epsilon}{\varepsilon}

\newcommand{\diag}{\mathrm{diag}}
\title{Convex Optimization}
\subtitle{lecture notes\\
Graz University of Technology\\
Institute of Visual Computing}
\author{Andreas Habring}
\publishers{\small\email{andreas.habring@tugraz.at}}
\hypersetup{
    pdftitle={Convex Optimization},
    pdfauthor={Andreas Habring},
}

\begin{document}
\maketitle

\frontmatter
\tableofcontents

\mainmatter

\chapter*{Preface}
\markboth{Preface}{Preface}
\enlargethispage*{2cm}

These notes build on lecture slides created by Prof.~Thomas Pock.
This is not a final version of the notes and may contain typos/errors as well as a lack of \emph{connecting text} between results. These notes primarily serve as a collection of content of the corresponding lecture I taught during the summer term of 2026 at Graz University of Technology.

I thank Prof.~Christian Clason for this beautiful latex template.


\chapter{Preliminaries}

\section{Vector spaces, norms, and scalar products}
\begin{defn}[Vector space]\label{preliminaries:defn:vectorspace}
    A vector space over a field $\mathbb{F}$ is a set $V$ with two operations $+:\mathbb{F}\times V\rightarrow V$, $(u,v)\mapsto u+v$, $\cdot:\mathbb{F}\times V\rightarrow V$, $(\lambda,v)\mapsto\lambda \cdot v$ such that
    \begin{enumerate}
        \item $(V,+)$ is an abelian group, that is, the following hold for all $u,v,w\in V$:
        \begin{enumerate}
            \item Associativity: $(u+v)+w = u+(v+w)$
            \item Neutral element: There exists $e\in V$ such that $v+e = e+v = v$. We will denote $e=0$
            \item Inverse element: There exists $\tilde{v}$ such that $v+\tilde{v}=0$ We will denote $\tilde{v} = -v$.
            \item Commutativity: $v+w=w+v$
        \end{enumerate}
        \item The distributive laws hold, that is, for all $u,v\in V$, $\lambda, \mu\in \mathbb{F}$ the following hold
        \begin{enumerate}
            \item $\lambda\cdot(\mu\cdot v) = (\lambda\mu)\cdot v$
            \item $\lambda\cdot(u+v) = (\lambda\cdot v) + (\lambda\cdot\mu)$
            \item $(\lambda+\mu)\cdot v = (\lambda\cdot v) + (\mu\cdot u)$
            \item $1\cdot v = v$.
        \end{enumerate}
    \end{enumerate}
    We will denote $\lambda\cdot v = \lambda v$.
\end{defn}

From now on we will restrict to the setting $\F = \R$.
\begin{example}[Important vector spaces]\
    \begin{enumerate}
        \item $\R^d$: The vector space consisting of vectors of the form 
        \begin{equation}
            v = \begin{pmatrix}
                v_1\\
                \vdots\\
                v_d
            \end{pmatrix}
        \end{equation}
        where $v_i\in \R$ and addition is element-wise and multiplication applied to each element. An important mathematical result states that any finite dimensional vector space can be identified with $\R^d$ where $d$ is the dimension. We can therefore usually think of $\R^d$ when working with finite dimensional spaces.
        \item Function spaces: Let $\Omega\subset\R^d$ and $f,g:\Omega\rightarrow \R$. We can define $f+g$ and for $\lambda\in \R$, $\lambda f$ pointwise as
        \begin{gather}
            (f+g)(x) = f(x) + g(x)\\
            (\lambda f)(x) = \lambda f(x).
        \end{gather}
        This way we can define all sorts of function spaces. For instance the space of linear functions (Why is this a well-defined vector space?)
    \end{enumerate}    
\end{example}

\begin{defn}[Norm]
    A norm is a function $\|\emptyarg\|:V\rightarrow [0,\infty]$ such that the following hold for all $\lambda\in \R$, $u,v\in V$
    \begin{enumerate}
        \item Positivity: $\|v\| = 0$ iff $v=0$.
        \item Homogenity: $\|\lambda u\| = |\lambda|\| u\|$.
        \item Triangle inequality: $\|u+v\|\leq \|u\| + \|v\|$.
    \end{enumerate}
    We call the tuple $(V,\|\emptyarg\|)$ a normed space.
\end{defn}
\begin{rem}
    Note that by definition of the image space the norm is always non-negative.
\end{rem}

From now on all vector spaces are assumed to be normed. If there is no risk of ambiguity, we will denote the norm simply as $\|\emptyarg\|$. To denote a specific norm we will use, \eg, $\|\emptyarg\|_\bigstar$ where $\bigstar$ will be a symbol indicating the specific norms, \cf~\cref{preliminiaries:example:norms}.

\begin{example}[Common norms]\
    \label{preliminiaries:example:norms}
    \begin{enumerate}
        \item $\ell^p$ norms: For $p\in [1,\infty)$ we denote 
        \begin{equation}
            \|v\|_p = \bigg( \sum_{i=1}^d |v_i|^p\bigg)^{1/p}
        \end{equation}
        For $p=\infty$ we denote 
        \begin{equation}
            \|v\|_\infty = \max_{i=1,\dots d}|v_i|
        \end{equation}
        \item Consider the space of $n\times m$ matrices, that is, $\R^{n\times m}$. Every matrix $A\in\R^{n\times m}$ can be identified with a linear function via
        \begin{equation}
            (Av)_i = \sum_{j=1}^m A_{i,j}v_j,\quad i=1,\dots n.
        \end{equation}
        The following norms are frequently used on $\R^{n\times m}$
        \begin{itemize}
            \item Frobenius: $\|A\|_F = \sqrt{\sum_{i,j}|A_{i,j}|^2}$.
            \item Induced norms: For every pair of norms $\|\emptyarg\|_a$, $\|\emptyarg\|_{\Delta}$ on $\R^m$ and $\R^n$, respectively, we can define the induced norm
            \begin{equation}
                \|A\|_{a,\Delta} \coloneq \sup_{\|v\|_a\leq 1}\|Av\|_\Delta = \sup_{v\neq 0}\frac{\|Av\|_\Delta}{\|v\|_a}.
            \end{equation}
            Induced norms satsify $\|Av\|_b\leq \|v\|_a$. If $a=b$ we write $\|\emptyarg\|_{a,a} = \|\emptyarg\|_a$. In particular, we have
            \begin{gather}
                \|A\|_{1} = \max_j \sum_i |A_{i,j}|\\
                \|A\|_{\infty} = \max_i \sum_j |A_{i,j}|\\
                \|A\|_{2} = \sqrt{\lambda_{\text{max}}(A^\top A)}
            \end{gather}
        \end{itemize}
        where $\lambda_{\text{max}}$ denotes the largest eigenvalue.
        \item For $f:\Omega\rightarrow\R$ with $\Omega\subset \R^d$ we may define $L^p$ norms as
        \begin{equation}
            \|f\|_p = \bigg(\int_\Omega |f(x)|^p \dd x\bigg)^{1/p},\quad 1\leq p<\infty
        \end{equation}
        and\footnote{This definition of the $L^\infty$ norm is not entirely correct. One actually has to take care of so-called \emph{null sets} leading to $\|f\|_\infty = \inf_{\substack{N\subset \Omega\\ |\Omega|=0}}\sup_{x\in\Omega}|f(x)|$. But this should not be of your concern in this class.}
        \begin{equation}
            \|f\|_\infty = \sup_{x\in\Omega}|f(x)|.
        \end{equation}
        Then, $\|\emptyarg\|_p$ defines a norm on the \textbf{vector space}
        \begin{equation}
            L^p(\Omega) = \{f:\Omega\rightarrow \R\;|\; \|f\|_p<\infty\}.
        \end{equation}
    \end{enumerate}
\end{example}

\begin{theorem}[Equivalence of norms]
    All norms on $\R^d$ are equivalent, that is for any two norms $\|\emptyarg\|_a$, and $\|\emptyarg\|_b$ there exist $m,M>0$ such that for any $v\in\R^d$
    \begin{equation}
        m\|v\|_a\leq \|v\|_b\leq M\|v\|_a.
    \end{equation}
\end{theorem}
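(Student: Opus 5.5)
Since equivalence of norms is transitive and symmetric (if $m\|v\|_a\le\|v\|_b\le M\|v\|_a$ then $\tfrac1M\|v\|_b\le\|v\|_a\le\tfrac1m\|v\|_b$), it suffices to compare an arbitrary norm $\|\emptyarg\|$ on $\R^d$ with the Euclidean norm $\|\emptyarg\|_2$. The strategy is to obtain the upper bound from the triangle inequality, deduce from it that $\|\emptyarg\|$ is continuous, and get the lower bound by compactness of the Euclidean unit sphere.

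\textbf{Upper bound.} Write $v=\sum_{i=1}^d v_i e_i$ with the standard basis vectors $e_i$. By the triangle inequality, homogeneity and Cauchy--Schwarz,
\begin{equation}
    \|v\| \le \sum_{i=1}^d |v_i|\,\|e_i\| \le \Big(\sum_{i=1}^d \|e_i\|^2\Big)^{1/2}\|v\|_2 \eqqcolon M\|v\|_2 .
\end{equation}
Here $M>0$ because $e_1\neq 0$, so positivity gives $\|e_1\|>0$.

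\textbf{Continuity.} The reverse triangle inequality $\big|\|u\|-\|v\|\big|\le\|u-v\|$ follows directly from the norm axioms. Combined with the upper bound it gives $\big|\|u\|-\|v\|\big|\le M\|u-v\|_2$. Hence $\|\emptyarg\|$ is Lipschitz, and in particular continuous, with respect to the Euclidean topology.

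\textbf{Lower bound.} Let $S=\{v\in\R^d:\|v\|_2=1\}$. This set is closed and bounded in $\R^d$, so it is compact by Heine--Borel. By the extreme value theorem, the continuous function $\|\emptyarg\|$ attains its minimum $m$ on $S$ at some $v^*\in S$. Since $v^*\neq 0$, positivity gives $m=\|v^*\|>0$. For $v\neq 0$, homogeneity applied to $v/\|v\|_2\in S$ yields $\|v\|\ge m\|v\|_2$, and the case $v=0$ is trivial.

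\textbf{Main obstacle.} The key step is the lower bound. It is the only place where finite dimensionality really enters, through compactness of $S$, and it rests on two facts not proved in these notes so far: the Heine--Borel theorem and the extreme value theorem in $\R^d$. I would simply invoke both as standard facts from analysis. The one point needing care is that continuity of $\|\emptyarg\|$ must be established with respect to the Euclidean topology, where compactness of $S$ is known; this is why the upper bound is proved first and then used to obtain the Lipschitz estimate.
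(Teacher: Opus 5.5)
The notes state this theorem without proof, so there is no argument in the paper to compare yours against. Your proof is the standard one and it is correct: the upper bound comes from the triangle inequality and Cauchy--Schwarz, this gives Lipschitz continuity of $\|\emptyarg\|$ with respect to $\|\emptyarg\|_2$, and the lower bound comes from minimizing over the compact Euclidean unit sphere.

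One point is worth making explicit in the context of these notes. The notes' proof of Bolzano--Weierstra\ss{} ends by appealing to equivalence of norms, so the compactness you invoke must be compactness with respect to $\|\emptyarg\|_2$ specifically. There, coordinatewise convergence of a subsequence already gives convergence via $\|v\|_2\le\sum_i|v_i|$. That is exactly the version your argument uses, so there is no circularity.
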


\begin{defn}[Scalar product]
    A mapping $\langle\emptyarg,\emptyarg\rangle:V\times V\rightarrow \R$ is called scalar product or inner product if the following hold for all
    \begin{enumerate}
        \item Bilinearity: $w\mapsto \langle w, u\rangle$ and $w\mapsto \langle u, w\rangle$ are linear maps.
        \item Positivity: $\langle u, u\rangle\geq 0$ and equality holds iff $u=0$
        \item Symmetry: $\langle u,v \rangle = \langle v,u\rangle$.
    \end{enumerate}
    Every inner product defines a norm via $\|v\| \coloneq\sqrt{\inner{v}{v}}$. In this case we refer to the the space as an inner product space.
\end{defn}

\begin{cor}[Cauchy-Schwartz]
    In every inner product space $V$ it holds
    \begin{equation}
        \inner{v}{w}\leq |\inner{v}{w}| \leq \|v\|\|w\|,\quad v,w\in V
    \end{equation}
    and the inequality is strict whenever $v,w$ are not colinear.
\end{cor}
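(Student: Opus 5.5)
The first inequality $\inner{v}{w}\leq|\inner{v}{w}|$ holds for every real number, so the content of the statement is $|\inner{v}{w}|\leq\|v\|\|w\|$ together with the strictness claim. I plan to use the classical quadratic-polynomial argument, relying only on bilinearity, symmetry and positivity of the scalar product.

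If $w=0$, then bilinearity gives $\inner{v}{0}=\inner{v}{0+0}=2\inner{v}{0}$, so $\inner{v}{0}=0$ and both sides vanish. In this case $v$ and $w$ are colinear, so strictness is not claimed. Now assume $w\neq 0$, so that $\|w\|^2>0$ by positivity. For $t\in\R$, bilinearity and symmetry give
\begin{equation}
    0\leq \inner{v-tw}{v-tw} = \|v\|^2 - 2t\inner{v}{w} + t^2\|w\|^2 .
\end{equation}
Choose the minimizing value $t=\inner{v}{w}/\|w\|^2$. This yields $0\leq \|v\|^2-\inner{v}{w}^2/\|w\|^2$, that is, $\inner{v}{w}^2\leq\|v\|^2\|w\|^2$. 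Taking square roots gives $|\inner{v}{w}|\leq\|v\|\|w\|$.

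For strictness, suppose $v$ and $w$ are not colinear. Then $w\neq0$, and $v-tw\neq 0$ for every $t$; in particular this holds for the chosen $t$. The positivity axiom (equality only at $0$) then makes the first inequality above strict, so $\inner{v}{w}^2<\|v\|^2\|w\|^2$, and the strict inequality follows.

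The only subtle point is this last step: strictness must come from the ``iff'' part of positivity applied to the specific vector $v-tw$. I also need to fix the meaning of ``colinear'' as ``one vector is a scalar multiple of the other, or one of them is zero''. With this convention the case $w=0$ correctly falls under the colinear case rather than contradicting the strict inequality.
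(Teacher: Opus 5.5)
Your proof is correct and follows the paper's route: expand $0\le\langle v-tw,v-tw\rangle$, and get strictness from the positivity axiom because $v-tw\neq 0$ for non-colinear vectors. The only difference is the parameter: the paper takes $\lambda=\|v\|/\|w\|$, which gives only $\langle v,w\rangle\le\|v\|\|w\|$, so the absolute value needs the same argument repeated with $-w$, a step the paper leaves implicit; your minimizing choice $t=\langle v,w\rangle/\|w\|^2$ yields $\langle v,w\rangle^2\le\|v\|^2\|w\|^2$ and so bounds $|\langle v,w\rangle|$ in one step.
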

\begin{proof}
    We may assume that $v,w\neq 0$ as otherwise the result holds trivially. We have for any $\lambda$
    \begin{equation}\label{preliminaries:eq:CS}
        0\leq \inner{v-\lambda w}{v-\lambda w} = \|v\|^2 -2\lambda \inner{v}{w} + \lambda^2\|w\|^2.
    \end{equation}
    If we choose $\lambda = \frac{\|v\|}{\|w\|}$ we find
    \begin{equation}
        2\frac{\|v\|}{\|w\|} \inner{v}{w}\leq 2\|v\|^2.
    \end{equation}
    Moreover, whenever $v,w$ are not colinear, $v-\lambda w\neq 0$ for any $\lambda$ and the inequality in~\eqref{preliminaries:eq:CS} is strict.
\end{proof}

\begin{theorem}[Hölder]
    Let $p,q\in [1,\infty]$ such that $\frac{1}{p} + \frac{1}{q}=1$\footnote{We call $p,q$ \emph{Hölder conjugate exponents}.}. Then for any $u,v\in \R^d$ equipped with the standard inner product we have
    \begin{equation}
        \inner{u}{v} \leq |\inner{u}{v}|\leq \|u\|_p \|v\|_q.
    \end{equation}
\end{theorem}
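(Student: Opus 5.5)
The first inequality $\inner{u}{v}\leq|\inner{u}{v}|$ is trivial. For the second, the triangle inequality gives $|\inner{u}{v}| = |\sum_i u_i v_i|\leq \sum_i |u_i||v_i|$, so it suffices to show $\sum_i |u_i||v_i|\leq \|u\|_p\|v\|_q$. I will treat the endpoint cases separately. If $p=1$ and $q=\infty$ (or vice versa), then directly
\begin{equation}
    \sum_i |u_i||v_i|\leq \max_j|v_j|\sum_i|u_i| = \|u\|_1\|v\|_\infty .
\end{equation}
If $u=0$ or $v=0$, both sides vanish.

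So assume $1<p,q<\infty$ and $u,v\neq 0$. Both sides are positively homogeneous of degree one in $u$ and in $v$ separately. Replacing $u$ by $u/\|u\|_p$ and $v$ by $v/\|v\|_q$, it therefore suffices to prove $\sum_i |u_i||v_i|\leq 1$ whenever $\|u\|_p=\|v\|_q=1$.

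The key ingredient is Young's inequality: for $a,b\geq 0$,
\begin{equation}
    ab\leq \frac{a^p}{p}+\frac{b^q}{q}.
\end{equation}
Applying it with $a=|u_i|$, $b=|v_i|$ and summing over $i$ gives
\begin{equation}
    \sum_i|u_i||v_i|\leq \frac{1}{p}\|u\|_p^p+\frac{1}{q}\|v\|_q^q=\frac1p+\frac1q=1,
\end{equation}
which is the claim.

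The main obstacle is proving Young's inequality, since no convexity tools are available yet at this point of the notes. I would use concavity of the logarithm. If $a=0$ or $b=0$ the inequality is trivial. Otherwise, since $\frac1p+\frac1q=1$ and $\log$ is concave (second derivative $-1/t^2<0$),
\begin{equation}
    \log\Big(\frac{a^p}{p}+\frac{b^q}{q}\Big)\geq \frac1p\log a^p+\frac1q\log b^q=\log(ab),
\end{equation}
and exponentiating gives Young's inequality. If I want to avoid invoking concavity, an alternative is to fix $b>0$ and minimize $\phi(a)=\frac{a^p}{p}+\frac{b^q}{q}-ab$ over $a\geq 0$ by elementary calculus. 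The critical point is $a^{p-1}=b$, where $\phi=0$ because $(p-1)q=p$; since $\phi$ is convex, this critical point is a global minimum, so $\phi\geq 0$.
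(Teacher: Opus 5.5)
Your proof is correct and follows the paper's argument: the endpoint case via $\sum_i|u_i||v_i|\leq\|v\|_\infty\|u\|_1$, and for $1<p<\infty$ summing Young's inequality over the coordinates and normalizing by $\|u\|_p$ and $\|v\|_q$ (you normalize before applying Young's inequality, the paper after, which makes no difference). The only addition is that you prove Young's inequality, via concavity of $\log$ or via the one-variable minimization, whereas the paper simply quotes it; both of your derivations are valid.
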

\begin{proof}
    We distinguish two cases:
    \begin{itemize}
        \item \text{p=1}: In this case one easily finds
        \begin{equation}
            |\inner{u}{v}| 
            \leq \sum_i |u_i||v_i|
            \leq\sum_i |u_i|\|v\|_\infty
            \leq\|v\|_\infty\sum_i |u_i| = \|v\|_\infty\|u\|_1.
        \end{equation}
        \item \text{$1<p<\infty$}: First note that the inequality is trivially true if $u=0$ or $v=0$ so assume now $u,v\notin\{0\}$. Note that Young's inequality states that for any Hölder conjugate exponents $1<p,q<\infty$ and $a,b\in\R$ it holds $ab \leq \frac{a^p}{p} + \frac{b^q}{q}$. It follows for any $u,v$
        \begin{equation}\label{eq:prelim:hoelder}
            |\inner{u}{v}| \leq \sum_i |u_i||v_i|\leq \sum_i \frac{|u_i|^p}{p} + \frac{|v_i|^q}{q} \leq \frac{\|u\|_p^p}{p} + \frac{\|v\|_q^q}{q}.
        \end{equation}
        Replacing $u$ by $\frac{u}{\|u\|_p}$ and $v$ by $\frac{v}{\|v\|_q}$ in~\eqref{eq:prelim:hoelder} leads to 
        \begin{equation}
            \begin{aligned}
                \frac{|\inner{u}{v}|}{\|u\|_p\|v\|_q} 
                = \left|\inner{\frac{u}{\|u\|_p}}{\frac{v}{\|v\|_q}}\right|
                \leq \frac{\left\|\frac{u}{\|u\|_p}\right\|_p^p}{p} + \frac{\left\|\frac{v}{\|v\|_q}\right\|_q^q}{q} 
                = \frac{1}{p} + \frac{1}{q} 
                = 1
            \end{aligned}
        \end{equation}
    \end{itemize}
\end{proof}

Using Hölder's inequality, we can show that $\ell^p$ norms satsify the triangle inequality, which is the only more difficult aspect of showing that they are, in fact, norms.

\begin{theorem}
    For any $p\in[1,\infty]$ it holds
    \begin{equation}
        \|x+y\|_p\leq \|x\|_p+\|y\|_p,\quad x,y\in\R^d.
    \end{equation}
\end{theorem}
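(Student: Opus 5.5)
The plan is to handle the endpoint cases $p=1$ and $p=\infty$ directly and to derive the case $1<p<\infty$ from Hölder's inequality.

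\emph{Endpoint cases.} For $p=1$ the claim follows by summing the scalar triangle inequality $|x_i+y_i|\leq |x_i|+|y_i|$ over $i$. For $p=\infty$ we use the same scalar inequality together with $|x_i|\leq\|x\|_\infty$ and $|y_i|\leq \|y\|_\infty$, which gives $|x_i+y_i|\leq \|x\|_\infty+\|y\|_\infty$ for every $i$; taking the maximum over $i$ finishes this case.

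\emph{Main case $1<p<\infty$.} Let $q=\frac{p}{p-1}$ be the Hölder conjugate exponent. If $x+y=0$ there is nothing to prove, so assume $\|x+y\|_p>0$. We split off one factor of $|x_i+y_i|$ and apply the scalar triangle inequality:
\begin{equation}
    \|x+y\|_p^p = \sum_i |x_i+y_i|\,|x_i+y_i|^{p-1} \leq \sum_i |x_i|\,|x_i+y_i|^{p-1} + \sum_i |y_i|\,|x_i+y_i|^{p-1}.
\end{equation}
To each of the two sums we apply Hölder's inequality, stated above, using the vectors $(|x_i|)_i$ (respectively $(|y_i|)_i$) and $w:=(|x_i+y_i|^{p-1})_i$.

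The key computation is the $q$-norm of $w$. Since $(p-1)q=p$, we get
\begin{equation}
    \|w\|_q = \Big(\sum_i |x_i+y_i|^{p}\Big)^{1/q} = \|x+y\|_p^{p/q} = \|x+y\|_p^{p-1}.
\end{equation}
This yields
\begin{equation}
    \|x+y\|_p^p \leq (\|x\|_p+\|y\|_p)\,\|x+y\|_p^{p-1}.
\end{equation}
Dividing by $\|x+y\|_p^{p-1}>0$ gives the claim.

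The only delicate points are the bookkeeping of exponents, namely the identities $(p-1)q=p$ and $p/q=p-1$, and the exclusion of the zero case before dividing. Both are routine, so I do not expect a genuine obstacle.
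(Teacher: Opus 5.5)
Your proposal is correct and follows essentially the same route as the paper: split off one factor $|x_i+y_i|$, apply the scalar triangle inequality, use Hölder on each sum with the exponent identity $(p-1)q=p$, and divide by $\|x+y\|_p^{p-1}$. You also carry out the endpoint cases $p\in\{1,\infty\}$, which the paper leaves as an exercise, and you state the assumption $\|x+y\|_p>0$ explicitly before dividing.
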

\begin{proof}
    The cases $p\in\{1,\infty\}$ are left as an easy exercise. Thus, assume $1<p<\infty$. Assume without loss of generality that $\|x+y\|_p$ as otherwise the result holds trivially. We find using Hölder's inequality and the fact that $p-1 = p/q$
    \begin{equation}
        \begin{aligned}
            \|x+y\|_p^p 
            = \sum_i |x_i+y_i||x_i+y_i|^{p-1}
            &\leq \sum_i |x_i||x_i+y_i|^{p-1} + \sum_i |y_i||x_i+y_i|^{p-1}\\
            &\leq (\|x\|_p + \|y\|_p) \bigg(\sum_i |x_i+y_i|^{(p-1)q}\bigg)^{1/q}\\
            &\leq (\|x\|_p + \|y\|_p) \|x+y\|_p^{p/q}.
        \end{aligned}
    \end{equation}
    The proof follows from rearranging terms.
\end{proof}

\section{Sequences and basic topology}

\begin{defn}[Convergence of a sequence]
    A sequence $(x_n)_n\subset\R^d$ is called convergent if there exists $x\in \R^d$ such that for every $\epsilon>0$ there exists $N\in\N$ such that 
    \begin{equation}
        n\geq N\Rightarrow \|x-x_n\|\leq \epsilon.
    \end{equation}
\end{defn}

\begin{rem}
    By equivalence of norms, convergence is independent of the choice of the norm.
\end{rem}

\begin{defn}[Cauchy sequence]
    We call a sequence $(x_n)_n\subset V$ Cauchy iff for any $\epsilon>0$ there exists $N\in \N$ such that for any $n,m\geq N$, it holds $\|x_n-x_m\|<\epsilon$.
\end{defn}

Being Cauchy is similar to being convergent. Note, however, that the Cauchy property can be formulated without the notion of a limit point. One can easily show the following:

\begin{theorem}
    Every convergent sequence is Cauchy.
\end{theorem}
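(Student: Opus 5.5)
The plan is the standard $\epsilon/2$ argument, using only the triangle inequality for the norm. Let $(x_n)_n$ converge to $x$ and fix $\epsilon>0$. By the definition of convergence applied with $\epsilon/2$ (or, to get the strict inequality required in the definition of a Cauchy sequence, with $\epsilon/3$), there is $N\in\N$ such that $\|x-x_n\|\leq \epsilon/3$ for all $n\geq N$.

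Next, take arbitrary $n,m\geq N$. Insert the limit $x$ and use the triangle inequality together with homogeneity, which gives $\|x_m - x\| = \|x-x_m\|$:
\begin{equation}
    \|x_n-x_m\| = \|(x_n - x) + (x - x_m)\| \leq \|x-x_n\| + \|x-x_m\| \leq \tfrac{2\epsilon}{3} < \epsilon.
\end{equation}
Since $\epsilon>0$ was arbitrary, the same $N$ witnesses the Cauchy property, and the statement follows.

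There is no real obstacle. The only points needing care are two small mismatches between the definitions. First, convergence is stated with a non-strict inequality $\leq\epsilon$, while the Cauchy property uses a strict one $<\epsilon$; this is why I shrink $\epsilon$ to $\epsilon/3$ rather than $\epsilon/2$. Second, the convergence definition is formulated in $\R^d$ and the Cauchy definition in a general normed space $V$. The argument above uses only the norm axioms, so it works verbatim in any normed space.
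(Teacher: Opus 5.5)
Your proof is correct and is essentially the paper's argument: insert the limit, apply the triangle inequality, and make both terms small for $n,m$ large. Choosing $\epsilon/3$ to reconcile the non-strict inequality in the convergence definition with the strict one in the Cauchy definition is a detail the paper's brief sketch leaves implicit.
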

\begin{proof}
    Let $x_n\rightarrow\hat x$. The proof follows from the fact that by the triangle inequality
    \begin{equation}
        \|x_n-x_m\|\leq \|x_n-\hat x\| + \|x_m - \hat x\|
    \end{equation}
    where the latter two terms can be made small by choosing $n,m$ sufficiently large.
\end{proof}

\begin{defn}[Banach and Hilbert space]
    A normed space $(V,\|\emptyarg\|)$ is called Banach space if every Cauchy sequence is convergent. That is, if $(x_n)_n$ is Cauchy, then there exists $x\in V$ such that $\|x_n-x\|_n\rightarrow 0$ as $n\rightarrow \infty$. If the norm is derived from an inner product, we call the space a Hilbert space.
\end{defn}

\begin{example}
    The spaces $(\R^d,\|\emptyarg\|_p)$ are Banach spaces for any $1\leq p\leq \infty$.
\end{example}

\begin{defn}[Basic topology]
    We call a set $A\subset\R^d$ 
    \begin{itemize}
        \item open, if for every $x\in A$ there exists $\epsilon>0$ such that $B_\epsilon (x)\subset A$,
        \item closed, if $A^c$ is open; closedness is equivalent to the following property: If $(x_n)_n\subset A$ and $x_n\rightarrow x\in\R^d$ then $x\in A$ as well.
    \end{itemize}
\end{defn}

\begin{defn}[Sequential copmpactness]
    We call a set $A\subset \R^d$ (sequentially) compact if every sequence in $A$ admits a convergent (in $A$!) subsequence. That is, for every sequence $(x_n)_n\subset A$ there exists a subsequence $(x_{n(k)})_k$ and a point $x\in A$ such that $x_{n(k)}\rightarrow x$.
\end{defn}

\begin{theorem}[Bolzano-Weierstraß]\label{thm:bolzano}
    In a finite-dimensional vector space every bounded sequence admits a convergent subsequence.    
\end{theorem}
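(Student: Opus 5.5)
We reduce to $\R^d$ with the $\infty$-norm, then to the one-dimensional case. Since every finite-dimensional space can be identified with $\R^d$, and all norms on $\R^d$ are equivalent (Theorem on equivalence of norms), we may work with $\|\emptyarg\|_\infty$. Equivalence of norms shows that boundedness and convergence do not depend on the norm. So let $(x_n)_n\subset\R^d$ with $\|x_n\|_\infty\leq C$ for all $n$. Then each coordinate sequence $(x_{n,i})_n\subset[-C,C]$ is a bounded real sequence.

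\emph{Step 1 (the case $d=1$).} Let $(a_n)_n\subset[-C,C]$. We use bisection. Set $I_0=[-C,C]$. Given an interval $I_k$ of length $2C\,2^{-k}$ that contains $a_n$ for infinitely many $n$, split it into two closed halves. At least one half contains $a_n$ for infinitely many $n$; call it $I_{k+1}$. We then pick indices $n(0)<n(1)<\dots$ with $a_{n(k)}\in I_k$, which is possible since each $I_k$ contains infinitely many terms. For $k,l\geq K$ both terms lie in $I_K$, so $|a_{n(k)}-a_{n(l)}|\leq 2C\,2^{-K}$. Hence $(a_{n(k)})_k$ is Cauchy. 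By completeness of $\R$ (the example stating that $(\R^d,\|\emptyarg\|_p)$ are Banach spaces, with $d=1$), it converges.

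\emph{Step 2 (induction over coordinates).} First apply Step 1 to the first coordinates to obtain a subsequence along which $x_{n,1}$ converges. Then apply Step 1 to the second coordinates of this subsequence to get a further subsequence, along which the first coordinate still converges, since subsequences of convergent sequences converge to the same limit. After $d$ extractions we have a subsequence $(x_{n(k)})_k$ whose every coordinate converges, say to $x_i$. Put $x=(x_1,\dots,x_d)$. Then $\|x_{n(k)}-x\|_\infty=\max_i|x_{n(k),i}-x_i|\to 0$, because this is a maximum of finitely many null sequences.

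The main obstacle is Step 1, specifically where the limit comes from. The argument rests on completeness of $\R$, which we take as given from the Banach space example; alternatively, the bisection intervals are nested, and their intersection point can be taken as the limit. The remaining steps are bookkeeping about nested subsequences. Finite dimension enters exactly in that there are only finitely many extractions and the maximum is over finitely many coordinates.
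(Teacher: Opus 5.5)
Your proof is correct and follows the paper's argument essentially step for step: bisection producing nested intervals that each contain $a_n$ for infinitely many $n$, extraction of a subsequence that is Cauchy and hence convergent by completeness of $\R$, and then $d$ successive coordinate-wise extractions combined with equivalence of norms. The differences are cosmetic: you conclude in $\|\cdot\|_\infty$ rather than $\|\cdot\|_1$, and you track the interval length $2C\,2^{-k}$ explicitly where the paper tacitly normalizes it to $2^{-k}$.
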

\begin{proof}
    Assume first the dimesnion is one, that is, the space is simply $\R$. Let $(x_n)_n\subset V$ be bounded. We will in the following inductively construct two sequences $(a_n)_n, (b_n)_n$ such that $a_n\leq b_n$, $[a_n,b_n]\subset [a_{n-1},b_{n-1}]$, $b_n-a_n = (b_{n-1}-a_{n-1})/2$, and importantly, infinitely many $x_k$ are contained in $[a_n,b_n]$ for any $n$.
    
    We choose $a_0,b_0$ such that $(x_n)_n\subset [a_0,b_0]$ by boundedness of the sequence. Obviously infinitely many $x_k$ are contained in $[a_0,b_0]$. Now let $a_n$, $b_n$ be given. Define the midpoint $m_n = (a_n+b_n)/2$. Since infinitely many $a_k$ are contained in $[a_n,b_n]$ we know that in at least one of the intervals $[a_n,m_n]$, $[m_n,b_n]$ there have to be infinitely many $x_k$. Thus, set either $a_{n+1} = a_n$ and $b_{n+1} = m_n$ or $a_{n+1} = m_n$ and $b_{n+1} = b_n$ so that infinitely $x_k$ are contained in $[a_{n+1},b_{n+1}]$.

    Now we are in the position to construct a convergent subsequence $(x_{n(k)})_k$ of $(x_n)_n$. Again we proced inductively. Choose $n(0)=0$. Let $(n(k))_{k=0}^m$ be given. Since $[a_{m+1},b_{m+1}]$ contains infinitely many elements of $(x_k)_k$ we can pick, \eg, $n(m+1) = \min\{k\in\N\;|\; k>n(m),\; x_k\in [a_{m+1},b_{m+1}]\}$.

    Lastly, we show that the sequence is indeed convergent. Let $k,\ell\in\N$ arbitrary. Assume without loss of generality that $k\leq \ell$. Since $x_{n(k)}, x_{n(\ell)}\in [a_{k},b_{k}]$ we have 
    \begin{equation}
        |x_{n(k)}-x_{n(\ell)}|\leq |b_{k}-a_{k}| = 2^{-k}
    \end{equation}
    it follows that $(x_{n(k)})_k$ is Cauchy and, thus, convergent.

    For dimesnions greater than one, that is $\R^d$ we may proceed as follows. We can first pick a subsequence $(n_1(k))_k$ such that the sequence of the first entries of the vectors converges. Next we may pick a subsequence of this subsequence $(n_2(k))_k$ such that also the second entries converge. Proceeding like this, the subsequence $(n_d(k))_k$ leads to convergence of all components of the vectors which, in particular, implies convergence in, \eg, the $1$-norm. By equivalence of norms we have convergence in all norms.
\end{proof}

As a consequence we have the following.

\begin{cor}
    In $\R^d$ a set is compact iff it is closed and bounded.
\end{cor}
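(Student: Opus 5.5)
We prove the two implications separately, working in $\R^d$ with an arbitrary norm; by equivalence of norms, boundedness and convergence do not depend on which norm is chosen.

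\emph{Closed and bounded $\Rightarrow$ compact.} Let $A$ be closed and bounded, and let $(x_n)_n\subset A$ be arbitrary. Since $A$ is bounded, the sequence is bounded, so \cref{thm:bolzano} yields a subsequence $(x_{n(k)})_k$ with $x_{n(k)}\rightarrow x$ for some $x\in\R^d$. Because $A$ is closed, the sequential characterization of closedness gives $x\in A$. Hence every sequence in $A$ has a subsequence converging in $A$, which is exactly sequential compactness.

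\emph{Compact $\Rightarrow$ closed.} Let $A$ be compact, and let $(x_n)_n\subset A$ with $x_n\rightarrow x\in\R^d$. By compactness there is a subsequence $x_{n(k)}\rightarrow y$ with $y\in A$. Every subsequence of a convergent sequence converges to the same limit, so $x_{n(k)}\rightarrow x$ as well. Limits are unique: by the triangle inequality, $\|x-y\|\leq\|x-x_{n(k)}\|+\|x_{n(k)}-y\|\rightarrow 0$, hence $x=y$. Therefore $x=y\in A$, and $A$ is closed.

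\emph{Compact $\Rightarrow$ bounded.} We argue by contradiction. If $A$ were unbounded, we could choose $x_n\in A$ with $\|x_n\|\geq n$ for every $n$. By compactness some subsequence $x_{n(k)}$ converges, say to $x$. A convergent sequence is bounded: for $k$ large, $\|x_{n(k)}\|\leq\|x\|+1$, and only finitely many terms remain. This contradicts $\|x_{n(k)}\|\geq n(k)\geq k\rightarrow\infty$.

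None of these steps is a real obstacle; the substantial work is already done in \cref{thm:bolzano}. The only points that need explicit justification are the two auxiliary facts used in the converse direction: uniqueness of limits together with the fact that subsequences inherit the limit, and boundedness of convergent sequences. Both follow in one line from the triangle inequality, as indicated above.
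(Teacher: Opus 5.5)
Your proof is correct and follows the same route as the paper. One direction uses Bolzano--Weierstrass plus sequential closedness; for the converse, choosing $x_n\in A$ with $\|x_n\|\geq n$ gives boundedness, and the subsequence-limit argument gives closedness. The only difference is that you explicitly justify two elementary facts the paper uses implicitly: uniqueness of limits and boundedness of convergent sequences.
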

\begin{proof}
    Assume $A\subset \R^d$ is closed and bounded. Let $(x_n)_n\subset A$. By boundedness and \cref{thm:bolzano} there exists a convergent subsequence and by closedness its limit is in $A$, thus, $A$ is compact. 

    To prove the converse assume $A$ is compact. If $A$ was not bounded, for any $n\in\N$ we could find $x_n\in A$ such that $\|x_n\|\geq n$. Then the sequence $(x_n)_n$ does not admit a convergent subsequence as any subsequence \emph{converges to infinity}. Thus, $A$ has to be bounded. Now let $(x_n)_n\subset A$ be such that $x_n\rightarrow x\in\R^d$. We need to show that $x\in A$. By compactness, $(x_n)_n$ admits a convergent subsequence $(x_{n(k)})_k$, which is \textbf{convergent in $A$}. Any subsequence of a convergent sequence admits the same limit. Thus, $x = \lim_k x_{n(k)}\in A$, concluding the proof.
\end{proof}

\section{Functions}

\begin{defn}[Continuity]
    Let $f:V\rightarrow E$ be a map between two normed spaces. Then we call $f$ continuous in $v\in V$ iff one of the following equivalent conditions is satisfied:
    \begin{itemize}
        \item For every sequence $v_n\rightarrow v$ it holds true that $f(v_n)\rightarrow f(v)$.
        \item For every $\epsilon>0$ there exists $\delta>0$ such that 
        \begin{equation}
            \|u-v\|\leq \delta\Rightarrow \|f(u)-f(v)\|\leq \epsilon.
        \end{equation}
    \end{itemize}
    If $f$ is coontinuos at every $v\in V$ we simply say, $f$ is continuous.
\end{defn}
\begin{defn}[Lipschitz continuity]
    Let $f:V\rightarrow E$ be a map between two normed spaces. Then we call $f$ Lipschitz continuous if there exists $L\geq 0$ such that 
    \begin{equation}
        \|f(u)-f(v)\|\leq L\|u-v\|,\quad u,v\in V
    \end{equation}
\end{defn}

\subsection{Linear maps and the dual space}

\begin{defn}[Linear maps]
    We call a function $f:\R^n\rightarrow \R^m$ linear if 
    \begin{equation}
        f(\lambda x + \mu y) = \lambda f(x) + \mu f(y)\quad x,y\in\R^n, \; \mu,\lambda\in\R.
    \end{equation}
\end{defn}

Indeed, we can identify linear mappings with matrices!
\begin{theorem}
    The space of all linear functions $\R^n\rightarrow \R^m$ is isomorphic\footnote{in layman's terms: the same} to the space of all matrices $A\in \R^{m\times n}$.
\end{theorem}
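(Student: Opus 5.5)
The plan is to write down an explicit map between the two spaces and check that it is a linear bijection. Denote by $L(\R^n,\R^m)$ the space of linear functions $\R^n\rightarrow\R^m$. This is a vector space under pointwise addition and scalar multiplication, as in the function-space example, since sums and scalar multiples of linear maps are again linear. Let $e_1,\dots,e_n$ be the standard basis of $\R^n$. I would define
\begin{equation}
    \Phi: \R^{m\times n}\rightarrow L(\R^n,\R^m),\quad \Phi(A)(x) = Ax,
\end{equation}
using the matrix-vector product from \cref{preliminiaries:example:norms}, so that $(Ax)_i = \sum_{j} A_{i,j}x_j$. In the reverse direction I would define
\begin{equation}
    \Psi: L(\R^n,\R^m)\rightarrow \R^{m\times n},\quad \Psi(f)_{i,j} = f(e_j)_i .
\end{equation}
In words, the $j$-th column of $\Psi(f)$ is $f(e_j)$.

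The steps, in order, are as follows.

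First, $\Phi$ is well defined. For fixed $A$, the map $x\mapsto Ax$ is linear, because each component $\sum_j A_{i,j}x_j$ is linear in $x$.

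Second, $\Phi$ is itself linear. We have $(\lambda A+\mu B)x = \lambda Ax+\mu Bx$ componentwise, so $\Phi(\lambda A+\mu B)=\lambda\Phi(A)+\mu\Phi(B)$ as functions.

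Third, $\Psi\circ\Phi = \Id$. We compute $(Ae_j)_i = \sum_k A_{i,k}\delta_{kj} = A_{i,j}$.

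Fourth, $\Phi\circ\Psi=\Id$. Take $f$ linear and any $x\in\R^n$, and write $x=\sum_j x_je_j$. By linearity of $f$,
\begin{equation}
    f(x) = \sum_j x_j f(e_j),
\end{equation}
so that
\begin{equation}
    f(x)_i = \sum_j f(e_j)_i\,x_j = (\Psi(f)x)_i .
\end{equation}
These four steps show that $\Phi$ is a linear bijection with inverse $\Psi$, which is exactly an isomorphism of vector spaces.

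The only step with real content is the fourth one. It uses that a linear map is determined by its values on a basis. This requires extending the two-term linearity in the definition to finite sums $f(\sum_j x_je_j)=\sum_j x_jf(e_j)$, which I would do by a short induction on the number of terms. Everything else is direct componentwise computation.
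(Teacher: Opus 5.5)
Your proof is correct and uses exactly the paper's correspondence $f\mapsto A_f$ with $(A_f)_{i,j}=f(e_j)_i$, which is your $\Psi$. The paper leaves bijectivity and linearity as an exercise, and your argument supplies that verification by exhibiting $\Phi(A)=(x\mapsto Ax)$ as a linear two-sided inverse.
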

\begin{proof}
    We can define a one-to-one correspondence via defining for each $f:\R^n\rightarrow\R^m$ a matrix $A_f$ as
    \begin{equation}
        (A_f)_{i,j} = f(e_j)_i.
    \end{equation}
    It is left as an exercise to show that this correspondence is bijective and linear.
\end{proof}

\begin{defn}[Dual space]
    Let $V$ be an arbitrary vector space. The dual space $V^*$ is the \textbf{vector space} of all linear maps $V\rightarrow\R$, \ie,
    \begin{equation}
        V^* = \{f:V\rightarrow \R\;|\; f\text{ is linear}\}.
    \end{equation}
    We refer to elements in $V^*$ as linear functionals on $V$. The canoncical norm on $V^*$ is the dual norm
    \begin{equation}
        \|v^*\|_{V^*} \coloneqq \sup_{\substack{v\in V\\\|v\|_V\leq 1}} v^*(v).
    \end{equation}
    It is common, in a slight abuse of notation, to use the notation of the scalar product also for the application of a dual element, that is, 
    \begin{equation}
        \inner{v^*}{v} \coloneqq v^*(v).
    \end{equation}
\end{defn}

Note that it directly follows that 
\begin{equation}
    |v^*(v)|\leq \|v^*\| \|v\|.
\end{equation}

The following result is extremely important: It states that in a Hilbert space, the dual space can be identified with the space itself! Moreover, it \emph{justifies} the notation of the application of a dual element using the scalar product.

\begin{theorem}[Riesz]
    Let $V$ be a Hilbert space. Then, for every $v^*\in V$ there exists a unique $v\in V$ such that
    \begin{equation}
        v^*(w) = \langle v,w\rangle,\quad w\in V.
    \end{equation}
    The mapping $v^*\mapsto v$ is linear and $\|v\| = \|v^*\|$.
    In particular, $V^*\cong V$. 
\end{theorem}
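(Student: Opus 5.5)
The plan is to prove the Riesz representation theorem in the setting the notes actually work in: a Hilbert space $V$, where $v^*\in V^*$ is a linear functional (the statement says $v^*\in V$, but $V^*$ is clearly meant). Since the notes restrict to $\R^d$, I would give the finite-dimensional proof via an orthonormal basis. Then I would indicate the general argument via the orthogonal complement of the kernel.

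\textbf{Existence.} Take an orthonormal basis $e_1,\dots,e_d$ of $V$, obtained by Gram--Schmidt, and set
\begin{equation}
    v \coloneqq \sum_{i=1}^d v^*(e_i)\, e_i .
\end{equation}
For $w=\sum_i \inner{w}{e_i}e_i$, linearity of $v^*$ and bilinearity of the inner product give $v^*(w)=\sum_i \inner{w}{e_i}v^*(e_i)=\inner{v}{w}$.

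In the general (possibly infinite-dimensional) case I would argue differently.
\begin{itemize}
    \item If $v^*=0$, take $v=0$.
    \item Otherwise $N=\ker v^*$ is a closed proper subspace (assuming $v^*$ is bounded). Pick $z\perp N$ with $z\neq 0$, using the projection onto the closed subspace $N$; this step needs completeness and is the \emph{main obstacle}.
    \item Observe that $v^*(w)z-v^*(z)w\in N$, so its inner product with $z$ vanishes. This yields $v=\frac{v^*(z)}{\|z\|^2}z$.
\end{itemize}

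\textbf{Uniqueness.} If $\inner{v}{w}=\inner{v'}{w}$ for all $w$, take $w=v-v'$ and use positivity of the inner product to get $v=v'$.

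\textbf{Linearity.} This follows from uniqueness. The vector $\lambda v_1+\mu v_2$ represents $\lambda v_1^*+\mu v_2^*$ by bilinearity, so it must be the representer.

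\textbf{Isometry.} By Cauchy--Schwarz, $|v^*(w)|=|\inner{v}{w}|\le \|v\|\|w\|$, hence $\|v^*\|\le\|v\|$. For $v\neq0$, testing with $w=v/\|v\|$ gives $v^*(w)=\|v\|$, hence $\|v^*\|\ge\|v\|$.

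\textbf{Conclusion.} The map $v^*\mapsto v$ is therefore a linear isometric injection from $V^*$ into $V$. It is surjective because every $v\in V$ defines the functional $\inner{v}{\emptyarg}$. Hence $V^*\cong V$.
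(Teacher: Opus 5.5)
Your proof is correct. Your ``general'' sketch is exactly the paper's argument: the paper normalizes $\|z\|=1$ and obtains $v=v^*(z)z$, which is your $\frac{v^*(z)}{\|z\|^2}z$. Your primary existence proof, however, takes a different route: you construct the representer explicitly as $v=\sum_i v^*(e_i)e_i$ from an orthonormal basis.

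That construction is entirely elementary and needs no statement about $\ker(v^*)^\perp$. It is, however, tied to finite dimension, since it relies on Gram--Schmidt and a finite expansion. In infinite dimensions the series would need a convergence argument, which again requires boundedness of $v^*$.

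The kernel argument is dimension-free, but its only nontrivial step is asserted without justification in the paper. That step is the existence of a nonzero $z\perp\ker(v^*)$. In $\R^d$ it follows from rank--nullity, because $\ker(v^*)$ has dimension $d-1$. In general it needs exactly the two things you flag: $v^*$ must be bounded, so that the kernel is closed, and $V$ must be complete, so that one can project onto the kernel. Since the paper defines $V^*$ as the algebraic dual, your remark that boundedness must be assumed is a genuine correction outside finite dimensions.

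Finally, you supply the uniqueness, linearity, isometry and surjectivity arguments that the paper leaves as an exercise. These are standard and correct.
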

\begin{proof}
    If $v^*= 0$, $v=0$, so let us assume $v^*\neq 0$. Let $z\in \ker(v^*)^\perp$ be such that $v^*(z)\neq 0$ and $\|z\|=1$. 
    Let $w\in V$ arbitrary. Note that 
    \begin{equation}
        v^*(zv^*(w) - wv^*(z)) = 0
    \end{equation}
    so that 
    \begin{equation}
        \inner{zv^*(w) - wv^*(z)}{z} = 0
    \end{equation}
    by orthogonality of $z$ to $\ker(v^*)$. It follows
    \begin{equation}
        \begin{aligned}
            v^*(w) = v^*(w)\inner{z}{z}
            =& \inner{zv^*(w)}{z}\\
            =& \inner{zv^*(w) - wv^*(z) + wv^*(z)}{z}\\
            =& \inner{zv^*(w) - wv^*(z) + wv^*(z)}{z}\\
            =& \inner{zv^*(w) - wv^*(z)}{z} + \inner{wv^*(z)}{z}\\
            =& \inner{wv^*(z)}{z}\\
            =& \inner{w}{v^*(z)z}
        \end{aligned}
    \end{equation}
    so that $v = v^*(z)z$ is the desired element. The remainder of the proof is left as an exercise.
\end{proof}

\begin{example}[Dual]
    The dual norm of $\|\emptyarg\|_p$ is precisely $\|\emptyarg\|_q$ with $\frac{1}{p}+\frac{1}{q}=1$.
\end{example}

Most readers are probably familiar with the adjoint or transpose of a matrix, which is simply obtained by swapping rows and colums. That is, for $A\in \R^{n\times m}$ we define the adjoint $A^T\in \R^{m\times n}$ via
\begin{equation}
    (A^T)_{i,j} = A_{j,i}.
\end{equation}
One can easily check that with the standard scalar product the adjoint satsifies for any $x\in \R^m$, $y\in \R^n$
\begin{equation}
    \inner{Ax}{y} = \inner{x}{A^Ty}.
\end{equation}
The latter property is in fact the characterizing property of the formal definition of the adjoint of a linear mapping. Note, in particular, \textbf{that the adjoint thus depends on the scalar product}.
\begin{defn}
    Let $A:V\rightarrow W$ be a linear mapping between two inner product spaces. We define the adjoint operations $A^*:W\rightarrow V$ via
    \begin{equation}
        \inner{Ax}{y} = \inner{x}{A^*y},\quad x\in V,\; y\in W
    \end{equation}
\end{defn}
\begin{exercise}
Show that the adjoint is a well-defined linear operator.
\end{exercise}

\begin{lemma}
    The following properties are satisfied for $A,B:V\rightarrow W$ linear and $\lambda\in \R$
    \begin{enumerate}
        \item $(A^*)^* = A$
        \item $(\lambda A + B)^* = \lambda A^* + B^*$
        \item $(AB)^*=B^*A^*$.
    \end{enumerate}
\end{lemma}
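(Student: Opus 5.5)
All three identities will follow from one observation: an operator is determined by its inner products. Concretely, if $C,D:V\rightarrow W$ satisfy $\inner{Cx}{y} = \inner{Dx}{y}$ for all $x\in V$, $y\in W$, then $C=D$. To see this, fix $x$ and take $y = Cx-Dx$. Positivity of the scalar product then gives $\|Cx-Dx\|^2=0$, hence $Cx=Dx$. This is also the uniqueness part of the exercise that the adjoint is well defined, and I will use it as given. With this in hand, each item reduces to a short computation using the defining relation $\inner{Ax}{y}=\inner{x}{A^*y}$ together with the symmetry and bilinearity of the inner product.

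For (i), note that $A^*:W\rightarrow V$, so $(A^*)^*:V\rightarrow W$ is characterised by $\inner{A^*y}{x} = \inner{y}{(A^*)^*x}$ for $y\in W$, $x\in V$. Using symmetry twice and the definition of $A^*$, I compute
\begin{equation}
    \inner{y}{Ax} = \inner{Ax}{y} = \inner{x}{A^*y} = \inner{A^*y}{x}.
\end{equation}
Hence $A$ satisfies the defining relation of $(A^*)^*$, and uniqueness gives $(A^*)^*=A$.

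For (ii), I use linearity in the first argument, then the definitions of $A^*$ and $B^*$, then linearity in the second argument:
\begin{equation}
    \inner{(\lambda A+B)x}{y} = \lambda\inner{Ax}{y}+\inner{Bx}{y} = \lambda\inner{x}{A^*y}+\inner{x}{B^*y} = \inner{x}{(\lambda A^*+B^*)y}.
\end{equation}
Uniqueness then yields $(\lambda A + B)^* = \lambda A^* + B^*$.

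For (iii), the composition $AB$ only makes sense if the domains are compatible, so I will read the statement with $B:U\rightarrow V$ and $A:V\rightarrow W$ for inner product spaces $U,V,W$. Applying the adjoint relation twice gives
\begin{equation}
    \inner{ABx}{y}=\inner{Bx}{A^*y}=\inner{x}{B^*A^*y},
\end{equation}
and uniqueness gives $(AB)^*=B^*A^*$.

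The only point needing care is exactly this domain issue in (iii): the hypothesis $A,B:V\rightarrow W$ must be relaxed to composable maps. Beyond that, the argument relies on existence of the adjoint, which I take from the preceding exercise (in finite dimensions it follows, for example, from Riesz applied to $x\mapsto\inner{Ax}{y}$).
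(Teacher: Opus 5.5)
Your proof is correct. The paper gives no argument for this lemma (it is left as an exercise), so there is no alternative route to compare against. Your argument is the standard one: show that the candidate operator satisfies the defining relation $\inner{Ax}{y}=\inner{x}{A^*y}$, then conclude by uniqueness, which you justify by testing against $y=Cx-Dx$.

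You are also right that (iii) only makes sense for composable maps, such as $B:U\rightarrow V$ and $A:V\rightarrow W$, or for $V=W$.

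One small refinement: each of your computations exhibits an operator that satisfies the relevant defining relation. So the existence of $(A^*)^*$, $(\lambda A+B)^*$ and $(AB)^*$ comes for free. The only existence you genuinely need is that of $A^*$ and $B^*$, and the statement already presupposes those.
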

\begin{proof}
    Exercise!
\end{proof}

\begin{defn}[Extended real-valued functions]
    Let $V$ be a vector space. We call a function $f:V\rightarrow [-\infty,\infty]$ extended real-valued. Moreover, we call $f$ proper if $f>-\infty$ and there exists $x\in V$ such that $f(x)<\infty$. The domain of a proper extended real-valued function $f$ is defined as the set
    \begin{equation}
        \dom(f) = \{x\in V\;|\; f(x)<\infty \}.
    \end{equation}
\end{defn}

\begin{example}
    A frequently used extended real-valued function is the indicator function. For a set $A\subset V$ the indicator function os defined as 
    \begin{equation}
        \delta_A(x) = 
        \begin{cases}
            0\quad &x\in A\\
            \infty\quad &\text{else.}
        \end{cases}
    \end{equation}
    Using the indicator function we can re-write constraint optimization problems as unconstrained ones via
    \begin{equation}
        \min_{x\in C} f(x) = \min_{x\in V} f(x)+\delta_C(x)
    \end{equation}
\end{example}

\begin{defn}[Epigraph and closed functions]
    The epigraph of a function $f:V\rightarrow (-\infty,\infty]$ is defined as 
    \begin{equation}
        \epi(f) = \{ (x,t)\in V\times \R\; |\; f(x)\leq t\}.
    \end{equation}
    A function is called closed if its epigraph is a closed set.
\end{defn}

\begin{exercise}
    If $A$ is closed then so is $\delta_A$.
\end{exercise}

\begin{exercise}
    Is the domain of a closed function closed? Prove or disprove!
\end{exercise}

\begin{defn}[Lower semi-continuity]
    We call $f:V\rightarrow(-\infty,\infty]$ lower semi-continuous (lsc) iff for every sequence $x_n\rightarrow x$ in $V$ it holds
    \begin{equation}
        f(x)\leq \liminf_{n\rightarrow \infty}f(x_n).
    \end{equation}
\end{defn}

\begin{lemma}
    A function $f:V\rightarrow (-\infty,\infty]$ is closed iff it is lsc.
\end{lemma}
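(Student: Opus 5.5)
We prove the two implications separately, using the sequential characterization of closedness from the definition of basic topology: a set is closed iff limits of convergent sequences in the set stay in the set. Throughout, $V\times\R$ carries, for instance, the norm $\|(x,t)\| = \|x\|+|t|$. Convergence $(x_n,t_n)\to(x,t)$ is then equivalent to $x_n\to x$ and $t_n\to t$.

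\emph{lsc $\Rightarrow$ closed.} Let $(x_n,t_n)\in\epi(f)$ with $(x_n,t_n)\to(x,t)$. Then $f(x_n)\leq t_n$ for all $n$, so by lower semi-continuity
\begin{equation}
    f(x)\leq \liminf_{n\rightarrow\infty} f(x_n)\leq \liminf_{n\rightarrow\infty} t_n = t.
\end{equation}
Hence $(x,t)\in\epi(f)$, and $\epi(f)$ is closed.

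\emph{closed $\Rightarrow$ lsc.} Let $x_n\to x$ and set $\alpha = \liminf_n f(x_n)\in(-\infty,\infty]$; in principle $\alpha=-\infty$ is also possible. If $\alpha=\infty$ there is nothing to show. Otherwise pick a subsequence $(x_{n(k)})_k$ with $f(x_{n(k)})\to\alpha$.

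\begin{itemize}
    \item If $\alpha\in\R$, set $t_k = \max\{f(x_{n(k)}),\alpha\}+1/k$. This is finite for large $k$, satisfies $t_k\geq f(x_{n(k)})$, and $t_k\to\alpha$. Thus $(x_{n(k)},t_k)\in\epi(f)$ converges to $(x,\alpha)$, so by closedness $(x,\alpha)\in\epi(f)$, i.e.\ $f(x)\leq\alpha$. (One could simply take $t_k=f(x_{n(k)})$; the $\max$ is only a safety margin.)
    \item If $\alpha=-\infty$, then for any fixed $M\in\R$ we have $f(x_{n(k)})\leq M$ for all large $k$. So $(x_{n(k)},M)\in\epi(f)$, and its limit $(x,M)$ lies in $\epi(f)$, giving $f(x)\leq M$ for all $M$. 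This contradicts $f(x)>-\infty$, so this case cannot occur.
\end{itemize}

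The only delicate step is this choice of the second coordinates in the converse direction, in particular handling $\liminf$ through a subsequence and treating the infinite values of $\alpha$ separately. Once the sequence in the epigraph is built, both directions are immediate from the sequential characterization of closed sets.
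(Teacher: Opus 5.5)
Your proof is correct and takes the same route as the paper: sequential closedness of the epigraph in $V\times\R$, a subsequence realizing the $\liminf$ for closed $\Rightarrow$ lsc, and the chain $f(x)\leq\liminf_n f(x_n)\leq\liminf_n t_n=t$ for the converse. Your separate treatment of $\alpha=\pm\infty$ and of possibly infinite values $f(x_{n(k)})$ is more careful than the paper, which inserts $(x_{n(k)},f(x_{n(k)}))$ into the epigraph without checking that these values, or their limit, are finite.
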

\begin{proof}
    $\Rightarrow$: Assume $f$ is closed. Let $x_n\rightarrow x$. It follows that $(x_n,f(x_n))\in \epi(f)$. Let Now $n(k)$ be a subsequence realizing the liminf, that is,
    \begin{equation}
        \liminf_n f(x_n) = \lim_{k} f(x_{n(k)})
    \end{equation}
    It follows that 
    \begin{equation}
        (x_{n(k)},f(x_{n(k)}))\rightarrow (x,\liminf_n f(x_{n}))
    \end{equation}
    as $k\rightarrow \infty$. By closedness we have $(x,\liminf_n f(x_{n}))\in \epi(f)$, that is $f(x)\leq \liminf_n f(x_{n})$. 

    $\Leftarrow$: Assume now $f$ is lsc and let $(x_n,t_n)_n\subset \epi(f)$ with $(x_n,t_n)\rightarrow (x,t)$ in $V\times \R$. In particular, it follows $x_n\rightarrow x$. By lower semicontinuity we have
    \begin{equation}
        f(x)\leq \liminf_n f(x_n) \leq \liminf_n t_n = \lim_n t_n = t
    \end{equation}
    where the second inequality follows from $(x_n,t_n)\in \epi(f)$. This implies $f(x)\leq t$ and, hence, $(x,t)\in \epi(f)$ concluding the proof.
\end{proof}

\begin{lemma}
    If $f$ is continuous and $\dom(f)$ is closed, then $f$ is closed.
\end{lemma}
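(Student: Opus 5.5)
The plan is to verify closedness of $\epi(f)$ directly via the sequential characterization of closed sets, rather than going through the preceding lemma on lower semi-continuity. This avoids sequences that leave the domain, along which $f$ takes the value $\infty$.

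The first thing to fix is the meaning of ``continuous'' for a function $f:V\rightarrow(-\infty,\infty]$. I will read it as: the restriction $f|_{\dom(f)}:\dom(f)\rightarrow\R$ is continuous in the sense of the earlier definition. That is, for every sequence $(x_n)_n\subset\dom(f)$ with $x_n\rightarrow x\in\dom(f)$ we have $f(x_n)\rightarrow f(x)$. This interpretation is the one real subtlety in the statement. If $f$ is real-valued everywhere, then $\dom(f)=V$ is trivially closed and the same argument applies verbatim.

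The argument then runs as follows.
\begin{enumerate}
    \item Take a sequence $(x_n,t_n)_n\subset\epi(f)$ with $(x_n,t_n)\rightarrow(x,t)$ in $V\times\R$. In particular $x_n\rightarrow x$ and $t_n\rightarrow t$.
    \item From $(x_n,t_n)\in\epi(f)$ we get $f(x_n)\leq t_n<\infty$, so every $x_n$ lies in $\dom(f)$.
    \item Since $\dom(f)$ is closed, the limit satisfies $x\in\dom(f)$, so $f(x)<\infty$.
    \item Continuity of $f$ on $\dom(f)$ at $x$ now gives $f(x_n)\rightarrow f(x)$.
    \item Passing to the limit in $f(x_n)\leq t_n$ yields $f(x)\leq t$, that is, $(x,t)\in\epi(f)$.
\end{enumerate}
Hence $\epi(f)$ is closed and $f$ is closed.

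There is no hard computational step. The only point requiring care is step 3: closedness of $\dom(f)$ is exactly what guarantees that the limit point lies where continuity can be applied. Without it, a sequence in the domain could converge to a point outside the domain, where $f=\infty$, and the epigraph would fail to be closed. I would also remark in passing that the lsc lemma then gives lower semi-continuity of $f$ as a corollary.
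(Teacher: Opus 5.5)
Your proof is correct and complete. Reading continuity as continuity of $f|_{\dom(f)}$ is the weakest sensible interpretation: a real-valued continuous $f$, or an $f$ continuous into $(-\infty,\infty]$ with the extended topology, both satisfy it. So your argument covers every reading of the hypothesis, and step 3 is exactly where closedness of $\dom(f)$ is needed.

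The paper leaves this lemma as an exercise, so there is no printed proof to compare against. Its placement suggests the intended route goes through the preceding lemma that closed and lsc are equivalent. That route needs a case split on whether $\liminf_n f(x_n)$ is finite:
\begin{itemize}
    \item If it is infinite, there is nothing to show.
    \item If it is finite, a subsequence realizing it eventually lies in $\dom(f)$. Closedness of $\dom(f)$ then puts $x$ in $\dom(f)$, and continuity of the restriction gives $f(x)=\liminf_n f(x_n)$.
\end{itemize}
This is the same argument as yours. Your direct route is slightly cleaner because the finite $t_n$ already force $x_n\in\dom(f)$, so the split disappears. The lsc route instead gives lower semi-continuity first and closedness as the corollary, the reverse of your closing remark.
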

\begin{proof}
    Exercise.
\end{proof}

\begin{figure}
    \centering
    \begin{tikzpicture}
        \node at (0,0) {\includegraphics[width=0.4\textwidth]{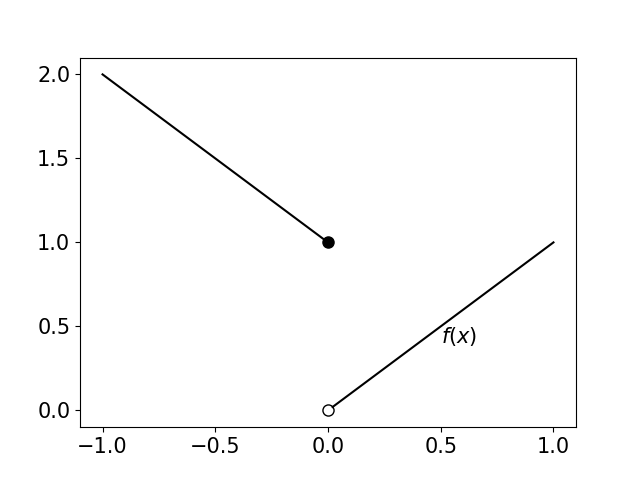}};
        \node at (7,0) {\includegraphics[width=0.4\textwidth]{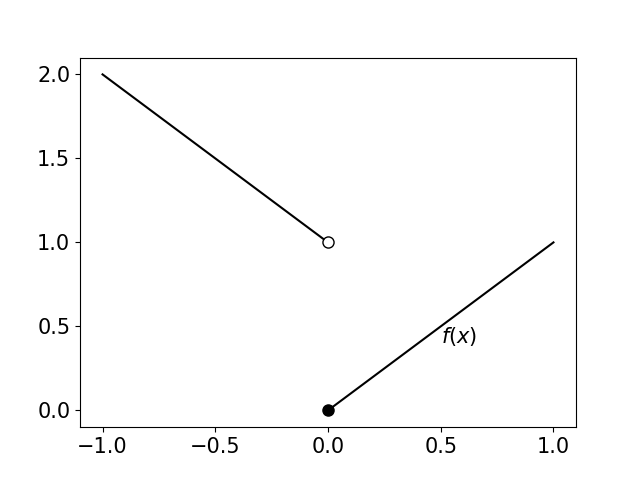}};
        \node at (0,-3) {No minimum attained};
        \node at (7,-3) {Minimum attained};
    \end{tikzpicture}
    \caption{Lack of lower-semicontinuity may lead to infima not being attained!}
    \label{prelim:fig:lsc}
\end{figure}

\begin{defn}[Coercive]
    We call $f:V\rightarrow \R$ coercive if 
    \begin{equation}
        \lim_{\|x\|\rightarrow\infty} f(x) = \infty.
    \end{equation}
\end{defn}

\begin{theorem}\label{preliminaries:thm:direct_method}
    Let $f:V\rightarrow (-\infty,\infty]$ be proper, coercive, and lsc and $V$ be finite-dimensional\footnote{This is not necessary but assumed to avoid more complicated arguments.}. Then $f$ admits a minimium on $V$.
\end{theorem}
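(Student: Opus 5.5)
The plan is the direct method of the calculus of variations: take a minimizing sequence, show it is bounded by coercivity, extract a convergent subsequence via \cref{thm:bolzano}, and conclude with lower semi-continuity. First I set $m \coloneqq \inf_{x\in V} f(x)\in[-\infty,\infty)$. Since $f$ is proper, some $x_0$ has $f(x_0)<\infty$, so $m<\infty$. At this point I do not yet know that $m>-\infty$; this will come out of the argument. By definition of the infimum I choose $(x_n)_n\subset V$ with $f(x_n)\rightarrow m$. If $m=-\infty$ this means $f(x_n)\rightarrow-\infty$; otherwise $f(x_n)\leq m+1/n$.

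Next I show boundedness of $(x_n)_n$, which I expect to be the only step needing a little care. Coercivity means that for every $C\in\R$ there exists $R>0$ with $\|x\|>R\Rightarrow f(x)>C$. Take $C = f(x_0)$, which is finite. Passing to a tail, I may assume $f(x_n)\leq f(x_0)+1$ for all $n$; this is possible in both cases $m=-\infty$ and $m>-\infty$ since $m\leq f(x_0)$. Applying coercivity with $C=f(x_0)+1$ gives $\|x_n\|\leq R$ for all $n$. If the sequence were unbounded, a subsequence would satisfy $\|x_{n(k)}\|\rightarrow\infty$, hence $f(x_{n(k)})\rightarrow\infty$, contradicting $f(x_{n(k)})\leq f(x_0)+1$.

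By \cref{thm:bolzano} (this is where finite dimensionality enters) there is a subsequence $x_{n(k)}\rightarrow \hat x\in V$. Lower semi-continuity then gives
\begin{equation}
    f(\hat x)\leq \liminf_{k\rightarrow\infty} f(x_{n(k)}) = \lim_{n\rightarrow\infty} f(x_n) = m.
\end{equation}
Because $f$ maps into $(-\infty,\infty]$, we have $f(\hat x)>-\infty$, which rules out $m=-\infty$. On the other hand $m\leq f(\hat x)$ holds by definition of the infimum. Hence $f(\hat x)=m$, and $\hat x$ is a minimizer. Since $m<\infty$, we also get $\hat x\in\dom(f)$.
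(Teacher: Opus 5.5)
Your proof is correct and follows the same direct-method argument as the paper: a minimizing sequence, boundedness from coercivity, a convergent subsequence via \cref{thm:bolzano}, and the conclusion by lower semi-continuity. Your explicit handling of the case $\inf f = -\infty$, which you rule out at the end using $f(\hat x) > -\infty$, is a detail the paper leaves implicit.
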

\begin{proof}
    The proof follows the so-called \emph{direct method}.
    \paragraph{Step 1: Take a minimizing sequence}
    Let $(x_n)_n$ be a minimizing sequence\footnote{As an exercise: Why does such a sequence always exist?}, that is, 
    \begin{equation}
        \lim_n f(x_n) = \inf_{x\in V} f(x).
    \end{equation}
    \paragraph{Step 2: Bounedness of the sequence}
    Since $f$ is coercive, the minimizing sequence must be bounded. Indeed, assume the contrary. Then we can find a subsequence $(x_{n(k)})_k$ such that $\|x_{n(k)}\|\rightarrow\infty$. By coercivity, this implies
    \begin{equation}
        f(x_{n(k)})\rightarrow\infty
    \end{equation}
    which is a contradiction to
    \begin{equation}
        f(x_{n(k)})\rightarrow \inf f<\infty.
    \end{equation}
    \paragraph{Step 3: Apply a copmpactness argument to extract a convergent subsequence}
    From \cref{thm:bolzano} we know that every bounded sequence in a finite dimensional vector space admits a convergent subsequence. Thus, we can take such a subsequence $(x_{n(k)})_k$ with limit $\lim_kx_{n(k)}=\hat x$.
    \paragraph{Step 4: Conclude using lsc}
    By lsc of $f$ we obtain
    \begin{equation}
        f(\hat x) \leq \liminf_{k\rightarrow \infty}x_{n(k)} = \lim_n f(x_n) = \inf_x f(x) \leq f(\hat{x}).
    \end{equation}
\end{proof}

\chapter{Convexity}

Our goal in this lecture is to solve problems of the form 
\begin{equation}
    \min_{x\in C} f(x)
\end{equation}
where $C\subset V$ is a \emph{convex subset} of the space $V$ and $f:C\rightarrow (-\infty,\infty]$ is a \emph{convex function}. Therefore, in the following we will analyze in more detail what convexity of sets and functions precisely means and what consequences we can draw from those properties.

\section{Convex sets}
A set is called convex if for any two points within the set, the entire straight line connecting the points as contained in the set as well. More formally:
\begin{defn}
    A set $C\subset V$ is called convex iff for every $x,y\in C$, $\lambda\in (0,1)$ it holds
    \begin{equation}
        \lambda x + (1-\lambda) y\in C.
    \end{equation}
\end{defn}

Note that
\begin{equation}
    \lambda x + (1-\lambda) y = y + \lambda(x-y)
\end{equation}
which shows more clearly that $\lambda x + (1-\lambda) y$ is the line connecting $x$ and $y$.

Moreover, we can easily show the following using more general \emph{convex combinations}.

\begin{lemma}
    The set $C\subset V$ is convex if and only if for every $n\in \N$, $x_i\in C$, $\lambda_i\in [0,1]$, $i=1,\dots,n$ with $\sum_{i=1}^n\lambda_i=1$ it holds
    \begin{equation}
        \sum_{i=1}^n\lambda_i x_i \in C.
    \end{equation}
\end{lemma}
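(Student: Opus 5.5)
The plan is to prove the two implications separately. The direction "$\Leftarrow$" is immediate: take $n=2$, $x_1=x$, $x_2=y$, $\lambda_1=\lambda\in(0,1)$ and $\lambda_2=1-\lambda$. Then $\lambda_1+\lambda_2=1$, so the hypothesis gives $\lambda x+(1-\lambda)y\in C$, which is exactly the definition of convexity.

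For "$\Rightarrow$" I will use induction on $n$. The base case $n=1$ is trivial: then $\lambda_1=1$ and $\sum_i\lambda_i x_i=x_1\in C$. For the inductive step, assume the claim holds for $n-1$ points. Let $x_1,\dots,x_n\in C$ and let $\lambda_i\in[0,1]$ with $\sum_{i=1}^n\lambda_i=1$.

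The only real obstacle is handling the endpoint cases, since the definition of convexity only allows $\lambda\in(0,1)$ and we must avoid dividing by zero. I will split into cases:
\begin{itemize}
    \item If $\lambda_n=1$, then all other $\lambda_i=0$ and the sum equals $x_n\in C$.
    \item If $\lambda_n=0$, the sum is a convex combination of $x_1,\dots,x_{n-1}$ with weights summing to $1$, so it lies in $C$ by the induction hypothesis.
    \item Otherwise $\lambda_n\in(0,1)$. Set $\mu_i=\lambda_i/(1-\lambda_n)$ for $i=1,\dots,n-1$. These satisfy $\mu_i\ge 0$ and $\sum_{i=1}^{n-1}\mu_i=(1-\lambda_n)/(1-\lambda_n)=1$, which in particular forces $\mu_i\le 1$. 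By the induction hypothesis, $y\coloneqq\sum_{i=1}^{n-1}\mu_i x_i\in C$.
\end{itemize}
In the last case we then write
\begin{equation}
    \sum_{i=1}^n\lambda_i x_i=(1-\lambda_n)\,y+\lambda_n x_n,
\end{equation}
which is a two-point combination with parameter $\lambda_n\in(0,1)$ of the points $x_n,y\in C$. By the definition of convexity it lies in $C$, which completes the induction.
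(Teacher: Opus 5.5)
Your proof is correct and complete. The choice $n=2$ gives $\Leftarrow$, and in the induction you treat $\lambda_n\in\{0,1\}$ separately, which is necessary because the paper's definition of convexity only allows $\lambda\in(0,1)$. The paper leaves this lemma as an exercise, and your induction is the standard argument for it.
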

\begin{proof}
    Exercise.
\end{proof}

For notational convenience we define the \emph{unit simplex} as 
\begin{equation}
    \Delta^k = \{(\lambda_1,\dots,\lambda_k)\in [0,1]\;|\;\sum_{i=1}^n\lambda_i = 1\}
\end{equation}
We may now define the smalles convex set containing a given set, the \emph{convex hull}.

\begin{defn}[Convex hull]
    Let $C\subset V$. The convex hull of $C$ is defined as the set
    \begin{equation}\label{eq:convex_sets:defin_conv_hull}
        \conv(C) \coloneqq \bigcap_{\substack{S\subset V \text{ convex}\\C\subset S}}S
    \end{equation}
\end{defn}

\begin{lemma}
    It holds true that
    \begin{equation}\label{eq:convex_sets:conv_hull}
        \conv(C) = \bigg\{ \sum_{i=1}^n\lambda_i x_i\;|\; n\in \N,\;x_i\in C, \; \lambda\in \Delta^n\bigg\}
    \end{equation}
\end{lemma}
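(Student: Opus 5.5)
Denote the right-hand side of~\eqref{eq:convex_sets:conv_hull} by $K$. The plan is to prove the two inclusions $\conv(C)\subset K$ and $K\subset\conv(C)$ separately, using the characterization of convexity via finite convex combinations from the preceding lemma together with the definition~\eqref{eq:convex_sets:defin_conv_hull} of $\conv(C)$ as an intersection.

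For $K\subset\conv(C)$, take any convex set $S$ with $C\subset S$. Any element $\sum_{i=1}^n\lambda_i x_i$ of $K$ has $x_i\in C\subset S$ and $\lambda\in\Delta^n$. Hence, by the preceding lemma applied to the convex set $S$, this element lies in $S$. Since $S$ was an arbitrary convex superset of $C$, the element lies in the intersection, i.e.\ in $\conv(C)$.

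For $\conv(C)\subset K$, it suffices to show that $K$ is itself a convex set containing $C$, because then $K$ is one of the sets $S$ in the intersection~\eqref{eq:convex_sets:defin_conv_hull}. Containment $C\subset K$ follows by taking $n=1$, $\lambda_1=1$. For convexity, take $x=\sum_{i=1}^n\lambda_i x_i$ and $y=\sum_{j=1}^m\mu_j y_j$ in $K$ and $t\in(0,1)$. Then
\begin{equation}
    tx+(1-t)y=\sum_{i=1}^n t\lambda_i x_i+\sum_{j=1}^m(1-t)\mu_j y_j
\end{equation}
is a combination of the $n+m$ points $x_1,\dots,x_n,y_1,\dots,y_m\in C$. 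Its coefficients lie in $[0,1]$ and sum to $t\cdot 1+(1-t)\cdot 1=1$, so it belongs to $K$.

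The only step that needs some care is this convexity of $K$, specifically the bookkeeping that the concatenated coefficient vector $(t\lambda_1,\dots,t\lambda_n,(1-t)\mu_1,\dots,(1-t)\mu_m)$ lies in $\Delta^{n+m}$. Repeated points among the $x_i$ and $y_j$ cause no problem, since the definition of $K$ does not require the points to be distinct. Everything else is a direct unwinding of the definitions.
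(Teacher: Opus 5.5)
Your proof is correct and follows the same two-inclusion argument as the paper. You show $K\subset\conv(C)$ by applying the finite-convex-combination characterization to every convex superset $S$ of $C$, and $\conv(C)\subset K$ because $K$ is itself a convex set containing $C$; you also verify explicitly that the concatenated weights lie in $\Delta^{n+m}$, a step the paper only calls easy to check.
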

\begin{proof}
    Let us denote the right-hand side of \eqref{eq:convex_sets:conv_hull} as $A$.
    We will show that $\conv(C)\subset A$ and $A\subset\conv (C)$.

    \underline{$A\subset\conv (C)$:}
    Let $x=\sum_{i=1}^n\lambda_i x_i\in A$. Note that for any $S\subset V$ such that $S$ is convex and $C\subset S$ it holds that $x\in S$. Therefore, $x\in \conv(C)$ by its definition in~\eqref{eq:convex_sets:defin_conv_hull}.

    \underline{$\conv (C)\subset A$:} 
    One may easily check that $A$ is, indeed, a convex set. Moreover $C\subset A$. Therefore, $A$ appears as one of the sets $S$ in~\eqref{eq:convex_sets:defin_conv_hull} and, thus, $\conv(C)\subset A$.
\end{proof}

\begin{figure}
    \centering
    \includegraphics[width=.7\textwidth]{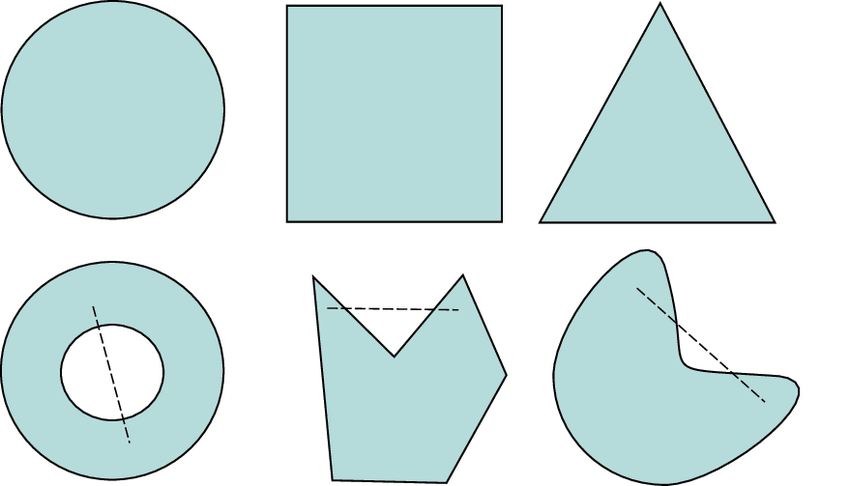}
    \caption{Top row: convex sets. Bottom row: non-convex sets.}
\end{figure}

\begin{example}
    The following are convex sets (the proofs are left as exercises):
    \begin{itemize}
        \item The empty set $\emptyset$
        \item Every vector space
        \item Every norm ball, that is, for every norm $\|\emptyarg\|$ and every $c\in V$ and $R\geq 0$ the sets
        \begin{equation}
            B_{\|\emptyarg\|,R}(c)\{x\in V\;|\; \|x-c\|< R\}
        \end{equation}
        and
        \begin{equation}
            \overline{B}_{\|\emptyarg\|,R}(c)\{x\in V\;|\; \|x-c\|\leq R\}
        \end{equation}
        \item Every affine subspace, that is, for any set of vectors $z, x_1,\dots,x_k\in V$ the set
        \begin{equation}
            \bigg\{ x\in V\;|\; x = z + \sum_{i=1}^k\lambda_i x_i,\; \lambda_i\in \R \bigg\}
        \end{equation}
    \end{itemize}
\end{example}

\begin{figure}
    \centering
	\includegraphics[width=1\textwidth]{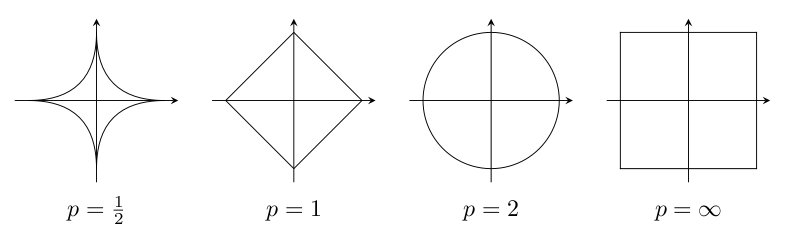}
    \caption{Norm balls for $\overline{B}_{\|\emptyarg\|_p,R}(0)$ for different $\ell^p$ norms. Note that the set is not convex for $p=1/2$ as this choise does not lead to a norm (no triangle inequality).}
\end{figure}

\begin{lemma}[Operations that preserve convexity]\label{convexity:lemma:ops preserving convexity}
    The following hold true:
    \begin{enumerate}
        \item Intersection. Let $C_i\subset V$ be convex for $i\in I$ with $I$ an arbitrary index set. Then the intersection
        \begin{equation}
            \bigcap_{i\in I} C_i \coloneqq \{x\in V\;|\; x\in C_i\; \forall i\in I\}
        \end{equation}
        is convex.
        \item Weighted sum. Let $C_i\subset V$ be convex for $i=1,\dots, N$. Moreover let $\lambda_i \in \R$ for $i=1,\dots, N$. Then the set
        \begin{equation}
            \sum_{i=1}^n \lambda_i C_i \coloneqq \bigg\{\sum_{i=1}^N \lambda_i x_i\;|\; x_i\in C_i\; \forall i\in I\bigg\}
        \end{equation}
        is convex.
        \item Cartesian product sum. Let $C_i\subset V$ be convex for $i\in I$ with $I$ an arbitrary index set. Then the cartesian product
        \begin{equation}
            \bigotimes_{i\in I} C_i \coloneqq \{(x_i)_{i\in I}\;|\; x_i\in C_i\; \forall i\in I\}
        \end{equation}
        is convex.
        \item Images and preimages\footnote{The preimage is well-defined also for non-invertible maps!} of linear maps are convex, that is, for $A:V\rightarrow W$ linear and $S\subset V$, $T\subset W$ both convex the following sets are convex:
        \begin{equation}
            A(S) = \{Ax \;|\; x\in S\},\quad A^{-1}= \{y\in V\;|\; Ax \in T\}
        \end{equation}
    \end{enumerate}
\end{lemma}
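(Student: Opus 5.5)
The plan is to verify each item directly from the definition of convexity. Throughout I fix two points in the set under consideration and $\lambda\in(0,1)$, and show that $\lambda x+(1-\lambda)y$ again lies in the set. Linearity, or the structure of the set, reduces each claim to convexity of the individual $C_i$ (resp.\ $S$, $T$). No earlier result beyond the definition is needed.

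\emph{Intersection.} Take $x,y\in\bigcap_{i\in I}C_i$. Then $x,y\in C_i$ for every $i$. Convexity of each $C_i$ gives $\lambda x+(1-\lambda)y\in C_i$ for all $i$, hence the point lies in the intersection. The case $I=\emptyset$ (intersection $=V$) is covered by the fact that $V$ is convex.

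\emph{Weighted sum.} Take $x=\sum_i\lambda_i x_i$ and $y=\sum_i\lambda_i y_i$ with $x_i,y_i\in C_i$. Rearranging gives
\begin{equation}
    \lambda x+(1-\lambda)y=\sum_{i=1}^N\lambda_i\bigl(\lambda x_i+(1-\lambda)y_i\bigr).
\end{equation}
Each bracket lies in $C_i$ by convexity, so the point lies in $\sum_i\lambda_i C_i$. The only thing to watch is notation: the scalar $\lambda$ of the convex combination must be kept distinct from the weights $\lambda_i$.

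\emph{Cartesian product.} The operations are componentwise, so $\lambda(x_i)_i+(1-\lambda)(y_i)_i=(\lambda x_i+(1-\lambda)y_i)_i$. Each component lies in $C_i$, so the tuple lies in the product.

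\emph{Images and preimages.} For the image, write two points of $A(S)$ as $Ax$ and $Ay$ with $x,y\in S$. Linearity gives $\lambda Ax+(1-\lambda)Ay=A(\lambda x+(1-\lambda)y)\in A(S)$, since $\lambda x+(1-\lambda)y\in S$. For the preimage $A^{-1}(T)=\{x\in V\mid Ax\in T\}$ (the statement has a typo), take $x,y$ with $Ax,Ay\in T$. Then $A(\lambda x+(1-\lambda)y)=\lambda Ax+(1-\lambda)Ay\in T$ by convexity of $T$. Neither argument requires $A$ to be invertible.

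There is no genuine obstacle here. The only point requiring care is reading the statement correctly: fixing the preimage definition, and separating the index $i$ and the weights from the convex-combination parameter.
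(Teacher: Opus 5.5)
Your proposal is correct and follows the same approach as the paper: a direct check of the definition of convexity, and your intersection argument is exactly the paper's proof. The paper leaves the weighted-sum, Cartesian-product, and image/preimage items as an exercise, and your componentwise and linearity arguments settle them correctly, including the corrected reading of the preimage as $\{x\in V\mid Ax\in T\}$.
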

\begin{proof}
    We only prove the first part and leave the remainder as an exercise. The proof is straight-forward. Let $x,y\in \bigcap_{i\in I} C_i$ and $\lambda\in (0,1)$. Since $x,y\in C_i$ for every $i$ and each $C_i$ is convex, it follows that $\lambda x + (1-\lambda)y \in C_i$ for every $i$ and, therefore, $\lambda x + (1-\lambda)y \in \bigcap_{i\in I} C_i$.
\end{proof}

We want to formally define the notion of a hyperplane, that is, the generalization to arbitrary dimensions of a 2D plane in $\R^3$.

\begin{defn}[Hyperplanes]
    Let $V$ be an inner product space, a hpyerplane is a set $H$ of the form
    \begin{equation}
        H = \{x\in V\;|\; \inner{x}{a}=b\}
    \end{equation}
    for some $a\in V\setminus\{0\}$, $b\in\R$.
\end{defn}

The parameter $a$ defines the angle of the plane and $b$ its offset. In particular, for $0\in H$ iff $b=0$ in which case $H$ is a subspace. Whenever $b\neq 0$, $H$ is merely an affine subspace.

Note that if $x_0\in H$ it holds that
\begin{equation}
    H = \{x\in V\;|\; \inner{x-x_0}{a}=0\}.
\end{equation}
It follows that $a$ is the normal vector to the plane. Moreover one can check (exercise) that $\frac{|b|}{\|a\|}$ is the distance of the plane from the origin.

Note that every hyperplane separates the space into two halfs. 
\begin{defn}[Halfspaces]
    Let $V$ be an inner product space, a halfspace is a set $H^-$ of the form
    \begin{equation}
        H^- = \{x\in V\;|\; \inner{x}{a}\leq b\}
    \end{equation}
    for some $a\in V\setminus\{0\}$, $b\in\R$.
\end{defn}

The following theorem, which states that this separation can be chosen in a particular manner, is in fact surprisingly crucial within optimization and functional analysis!

\begin{theorem}[Hahn-Banach seperation theorem]
    Let $S,T$ be two nonempty, disjoint, convex subsets of a vector space $V$ with $S$ open. Then there exists $a\in V^*$, $a\neq 0$ and $\alpha\in \R$ such that
    \begin{equation}
        \inner{x}{a}<\inner{y}{a},\quad x\in S, \; y\in T.
    \end{equation}
    If $S,T$ are both closed and one of them is compact, there exists $\alpha\in \R$ and $\epsilon>0$ such that 
    \begin{equation}
        \inner{x}{a}\leq \alpha\leq \alpha+\epsilon\leq \inner{y}{a},\quad x\in S, \; y\in T.
    \end{equation}
\end{theorem}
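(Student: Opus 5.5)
In the finite-dimensional inner product setting of these notes ($V=\R^d$ with the standard inner product, so $V^*\cong V$ by Riesz), I would prove the strict separation part first and then derive the open-set statement from it.

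\textbf{Step 1: a point and a closed convex set.} Let $C$ be nonempty, closed and convex, and let $z\notin C$. I take the projection $p\in C$ of $z$, that is, a minimizer of $x\mapsto\|x-z\|^2+\delta_C(x)$. This function is proper, coercive and lsc, since $\delta_C$ is closed. Hence \cref{preliminaries:thm:direct_method} gives existence of $p$. For $y\in C$ and $t\in(0,1)$ the point $p+t(y-p)$ lies in $C$. Expanding $\|p+t(y-p)-z\|^2\geq\|p-z\|^2$, dividing by $t$ and letting $t\to0$ gives
\begin{equation}
\inner{z-p}{y-p}\leq 0,\quad y\in C.
\end{equation}
With $a=p-z\neq0$ this yields $\inner{y}{a}\geq\inner{p}{a}=\inner{z}{a}+\|a\|^2$ for all $y\in C$. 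This is strict separation of $z$ from $C$ with gap $\|a\|^2$.

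\textbf{Step 2: closed and compact sets.} Let $S$ be compact, $T$ closed, both convex. Set $C=T-S$, which is convex by \cref{convexity:lemma:ops preserving convexity}. It is closed: if $y_n-x_n\to w$, pass to a subsequence with $x_n\to x\in S$ using compactness. Then $y_n\to w+x$, which lies in $T$ because $T$ is closed. Disjointness gives $0\notin C$. Step 1 with $z=0$ gives $a$ with $\inner{y-x}{a}\geq\|a\|^2=:\epsilon>0$ for all $x\in S$, $y\in T$. Taking $\alpha=\sup_{x\in S}\inner{x}{a}$, which is finite by compactness, gives the second claim. The case where $T$ is the compact one is symmetric.

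\textbf{Step 3: $S$ open.} Again set $C=T-S$. It is convex, it is open (as a union of translates of $-S$), and $0\notin C$. Since $C$ need not be closed, I would first separate $0$ from $\overline C$ only weakly. I expect this to be the main obstacle, because $0$ may lie on the boundary of $\overline C$.

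First I check that $\overline C$ is convex and that $0\notin\operatorname{int}\overline C$. For an open convex set, $\operatorname{int}\overline C=C$. To see this, take $w\in\operatorname{int}\overline C$ and some $c\in C$. Step slightly past $w$ to $w'=w+\eta(w-c)\in\overline C$, and write $w$ as a convex combination of $w'$ and $c$. Using openness of $C$ near $c$, this convex combination lands in $C$.

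Next, pick points $z_k\notin\overline C$ with $z_k\to0$, and apply Step 1 to get unit vectors $a_k$ with $\inner{y}{a_k}\geq\inner{z_k}{a_k}$ on $\overline C$. By \cref{thm:bolzano}, after passing to a subsequence $a_k\to a$ with $\|a\|=1$, and in the limit $\inner{w}{a}\geq0$ for all $w\in C$.

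Since $C$ is open, this inequality is in fact strict. If $\inner{w}{a}=0$ for some $w\in C$, then $w-\delta a\in C$ for small $\delta>0$, and $\inner{w-\delta a}{a}=-\delta<0$, a contradiction. Therefore $\inner{y-x}{a}>0$, that is, $\inner{x}{a}<\inner{y}{a}$ for all $x\in S$, $y\in T$.
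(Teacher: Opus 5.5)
The notes state this theorem without proof and use it as a black box (for the supporting hyperplane theorem, the existence of subgradients, and $f^{**}=f$), so there is no argument in the paper to compare against. Your proof is correct in the finite-dimensional setting you fix. That is also the only setting in which the notes actually define open, closed and compact sets.

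Its merit is that it is self-contained and non-circular:
\begin{itemize}
\item Step 1 uses only the direct method. It is essentially the existence half of the projection theorem together with the variational inequality $\inner{x-\proj_C(x)}{y-\proj_C(x)}\leq 0$, which the notes prove later for projected subgradient descent.
\item Step 3 is the classical finite-dimensional proof of the supporting hyperplane theorem: separate nearby exterior points of $\overline C$, normalize, pass to a limit. So you obtain separation from it, rather than deriving it from separation as the notes do.
\end{itemize}

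The price is the dimension restriction. Besides the inner product, Step 3 needs Bolzano--Weierstrass to keep $\|a\|=1$ in the limit, and this is exactly what fails in infinite dimensions. The statement, however, is phrased for a general vector space, and the duality chapter invokes it there. The general case needs the analytic Hahn--Banach theorem (Zorn's lemma), applied to the Minkowski gauge of a translate of the open convex set $T-S$.

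Two small points.
\begin{itemize}
\item The inclusion $(\overline C)^\circ\subseteq C$ is where you are most terse. To make ``openness near $c$'' precise, take $w'_n\in C$ with $w'_n\to w'$ and put $\lambda=\frac{1}{1+\eta}$. Then write $w=\lambda w'_n+(1-\lambda)\bigl(c+\tfrac{\lambda}{1-\lambda}(w'-w'_n)\bigr)$. The second point lies in a ball around $c$ contained in $C$ once $n$ is large, so $w\in C$ by convexity. You only need this for $w=0$.
\item The unused $\alpha$ in the first claim comes for free. Take $\alpha=\sup_{x\in S}\inner{x}{a}$, which is finite; since $S$ is open and $a\neq 0$, it is not attained. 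Hence $\inner{x}{a}<\alpha\leq\inner{y}{a}$ for $x\in S$, $y\in T$.
\end{itemize}
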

The second version allows to find a strictly positive difference between 
\[
    \sup_{x\in S}\inner{x}{a}
    \quad 
    \text{and}
    \quad
    \inf_{y\in T}\inner{y}{a}.
\]

\begin{figure}
    \centering
			\begin{tikzpicture}[font=\sffamily]
				\draw[dashed] (-2.5,3) -- (2.5,-0.5);
				\node[circle,draw,fill=gray,minimum size=2.4cm] at (1,2.4) {$S$};
				\draw[fill=gray,rounded corners] (-2,2) -- (-0.2,1) -- (0.2,-1) -- (-1.5,0) -- (-2.2,1.5) --cycle;
				\path (-2,2) --  (0.2,-1)  node[midway] {$T$};
				\draw[ultra thin] (-3,0) -- (3,0) (0,-2) -- (0,4);
			\end{tikzpicture}\hfill%
			\begin{tikzpicture}[font=\sffamily]
				\node[circle,draw,fill=gray,minimum size=2.4cm] at (1,2.4) {$S$};
				\draw[fill=gray,rounded corners] (0,4) -- (-0.5,1) -- (.5,.8) -- (2,.5) -- (0,-.5) -- (-1.5,0) -- (-2.2,1.5) --cycle;
				\path (-2,2) --  (0.2,-1)  node[midway] {$T$};
				\draw[ultra thin] (-3,0) -- (3,0) (0,-2) -- (0,4);
			\end{tikzpicture}
    \caption{Illustration of the Hahn-Banach seperation theorem and why it may fail without convexity.}
\end{figure}

\section{Convex functions}
As shown in~\cref{fig:convexity:1d_example} convexity has crucial impacts on the questions of existence and uniqueness of minima.
\begin{figure}
    \centering
    \includegraphics[width=0.5\textwidth]{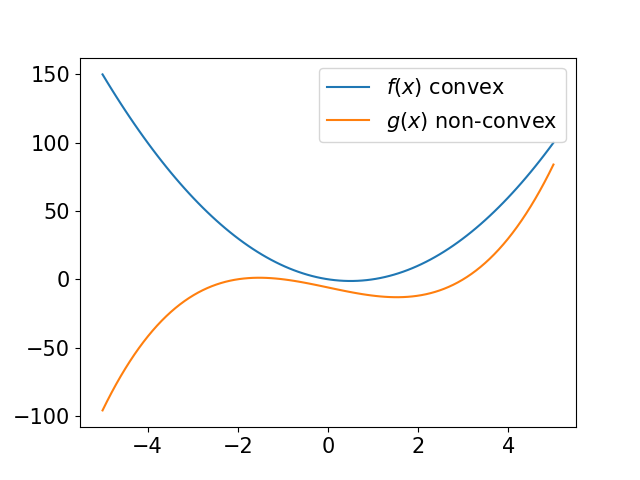}
    \caption{Example of a convex vs a non-convex function in 1D.}
    \label{fig:convexity:1d_example}
\end{figure}

\begin{defn}[Zero-order condition for convexity]
    Let $C\subset V$ be convex and $f:C\rightarrow (-\infty,\infty]$.
    \begin{itemize}
        \item We call $f$ convex iff for any $x,y\in C$ and $\lambda\in (0,1)$ it holds
        \begin{equation}
            f(\lambda x + (1-\lambda)y)\leq \lambda f( x ) + (1-\lambda)f(y).
        \end{equation}
        \item We call $f$ strictly convex iff for any $x,y\in C$ and $\lambda\in (0,1)$ it holds
        \begin{equation}
            f(\lambda x + (1-\lambda)y) < \lambda f( x ) + (1-\lambda)f(y).
        \end{equation}
        \item We call $f$ $\mu$-strongly convex iff for any $x,y\in C$ and $\lambda\in (0,1)$ it holds
        \begin{equation}
            f(\lambda x + (1-\lambda)y) \leq \lambda f( x ) + (1-\lambda)f(y) - \frac{\lambda(1-\lambda)\mu}{2}\|x-y\|^2
        \end{equation}
    \end{itemize}
\end{defn}
The zero-order condition states that the straight line connecting to points on the graph (the secant) is always above the function. In the case of a strictly convex function the secant is strictly above the function and in the strongly convex case we may even \emph{squeeze} a quadratic inbetween.

\begin{figure}
    \centering
    \begin{tikzpicture}[inner sep=.5mm,
			declare function={
				f(\x)= (\x-3)^2+3*\x-4;
			}
			]
			\begin{axis}[axis lines=middle,domain=-1:4,samples=200,xtick={0.5,3},xticklabels={$x$,$y$},ytick=\empty,xmin=-.2,xmax=3.7,ymin=-.1,ymax=10]
				\addplot[blue,very thick,name path=f] {f(x)};
				\pgfplotsinvokeforeach{.5,3}{
					\path[name path=x] (#1,0) -- (#1,6);
					\draw[black!50,thick,dashed,name intersections={of=f and x, name=i}] (#1,0) -- (i-1);
				}
				\draw[very thick] ($(.5,{f(.5)})$) node[circle,fill=blue,label=left:{$f(x)$}]{} -- ($(3,{f(3)})$) node[circle,fill=blue,label=right:{$f(y)$}]{};
				\node[text=blue] at (1.75,2) {$f(\lambda x+(1-\lambda)y)$};
				\node[rotate=10] at (1.75,5) {$\lambda f(x)+(1-\lambda)f(y)$};
			\end{axis}
		\end{tikzpicture}
        \caption{Geometric interpretation of the zero-order condition for convexity of a function.}
\end{figure}

\begin{example}
    The following are convex (proof: exercise):
    \begin{enumerate}
        \item Affine functions $f(x)=\langle y, x\rangle + b$ with $a\in V$, $b\in \R$ (in particular, linear functions for $b=0$).
        \item All norms are convex functions.
    \end{enumerate}
\end{example}

Similar to the corresponding result for convex sets we can extend convexity to arbitrarily \emph{large} convex combinations.

\begin{theorem}[Jensen]
    Let $C\subset V$ be convex and $f:C\rightarrow \bR$. Then $f$ is convex iff for any $k\in \N$, $\lambda\in \Delta^k$, and $x_i\in C$ it holds
    \begin{equation}
        f(\sum_{i=1}^k\lambda_i x_i)\leq \sum_{i=1}^k\lambda_i f(x_i)
    \end{equation}
\end{theorem}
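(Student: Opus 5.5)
The plan is to prove Jensen's inequality by induction on $k$, mirroring the corresponding statement for convex sets. The direction ``$\Leftarrow$'' is immediate: taking $k=2$, $\lambda=(\lambda,1-\lambda)\in\Delta^2$ with $\lambda\in(0,1)$ gives exactly the zero-order condition for convexity. So the work lies in ``$\Rightarrow$''.

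For ``$\Rightarrow$'', assume $f$ is convex and argue by induction on $k$. The case $k=1$ is trivial since then $\lambda_1=1$. For $k=2$ with $\lambda_1\in(0,1)$ it is the definition; if $\lambda_1\in\{0,1\}$ both sides coincide. For the induction step, let the claim hold for $k$ and take $\lambda\in\Delta^{k+1}$ and $x_1,\dots,x_{k+1}\in C$. If $\lambda_{k+1}=1$, all other weights vanish and the claim is trivial. Otherwise set $s=1-\lambda_{k+1}=\sum_{i=1}^k\lambda_i>0$ and write
\begin{equation}
    \sum_{i=1}^{k+1}\lambda_i x_i = s\,y + (1-s)\,x_{k+1},\qquad y\coloneqq \sum_{i=1}^k \frac{\lambda_i}{s}x_i .
\end{equation}
Since $(\lambda_i/s)_{i=1}^k\in\Delta^k$, the lemma characterising convex sets via convex combinations gives $y\in C$. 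Applying the two-point case and then the induction hypothesis to $f(y)$ yields
\begin{equation}
    f\Big(\sum_{i=1}^{k+1}\lambda_i x_i\Big)\leq s f(y) + \lambda_{k+1}f(x_{k+1})\leq s\sum_{i=1}^k\frac{\lambda_i}{s}f(x_i) + \lambda_{k+1}f(x_{k+1}) = \sum_{i=1}^{k+1}\lambda_i f(x_i).
\end{equation}

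The only delicate point, and the step I expect to need care, is the bookkeeping with extended values. The statement allows $f:C\rightarrow\bR$, so expressions like $s f(y)$ with $f(y)=+\infty$, or sums mixing $+\infty$ and $-\infty$, must be interpreted consistently. I would adopt the convention of the definition of convex functions, where values lie in $(-\infty,\infty]$. Under this convention, if some $f(x_i)=\infty$ with $\lambda_i>0$, the right-hand side is $+\infty$ and there is nothing to prove. Otherwise all relevant values are finite, and the chain above is ordinary real arithmetic; one should note that $f(y)<\infty$ then follows from the induction hypothesis. Multiplication by $s>0$ preserves the inequality in either case. Zero weights are handled by dropping the corresponding terms, so that $0\cdot\infty$ never actually has to be evaluated.
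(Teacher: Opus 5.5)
Your proof is correct. The paper states Jensen's inequality without proof, and it also leaves as an exercise the convex-combination characterization of convex sets that you invoke to get $y\in C$. So there is no argument in the text to compare against. Your induction on $k$ is the standard argument one would supply here: you renormalize the first $k$ weights by $s=1-\lambda_{k+1}$, apply the two-point inequality, and then apply the induction hypothesis.

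Reading $\bR$ as $(-\infty,\infty]$ is the right interpretation. The paper only defines convexity for functions with values in $(-\infty,\infty]$, and if $-\infty$ were allowed the two-point definition itself would need a convention for $\infty-\infty$. Your bookkeeping of zero weights via $0\cdot\infty=0$, and of infinite values carrying positive weight, is sufficient.
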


We have seen above that convexity means that the secant stays above the function. It also means, that the tangent stays below the function (\cf~\cref{convexity:fig:gradients}). 

\begin{theorem}[First-order characterization of convexity]\label{convexity:thm:first_order}
    Let $f:C\rightarrow\R$ continuously differentiable with $C\subset\R^d$ convex.
    \begin{itemize}
        \item $f$ is convex iff for all $x,y\in C$
        \[
            f(x) + \langle \nabla f(x),y-x\rangle \leq f(y).
        \]
        \item $f$ is strictly convex iff the above inequality holds strict for all $x\neq y$.
    \end{itemize}
\end{theorem}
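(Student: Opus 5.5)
The plan is to reduce everything to one-dimensional restrictions along segments and then use the zero-order definition of convexity together with the chain rule. Fix $x,y\in C$ and define $g(t)=f(x+t(y-x))$ for $t\in[0,1]$. By convexity of $C$ this is well defined, and since $f$ is continuously differentiable, $g$ is $C^1$ with $g'(t)=\inner{\nabla f(x+t(y-x))}{y-x}$.

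\emph{Convex $\Rightarrow$ gradient inequality.} For $\lambda\in(0,1)$, convexity gives
\begin{equation}
    f(x+\lambda(y-x))=f((1-\lambda)x+\lambda y)\leq f(x)+\lambda\bigl(f(y)-f(x)\bigr).
\end{equation}
Subtract $f(x)$ and divide by $\lambda$ to get $\frac{g(\lambda)-g(0)}{\lambda}\leq f(y)-f(x)$. Letting $\lambda\downarrow 0$, the left side tends to $g'(0)=\inner{\nabla f(x)}{y-x}$, which yields the claim.

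\emph{Gradient inequality $\Rightarrow$ convex.} Take $x,y\in C$ and $\lambda\in(0,1)$, and set $z=\lambda x+(1-\lambda)y\in C$. Apply the inequality at the base point $z$ twice:
\begin{equation}
    f(x)\geq f(z)+\inner{\nabla f(z)}{x-z},\qquad f(y)\geq f(z)+\inner{\nabla f(z)}{y-z}.
\end{equation}
Multiply the first by $\lambda$ and the second by $1-\lambda$, then add. Since $\lambda(x-z)+(1-\lambda)(y-z)=0$, the gradient terms cancel and we get $\lambda f(x)+(1-\lambda)f(y)\geq f(z)$. The same computation with strict inequalities proves the reverse implication of the strict case, because $x\neq y$ forces $x\neq z$ and $y\neq z$.

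\emph{Strictly convex $\Rightarrow$ strict inequality.} This is the main obstacle, since taking the limit $\lambda\downarrow0$ destroys strictness. The remedy is to use an intermediate point. Let $x\neq y$ and $m=\frac{x+y}{2}$. The non-strict part already proved gives $f(m)\geq f(x)+\inner{\nabla f(x)}{m-x}=f(x)+\tfrac12\inner{\nabla f(x)}{y-x}$. Strict convexity gives $f(m)<\tfrac12 f(x)+\tfrac12 f(y)$. Combining the two,
\begin{equation}
    f(x)+\tfrac12\inner{\nabla f(x)}{y-x}<\tfrac12 f(x)+\tfrac12 f(y).
\end{equation}
Rearranging gives $f(x)+\inner{\nabla f(x)}{y-x}<f(y)$, which completes the proof.
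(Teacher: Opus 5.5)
Your proof is correct and, for the non-strict characterization, it is the same as the paper's: a difference quotient along the segment with $\lambda\downarrow 0$ for one direction, and for the other, two applications of the gradient inequality at the intermediate point $z$, weighted by $\lambda$ and $1-\lambda$. The paper dismisses the strict case as following trivially, whereas you correctly observe that the limit destroys strictness and supply the midpoint argument needed for strictly convex $\Rightarrow$ strict inequality, so your version is actually more complete than the paper's.
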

\begin{proof}
    We only show the first item as the second follows trivially.
    
    $\Rightarrow$: Let $f$ be convex. then we have
    \begin{equation}
        f(x+\lambda (y-x)) = f(\lambda y + (1-\lambda)x)\leq \lambda f(y) + (1-\lambda) f(x).
    \end{equation}
    Rearranging yields
    \begin{equation}
        \frac{f(x+\lambda (y-x))-f(x)}{\lambda} \leq f(y) - f(x).
    \end{equation}
    Letting $\lambda\rightarrow 0$ yields the result.

    $\Leftarrow$: Assume now the gradient condition holds true. Let us denote $x_\lambda = x+\lambda (y-x)$
    \begin{equation*}
        f(x_\lambda) + \langle \nabla f(x_\lambda), y-x_\lambda\rangle = f(x_\lambda) + \langle \nabla f(x_\lambda), (1-\lambda) (y-x)\rangle \leq f(y)
    \end{equation*}
    and
    \begin{equation*}
        f(x_\lambda) + \langle \nabla f(x_\lambda), x-x_\lambda\rangle = f(x_\lambda) + \langle \nabla f(x_\lambda), -\lambda(y-x)\rangle \leq f(x).
    \end{equation*}
    Multiplying the first inequality with $\lambda$, the second with $1-\lambda$ and adding both leads to
    \begin{equation*}
        f(x_\lambda) \leq \lambda f(y) +(1-\lambda) f(x).
    \end{equation*}
\end{proof}

\begin{figure}
    \centering
    \begin{tikzpicture}[inner sep=.5mm,
			declare function={
				f(\x)=  (\x-3)^2+3*\x-4;
			}
			]
			\begin{axis}[axis lines=middle,domain=-1:4,samples=200,xtick={2},xticklabels={$x$},ytick=\empty,xmin=-.2,xmax=3.7,ymin=-.1,ymax=10]
				\addplot[blue,very thick,name path=f] {f(x)};
				\pgfplotsinvokeforeach{2}{
					\path[name path=x] (#1,0) -- (#1,6);
					\draw[black!50,thick,dashed,name intersections={of=f and x, name=i}] (#1,0) -- (i-1);
				}
				\addplot[very thick] {f(2)+1*(x-2)};
				\node[circle,fill=blue,label=above:{$f(x)$}] at ($(2,{f(2)})$) {};
				\node[text=blue,yshift=5mm] at ($(.5,{f(.5)})$) {$f$};
			\end{axis}
			\node[rotate=17,yshift=-3mm] at (6, 2.5) {$\textcolor{black}{y \mapsto f(x)+\inner{\nabla f(x)}{y-x}}$};
		\end{tikzpicture}
        \caption{Geometric interpretation of a convex function.}
        \label{convexity:fig:gradients}
\end{figure}

\begin{defn}[Minimum]
    Let $f:C\rightarrow \R$. 
    \begin{itemize}
        \item We call $x^*$ a local minimum iff there exists $\epsilon>0$ such that 
        \[
            f(x^*)\leq f(x),\quad \forall x\in B_\epsilon(x^*)
        \]
        \item We call $x^*$ a global minimum iff
        \[
            f(x^*)\leq f(x),\quad \forall x\in C
        \]
    \end{itemize}
    The minimum is called \emph{strict} if the above inequalities hold strict.
\end{defn}

We are now in the position to prove a first crucial result of convex optimization.
\begin{prop}\label{convexity:prop:OC1}
    Let $f:C\rightarrow \R$ continuously differentiable over a convex set. If $\nabla f(x^*)=0$ then $x^*$ is a \textbf{global} minimizer.
\end{prop}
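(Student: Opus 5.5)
The statement as written omits the hypothesis that $f$ is convex. Without it the claim fails: for $f(x)=x^3$ on $C=\R$ we have $\nabla f(0)=0$, yet $0$ is not a minimizer. I therefore read the proposition, as its placement and the phrase ``convex optimization'' indicate, as assuming $f$ is convex on the convex set $C$.

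The plan is to apply the first-order characterization of convexity, \cref{convexity:thm:first_order}, directly. Since $f$ is continuously differentiable and convex on the convex set $C$, that theorem gives, for all $x,y\in C$,
\begin{equation*}
    f(x) + \inner{\nabla f(x)}{y-x} \leq f(y).
\end{equation*}
Specialize to $x=x^*$ and an arbitrary $y\in C$. Inserting $\nabla f(x^*)=0$ kills the inner product term, which leaves $f(x^*)\leq f(y)$ for every $y\in C$. This is exactly the definition of a global minimum.

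There is no computational obstacle; the only delicate point is the interpretation just discussed. I would state the convexity assumption explicitly, possibly with the counterexample $x^3$ in a remark. A further subtlety concerns differentiability at boundary points of $C$. \cref{convexity:thm:first_order} is used in the same setting as the proposition (continuously differentiable on $C$), so I take its conclusion at face value.

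I would also add a short comment on uniqueness. If $f$ is strictly convex, the strict version of \cref{convexity:thm:first_order} gives $f(x^*)<f(y)$ for all $y\neq x^*$. Hence $x^*$ is then the unique, strict global minimizer.
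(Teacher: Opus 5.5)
Your proof is correct and matches the paper's argument exactly: apply the first-order characterization \cref{convexity:thm:first_order} at $x=x^*$ and use that the inner-product term vanishes. You are also right that the statement tacitly assumes $f$ is convex, since the paper's proof relies on the first-order characterization, and $f(x)=x^3$ on $\R$ is a valid counterexample without that assumption.
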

\begin{proof}
    By the first-order characterization of convexity we have
    \begin{equation}
        f(x)\geq f(x^*) + \langle \nabla f(x^*),x-x^*\rangle
        = f(x^*) + 0.
    \end{equation}
\end{proof}

Note, that the converse of~\cref{convexity:prop:OC1}, that is, the implication
\begin{equation}
    \text{$x^*$ is minimum} \Rightarrow \nabla f(x^*) = 0
\end{equation}
is in general only true if $x^*$ is contained in the interior of $C$ denoted as $C^\circ$, that is, there exists $\epsilon>0$ such that $B_\epsilon(x^*)\subset C$.

\begin{theorem}
    Let $f:C\rightarrow \R$ continuously differentiable over a convex set and $x^*\in C^\circ$. Then $\nabla f(x^*)=0$ iff $x^*$ is a global minimizer.
\end{theorem}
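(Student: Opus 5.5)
The statement has two directions. The implication ``$\nabla f(x^*)=0 \Rightarrow x^*$ is a global minimizer'' is exactly \cref{convexity:prop:OC1}, so I would simply cite it. (As there, $f$ is tacitly convex; the statement is false without convexity.) Nothing new is needed for this direction, and it does not even use $x^*\in C^\circ$.

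For the converse, assume $x^*\in C^\circ$ is a global minimizer. I would argue by contradiction, or equivalently through directional derivatives. Since $x^*\in C^\circ$, pick $\epsilon>0$ with $B_\epsilon(x^*)\subset C$. Set $d=-\nabla f(x^*)$ and suppose $d\neq 0$. For $t>0$ small enough that $t\|d\|<\epsilon$, the point $x^*+td$ lies in $C$. Global minimality then gives $f(x^*+td)-f(x^*)\geq 0$. Dividing by $t$ and letting $t\downarrow 0$, differentiability of $f$ yields
\begin{equation}
    0\leq \lim_{t\downarrow 0}\frac{f(x^*+td)-f(x^*)}{t} = \inner{\nabla f(x^*)}{d} = -\|\nabla f(x^*)\|^2 .
\end{equation}
This forces $\nabla f(x^*)=0$, contradicting $d\neq 0$.

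An alternative that avoids contradiction is to test every direction $v$. The same limit argument applied to $\pm v$ gives both $\inner{\nabla f(x^*)}{v}\geq 0$ and $\inner{\nabla f(x^*)}{-v}\geq 0$. Choosing $v=\nabla f(x^*)$ then gives the conclusion directly.

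The only delicate step is justifying that the difference quotient converges to $\inner{\nabla f(x^*)}{d}$, which requires $x^*+td\in C$ for all small $t>0$. This is precisely where interiority is used: at a boundary point, only directions pointing into $C$ would be admissible, and one would obtain just a variational inequality. Here the ball $B_\epsilon(x^*)\subset C$ makes every direction admissible, and the limit is the standard identification of the directional derivative with the inner product with the gradient for a continuously differentiable function.
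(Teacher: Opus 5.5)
Your proof is correct and follows essentially the paper's argument. You cite \cref{convexity:prop:OC1} for one direction, and for the other you use interiority to pass to the limit in $\frac{f(x^*+tv)-f(x^*)}{t}\geq 0$ along arbitrary directions $\pm v$. Your choice $v=-\nabla f(x^*)$ (or $v=\nabla f(x^*)$ in your alternative) simply makes explicit the final step that the paper leaves as a ``why?''.
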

\begin{proof}
    One direction follows from~\cref{convexity:prop:OC1}. For the other direction, assume that $x^*$ is a minimum. Therefore, for any $v\in\R^d$, if $t>0$ is sufficiently small, it follows by optimality
    \begin{equation}
        f(x^*+tv) - f(x^*)\geq 0.
    \end{equation}
    Dividing by $t$, since $t>0$ yields
    \begin{equation}
        \frac{f(x^*+tv) - f(x^*)}{t}\geq 0.
    \end{equation}
    Letting $t\rightarrow 0$ yields 
    \begin{equation}
        \langle\nabla f(x^*), v\rangle\geq 0.
    \end{equation}
    Since $v$ was arbitrary, we may repate the same argument with $-v$ leading to $\langle\nabla f(x^*), v\rangle\leq 0$ and, therefore, in total to $\langle\nabla f(x^*), v\rangle = 0$. Again, since $v$ was arbitrary, we have $\nabla f(x^*)=0$ (why?).
\end{proof}

You may remember that in 1D, if the function is differentiable., convexity can be characterized by monotonicity of the derivative. In arbitrary dimensions we have the following result.

\begin{theorem}[Monotonicity of the gradient]
    Let $f:C\rightarrow\R$ be continuously differentiable over a convex set $C$. Then $f$ is convex iff
    \begin{equation}
        \langle\nabla f(x) - \nabla f(y),x-y\rangle\geq 0.
    \end{equation}
\end{theorem}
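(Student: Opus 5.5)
The plan is to reduce both directions to the first-order characterization in \cref{convexity:thm:first_order}, which says that $f$ is convex iff $f(x) + \langle \nabla f(x),y-x\rangle \leq f(y)$ for all $x,y\in C$.

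\emph{Convexity implies monotonicity.} Assume $f$ is convex and take $x,y\in C$. Write the first-order inequality once with the roles $(x,y)$ and once with the roles $(y,x)$:
\begin{equation*}
    f(x) + \langle \nabla f(x),y-x\rangle \leq f(y),\qquad f(y) + \langle \nabla f(y),x-y\rangle \leq f(x).
\end{equation*}
Adding the two inequalities cancels $f(x)+f(y)$. What remains is $\langle \nabla f(x)-\nabla f(y), y-x\rangle\leq 0$, which is the claimed inequality $\langle\nabla f(x) - \nabla f(y),x-y\rangle\geq 0$.

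\emph{Monotonicity implies convexity.} Assume the gradient is monotone and fix $x,y\in C$. Define $\phi(t) = f(x+t(y-x))$ for $t\in[0,1]$; this is well defined because $C$ is convex. Since $f$ is continuously differentiable, the chain rule gives that $\phi$ is $C^1$ with
\begin{equation*}
    \phi'(t) = \langle \nabla f(x+t(y-x)), y-x\rangle .
\end{equation*}
By the fundamental theorem of calculus,
\begin{equation*}
    f(y)-f(x) = \phi(1)-\phi(0) = \int_0^1 \langle \nabla f(x+t(y-x)), y-x\rangle\,\dd t .
\end{equation*}
For $t\in(0,1]$, set $z_t = x+t(y-x)$, so that $z_t-x = t(y-x)$. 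Applying monotonicity to the pair $z_t,x$ gives $\langle \nabla f(z_t)-\nabla f(x), t(y-x)\rangle \geq 0$. Dividing by $t>0$ yields
\begin{equation*}
    \langle \nabla f(z_t), y-x\rangle \geq \langle \nabla f(x), y-x\rangle .
\end{equation*}
At $t=0$ this holds with equality. Integrating over $[0,1]$ gives $f(y)-f(x)\geq \langle \nabla f(x),y-x\rangle$, and \cref{convexity:thm:first_order} then shows that $f$ is convex.

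The only step needing care is the second direction. There one must justify the one-dimensional reduction: the segment must lie in $C$, $\phi$ must be differentiable with the stated derivative, and $\phi'$ must be continuous so that the fundamental theorem of calculus applies. All of this follows from convexity of $C$ and continuous differentiability of $f$. If one wants to avoid integration, the mean value theorem works instead: $\phi(1)-\phi(0)=\phi'(s)$ for some $s\in(0,1)$, and the same monotonicity estimate at $t=s$ gives the first-order inequality directly.
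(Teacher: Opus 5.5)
Your proof is correct and follows the paper's route. Both arguments get $\Rightarrow$ by adding the two first-order inequalities. Both get $\Leftarrow$ from the fundamental theorem of calculus along the segment, with monotonicity applied to the pair $x+t(y-x)$, $x$ after dividing by $t$. Your closing appeal to the first-order characterization, and the mean-value-theorem variant, only make explicit what the paper leaves implicit.
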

\begin{proof}
    $\Rightarrow$: Assume $f$ is convex. By the first-order characterization of convexity we have 
    \begin{equation}
        \begin{aligned}
            &f(x) + \langle\nabla f(x), y-x\rangle\leq f(y),\\
            &f(y) + \langle\nabla f(y), x-y\rangle\leq f(x).
        \end{aligned}
    \end{equation}
    Adding both equation yields the result.

    $\Leftarrow$: Assume $\nabla f$ is monotone. By the fundamental theorem of calculus we have
    \begin{equation}
        \begin{aligned}
            f(y) - f(x) 
            &= \int_0^1 \frac{\dd}{\dd t}f(x+t(y-x))\dd t\\
            &= \int_0^1 \langle\nabla f(x+t(y-x)), y-x\rangle\dd t\\
            &= \int_0^1 \langle\nabla f(x+t(y-x))-\nabla f(x), y-x\rangle\dd t + \langle\nabla f(x), y-x\rangle\\
            &= \int_0^1 \frac{1}{t}\underbrace{\langle\nabla f(x+t(y-x))-\nabla f(x), t(y-x)\rangle}_{\geq 0}\dd t + \langle \nabla f(x), y-x\rangle\\
            &\geq \langle \nabla f(x), y-x\rangle
        \end{aligned}
    \end{equation}
\end{proof}

Finally, we can also use the second derivative to characterize convexity. Intuitivelly, convexity means \emph{positive curvature}. To formalize this, we recall, how matrices are \emph{ordered}. For two matrices $A,B\in \R^{d\times d}$ we write
\begin{equation}
    A\succeq B \Longleftrightarrow \langle x, A x\rangle \geq \langle x, B x\rangle,\; \forall x\in\R^d
\end{equation}
and, similarly,
\begin{equation}
    A\succ B \Longleftrightarrow \langle x, A x\rangle > \langle x, B x\rangle,\; \forall x\in\R^d.
\end{equation}

\begin{theorem}[Second order characterization of convexity]
    Let $f:C\rightarrow \R$ be twice continuously differentiable over a convex set $C$. 
    \begin{enumerate}
        \item $f$ is convex iff $\nabla^2 f(x)\succeq 0$.
        \item $f$ is strictly convex iff $\nabla^2 f(x)\succ 0$.
    \end{enumerate}
\end{theorem}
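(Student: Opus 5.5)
The plan is to reduce everything to the monotonicity of the gradient theorem proved just above, using the fundamental theorem of calculus along line segments. For $x,y\in C$ I set $v=y-x$ and $x_t=x+tv\in C$ for $t\in[0,1]$; this is possible because $C$ is convex. Since $f$ is $C^2$, applying the fundamental theorem of calculus to $t\mapsto \nabla f(x_t)$ gives
\begin{equation*}
    \inner{\nabla f(y)-\nabla f(x)}{y-x} = \int_0^1 \inner{\nabla^2 f(x_t)\,v}{v}\,\dd t .
\end{equation*}
This identity drives both directions.

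\emph{Sufficiency in (i).} If $\nabla^2 f(z)\succeq 0$ for all $z\in C$, the integrand is nonnegative, so $\nabla f$ is monotone. The monotonicity theorem then yields convexity.

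\emph{Sufficiency in (ii).} If $\nabla^2 f\succ 0$ and $x\neq y$, then $v\neq 0$. The integrand is continuous in $t$ and strictly positive, so $\inner{\nabla f(y)-\nabla f(x)}{y-x}>0$, i.e.\ $\nabla f$ is strictly monotone. To conclude, I repeat the integral computation from the proof of the monotonicity theorem:
\begin{equation*}
    f(y)-f(x)-\inner{\nabla f(x)}{y-x}=\int_0^1 \tfrac1t\inner{\nabla f(x_t)-\nabla f(x)}{t v}\,\dd t .
\end{equation*}
For every $t\in(0,1]$ the integrand is strictly positive and continuous, so the left-hand side is $>0$. This is the strict first-order condition, and the strict part of \cref{convexity:thm:first_order} gives strict convexity.

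\emph{Necessity in (i).} Assume $f$ is convex, and take $x\in C^\circ$ and any $v\in\R^d$. For small $t>0$ the point $x+tv$ lies in $C$, and monotonicity gives $\inner{\nabla f(x+tv)-\nabla f(x)}{tv}\ge 0$. Dividing by $t^2$ and letting $t\to 0$, differentiability of $\nabla f$ gives $\inner{\nabla^2 f(x)v}{v}\ge 0$. At boundary points I would argue by continuity of $\nabla^2 f$, approximating by interior points. This requires $C$ to have nonempty interior with $C\subset\overline{C^\circ}$, which holds for convex $C$ with nonempty interior. Otherwise only directions $v$ parallel to the affine hull are controlled, so I would state the result for open $C$, or at least for $C$ with nonempty interior. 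This limiting step is the main technical point of the positive results.

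\emph{The main obstacle is the converse of (ii), which is false as stated.} For $f(x)=x^4$ on $\R$, $f$ is strictly convex but $f''(0)=0$. The argument above only produces $\nabla^2 f\succeq 0$, because the strict inequality is lost in the limit $t\to 0$. So I would prove (ii) only as the implication $\nabla^2 f\succ 0 \Rightarrow$ strictly convex, and add a remark with this counterexample showing that the reverse implication cannot hold.
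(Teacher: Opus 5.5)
Your proof of (i) follows the paper's argument. Necessity comes from the monotonicity of $\nabla f$ applied to $x$ and $x+tv$, with $t\to 0$. Sufficiency comes from the fundamental theorem of calculus for $t\mapsto\nabla f(x+t(y-x))$, followed by the monotonicity theorem.

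The paper proves only (i) and silently assumes $x+tv\in C$. Your additions beyond the text are all correct: the interior-point caveat, the derivation of strict convexity from $\nabla^2 f\succ 0$ via strict monotonicity and the strict first-order condition, and the counterexample $f(x)=x^4$ showing that the \emph{only if} direction of (ii) is false.
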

\begin{proof}
    We only proof the first assertion.

    $\Rightarrow$: Assume $f$ is convex. By monotonicity of the gradient it follows for $t>0$
    \begin{equation}
        \frac{\inner{\nabla f(x+tv)-\nabla f(x)}{v}}{t}\geq 0.
    \end{equation}
    Letting $t\rightarrow 0$ it follows 
    \begin{equation}
        \langle v, \nabla^2 f(x)v\rangle\geq 0.
    \end{equation}
    Since $v$ was arbitrary, the result follows.

    $\Leftarrow$: Assume $\nabla^2 f\succeq 0$. By the fundamental theorem of calculus we have
    \begin{equation}
        \begin{aligned}
            \nabla f(y) - \nabla f(x)
            =& \int_0^1 \frac{\dd}{\dd t} \nabla f(x+t(y-x))\dd t\\
            =& \int_0^1 \nabla^2 f(x+t(y-x))(y-x)\dd t.
        \end{aligned}
    \end{equation}
    Multiplying by $y-x$ yields
    \begin{equation}
        \begin{aligned}
            \langle \nabla f(y) - \nabla f(x),y-x\rangle
            =& \int_0^1 \frac{\dd}{\dd t} \nabla f(x+t(y-x))\dd t\\
            =& \int_0^1 \underbrace{\langle y-x,\nabla^2 f(x+t(y-x))(y-x)\rangle}_{\geq 0}\dd t\\
            \geq& 0.
        \end{aligned}
    \end{equation}
\end{proof}


\begin{lemma}[Operations that preserve convexity]
    Let $f, f_1,\dots, f_n:C\rightarrow\R$ be convex on a convex set. 
    \begin{enumerate}
        \item For any $\alpha\geq 0$, $\alpha f$ is convex.
        \item $\sum_i f_i$ is convex.
    \end{enumerate}
\end{lemma}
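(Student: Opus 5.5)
Both items follow directly from the zero-order definition of convexity, by taking the defining inequality and multiplying it by a nonnegative scalar or summing it. Neither differentiability nor the first- or second-order characterizations (\cref{convexity:thm:first_order} and following) is needed. Throughout, fix $x,y\in C$ and $\lambda\in(0,1)$; the point $\lambda x+(1-\lambda)y$ lies in $C$ because $C$ is convex, so every expression below is defined.

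For the first item, we start from the convexity inequality for $f$,
\begin{equation*}
    f(\lambda x+(1-\lambda)y)\leq \lambda f(x)+(1-\lambda)f(y).
\end{equation*}
Since $\alpha\geq 0$, multiplying both sides by $\alpha$ preserves the inequality. Distributing $\alpha$ on the right gives
\begin{equation*}
    (\alpha f)(\lambda x+(1-\lambda)y)\leq \lambda(\alpha f)(x)+(1-\lambda)(\alpha f)(y),
\end{equation*}
which is exactly convexity of $\alpha f$. This is the only step where the sign condition $\alpha\geq0$ matters; for $\alpha<0$ the inequality would reverse. Since $f$ is real-valued here, there is no $0\cdot\infty$ issue.

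For the second item, we write the convexity inequality for each $f_i$ at the same $x,y,\lambda$ and sum over $i=1,\dots,n$. Using linearity of the finite sum, $\sum_i\big(\lambda f_i(x)+(1-\lambda)f_i(y)\big)=\lambda\sum_i f_i(x)+(1-\lambda)\sum_i f_i(y)$, which is the convexity inequality for $\sum_i f_i$. Alternatively, one can argue by induction on $n$, with the case $n=2$ as the base step.

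There is no real obstacle here. The only points requiring care are using $\alpha\geq0$ in the first item, and, in the second, evaluating all $f_i$ at the same convex combination with a common $\lambda$ so that the individual inequalities can be added. Combining the two items shows that any nonnegative linear combination $\sum_i\alpha_i f_i$ is convex.
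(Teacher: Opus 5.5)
Your proof is correct and complete. You multiply the zero-order convexity inequality by $\alpha\geq 0$, and you sum the inequalities for the $f_i$ at a common $x,y,\lambda$. The notes leave this lemma as an exercise, and this direct check from the definition is the standard intended solution.
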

\begin{proof}
    Exercise!
\end{proof}

\begin{lemma}[Operations that preserve convexity cont'd]
    Let $f:\R^n\supset C\rightarrow\R$ be convex and $A\in \R^{n\times m}$, $b\in \R^n$, then 
    \begin{equation}
        g(x) = f(Ax+b)
    \end{equation}
    is convex on $\{x\in \R^m\;|\; Ax+b\in C\}$.
\end{lemma}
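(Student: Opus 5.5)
The plan is to verify the zero-order definition of convexity directly, after first checking that the domain of $g$ is a convex set. Set $D \coloneqq \{x\in \R^m\;|\; Ax+b\in C\}$ and $T(x) \coloneqq Ax+b$.

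\emph{Convexity of $D$.} The map $T$ is affine, so for $x,y\in\R^m$ and $\lambda\in(0,1)$ we have
\begin{equation}
    T(\lambda x+(1-\lambda)y) = \lambda(Ax+b)+(1-\lambda)(Ay+b),
\end{equation}
because $\lambda b+(1-\lambda)b=b$. Hence, if $x,y\in D$, the point $T(\lambda x+(1-\lambda)y)$ is a convex combination of $T(x),T(y)\in C$. Convexity of $C$ then gives $\lambda x+(1-\lambda)y\in D$. Alternatively, $D$ is the preimage under the linear map $A$ of the translate $C-b$. Translates of convex sets are convex, so \cref{convexity:lemma:ops preserving convexity}(iv) applies as well.

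\emph{Convexity of $g$.} Take $x,y\in D$ and $\lambda\in(0,1)$. By the identity above,
\begin{equation}
    g(\lambda x+(1-\lambda)y) = f\bigl(\lambda T(x)+(1-\lambda)T(y)\bigr) \leq \lambda f(T(x))+(1-\lambda)f(T(y)) = \lambda g(x)+(1-\lambda)g(y).
\end{equation}
The inequality is the convexity of $f$ on $C$, applied to the points $T(x),T(y)\in C$.

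There is no real obstacle. The one point needing care is that $T$ is affine rather than linear, so one must check that the offset $b$ is preserved under convex combinations. This works precisely because the weights sum to one, which is what makes the proof go through.
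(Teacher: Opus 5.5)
Your proof is correct and complete. The paper leaves this lemma as an exercise, and your argument is the standard one it intends: first check that the domain $\{x\in\R^m\;|\;Ax+b\in C\}$ is convex, then use that the affine map $x\mapsto Ax+b$ preserves convex combinations and apply the convexity of $f$ at the points $Ax+b,\,Ay+b\in C$.
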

\begin{proof}
    Exercise!
\end{proof}

\begin{lemma}[Operations that preserve convexity cont'dd]
    Let $f:C\rightarrow\R$ be convex and $g:I\rightarrow \R$ be convex and non-decreasing with $I\subset\R$ an interval such that $f(C)\subset I$. Then $g\circ f$ is convex.
\end{lemma}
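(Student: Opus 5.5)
The plan is to verify the zero-order condition for convexity directly, using only the definition and the two hypotheses on $g$ in sequence. Fix $x,y\in C$ and $\lambda\in(0,1)$, and set $z=\lambda x+(1-\lambda)y$. Since $C$ is convex, $z\in C$, so $f(z)$ is defined and lies in $I$. Likewise $f(x),f(y)\in I$. Because $I$ is an interval, it is convex, so the convex combination $\lambda f(x)+(1-\lambda)f(y)$ also lies in $I$. Hence $g$ can be evaluated at every point that will appear below.

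First, convexity of $f$ gives
\begin{equation*}
    f(z)\leq \lambda f(x)+(1-\lambda)f(y).
\end{equation*}
Both sides lie in $I$ and $g$ is non-decreasing, so applying $g$ preserves the inequality:
\begin{equation*}
    g(f(z))\leq g\big(\lambda f(x)+(1-\lambda)f(y)\big).
\end{equation*}
Second, convexity of $g$ on $I$, applied to the points $f(x),f(y)\in I$, bounds the right-hand side by $\lambda g(f(x))+(1-\lambda)g(f(y))$. Chaining the two inequalities yields
\begin{equation*}
    (g\circ f)(\lambda x+(1-\lambda)y)\leq \lambda (g\circ f)(x)+(1-\lambda)(g\circ f)(y),
\end{equation*}
which is exactly the definition of convexity for $g\circ f$.

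There is no real obstacle here. The only points needing care are bookkeeping: checking that the intermediate point $\lambda f(x)+(1-\lambda)f(y)$ lies in $I$ (which uses that $I$ is an interval), and using monotonicity of $g$ in the right direction. Monotonicity is genuinely necessary; for example, $g(t)=-t$ composed with $f(x)=x^2$ is not convex.
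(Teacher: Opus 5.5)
Your proof is correct. The paper gives no proof of this lemma (it is left as an exercise), and your chain of convexity of $f$, then monotonicity of $g$ on $I$, then convexity of $g$, with the interval property of $I$ guaranteeing that $\lambda f(x)+(1-\lambda)f(y)\in I$, is the standard argument the exercise calls for; it needs monotonicity of $g$ only on $I$, which is exactly what the paper's later example $(\|x\|^2+1)^2$ relies on.
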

\begin{proof}
    Exercise!
\end{proof}

\begin{example}
    \begin{enumerate}
        \item The logsumexp function is convex
        \begin{equation}
            f(x) = \log\bigg(\sum_{i=1}^n e^{x_i}\bigg).
        \end{equation}
        \item The quadratic-over-linear function
        \begin{equation}
            f(x) = \frac{x_1^2}{x_2}
        \end{equation}
        is convex on $\R\times (0,\infty)$.
        \item The function $h(x) = e^{\|x\|^2}$ is convex over $\R^d$ as $f(x) = \|x\|^2$ is convex and $g(t)=e^t$ is convex and non-decreasing.
		\item Consider the $h(x) = (\|x\|^2+1)^2$ over $\R^d$. We have $f(x)=\|x\|^2+1$ and $g(t)=t^2$ are convex. The function \( g \) not non-decreasing on $\R$ but it is non-decreasing on \( f(\R^n) = [1, \infty) \). Consequently, \( h \) is convex.
        \item In the previous example, if we replace \( f \) with \( \|\emptyarg \|^2 - 1 \), then \( g \) is not non-decreasing on \( f(\R^n) = [-1, \infty) \), \cf~\cref{convexity:fig:compositions}.
    \end{enumerate}
\end{example}

\begin{figure}
    \centering
    \begin{tikzpicture}[declare function={
			f1(\x) =  \x^2 + 1;
			f2(\x) =  \x^2 - 1;
			g(\x) = \x^2;
			h1(\x) = g(f1(\x));
			h2(\x) = g(f2(\x));
		}
		]
		\begin{groupplot}[
			group style={group size= 3 by 1,vertical sep=1.5cm}, height=4.5cm, width=5.3cm,ymin=-1.2,ymax=2.5,domain=-2:2,xmin=-1.6,xmax=1.6,samples=200,grid
		]
			\nextgroupplot[
				align=center,
				title={%
					\( \textcolor{teal}{f_1 : x \mapsto x^2 + 1} \) \\
					\( \textcolor{red}{f_2 : x \mapsto x^2 - 1} \)
				}
			]
				\addplot[teal, very thick] {f1(x)};
				\addplot[red, very thick] {f2(x)};
			\nextgroupplot[title={\( g : x \mapsto x^2\)}]
				\draw[->, teal, very thick] (axis cs:1, 0) -- (axis cs:1.5, 0) node [midway, above] {\( \textcolor{teal}{f_1(\R)} \)};
				\draw[->, red, very thick] (axis cs:-1, 0) -- (axis cs:.5, 0) node [midway, below] {\( \textcolor{red}{f_2(\R)} \)};
				\addplot[blue, very thick] {g(x)};
			\nextgroupplot[
				align=center,
				title={%
					\( \textcolor{teal}{g \circ f_1 } \) \\
					\( \textcolor{red}{g \circ f_2 } \)
				}
			]
				\addplot[teal, very thick] {h1(x)};
				\addplot[red, very thick] {h2(x)};
		\end{groupplot}
	\end{tikzpicture}
    \caption{Convexity of compositions.}
    \label{convexity:fig:compositions}
\end{figure}

\begin{lemma}[Operations that preserve convexity cont'ddd]
    Let $f_i:C\rightarrow\R$, $i=1,\dots, n$ be convex over the convex set $C$. Then
    \begin{equation}
        f(x) =\max_{i=1,\dots,n} f_i(x)
    \end{equation}
    is convex.
\end{lemma}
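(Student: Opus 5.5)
The plan is to verify the zero-order condition for convexity directly from the definition, using only that each $f_i$ is convex and that a maximum dominates each of its entries. First, $f$ is well defined as a function $C\rightarrow\R$, since it is a maximum over finitely many real numbers for each fixed $x$.

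Fix $x,y\in C$ and $\lambda\in(0,1)$, and set $z=\lambda x+(1-\lambda)y$. Since $C$ is convex, $z\in C$, so $f(z)$ is defined. The maximum over a finite index set is attained, so there is an index $j\in\{1,\dots,n\}$ with $f(z)=f_j(z)$.

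By convexity of $f_j$ we get
\begin{equation*}
    f(z)=f_j(z)\leq \lambda f_j(x)+(1-\lambda)f_j(y).
\end{equation*}
Next we use $f_j(x)\leq \max_i f_i(x)=f(x)$ and $f_j(y)\leq f(y)$. Since the weights $\lambda$ and $1-\lambda$ are nonnegative, this gives
\begin{equation*}
    \lambda f_j(x)+(1-\lambda)f_j(y)\leq \lambda f(x)+(1-\lambda)f(y).
\end{equation*}
Chaining the two inequalities yields the zero-order condition $f(z)\leq\lambda f(x)+(1-\lambda)f(y)$, so $f$ is convex.

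There is no real obstacle here. The only point needing care is that the bound on $f_j(z)$ must be passed through nonnegative weights, which the condition $\lambda\in(0,1)$ guarantees. As an alternative route, one could note that $\epi(f)=\bigcap_i\epi(f_i)$ and that a function is convex exactly when its epigraph is convex; the result would then follow from the intersection part of \cref{convexity:lemma:ops preserving convexity}. That equivalence has not been established earlier in the paper, however, so the direct argument above is preferable.
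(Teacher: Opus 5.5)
Your proof is correct and essentially matches the paper's. The paper bounds every term under the maximum, $\max_i f_i(z)\le\max_i\bigl[\lambda f_i(x)+(1-\lambda)f_i(y)\bigr]\le\lambda f(x)+(1-\lambda)f(y)$, whereas you first fix an index $j$ attaining the maximum at $z$; both rest on the same two facts, convexity of each $f_i$ and $f_i\le f$ combined with nonnegative weights.
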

\begin{proof}
    We simply compute
    \begin{equation}
        \begin{aligned}
            f(\lambda x + (1-\lambda)y)
            = \max_{i}f_i(\lambda x + (1-\lambda)y)
            \leq& \max_{i}\lambda f_i(x) + (1-\lambda) f_i(y)\\
            \leq& \max_{i}\bigg\{\lambda\max_{j} f_j(x) +  (1-\lambda)\max_jf_j(y)\bigg\}\\
            =& \max_{i}\bigg\{\lambda f(x) + (1-\lambda) f(y)\bigg\}\\
            =& \lambda f(x) + (1-\lambda) f(y)
        \end{aligned}
    \end{equation}
\end{proof}

\begin{theorem}\label{convexity:thm:minimum}
    Let $f:C\times D\rightarrow \R$ be convex over $C\times D$ where both $C$ and $D$ are convex. Define
    \begin{equation}
        g(x) = \inf_{y\in D}f(x,y)
    \end{equation}
    where we asume the above infimum is finite for all $x\in C$. Then $g$ is convex.
\end{theorem}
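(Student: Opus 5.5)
We verify the zero-order condition for convexity directly, using an $\epsilon$-approximation of the infimum, since the infimum need not be attained. Fix $x_1,x_2\in C$, $\lambda\in(0,1)$, and $\epsilon>0$. Because $g(x_1)$ and $g(x_2)$ are finite infima over $D$, we can choose $y_1,y_2\in D$ with
\begin{equation}
    f(x_1,y_1)\leq g(x_1)+\epsilon,\qquad f(x_2,y_2)\leq g(x_2)+\epsilon.
\end{equation}
The finiteness assumption in the statement is exactly what guarantees that such near-minimizers exist and that the right-hand sides are real numbers.

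Next we combine the near-minimizers. Convexity of $C$ and $D$ gives $\lambda x_1+(1-\lambda)x_2\in C$ and $\lambda y_1+(1-\lambda)y_2\in D$; equivalently, $C\times D$ is convex by \cref{convexity:lemma:ops preserving convexity}. Hence $(\lambda x_1+(1-\lambda)x_2,\lambda y_1+(1-\lambda)y_2)$ is an admissible point. By the definition of $g$ as an infimum over $D$, and then joint convexity of $f$ on $C\times D$, we obtain
\begin{equation}
    \begin{aligned}
        g(\lambda x_1+(1-\lambda)x_2)
        &\leq f(\lambda x_1+(1-\lambda)x_2,\lambda y_1+(1-\lambda)y_2)\\
        &\leq \lambda f(x_1,y_1)+(1-\lambda)f(x_2,y_2)\\
        &\leq \lambda g(x_1)+(1-\lambda)g(x_2)+\epsilon.
    \end{aligned}
\end{equation}

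Since $\epsilon>0$ was arbitrary, letting $\epsilon\to 0$ yields the convexity inequality for $g$.

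The only delicate point is that the infimum need not be attained, which the $\epsilon$-argument handles. The crucial structural ingredient is \emph{joint} convexity of $f$ in $(x,y)$: the same $\lambda$ must be used in both coordinates. Convexity of $f$ in each variable separately would not suffice.
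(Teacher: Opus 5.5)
Your proof is correct and takes essentially the same approach as the paper. The paper picks minimizing sequences $(y_n^x)_n$, $(y_n^z)_n$, applies joint convexity to the combined point and passes to the limit in $n$; you use $\epsilon$-near-minimizers and let $\epsilon\to 0$, which is the same argument.
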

\begin{proof}
    Let $y_n^x$ and $y_n^z$ be such that 
    \begin{equation}
        g(x) = \lim_n f(x,y_n^x),\quad g(z) = \lim_n f(z,y_n^z)
    \end{equation}
    It follows
    \begin{equation}
        \begin{aligned}
            g(\lambda x + (1-\lambda)z)
            \leq f(\lambda x + (1-\lambda)z, \lambda y_n^x + (1-\lambda)y_n^z)
            \leq \lambda f(x, y_n^x) + (1-\lambda) f(z,y_n^z)
        \end{aligned}
    \end{equation}
    Taking the limit as $n\rightarrow \infty$ concludes the proof.
\end{proof}

\begin{example}
    The distance of a point to a set is a convex function, \ie, for any $C\subset\R^d$ the following map is convex,
    \begin{equation}
        \begin{aligned}
            x\mapsto d(x,C) \coloneqq \inf_{y\in C}\|x-y\|.
        \end{aligned}
    \end{equation}
\end{example}

\begin{theorem}
    Let $f:\R^d\rightarrow\R$ be convex over a convex set $C$. Then $f$ is convex iff for any $x,v\in \R^d$ the function
    \begin{equation}
        \begin{aligned}
            g:\R&\rightarrow\R\\
            t&\mapsto g(t) = f(x+tv)
        \end{aligned}
    \end{equation}
\end{theorem}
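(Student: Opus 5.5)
The statement is garbled as written. The intended claim is surely the standard one: $f$ is convex on $C$ if and only if, for every $x\in C$ and $v\in\R^d$, the function $g(t)=f(x+tv)$ is convex on the interval $I_{x,v}=\{t\in\R\;|\; x+tv\in C\}$. I would prove it in that form, directly from the zero-order definition of convexity. No differentiability is needed, so none of the first- or second-order characterizations enter. The only preliminary observation is that $I_{x,v}$ is convex, i.e.\ an interval. It is the preimage of the convex set $C$ under the affine map $t\mapsto x+tv$, so this follows from \cref{convexity:lemma:ops preserving convexity}, or by a one-line direct check.

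For the direction "$f$ convex $\Rightarrow$ $g$ convex", fix $x,v$, take $s,t\in I_{x,v}$ and $\lambda\in(0,1)$. The key identity is
\begin{equation*}
    x+(\lambda s+(1-\lambda)t)v = \lambda(x+sv)+(1-\lambda)(x+tv).
\end{equation*}
Applying the convexity of $f$ to the two points $x+sv,\,x+tv\in C$ then gives $g(\lambda s+(1-\lambda)t)\leq\lambda g(s)+(1-\lambda)g(t)$. Equivalently, $g$ is the composition of $f$ with an affine map, so the earlier lemma on composition with affine maps gives this directly.

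For the converse, take $y,z\in C$ and $\lambda\in(0,1)$. Choose $x=z$ and $v=y-z$, so that $0,1\in I_{x,v}$ and
\begin{equation*}
    \lambda y+(1-\lambda)z = x+\lambda v = x+(\lambda\cdot 1+(1-\lambda)\cdot 0)v.
\end{equation*}
Convexity of $g$ on $I_{x,v}$ then yields
\begin{equation*}
    f(\lambda y+(1-\lambda)z)=g(\lambda)\leq \lambda g(1)+(1-\lambda)g(0)=\lambda f(y)+(1-\lambda)f(z),
\end{equation*}
which is exactly convexity of $f$.

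There is no real analytical obstacle here. The only delicate point is stating the result correctly: $g$ must be restricted to the parameter set $I_{x,v}$ where $x+tv\in C$, because $f$ may be defined, or convex, only on $C$. Once the domain is fixed this way, both directions reduce to the single affine identity above. The same argument carries over verbatim to strict and strong convexity. In the strong case the only extra step is the relation $\|(x+sv)-(x+tv)\|=|s-t|\,\|v\|$, which shows that $g$ is $\mu\|v\|^2$-strongly convex.
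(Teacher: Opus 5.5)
Your proof is correct and is essentially the paper's: your converse direction is exactly the paper's argument (base point $z$, direction $y-z$, and $g(\lambda)=g(\lambda\cdot 1+(1-\lambda)\cdot 0)\leq \lambda g(1)+(1-\lambda)g(0)$), and you also supply the forward direction that the paper leaves as an exercise and restrict $g$ to the interval $\{t\in\R \mid x+tv\in C\}$, which the paper glosses over. One caveat on your closing aside: transferring the result to strict convexity requires $v\neq 0$, since for $v=0$ the function $g$ is constant and hence not strictly convex.
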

\begin{proof}
    $\Leftarrow$: Assume $g$ is convex for any $x,v$. Let $x,y\in \R^d$ and $\lambda\in (0,1)$. Consider
    \begin{equation}
        g(t) = f(y+ t(x-y)).
    \end{equation}
    Since $g$ is convex, we have
    \begin{equation}
        \begin{aligned}
            f(\lambda x + (1-\lambda)y) 
            = g(\lambda)
            =& g(\lambda*1 + (1-\lambda)*0)\\
            \leq& \lambda g(1) + (1-\lambda)g(0)\\
            =& \lambda f(x) + (1-\lambda)f(y).
        \end{aligned}
    \end{equation}
    The converse direction is left as an exercise.
\end{proof}

\begin{example}[Common convex functions]\
	\begin{itemize}
        \item Examples on $\R$
        \begin{itemize}
            \item Convex functions
            \begin{itemize}
                \item Exponential function $f(x)=\exp(ax)$ on $\R$, $a\in\R$
                \item Powers $f(x)=x^p$ on $(0,\infty)$ for $p\geq 1$ or $p\leq 0$
                \item Powers of absolute functions $f(x)=\vert x\vert^p$ on $\R$ for $p\geq 1$
                \item Negative entropy $f(x)=x\log(x)$ on $(0,\infty)$
            \end{itemize}
            \item Concave functions
            \begin{itemize}
                \item Powers $f(x)=x^p$ on $(0,\infty)$ for $0\leq p\leq1$
                \item Logarithm $f(x)=\log(x)$ on $(0,\infty)$
            \end{itemize}
        \end{itemize}
        \item Examples on $\R^n$
        \begin{itemize}
            \item Convex functions:
            \begin{itemize}
                \item $p$-Norms $f(x)=\|x\|_p$
                \item Power of $p$-norms $f(x)=\|x\|_p^p$
                \item Least squares $f(x)=\frac12\|Ax-b\|_2^2$, $A\in\R^{m\times n}$, $b\in \R^m$
                \item Maximum over affine function
                    \[
                        f(x)=\max\{\inner{a_1}{x}+b_1,\ldots,\inner{a_m}{x}+b_m\},
                    \]
                for $a_i\in\R^n$, $b_i\in\R$
                \item Perspective of a function $f(x,y)=yg(x/y)$ with $g(x)$ a convex function, $x \in \R^n$, $y \in (0,\infty)$
            \end{itemize}
            \item Concave functions:
            \begin{itemize}
                \item Minimum over affine functions
                    \[
                        f(x)=\min\{\inner{a_1}{x}+b_1,\ldots,\inner{a_m}{x}+b_m\}
                    \]
            \end{itemize}
        \end{itemize}
        \item Examples on $\R^{m\times n}$
        \begin{itemize}
            \item Convex functions:
            \begin{itemize}
                \item Affine function
                    \[
                        f(X)=\tr(A^\top X)+b=\sum_{i=1}^m\sum_{j=1}^n A_{ij}X_{ij}+b
                    \]
                    on $\R^{m\times n}$, $A\in\R^{n\times m}$, $b\in \R$
                \item Spectral norm $f(X)=\|X\|_2=\sigma_{\text{max}}(X)=\sqrt{\lambda_{\text{max}}(X^\top X)}$
                \item Barrier for positive definite matrices $f(X)=-\log\det X$ on $\mathbb{S}_{++}^n$
            \end{itemize}
        \end{itemize}
    \end{itemize}
\end{example}

Similarly to the above results we can show the following characterizations of strong convexity:

\begin{theorem}
    A function $f:C\rightarrow \R$ with $C$ convex is $\mu$-strongly convex if and only if
    \begin{itemize}
        \item  $f - \frac12 \|\emptyarg\|^2$ is convex
        \item \emph{First-order condition}: in the case $f$ is continuously differentiable; for any $x,y\in C$
        \[
            f(x) + \inner{\nabla f(x)}{y-x}\leq f(y) - \frac{\mu}{2}\|x-y\|^2
        \]
        \item \emph{Second-order condition}: in the case $f$ is twice continuously differentiable; for any $x$
        \[
            \nabla^2 f(x)\succeq \mu.
        \]
    \end{itemize}
\end{theorem}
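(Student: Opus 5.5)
The plan is to reduce all three characterizations to the already established characterizations of (plain) convexity, applied to the auxiliary function
\begin{equation*}
    g(x) \coloneqq f(x) - \frac{\mu}{2}\|x\|^2 ,
\end{equation*}
where $\|\emptyarg\|$ is the norm induced by the inner product. The first item should be read with the factor $\mu$, \ie, as convexity of $f-\frac{\mu}{2}\|\emptyarg\|^2$. The first-order condition should be read as $f(x)+\inner{\nabla f(x)}{y-x}+\frac{\mu}{2}\|x-y\|^2\leq f(y)$, so that a quadratic is squeezed \emph{between} the tangent and $f$. I would prove these corrected versions, since with the sign as printed the statement fails for $f=\frac{\mu}{2}\|\emptyarg\|^2$ with $\mu>0$.

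\textbf{Step 1 (zero order $\Leftrightarrow$ convexity of $g$).} The key ingredient is the parallelogram-type identity, valid in any inner product space and for $\lambda\in(0,1)$:
\begin{equation*}
    \|\lambda x+(1-\lambda)y\|^2 = \lambda\|x\|^2+(1-\lambda)\|y\|^2-\lambda(1-\lambda)\|x-y\|^2 .
\end{equation*}
It is verified by expanding both sides with bilinearity and comparing the coefficients of $\|x\|^2$, $\|y\|^2$ and $\inner{x}{y}$. Multiply the identity by $\frac{\mu}{2}$ and subtract it from the strong convexity inequality for $f$. The quadratic correction terms then cancel exactly, and what remains is precisely the zero-order convexity inequality for $g$. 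Every step is an equivalence, so $f$ is $\mu$-strongly convex iff $g$ is convex.

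\textbf{Step 2 (first order).} If $f$ is $C^1$, then so is $g$, with $\nabla g(x)=\nabla f(x)-\mu x$. By \cref{convexity:thm:first_order}, $g$ is convex iff $g(x)+\inner{\nabla g(x)}{y-x}\leq g(y)$ for all $x,y\in C$. Substituting $g$ and collecting the quadratic terms uses
\begin{equation*}
    \|y\|^2-\|x\|^2-2\inner{x}{y-x}=\|y-x\|^2 .
\end{equation*}
This turns the condition into $f(x)+\inner{\nabla f(x)}{y-x}+\frac{\mu}{2}\|y-x\|^2\leq f(y)$. Combined with Step 1, this gives the second item.

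\textbf{Step 3 (second order).} If $f$ is $C^2$, then $\nabla^2 g(x)=\nabla^2 f(x)-\mu\Id$. The second-order characterization of convexity says $g$ is convex iff $\nabla^2 g(x)\succeq 0$, which is exactly $\nabla^2 f(x)\succeq\mu\Id$. Together with Step 1, this gives the third item.

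The only genuinely nontrivial point is the identity in Step 1, together with making sure the norm there is the one induced by the inner product. For a general norm the identity fails, and strong convexity is then not equivalent to convexity of $f-\frac{\mu}{2}\|\emptyarg\|^2$. The rest is bookkeeping with the earlier theorems.
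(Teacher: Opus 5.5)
Your proof of the corrected statement is right, but one side remark is wrong. The printed first-order condition $f(x)+\inner{\nabla f(x)}{y-x}\leq f(y)-\frac{\mu}{2}\|x-y\|^2$ is the very same inequality as your ``corrected'' version; you only need to move the quadratic term. Indeed $f=\frac{\mu}{2}\|\emptyarg\|^2$ satisfies it with equality. Only the first item needs repair: the factor must be $\frac{\mu}{2}$ instead of $\frac12$. Even there, your example refutes the printed version only for $\mu<1$. For $\mu>1$, take $f=\frac12\|\emptyarg\|^2$: then $f-\frac12\|\emptyarg\|^2=0$ is convex, although $f$ is not $\mu$-strongly convex.

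Your route also differs from the one the paper indicates. The paper only says the result is a simple adaptation of the plain-convexity proofs, i.e., redoing them with the quadratic terms carried along:
\begin{itemize}
\item For one direction, divide the strong-convexity inequality by $\lambda$ and let $\lambda\to 0$.
\item For the other, write the first-order inequalities at $x_\lambda=\lambda x+(1-\lambda)y$ towards $x$ and towards $y$, and combine them with weights $\lambda$ and $1-\lambda$, using $\lambda(1-\lambda)^2+(1-\lambda)\lambda^2=\lambda(1-\lambda)$.
\item The Hessian condition would go analogously, via strong monotonicity of the gradient.
\end{itemize}
You instead prove the first item once, through the exact identity for $\|\lambda x+(1-\lambda)y\|^2$. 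You then get the other two items by applying the existing characterizations to $g=f-\frac{\mu}{2}\|\emptyarg\|^2$ as black boxes.

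Your reduction is shorter and needs no new limit or integral arguments. However, it relies on the inner-product norm at every step. The direct adaptation proves the equivalence of the zero- and first-order conditions for an arbitrary norm. In that setting, as you correctly observe, the first item is no longer available.
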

\begin{proof}
    The proof is a simple adaptation of the proof for regular convexity and left as an exercise.
\end{proof}


\chapter{Subgradients}

Recall the definition of the (Frechet) derivative.

\begin{defn}[Frechet derivative]
    Let $f:X\rightarrow Y$. Then $f$ is (Frechet) differentiable at $x\in X$ if there exists a linear, continuous mapping $A:X\rightarrow Y$ such that 
    \begin{equation}
        \lim_{\|h\|\rightarrow 0}\frac{\| f(x+h) - f(x)  -Ah\|}{\|h\|}= 0.
    \end{equation}
    We denote $A=Df(x)$.
\end{defn}

\begin{defn}[Gradient]
    Let $f:X\rightarrow \R$ be differentiable with $X$ a Hilbert space. Then The gradient of $f$ at the point $x$, denoted as $\nabla f(x)$, is the unique element in $X$ such that 
    \begin{equation}
        Df(x)h =  \inner{\nabla f(x)}{h}, \quad h\in X.
    \end{equation}
    We denote $A=Df(x)$.
\end{defn}

The existence and uniqueness of the gradient is a direct consequence of the Riesz representation theorem since $Df(x)\in X^*$ if $f$ maps into the real numbers.
Note moreover, that \textbf{the gradient depends on the specific scalar product}.

While the above general definition might be a little abstract, in most cases we will be dealing with the gradient is quite simple. Specifically, whenever $X=\R^d$ equipped with the standard Euclidean scalar product, then 
\begin{equation}
    \nabla f(x) = Df(x)^T,
\end{equation}
\ie, the gradient is simply the transposed derivative.

We can interpret the derivative as the best linear approximation of the function at a point, that is,
\begin{equation}
    f(y) = f(x) + Df(x)(y-x) + o(\|y-x\|)
\end{equation}
where the small $o$ denotes a function tending to zero \emph{faster than linearly}. Most optimization methods can be interpreted as iteratively minimizing a easier approximation of the function. However, many functions of interest do not admit a gradient in the above sense, for instance we might want to solve problems of the form 
\begin{equation}
    \min\limits_x \frac{1}{2}\|Ax-b\|^2_2 + \lambda\|K x\|_1
\end{equation}
with $A$ and $K$ linear. These types of composite problems are extremely popular in imaging.
Thus, we may ask the question if it is possible to define a more general notion of a linear approximation which fulfills similar properties as the gradient. The next definition answers this question positively.

\begin{defn}[Subdifferential]
    Let $f:X\rightarrow Y$. We call $g\in X^*$ a subgradient of $f$ at $x\in X$ if 
    \begin{equation}
        f(x) + \langle g, y-x\rangle\leq f(y), \quad \forall y\in Y.
    \end{equation}
    The subdifferential $\partial f(x)$ at $x\in X$ is the set of all subgradients at $x$, \ie, 
    \begin{equation}
        \partial f(x) =\{ g\in X^*\;|\; f(x) + \langle g, y-x\rangle\leq f(y), \quad \forall y\in Y \}.
    \end{equation}
\end{defn}

Note, that the subdifferential might be empty. It is empty at every point $x\in X$ where $f(x)=\infty$. 

\begin{example}\
    \begin{enumerate}
        \item Norms: Let $f = \norm{\cdot}$, then
			\[
				\partial f(0) = \overline{B}_{\|\cdot\|_*, 1}(0),
			\]
            \ie, the unit ball of the dual norm.
            where we recall the definition of the dual norm $\|\cdot\|_*$
            \[
                \|y\|_* = \sup_{\|x\|\leq 1} \langle x,y\rangle.
            \]
			As an example, if \( f = \|\cdot\|_1\) on \( \R^n \), then
			\[
				\partial f(0) = \overline{B}_{||\cdot\|_\infty,1}(0) = [-1,1]^n.
			\]
        \item Given a nonempty set $S \subseteq X$ and a point $x \in S$, we consider the indicator function $\delta_S$. The subdifferential is given by
			\[
				\partial\delta_S(x)=\left\{y\in X^\ast:\inner{y}{z-x}\leq 0,\;\forall z\in S\right\}\eqqcolon N_S(x),
			\]
			which is the so-called \emph{normal cone} of $S$ at $x$.
			The subdifferential of the indicator function of the unit norm ball is given by
			\[
				N_{\overline{B}_{\|\cdot\|, 1}(0)}(x)=
				\begin{cases}
					\{y\in X^\ast|\|y\|_*\leq \inner{y}{x}\}&\text{if }\norm{x}\leq 1,\\
					\emptyset&\text{if }\norm{x}>1.
				\end{cases}
            \]
    \end{enumerate}
\end{example}

We want to make sure, that the subdifferential is, in fact, a generalization of the regular gradient:

\begin{lemma}
    Let $f:V\rightarrow \R$ be convex and differentiable at $x\in \dom(f)^\circ$. Then $\partial f(x) = \{\nabla f(x)\}$.
\end{lemma}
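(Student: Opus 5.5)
We prove the two inclusions $\{\nabla f(x)\}\subset\partial f(x)$ and $\partial f(x)\subset\{\nabla f(x)\}$ separately.

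\emph{Step 1: $\nabla f(x)\in\partial f(x)$.} This is the ``$\Rightarrow$'' direction of \cref{convexity:thm:first_order}, but there it is stated under continuous differentiability, whereas here we only have differentiability at the single point $x$. So we repeat that argument directly. For any $y\in V$ and $\lambda\in(0,1)$, convexity gives
\begin{equation*}
    \frac{f(x+\lambda(y-x))-f(x)}{\lambda}\leq f(y)-f(x).
\end{equation*}
If $f(y)=\infty$, the subgradient inequality holds trivially. Otherwise we let $\lambda\to 0$. Differentiability at $x$ means the difference quotient converges to $Df(x)(y-x)=\inner{\nabla f(x)}{y-x}$. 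Note that $x+\lambda(y-x)\in\dom(f)$ for small $\lambda$, since $x$ is interior. This yields $f(x)+\inner{\nabla f(x)}{y-x}\leq f(y)$, so $\nabla f(x)\in\partial f(x)$.

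\emph{Step 2: uniqueness.} Let $g\in\partial f(x)$. Fix an arbitrary direction $v$. Since $x\in\dom(f)^\circ$, the point $x+tv$ lies in the domain for all sufficiently small $t>0$, and the subgradient inequality gives
\begin{equation*}
    \frac{f(x+tv)-f(x)}{t}\geq \inner{g}{v}.
\end{equation*}
Letting $t\downarrow 0$ gives $\inner{\nabla f(x)}{v}\geq\inner{g}{v}$, i.e.\ $\inner{\nabla f(x)-g}{v}\geq 0$ for every $v$. Replacing $v$ by $-v$ gives the reverse inequality, so $\inner{\nabla f(x)-g}{v}=0$ for all $v$. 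Choosing $v=\nabla f(x)-g$ forces $g=\nabla f(x)$; this is the same ``why?'' step as in the interior optimality theorem. (Here $g\in V^*$ is identified with an element of $V$ via Riesz.)

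\emph{Main obstacle.} The only delicate point is the justification of the limits: Fréchet differentiability at $x$ must be translated into a directional derivative statement. Setting $h=tv$ in the definition gives $\frac{f(x+tv)-f(x)}{t}\to Df(x)v$. We also need interiority of $x$ so that $f$ is finite along the segments used. Neither is deep, but this is where the hypothesis $x\in\dom(f)^\circ$ enters.
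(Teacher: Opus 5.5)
Your proposal is correct and matches the paper's proof step for step: the first-order convexity inequality shows that $\nabla f(x)$ is a subgradient, and comparing difference quotients in the directions $\pm v$ forces any $g\in\partial f(x)$ to equal $\nabla f(x)$. The only difference is that you re-derive the first-order inequality from convexity, using differentiability at the single point $x$, instead of citing \cref{convexity:thm:first_order}; this is a sensible refinement, since that theorem is stated for continuously differentiable $f$.
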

\begin{proof}
    In order to proof this result, we need to show (i) $\nabla f(x)\in \partial f(x)$ and (ii) if $g\in\partial f(x)$ then $g=\nabla f(x)$. The point (i) directly follows from the first order characterization of the gradient~\cref{convexity:thm:first_order}, which yields
    \begin{equation}
        f(x) + \inner{\nabla f(x)}{y-x}\leq f(y),\quad y\in V
    \end{equation}
    showing that $\nabla f(x)\in\partial f(x)$. For (ii) we note that by definition of the subgradient, for any $g\in\partial f(x)$, $h\in V$, and $t>0$ small enough such that $x+th\in\dom(f)$ we have
    \begin{equation}
        \inner{g}{h}\leq \frac{f(x+th)-f(x)}{t}.
    \end{equation}
    By letting $t\rightarrow 0$ it follows $\inner{g}{h}\leq \inner{\nabla f(x)}{h}$. Since this is true for all $h$, it follows $\inner{g}{h}= \inner{\nabla f(x)}{h}$ and, thus, $g=\nabla f(x)$.
\end{proof}
In fact, we also have the converse result, which we will not prove, however. That is, if the subdifferential is single-valued at a point $x$, then $f$ is differentiable in $x$ and $\partial f(x) = \{\nabla f(x)\}$.

\begin{figure}
    \centering
    \includegraphics[scale=0.4]{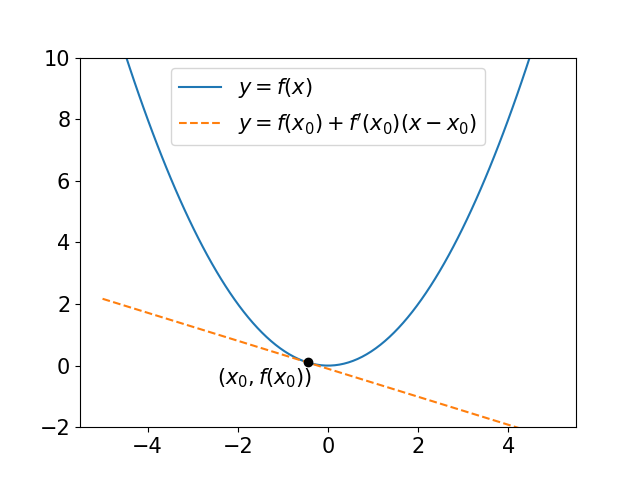}
    \includegraphics[scale=0.4]{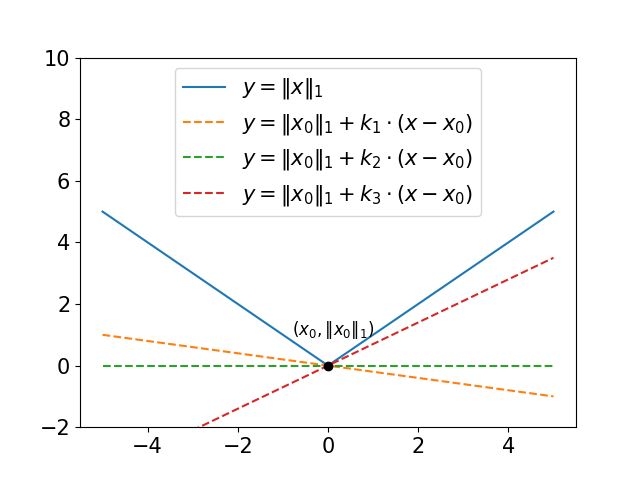}
    \caption{Left: Illustration of the gradient as the slope of the tangent. Right: Illustration of the subgradient.}
\end{figure}

\begin{example}\
    \begin{enumerate}
        \item For $f(x)=\|x\|_2$ it follows
        \[
            \partial f(x) 
            = \begin{cases}
                \{\frac{x}{\|x\|_2}\}\quad& x\neq 0\\
                \overline{B}_{\|\cdot\|_2,1}(0)\quad&\text{else.}
            \end{cases}
        \]
        \item In particular, for $d=1$ and $f(x)=|x|$ it follows
        \[
            \partial f(x) 
            = \begin{cases}
                \{\sign(x)\}\quad& x\neq 0\\
                [-1,1]\quad&\text{else.}
            \end{cases}
        \]
    \end{enumerate}
\end{example}

\begin{lemma}
    If the domain of $f:X\rightarrow Y$ is convex and the subdifferential is nonempty at every point in the domain, then $f$ is convex.
\end{lemma}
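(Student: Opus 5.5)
The plan is to prove the zero-order condition directly, using a subgradient taken at the intermediate point. The function here is real/extended-real valued on $X$, so the target is $\R$ (or $(-\infty,\infty]$). Fix $x,y\in\dom(f)$ and $\lambda\in(0,1)$, and set $x_\lambda = \lambda x + (1-\lambda)y$. By convexity of the domain we have $x_\lambda\in\dom(f)$, so by assumption there exists some $g\in\partial f(x_\lambda)$. This is the same mechanism as in the ``$\Leftarrow$'' direction of \cref{convexity:thm:first_order}: there the gradient served as the supporting slope, and here a subgradient replaces it.

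Next, write the subgradient inequality at $x_\lambda$, tested at the two points $x$ and $y$:
\begin{equation*}
    f(x_\lambda) + \inner{g}{x-x_\lambda}\leq f(x),\qquad f(x_\lambda) + \inner{g}{y-x_\lambda}\leq f(y).
\end{equation*}
Note that $x - x_\lambda = (1-\lambda)(x-y)$ and $y-x_\lambda = -\lambda(x-y)$. Multiply the first inequality by $\lambda$ and the second by $1-\lambda$, then add them. The linear terms combine to $\lambda(1-\lambda)\inner{g}{x-y} - (1-\lambda)\lambda\inner{g}{x-y} = 0$, which leaves
\begin{equation*}
    f(\lambda x + (1-\lambda)y)\leq \lambda f(x) + (1-\lambda) f(y).
\end{equation*}

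Finally, the points outside the domain have to be handled. If $x\notin\dom(f)$ or $y\notin\dom(f)$, then the right-hand side equals $+\infty$, since $\lambda,1-\lambda>0$ and $f>-\infty$. In that case the inequality is trivial, so $f$ is convex on all of $X$.

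There is no real obstacle in this argument. The only point needing care is that the subgradient must be taken at the convex combination $x_\lambda$, not at $x$ or $y$. For that we need $x_\lambda\in\dom(f)$, and this is exactly where the convexity of the domain is used. We also need $f(x_\lambda)$ to be finite, so that the subtractions in the computation are legitimate.
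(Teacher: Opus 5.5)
Your proof is correct and is essentially the paper's argument. Like the paper, you take a subgradient at the convex combination (which lies in $\dom(f)$ by convexity of the domain), test the subgradient inequality at both endpoints, and form the weighted sum so that the linear terms cancel. The paper only treats $x,y\in\dom(f)$, so your remark that the inequality is trivial when an endpoint lies outside the domain is a small completion of its argument.
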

\begin{proof}
    Let $x,y\in \dom f$, $\lambda\in (0,1)$ and $g\in \partial f(x+\lambda (y-x))$.
    By definition of the subdifferential we have
    \begin{gather}
        f(x+\lambda(y-x)) + \langle g, x - (x+\lambda(y-x))\rangle \leq f(x)\\
        f(x+\lambda(y-x)) + \langle g, y - (x+\lambda(y-x))\rangle \leq f(y)
    \end{gather}
    that is
    \begin{gather}
        f(x+\lambda(y-x)) + \langle g, -\lambda(y-x)\rangle \leq f(x)\\
        f(x+\lambda(y-x)) + \langle g, (1-\lambda)(y - x) \rangle \leq f(y).
    \end{gather}
    It follows
    \begin{equation}
        \begin{aligned}
            \frac{1}{\lambda} f(x+\lambda(y-x)) + \frac{1}{1-\lambda} f(x+\lambda(y-x))\leq \frac{1}{\lambda} f(x) + \frac{1}{1-\lambda} f(y).
        \end{aligned}
    \end{equation}
    Multyplying by $\lambda (1-\lambda)$ leads to 
    \begin{equation}
        \begin{aligned}
            f(x+\lambda(y-x)) \leq (1-\lambda) f(x) + \lambda f(y).
        \end{aligned}
    \end{equation}
\end{proof}

As stated above, the subdifferential may be empty. However, for convex functions we can guarantee the following:

\begin{theorem}\label{convexity:thm:nonempty_subgrad}
    Let $f:X\rightarrow\overline{\R}$ be proper and convex. Then $\partial f(x)\neq \emptyset$ for every $x\in \dom(f)^\circ$.
\end{theorem}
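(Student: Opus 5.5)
The plan is to apply the Hahn--Banach separation theorem to the epigraph of $f$ and a point lying just below the graph at $x$. Since the separation theorem in the first form requires one of the sets to be open, I take $S=\operatorname{int}\epi(f)$ and $T=\{(x,f(x))\}$ in $X\times\R$. Both sets are convex: $\epi(f)$ is convex because $f$ is convex, the interior of a convex set is convex, and a singleton is convex. They are disjoint, since any point $(x,t)$ in the interior of the epigraph satisfies $t>f(x)$: otherwise $(x,f(x)-\epsilon)\in\epi(f)$ for small $\epsilon$, contradicting the definition of $\epi$. I work in finite dimensions, as the paper does elsewhere, so that $X^*\cong X$ via Riesz and interiors behave as expected.

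The first key step is to show that $S$ is nonempty. Because $x\in\dom(f)^\circ$, I intend to show that $f$ is bounded above on a small ball around $x$. In $\R^d$, the ball $B_\epsilon(x)$ contains a small simplex $\conv\{v_0,\dots,v_d\}$ that itself contains a neighbourhood of $x$, with all vertices $v_i$ in $\dom(f)$. Jensen's inequality then gives $f\le \max_i f(v_i)=:M$ on that neighbourhood, so $U\times(M,\infty)\subset\epi(f)$ for an open set $U\ni x$. This makes $(x,M+1)$ an interior point. I expect this local boundedness to be the main technical obstacle.

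Separation then gives $(a,\beta)\neq 0$ with $\inner{a}{y}+\beta t<\inner{a}{x}+\beta f(x)$ for all $(y,t)\in S$. Since $\epi(f)$ is convex with nonempty interior, it lies in the closure of its interior, so the non-strict version of this inequality holds on all of $\epi(f)$. Taking $y=x$ and letting $t\to\infty$ forces $\beta\le 0$. If $\beta=0$, then $\inner{a}{y-x}\le 0$ for all $y$ in a neighbourhood of $x$ contained in $\dom(f)$, which forces $a=0$ and contradicts $(a,\beta)\neq 0$. Hence $\beta<0$. Setting $g=-a/\beta$ and dividing by $-\beta$ yields $\inner{g}{y-x}+f(x)\le t$ for all $(y,t)\in\epi(f)$. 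Choosing $t=f(y)$ for $y\in\dom(f)$ gives the subgradient inequality, and for $y\notin\dom(f)$ the inequality is trivial. Therefore $g\in\partial f(x)$.
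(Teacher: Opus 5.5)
Your proof is correct. It has the same skeleton as the paper's: separate $\epi(f)$ from $(x,f(x))$, send $t\to\infty$ to fix the sign of the vertical component, use $x\in\dom(f)^\circ$ to rule out a vertical hyperplane, then normalize. The difference is how you obtain the separating functional.

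The paper invokes its supporting hyperplane theorem at $(x,f(x))\in\partial\,\epi(f)$ as a black box. That theorem handles a possibly empty interior through the lemma that a convex set with empty interior lies in a hyperplane. The paper's subsequent $\alpha=0$ argument then disposes of that degenerate case automatically: a hyperplane containing the vertical rays of $\epi(f)$ must have $\alpha=0$, which forces $g=0$. So the paper never needs to know that $\epi(f)$ has interior points.

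You instead apply Hahn--Banach directly to $\epi(f)^\circ$ and $\{(x,f(x))\}$. This obliges you to do two things the paper does not make explicit:
\begin{enumerate}
    \item prove $\epi(f)^\circ\neq\emptyset$, via local boundedness of $f$ near $x$ (Jensen on a simplex inside $\dom(f)$);
    \item pass from the strict inequality on the interior to the non-strict one on all of $\epi(f)$, using $\epi(f)\subset\overline{\epi(f)^\circ}$.
\end{enumerate}

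The paper's route is shorter, given the machinery it has already built. Yours is self-contained from Hahn--Banach alone. It also spells out the interior-to-closure step that the paper's proof of the supporting hyperplane theorem uses silently. As a by-product it shows that a convex function is locally bounded on $\dom(f)^\circ$, which is the standard first step toward local Lipschitz continuity.
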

Before proving this result, we need some preperation:

\begin{lemma}
    The interior of a convex set is convex.
\end{lemma}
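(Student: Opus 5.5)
To show that $C^\circ$ is convex for a convex set $C\subset V$, I will verify the definition directly. I take $x,y\in C^\circ$ and $\lambda\in(0,1)$, set $z=\lambda x+(1-\lambda)y$, and produce a ball around $z$ that lies in $C$. The idea is that the open balls around $x$ and $y$ contained in $C$ can be mixed with the same weights $\lambda,1-\lambda$. The mixed set is then contained in $C$ by convexity, and it contains a ball around $z$.

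\textbf{Step 1: a common radius.} Since $x,y\in C^\circ$, there are $\epsilon_x,\epsilon_y>0$ with $B_{\epsilon_x}(x)\subset C$ and $B_{\epsilon_y}(y)\subset C$. I set $\epsilon=\min\{\epsilon_x,\epsilon_y\}>0$. Then both $B_\epsilon(x)$ and $B_\epsilon(y)$ are contained in $C$.

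\textbf{Step 2: every point near $z$ is a convex combination of points of $C$.} I claim $B_\epsilon(z)\subset C$. Let $w\in B_\epsilon(z)$ and write $h=w-z$, so $\|h\|<\epsilon$. Then
\begin{equation}
    w = z+h = \lambda (x+h) + (1-\lambda)(y+h).
\end{equation}
Since $\|(x+h)-x\|=\|h\|<\epsilon$, we have $x+h\in B_\epsilon(x)\subset C$, and likewise $y+h\in C$. Convexity of $C$ then gives $w\in C$. Thus $B_\epsilon(z)\subset C$, so $z\in C^\circ$ and $C^\circ$ is convex.

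\textbf{Main obstacle.} The only point requiring an idea is the choice in Step 2: perturb both endpoints by the \emph{same} vector $h$. This makes the convex combination land exactly on $w$, with no need to rescale the radius by $\lambda$ or $1-\lambda$. The degenerate case $C^\circ=\emptyset$ is trivially convex and needs no separate treatment, since the definition then quantifies over an empty set.
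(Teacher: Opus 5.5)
Your proof is correct and follows essentially the same argument as the paper. Both take a common radius $\epsilon$ for the balls around $x$ and $y$. Both then write $w=\lambda(x+h)+(1-\lambda)(y+h)$ with the same shift $h$ to conclude $B_\epsilon(\lambda x+(1-\lambda)y)\subset C$.
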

\begin{proof}
    Let $C$ be convex. If $C^\circ=\emptyset$ there is nothing to prove so assume $C^\circ\neq \emptyset$. 
    Let $x,y\in C^\circ$ and $\lambda\in (0,1)$. We have to show that $x_\lambda\coloneqq\lambda x + (1-\lambda)y\in C^\circ$.
    Then there exists $\epsilon$ such $B_\epsilon(x),B_\epsilon(y)\subset C^\circ$. Now let $z\in B_{\epsilon}(x_\lambda)$. That is, $z=x_\lambda + h$ with $\|h\|<\epsilon$. We may write
    \begin{equation}
        z = \lambda x + (1-\lambda)y + h = \lambda (x+h) + (1-\lambda)(y + h)
    \end{equation}
    and since $x+h\in B_\epsilon(x)\subset C$, $y+h\in B_\epsilon(y)\subset C$ it follows that $z\in C$ by convexity of $C$. Thus, $B_\epsilon(x_\lambda)\subset C$ and $x_\lambda\in C^\circ$ concluding the proof.
\end{proof}

\begin{lemma}
    If $C\subset\R^d$ is convex and $C^\circ=\emptyset$ then there exists a hyperplane $H$ such that $C\subset H$.
\end{lemma}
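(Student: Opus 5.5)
The idea is to use linear algebra: if $C$ spans, affinely, all of $\R^d$, then $C$ contains a nondegenerate simplex, and a simplex has nonempty interior. So if $C^\circ=\emptyset$, the affine hull of $C$ is a proper affine subspace, and any proper affine subspace sits inside a hyperplane.

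If $C=\emptyset$, any hyperplane works, so assume $C\neq\emptyset$ and fix $x_0\in C$. Let
\begin{equation*}
    L=\span\{x-x_0\;|\;x\in C\}.
\end{equation*}
I split into two cases.

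\emph{Case $L\neq\R^d$.} Choose $a\in L^\perp\setminus\{0\}$ and set $b=\inner{x_0}{a}$. For every $x\in C$ we have $\inner{x-x_0}{a}=0$, so $C\subset H=\{x\;|\;\inner{x}{a}=b\}$.

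\emph{Case $L=\R^d$.} We show this contradicts $C^\circ=\emptyset$. Since the vectors $x-x_0$, $x\in C$, span $\R^d$, we can pick $x_1,\dots,x_d\in C$ with $v_i=x_i-x_0$ linearly independent. By the convex hull characterization \eqref{eq:convex_sets:conv_hull} and convexity of $C$, the simplex $\conv\{x_0,\dots,x_d\}$ lies in $C$. Let $V=[v_1,\dots,v_d]$, which is invertible. Consider the barycenter
\begin{equation*}
    \bar x=x_0+\tfrac{1}{d+1}\sum_i v_i,
\end{equation*}
and points $z=\bar x+h$. Write $c=V^{-1}(z-x_0)$, so that $z=x_0+\sum_i c_iv_i$; the coefficients satisfy $c_i=\frac{1}{d+1}+(V^{-1}h)_i$.

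The point $z$ lies in the simplex iff $c_i\geq0$ for all $i$ and $\sum_i c_i\leq1$, since then $z=(1-\sum_i c_i)x_0+\sum_i c_ix_i$ is a convex combination. Because $V^{-1}$ is linear on a finite-dimensional space, it is continuous, so $\|V^{-1}h\|_\infty\leq M\|h\|$ for some constant $M$. Choosing $\|h\|<\epsilon$ with $\epsilon$ small enough (for instance $\epsilon M<\frac{1}{(d+1)^2}$) gives $c_i>0$ and $\sum_i c_i<1$. Hence $B_\epsilon(\bar x)\subset C$, contradicting $C^\circ=\emptyset$.

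The main obstacle is this last step, showing that a nondegenerate simplex has nonempty interior. The approach is to use invertibility and continuity of $V^{-1}$ (equivalently, equivalence of norms) to get a uniform bound on how far the coordinates $c_i$ can move under a small perturbation $h$. The rest of the argument is routine.
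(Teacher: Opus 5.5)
Your proof is correct and follows essentially the same route as the paper. Both translate so that a point of $C$ sits at the origin, show that $d$ linearly independent directions in $C$ would force an interior point via continuity of the inverse coordinate map, and conclude that the span is a proper subspace. The only differences are cosmetic: you use the barycenter with weights $\frac{1}{d+1}$, while the paper uses $\frac{1}{2d}\sum_i x_i$ with $0$ as the extra vertex; and you make explicit the hyperplane $\{x \,|\, \inner{x}{a}=\inner{x_0}{a}\}$ with $a\in L^\perp$ and the case $C=\emptyset$, both of which the paper leaves implicit.
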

\begin{proof}
    We may assume without loss of generality $0\in C$. Otherwise, consider for any $x_0\in C$, $\tilde{C} = C-x_0$. 
    
    We will show that $C$ is a subset of a $d-1$ dimensional subspace. Assume to the contrary that $C$ contains $d$ linearly independent elements, \ie, there exist $x_1,\dots,x_d\in C$ which are linearly independent. By convexity, also the convex hull
    \begin{equation}
        \conv((x_i)_i)=\bigg\{ \sum_{i=1}^d\lambda_i x_i\;\bigg|\; \lambda\in \Delta^d\bigg\}
    \end{equation}
    is a subset of $C$. We claim that $\frac{1}{2d}\sum_{i=1}^d x_i\in C^\circ$. First note that by convexity
    \[
        \frac{1}{2d}\sum_{i=1}^d x_i = \sum_{i=1}^d \frac{1}{2d} x_i + \frac{1}{2}0 \in C.
    \]
    Note that, since the $x_i$ are linearly independent, we may write any $h\in\R^d$ as $h=\sum_i \lambda_i x_i$. Indeed, the mapping $\lambda\mapsto h$ is a bijective linear mapping $\R^d\rightarrow\R^d$. Therefore, there exists $\epsilon>0$ such that if $\|h\|<\epsilon$ it follows $|\lambda_i|<\frac{1}{2d}$ for all $i$. In particular, we have $\frac{1}{2d} + \lambda_i \in [0,1]$ and $\sum_{i=1}^d (\frac{1}{2d} + \lambda_i)\in [0,1]$.
    As a consequence, for any $\|h\|<\epsilon$ we find
    \begin{equation}
        x+h = \sum_{i=1}^d \big(\frac{1}{2d} + \lambda_i\big) x_i = \sum_{i=1}^d \big(\frac{1}{2d} + \lambda_i\big) x_i + \bigg(1 - \sum_i\big(\frac{1}{2d} + \lambda_i\big) \bigg)0\in C.
    \end{equation}
    Therefore, if $C$ contained $d$ linear independent elements, its interior would not be empty as $B_\epsilon(0)\subset C$. Thus, $C$ contains at most $d-1$ linearly independent elements, say $x_1,\dots,x_{d-1}$. In particular, 
    \[
        C\subset \span \{ x_i\;|i=1,\dots,d-1\}
    \]
    concluding the proof.
\end{proof}
Using the two previous results we can prove the supporting hyperplane theorem.
\begin{theorem}[Supporting hyperplane theorem]
    Let $C\subset V$ be nonempty and convex and $x_0\in \partial C = \overline{C}\setminus(C^\circ)$\footnote{the boundary of $C$}. Then there exists $a\in V^*$, $a\neq 0$ such that
    \begin{equation}\label{subgradient:eq:supporting}
        \inner{a}{v}\leq \inner{a}{x_0},\quad v\in C.
    \end{equation}
\end{theorem}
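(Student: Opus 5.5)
The plan is to split into two cases according to whether $C^\circ$ is empty, using the two preceding lemmas together with the Hahn--Banach separation theorem. We work in $V=\R^d$ with the standard inner product, so that elements of $V^*$ are identified with vectors via the Riesz theorem.

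\emph{Case 1: $C^\circ=\emptyset$.} By the preceding lemma, $C$ is contained in a hyperplane $H=\{x\;|\;\inner{x}{a}=b\}$ with $a\neq 0$. Since $H$ is closed, we also have $\overline{C}\subset H$, and hence $x_0\in H$. Then $\inner{a}{v}=b=\inner{a}{x_0}$ for all $v\in C$, which gives \eqref{subgradient:eq:supporting} with equality. (Looking at the proof of that lemma, $H$ is really an affine hyperplane through a point of $C$ after translation; this is harmless.)

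\emph{Case 2: $C^\circ\neq\emptyset$.} Set $S=C^\circ$, which is open, nonempty and convex by the lemma on interiors of convex sets. Set $T=\{x_0\}$, which is convex and nonempty. Since $x_0\notin C^\circ$, the sets $S$ and $T$ are disjoint. The first part of the Hahn--Banach separation theorem then yields $a\neq 0$ with
\[
\inner{x}{a}<\inner{x_0}{a}\quad\text{for all } x\in C^\circ .
\]
It remains to pass from $C^\circ$ to all of $C$. For this I will show $C\subset\overline{C^\circ}$, after which the inequality $\inner{v}{a}\le\inner{x_0}{a}$ follows for $v\in C$ by continuity of $v\mapsto\inner{v}{a}$.

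To prove $C\subset\overline{C^\circ}$, fix $y\in C^\circ$ with $B_\epsilon(y)\subset C$, and let $v\in C$. For $\lambda\in(0,1)$ consider $v_\lambda=\lambda y+(1-\lambda)v$. Any $z\in B_{\lambda\epsilon}(v_\lambda)$ can be written as
\[
z=\lambda\Big(y+\tfrac{z-v_\lambda}{\lambda}\Big)+(1-\lambda)v,
\]
which is a convex combination of a point of $B_\epsilon(y)\subset C$ and the point $v\in C$. Hence $z\in C$, so $v_\lambda\in C^\circ$. Since $v_\lambda\to v$ as $\lambda\to0$, we get $v\in\overline{C^\circ}$.

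The main obstacle is this last step, the density of $C^\circ$ in $C$. It is the only point that needs a genuine argument beyond quoting earlier results; it is a mild variant of the proof that the interior of a convex set is convex. A secondary subtlety is Case 1: the hyperplane lemma was proved after translating $C$ so that $0\in C$, so one must translate back and check that $x_0\in\overline{C}$ lies in the same affine hyperplane, which holds because hyperplanes are closed.
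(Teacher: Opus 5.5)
Your proposal is correct and follows the paper's proof. The paper uses the same case split on whether $C^\circ$ is empty: Hahn--Banach applied to the open convex set $C^\circ$ and $\{x_0\}$ in one case, and the hyperplane lemma together with closedness of $H$ to get $x_0\in H$ in the other. Your density argument $C\subset\overline{C^\circ}$ supplies a step the paper skips when it says the separation of $C^\circ$ and $\{x_0\}$ yields the result, since that separation a priori only controls points of $C^\circ$.
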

\begin{proof}
    We consider only the finite dimensional case.
    The result is a direct consequence of Hahn-Banach's separation theorem. We make a case distinction. Assume first, $C^\circ\neq \emptyset$. Then $C^\circ$ is open, convex, and non-empty. We may apply Hahn-Banach separation theorem to the sets $C^\circ$ and $\{x_0\}$ yielding the result. In the converse case, if $C^\circ$ is empty, then $C$ is contained in an affine subset $H$ of at most dimension $d-1$. This affine subset also contains $x_0$ by closedness. Let $a\in V^*$ and $b\in \R$ such that 
    \[
        H=\{x\;|\; \inner{x}{a} = b\}.
    \]
    Then this $a$ satisfies~\eqref{subgradient:eq:supporting}.
\end{proof}

We can now prove existence of a subgradient.

\begin{proof}[Proof of \cref{convexity:thm:nonempty_subgrad}]
    We want to show that there exists $g\in X^*$ such that
    \begin{equation}
        f(x)+\inner{g}{y-x}\leq f(y)
    \end{equation}
    for all $y\in X$. This is equivalent to showing that
    \begin{equation}\label{subgrad:eq:nonempty_subdiff1}
        \inner{g}{y} - f(y)\leq \inner{g}{x} - f(x).
    \end{equation}
    This already looks quite similar to the structure of Hahn-Banach. In order to be able to use it, we switch to the epigraph. Note that $(x,f(x))\in \partial \epi(f)$. Moreover, by convexity of $f$, $\epi(f)$ is convex as well. We can therefore apply the supporting hyperplane theorem and find an element $(g,-\alpha)\in (V\times\R)^* = V^*\times\R$ such that\footnote{writing it as $-\alpha$ is just for convenience as we will see in a few seconds}
    \begin{equation}\label{subgrad:eq:nonempty_subdiff2}
        \inner{(g,-\alpha)}{(y,t)}\leq \inner{(g,-\alpha)}{(x,f(x))},\quad (y,t)\in \epi(f).
    \end{equation}
    which is equivalent to
    \begin{equation}
        \inner{g}{y}-\alpha t \leq \inner{g}{x} - \alpha f(x),\quad (y,t)\in \epi(f).
    \end{equation}
    We want to show now, that $\alpha>0$. Assume $\alpha<0$. Since $(x,t)\in \epi(f)$ for any $t>f(x)$ we may let $t\rightarrow \infty$ leading to a contradiction. Now assume next that $\alpha=0$. This would imply 
    \begin{equation}
        \inner{g}{y-x} \leq 0
    \end{equation}
    for all $y\in\dom(f)$ and therefore, since $x\in \dom(f)^\circ$, $g=0$. Since Hahn-Banach provides a non-trivial \emph{separating element} we also have a contradiction. Therefore, $\alpha>0$ and we may divide~\eqref{subgrad:eq:nonempty_subdiff2} by $\alpha$ to obtain
    \begin{equation}
        \inner{\tilde{g}}{y} - t \leq \inner{\tilde{g}}{x} - f(x),\quad (y,t)\in \epi(f).
    \end{equation}
    with $\tilde{g} = g/\alpha$. Picking $t=f(y)$ it follows that $\tilde{g}\in\partial f(x)$ and concluding the proof.
\end{proof}

Similar to the case of the regular derivative, in order to compute subgradients in practice, we can derive several computation rules.

\begin{theorem}
    Let $f:V\rightarrow\R$ proper and convex and $\alpha\geq 0$. Then for any $x\in\dom(f)$ we have
    \[
        \partial(\alpha f)(x) = \alpha\partial f(x).
    \]
\end{theorem}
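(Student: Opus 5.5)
The plan is a direct double inclusion, using only the definition of the subdifferential and the fact that multiplication by a positive scalar preserves inequalities. First I treat the main case $\alpha>0$, then comment on the degenerate case $\alpha=0$.

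\emph{Case $\alpha>0$.} Fix $x\in\dom(f)$; note $\dom(\alpha f)=\dom(f)$, and $\alpha\cdot\infty=\infty$ keeps all inequalities meaningful in $(-\infty,\infty]$.

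For ``$\supseteq$'', take $g\in\partial f(x)$. Then $f(x)+\inner{g}{y-x}\leq f(y)$ for all $y\in V$. Multiplying by $\alpha>0$ gives
\begin{equation*}
    (\alpha f)(x)+\inner{\alpha g}{y-x}\leq (\alpha f)(y),\quad y\in V,
\end{equation*}
so $\alpha g\in\partial(\alpha f)(x)$.

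For ``$\subseteq$'', take $h\in\partial(\alpha f)(x)$ and set $g\coloneqq h/\alpha$. Dividing the subgradient inequality for $\alpha f$ by $\alpha$ gives $g\in\partial f(x)$. Hence $h=\alpha g\in\alpha\partial f(x)$. This settles $\alpha>0$; the argument is purely algebraic, with linearity of $g\mapsto\inner{g}{\cdot}$ in the dual variable as the only ingredient.

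\emph{Case $\alpha=0$.} This is the step I expect to be the main obstacle, because the statement then depends on how $0\cdot f$ is interpreted.
\begin{itemize}
    \item With the convention $0\cdot\infty=0$, the function $0f$ is identically zero. Then $\partial(0f)(x)=\{0\}$, while $0\cdot\partial f(x)$ equals $\{0\}$ only if $\partial f(x)\neq\emptyset$.
    \item With the convention $0\cdot f=\delta_{\dom(f)}$, the left-hand side is the normal cone $N_{\dom(f)}(x)$, which is in general larger than $\{0\}$.
\end{itemize}
So I would handle $\alpha=0$ as follows. I adopt the convention $0\cdot\infty=0$, so that $0f\equiv 0$, and restrict to points where $\partial f(x)\neq\emptyset$. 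By \cref{convexity:thm:nonempty_subgrad} this holds for every $x\in\dom(f)^\circ$. Under these conditions both sides equal $\{0\}$.

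I would add a remark that at boundary points of $\dom(f)$ the identity can fail for $\alpha=0$. An example is $f(t)=-\sqrt{t}$ on $[0,\infty)$ (extended by $\infty$ for $t<0$) at $x=0$: here $\partial f(0)=\emptyset$, whereas $\partial(0f)(0)=\{0\}$. So the statement is either to be read with $\alpha>0$, or with $x\in\dom(f)^\circ$ when $\alpha=0$.
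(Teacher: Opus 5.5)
Your proof is correct. The paper leaves this result as an exercise, and your double inclusion for $\alpha>0$, multiplying or dividing the subgradient inequality by $\alpha$, is exactly the intended argument from the definition; it does not even use convexity.

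Your treatment of $\alpha=0$ is also right. Under the extended-valued reading that ``proper'' and ``$x\in\dom(f)$'' suggest, it is a genuine correction to the statement: your example $f(t)=-\sqrt{t}$ on $[0,\infty)$ at $x=0$ gives $\partial(0\cdot f)(0)\neq\emptyset=0\cdot\partial f(0)$ under either convention for $0\cdot\infty$. Under the literal hypothesis $f:V\rightarrow\R$, one has $\dom(f)=V$, so your restriction to $\dom(f)^\circ$ holds automatically and your argument proves the statement exactly as written (in finite dimensions).
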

\begin{proof}
    Exercise.
\end{proof}

\begin{theorem}
    Let $f,g:V\rightarrow\R$ be proper and convex. Then for any $x\in\dom(f)\cap\dom(g)$ we have
    \[
        \partial f(x) + \partial g(x)\subset \partial (f+g).
    \]
    Moreover, if there exists an $x_0\in \dom(f)^\circ \cap\dom(g)$ then
    \[
        \partial f(x) + \partial g(x) = \partial (f+g).
    \]
\end{theorem}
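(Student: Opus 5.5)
The inclusion $\partial f(x)+\partial g(x)\subset\partial(f+g)(x)$ is immediate, so I would dispose of it first. If $u\in\partial f(x)$ and $v\in\partial g(x)$, then adding the two subgradient inequalities $f(x)+\inner{u}{y-x}\le f(y)$ and $g(x)+\inner{v}{y-x}\le g(y)$ gives $(f+g)(x)+\inner{u+v}{y-x}\le (f+g)(y)$ for all $y$. Hence $u+v\in\partial(f+g)(x)$.

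For the reverse inclusion I fix $w\in\partial(f+g)(x)$ and need to split it as $w=u+v$. The approach is a separation argument on epigraph-type sets, in the same spirit as the proof of \cref{convexity:thm:nonempty_subgrad}. After replacing $f$ by $\tilde f(y)=f(y)-f(x)-\inner{w}{y-x}$ and $g$ by $\tilde g(y)=g(y)-g(x)$, I may assume $f(x)=g(x)=0$, $w=0$, and $f+g\ge 0$ everywhere with equality at $x$. Both sets below live in $V\times\R$:
\[
S=\{(y,t)\;|\; y\in\dom(f)^\circ,\ t>f(y)\},\qquad T=\{(y,t)\;|\; t\le -g(y)\}.
\]
$S$ is the strict epigraph of $f$ over the interior of its domain. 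It is convex, and it is open provided $f$ is continuous on $\dom(f)^\circ$. It is nonempty because $x_0\in\dom(f)^\circ$. $T$ is the hypograph of $-g$, which is convex and nonempty since it contains $(x,0)$. The two sets are disjoint: a common point would give $f(y)<t\le -g(y)$, contradicting $f+g\ge 0$.

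The Hahn--Banach separation theorem then yields a nonzero $(a,-\beta)$ with $\inner{a}{y}-\beta t<\inner{a}{z}-\beta s$ for all $(y,t)\in S$ and $(z,s)\in T$. As in the proof of \cref{convexity:thm:nonempty_subgrad}, I would argue $\beta>0$. Sending $t\to\infty$ in $S$ rules out $\beta<0$. If $\beta=0$, then $\inner{a}{y}\le\inner{a}{z}$ for all $y\in\dom(f)^\circ$ and $z\in\dom(g)$; choosing $z=x_0$, which lies in the open set $\dom(f)^\circ$, forces $a=0$, contradicting nontriviality. Dividing by $\beta$ and setting $u=a/\beta$ gives $\inner{u}{y}-t\le\inner{u}{z}-s$. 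Letting $t\downarrow f(y)$ and taking $s=-g(z)$, then inserting $(z,s)=(x,0)$, gives $f(y)\ge\inner{u}{y-x}$ for $y\in\dom(f)^\circ$. Inserting $y\to x$ on the other side gives $g(z)\ge\inner{-u}{z-x}$. So $-u\in\partial g(x)$ and $u+(-u)=0=w$. Undoing the shift expresses the original $w$ as an element of $\partial f(x)+\partial g(x)$.

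I expect the main obstacle to be the technical details around $S$. First, I need openness, i.e.\ continuity of a convex function on the interior of its domain, which the notes have not stated; I would prove it in finite dimensions by bounding $f$ above on a small simplex via Jensen. Second, $x$ itself may lie on the boundary of $\dom(f)$, so the inequality $f(y)\ge\inner{u}{y-x}$ obtained on $\dom(f)^\circ$ must be extended to all of $\dom(f)$. For that I would use convexity along segments from $y$ to $x_0$: points $y+\lambda(x_0-y)$ lie in the interior for $\lambda\in(0,1]$, and $f(y+\lambda(x_0-y))\le(1-\lambda)f(y)+\lambda f(x_0)$, so letting $\lambda\to0$ gives the inequality at $y$. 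The same care is needed when letting $y\to x$, where I would again approach along the segment toward $x_0$ rather than rely on lower semicontinuity.
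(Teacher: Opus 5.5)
Your proof is correct and follows the paper's route. The inclusion $\partial f(x)+\partial g(x)\subset\partial(f+g)(x)$ is proved exactly as in the notes, by adding the two subgradient inequalities. For the reverse inclusion the notes give no proof, only a pointer to a Hahn--Banach argument like the existence-of-subgradients proof, and your separation of the open strict epigraph of the normalized $f$ over $\dom(f)^\circ$ from the hypograph of $-g$ is exactly that argument. The technical points you flag are exactly the ones needed, and they go through as you describe: upper semicontinuity of $f$ on $\dom(f)^\circ$ in finite dimensions, and approaching along segments toward $x_0$ to handle $x$ on the boundary of $\dom(f)$.
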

\begin{proof}
    We only proof the first inclusion. The proof of the second one is a little more involved and relies on the Hahn-Banach separation theorem similar to the proof of existence of subgradients.

    Assume $p_1\in \partial f(x)$ and  $p_2\in \partial g(x)$, that is
    \begin{equation}
        \begin{aligned}
            f(x) + \langle p_1, y-x\rangle&\leq f(y)\\
            g(x) + \langle p_2, y-x\rangle&\leq g(y)
        \end{aligned}
    \end{equation}
    Adding both inequalities yields
    \begin{equation}
        \begin{aligned}
            f(x)+ g(x) + \langle p_1+p_2, y-x\rangle \leq f(y) + g(y)
        \end{aligned}
    \end{equation}
    so that $p_1+p_2\in \partial (f+g)(x)$.
\end{proof}

Finally, we list some more \emph{computation rules} for the subdifferential without proof.
\begin{theorem}
    Let $f:W\rightarrow\Rb$ be a proper, convex, and lower semi-continuous function and $A:V\rightarrow W$ a linear transformation.
    Define $h(x)=f(Ax+b)$ with $b\in W$. For any $x\in\dom(h)$ (weak rule)
    \[
        A^\ast(\partial f(Ax+b))\subseteq\partial h(x).
    \]
    with equality if there exists $x_0\in \dom(h)^\circ$ (strong rule).
\end{theorem}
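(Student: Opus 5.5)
The weak rule is a direct computation from the definition of the subgradient, using the adjoint identity $\inner{Ax}{y}=\inner{x}{A^*y}$. Let $x\in\dom(h)$ and $p\in\partial f(Ax+b)$, so that
\begin{equation*}
    f(Ax+b)+\inner{p}{w-(Ax+b)}\leq f(w),\quad w\in W.
\end{equation*}
For arbitrary $y\in V$ we choose $w=Ay+b$. Then $w-(Ax+b)=A(y-x)$ and $\inner{p}{A(y-x)}=\inner{A^*p}{y-x}$, which gives
\begin{equation*}
    h(x)+\inner{A^*p}{y-x}\leq h(y),\quad y\in V.
\end{equation*}
Hence $A^*p\in\partial h(x)$, and this proves the inclusion.

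For the strong rule we must show that every $g\in\partial h(x)$ has the form $A^*p$ with $p\in\partial f(Ax+b)$. Reading the stated hypothesis literally (interior of $\dom(h)$) is not enough in general: for instance, $A=0$ makes $h$ constant while $\partial f(b)$ may be empty. I would therefore work under the usual qualification that $Ax_0+b\in\dom(f)^\circ$ for some $x_0$, restricting to the finite-dimensional setting as elsewhere in these notes. The plan is to rewrite $h$ as a sum and invoke the sum rule, which is the theorem stated just before this one.

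Define $F:V\times W\to\Rb$ by
\begin{equation*}
    F(y,w)=f(w)+\delta_{G}(y,w),\qquad G=\{(y,w)\;|\;w=Ay+b\}.
\end{equation*}
Then $h(y)=F(y,w)$ whenever $w=Ay+b$. Now take $g\in\partial h(x)$. One checks that $(g,0)\in\partial F(x,Ax+b)$: for $(y,w)\in G$ the subgradient inequality is exactly the one for $h$, and outside $G$ the right-hand side is $+\infty$.

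Next we apply the sum rule to $F=\tilde f+\delta_G$, where $\tilde f(y,w)=f(w)$. The interiority condition for $\tilde f$ holds at $(x_0,Ax_0+b)$, since $\dom(\tilde f)^\circ=V\times\dom(f)^\circ$, and this point lies in $G$. The rule therefore gives
\begin{equation*}
    (g,0)=(0,p)+(u,v),\qquad p\in\partial f(Ax+b),\quad (u,v)\in N_G(x,Ax+b).
\end{equation*}
The subdifferential of $\tilde f$ is $\{0\}\times\partial f$, which is why its first component vanishes.

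It remains to compute the normal cone of the affine set $G$. It is the orthogonal complement of the direction space $\{(y,Ay)\}$, namely
\begin{equation*}
    N_G=\{(-A^*v,\,v)\;|\;v\in W\}.
\end{equation*}
The second components give $v=-p$, and then the first components give $g=u=A^*p$, as required.

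The main obstacle is the sum rule itself: its second part was stated without proof and rests on a Hahn--Banach separation argument. If I wanted to avoid relying on it, I would instead separate $\epi(f)$ from a suitable convex set built from the affine image $\{(Ay+b,\,h(x)+\inner{g}{y-x})\}$, following the same pattern as the proof of \cref{convexity:thm:nonempty_subgrad}. In that argument the interiority assumption is what forces the vertical component of the separating functional to be nonzero, exactly as it did there.
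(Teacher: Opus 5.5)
The notes list this rule without proof, so there is no argument in the paper to compare against. Judged on its own, your proposal is correct.

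\textbf{The hypothesis.} Your objection to the stated hypothesis is justified. Take $W=\R$, $f(w)=-\sqrt{w}$ on $[0,\infty)$ and $f=+\infty$ elsewhere, $b=0$ and $A=0$. Then $\dom(h)=V$ and $\partial h(x)=\{0\}$ for every $x$, while $A^*(\partial f(b))=\emptyset$. So the claimed equality fails, even at interior points of $\dom(h)$. Replacing the assumption by $Ax_0+b\in\dom(f)^\circ$ for some $x_0$ is the standard fix (Rockafellar's version even allows the relative interior). Under it, your argument gives equality at every $x\in\dom(h)$.

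\textbf{The argument.} The weak inclusion is the expected direct computation. In the strong part every step checks out:
\begin{itemize}
\item $(g,0)\in\partial F(x,Ax+b)$.
\item The qualification of the sum rule holds at $(x_0,Ax_0+b)$, which lies in $(V\times\dom(f)^\circ)\cap G$.
\item $\partial\tilde f(x,Ax+b)=\{0\}\times\partial f(Ax+b)$. Here you tacitly use $Ax+b\in\dom(f)$, which holds because $x\in\dom(h)$, to test with $w'=Ax+b$ and kill the first component.
\item $N_G(x,Ax+b)=\{(-A^*v,v)\;|\;v\in W\}$, which forces $g=A^*p$.
\end{itemize}

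\textbf{Comparison of routes.} The notes hint that the sum rule itself rests on a Hahn--Banach separation involving the epigraph, as in the proof that subdifferentials are nonempty on the interior of the domain. Your lifting to $V\times W$ puts all of the separation work into the sum rule and turns the affine rule into a normal-cone computation. The price is that your argument is only as complete as the equality part of the sum rule, which the notes also leave unproved. The direct separation of $\epi(f)$ from the affine set that you sketch at the end would be self-contained.
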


The derivative of a composition of differentiable functions is computed by using the chain rule, \ie, the derivative of the function $h(x) = g(f(x))$ is given by $D h(x) = Dg(f(x))Df(x)$.
This formula can be extended to the subdifferential calculus:
\begin{theorem}
    Let $f:V\rightarrow \R$ be a convex function, $g:\R\rightarrow \R$ be a non-decreasing convex function and $h=g\circ f$.
    Further, let $x\in V$ and suppose that $g$ is differentiable at the point $f(x)$.
    Then,
    \[
        \partial h(x)= g'(f(x))\partial f(x).
    \]
\end{theorem}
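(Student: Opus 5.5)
The plan is to prove the two inclusions separately, using one-sided directional derivatives of $f$ for the hard direction. Throughout, set $\alpha \coloneqq g'(f(x))$. Since $g$ is non-decreasing and differentiable at $f(x)$, we have $\alpha\geq 0$.

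\emph{Inclusion $g'(f(x))\partial f(x)\subset\partial h(x)$.} Let $s\in\partial f(x)$. By the first-order characterization of convexity (\cref{convexity:thm:first_order}, applied to the one-dimensional convex function $g$, which is differentiable at $f(x)$), we have $g(t)\geq g(f(x))+\alpha(t-f(x))$ for all $t\in\R$. Put $t=f(y)$. Since $\alpha\geq 0$ and $f(y)-f(x)\geq\inner{s}{y-x}$, we get
\[
h(y)\geq h(x)+\alpha\big(f(y)-f(x)\big)\geq h(x)+\inner{\alpha s}{y-x},
\]
so $\alpha s\in\partial h(x)$. The monotonicity of $g$ is used exactly here: it gives $\alpha\ge 0$, which lets the subgradient inequality for $f$ be multiplied through.

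\emph{Preparation: directional derivatives.} For $v\in V$ and $t>0$ let $q_v(t)=\frac{f(x+tv)-f(x)}{t}$. Convexity of $f$ makes $q_v$ non-decreasing in $t$; this is the usual three-slope argument, obtained by writing $x+sv$ as a convex combination of $x$ and $x+tv$ for $0<s<t$. Convexity also gives the lower bound $q_v(t)\geq f(x)-f(x-v)$, which is finite because $f$ is real-valued. Hence the limit $f'(x;v)=\lim_{t\downarrow0}q_v(t)$ exists in $\R$. Moreover, monotonicity gives $f'(x;y-x)\leq q_{y-x}(1)=f(y)-f(x)$. It also follows that $f(x+tv)-f(x)=t\,q_v(t)\to0$ as $t\downarrow0$.

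\emph{Inclusion $\partial h(x)\subset g'(f(x))\partial f(x)$.} Let $p\in\partial h(x)$.
\begin{enumerate}
\item Compute $h'(x;v)$. Write $g(f(x)+\delta)=g(f(x))+\alpha\delta+r(\delta)$, where $r(\delta)/\delta\to0$ and $r(0)=0$. With $\delta_t=t\,q_v(t)\to0$ this gives
\[
\frac{h(x+tv)-h(x)}{t}=\alpha q_v(t)+\frac{r(\delta_t)}{t}.
\]
The last term tends to $0$. When $\delta_t\neq0$ it equals $\frac{r(\delta_t)}{\delta_t}q_v(t)$ with $q_v(t)$ bounded, and when $\delta_t=0$ it vanishes. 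Hence $h'(x;v)=\alpha f'(x;v)$.
\item Use the subgradient. Dividing the subgradient inequality for $h$ at $y=x+tv$ by $t$ and letting $t\downarrow0$ gives $\inner{p}{v}\leq\alpha f'(x;v)$ for all $v$.
\item If $\alpha=0$, then $\inner{p}{v}\le0$ for both $v$ and $-v$, so $p=0=\alpha\cdot s$ for any $s\in\partial f(x)$. This set is nonempty by \cref{convexity:thm:nonempty_subgrad}, since $\dom f=V$.
\item If $\alpha>0$, set $s=p/\alpha$. Then $\inner{s}{y-x}\leq f'(x;y-x)\leq f(y)-f(x)$ for every $y$, so $s\in\partial f(x)$ and $p=\alpha s$.
\end{enumerate}

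I expect the main obstacle to be step 1, the one-sided chain rule. It needs care because $g$ is only differentiable at the single point $f(x)$, and $\delta_t$ may vanish or change sign. Handling the remainder $r$ as above, together with the boundedness of $q_v$ near $0$, should resolve this.
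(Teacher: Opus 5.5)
The paper lists this chain rule without proof, so there is no in-paper argument to compare against. Your proof is correct and complete.

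\textbf{Comparison with the standard argument.} The usual textbook proof (e.g.\ in \cite{beck2017first}) also starts from the one-sided chain rule $h'(x;d)=g'(f(x))\,f'(x;d)$. It then invokes the max formula $f'(x;d)=\max_{s\in\partial f(x)}\inner{s}{d}$ for both $f$ and $h$. It concludes because a closed convex set is determined by its support function. Your route is more elementary in two ways:
\begin{itemize}
\item You get $g'(f(x))\partial f(x)\subset\partial h(x)$ by chaining the two subgradient inequalities directly; this is exactly where $\alpha\ge 0$ enters.
\item For the reverse inclusion you need only the easy half of the max formula: any $s$ with $\inner{s}{\cdot}\le f'(x;\cdot)$ is a subgradient, via $f'(x;y-x)\le f(y)-f(x)$.
\end{itemize}
Your treatment of the remainder, $r(\delta_t)/t$, through the boundedness of $q_v$ on $(0,1]$, is right. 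It covers both $\delta_t=0$ and sign changes of $\delta_t$.

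\textbf{The case $\alpha=0$.} The only genuine existence input is $\partial f(x)\neq\emptyset$ in this case, and your case split shows it cannot be avoided. For constant $g$ one has $\partial h(x)=\{0\}$, which equals $0\cdot\partial f(x)$ only if $\partial f(x)\neq\emptyset$. \cref{convexity:thm:nonempty_subgrad} supplies this since $\dom f=V$. This is the one place where finite dimensionality, or continuity of $f$, is really used.

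\textbf{A small citation point.} \cref{convexity:thm:first_order} is stated for continuously differentiable functions, whereas $g$ is only differentiable at $f(x)$. The inequality $g(t)\ge g(f(x))+\alpha(t-f(x))$ you need still holds. The $\Rightarrow$ part of that proof uses differentiability only at the base point; equivalently, it follows from your own monotone-slope argument applied to $g$.
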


\begin{theorem}
    Let $f:V\to(-\infty, \infty]$ be a proper convex function. Then
    \[
        x^\ast\in\arg\min_{x\in V} f(x)
    \]
    if and only if $0\in\partial f(x^\ast)$.
\end{theorem}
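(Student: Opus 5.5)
The plan is to argue directly from the definition of the subdifferential; this is Fermat's rule for convex functions and needs no earlier machinery. The key observation is that the subgradient inequality at the point $x^\ast$ with the particular choice $g=0$ is exactly the statement that $x^\ast$ is a global minimizer. So both implications should come out of unfolding definitions.

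For the direction ``$\Leftarrow$'', assume $0\in\partial f(x^\ast)$. By definition of the subdifferential this means
\[
    f(x^\ast) + \inner{0}{y-x^\ast} \leq f(y),\quad \forall y\in V,
\]
that is, $f(x^\ast)\leq f(y)$ for all $y\in V$. Hence $x^\ast\in\arg\min_{x\in V} f(x)$. Note that the definition of the subdifferential already forces $x^\ast\in\dom(f)$, since the subdifferential is empty wherever $f=\infty$, so no separate argument is needed there.

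For the direction ``$\Rightarrow$'', assume $x^\ast$ is a minimizer, so $f(x^\ast)\leq f(y)$ for every $y\in V$. Rewriting this as $f(x^\ast)+\inner{0}{y-x^\ast}\leq f(y)$ shows directly that $0\in\partial f(x^\ast)$.

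The only step needing care is to make sure $f(x^\ast)$ is finite, so that the subgradient inequality is not a meaningless $\infty\leq\infty$ comparison. Here properness is used: there is some point $\tilde x$ with $f(\tilde x)<\infty$. Minimality then gives $f(x^\ast)\leq f(\tilde x)<\infty$, and $f>-\infty$ everywhere. Thus $x^\ast\in\dom(f)$ and the inequalities are between extended reals with a finite left-hand side.

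I do not expect any real obstacle. In particular, convexity is not actually needed for the equivalence itself; its role is contextual, ensuring that the subdifferential is a meaningful object, e.g.\ nonempty on $\dom(f)^\circ$ by \cref{convexity:thm:nonempty_subgrad}.
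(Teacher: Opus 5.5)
Your proof is correct and matches the paper's, which likewise just reads the subgradient inequality with $g=0$, namely $f(x^\ast)+\inner{0}{x-x^\ast}\leq f(x)$ for all $x$, in both directions. Your added remarks are accurate: properness is what makes $f(x^\ast)$ finite, and convexity plays no role in the equivalence itself.
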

\begin{proof}
    The proof directly follows from the subgradient inequality
    \[
        f(x)\geq f(x^\ast)+\inner{0}{x-x^\ast}
    \]
    for any $x\in\dom f$.
\end{proof}

\begin{theorem}
    Let $f:V\rightarrow(-\infty,\infty]$ be a proper and convex function and $C\subseteq V$ be a convex set for which $\dom(f)^\circ\cap C^\circ \neq\emptyset$.
    Then $x^\ast\in C$ is a solution of the constrained optimization problem if and only if
    there exists $g\in \partial f(x^\ast)$ such that
    \[
        \inner{g}{x-x^\ast}\geq 0
    \]
    for all $x\in C$.
\end{theorem}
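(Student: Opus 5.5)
The plan is to rewrite the constrained problem as an unconstrained one and apply the optimality condition together with the sum rule. Using the indicator function, $x^*\in C$ solves $\min_{x\in C} f(x)$ if and only if $x^*$ minimizes $F \coloneqq f+\delta_C$ over all of $V$. The function $F$ is proper: $\dom(F)=\dom(f)\cap C$ contains $\dom(f)^\circ\cap C^\circ\neq\emptyset$. It is also convex, since $C$ is convex and hence $\delta_C$ is convex, and sums of convex functions are convex. The preceding theorem (minimizer iff $0$ is a subgradient) therefore applies and gives
\[
    x^* \text{ solves the constrained problem} \iff 0\in\partial(f+\delta_C)(x^*).
\]

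Next I split the subdifferential of the sum. The exact sum rule needs a point $x_0\in\dom(f)^\circ\cap\dom(\delta_C)$. Since $\dom(\delta_C)=C$, any point of $\dom(f)^\circ\cap C^\circ$ works, and the hypothesis provides one. So $\partial(f+\delta_C)(x^*)=\partial f(x^*)+\partial\delta_C(x^*)$. The example computing the subdifferential of the indicator function gives $\partial\delta_C(x^*)=N_C(x^*)=\{y: \inner{y}{x-x^*}\leq 0\ \forall x\in C\}$. Hence $0\in\partial(f+\delta_C)(x^*)$ holds exactly when there are $g\in\partial f(x^*)$ and $y\in N_C(x^*)$ with $g+y=0$. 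In that case $-g\in N_C(x^*)$, which says $\inner{g}{x-x^*}\geq 0$ for all $x\in C$. This is precisely the claimed condition, and the equivalence holds in both directions.

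The \emph{if} direction can also be checked directly, without the sum rule, and I will write it that way to make the argument more robust. Take $g\in\partial f(x^*)$ with $\inner{g}{x-x^*}\geq 0$ on $C$. Then for every $x\in C$,
\[
    f(x)\geq f(x^*)+\inner{g}{x-x^*}\geq f(x^*).
\]
So the constraint qualification is only needed for the \emph{only if} direction. The main obstacle is that step: it relies on the equality case of the subdifferential sum rule, which the notes state without proof. I will invoke it as stated and check that its hypothesis ($x_0\in\dom(f)^\circ\cap\dom(g)$ with $g=\delta_C$) is implied by the assumption $\dom(f)^\circ\cap C^\circ\neq\emptyset$, which is in fact stronger than needed.
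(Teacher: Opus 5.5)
The notes state this theorem without proof, so there is nothing to compare against. Your argument is correct and is exactly the derivation the preceding results support: rewrite the problem with $\delta_C$, apply the optimality condition $0\in\partial(f+\delta_C)(x^*)$, split via the equality case of the sum rule (its hypothesis $\dom(f)^\circ\cap C\neq\emptyset$ is indeed implied by the assumption), and read off $-g\in N_C(x^*)$. Your direct check of the \emph{if} direction, and your remark that the constraint qualification is only needed for \emph{only if}, are both accurate; that direction is where the real work sits, in the unproven equality case of the sum rule.
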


\chapter{Projected subgradient descent}
Recall that we are interested in solving the problem
\begin{equation}
    \min_{x\in C} f(x).
\end{equation}
where $f:C\rightarrow\R$ is convex.
Recall also, that we may always write a constraint problem as above as an unconstraint one via\footnote{potentially extending $f$ arbitrarily outside of $\R^d$}
\begin{equation}
    \min_{x\in \R^d} f(x) + \delta_C(x).
\end{equation}
Let us recall the most basic numerical optimization method for such problems in the case $C=\R^d$: gradient (or \emph{steepest}) descent. We choose an initial value $x_0$ and then perform for $k=0,1,\dots$ the updates
\begin{equation}
    x_{k+1} = x_k-\tau_k \nabla f(x_k)
\end{equation}
where $\tau_k>0$ denotes the step size. As a first \emph{new} algorithm, we consider the subgradient version of steepest descent, for $k=0,1,\dots$
\begin{equation}
    \begin{cases}
        g_k &\in \partial f(x_k)\\
        x_{k+1} &= x_k-\tau_k g_k.
    \end{cases}
\end{equation}
Whenever $C\neq\R^d$ we may quickly run into a problem: It might happen that $x_{k+1}\notin C$ in which case the method is not well defined as the subgradient of $\delta_C(x)$ is empty for $x\notin C$. Therefore, we somehow need to make sure, that each new iterate stays in $C$ for which we will use \emph{projections}.

\begin{theorem}[Projection theorem]
    Let $V$ be an inner product space and $C\subset V$ closed, convex, and non-empty. Then the following function is well-defined
    \begin{equation}
        \begin{aligned}
            \proj_C:\R^d&\rightarrow C\\
            x&\mapsto\arg\min_{y\in C}\|x-y\|.
        \end{aligned}
    \end{equation}
\end{theorem}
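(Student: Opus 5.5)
We have to show that for every $x$ the problem $\min_{y\in C}\|x-y\|$ has a minimizer and that this minimizer is unique. The statement maps $\R^d\rightarrow C$, so we work in $V=\R^d$ with the norm induced by the inner product, which makes \cref{preliminaries:thm:direct_method} available.

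\textbf{Existence.} Fix $x$ and define $F(y)=\|x-y\|^2+\delta_C(y)$. Since $C\neq\emptyset$, $F$ is proper. The indicator $\delta_C$ is lsc because $C$ is closed (the earlier exercise). The map $y\mapsto\|x-y\|^2$ is continuous, so $F$ is lsc as a sum of lsc functions bounded below. $F$ is coercive because $\|x-y\|\geq\|y\|-\|x\|\rightarrow\infty$. Hence \cref{preliminaries:thm:direct_method} gives a minimizer $\hat y\in C$. Note that $F(\hat y)<\infty$ forces $\hat y\in C$. Minimizing $\|x-y\|^2$ and $\|x-y\|$ over $C$ give the same minimizers, since squaring is increasing on $[0,\infty)$.

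\textbf{Uniqueness.} This is the only place convexity of $C$ and the inner-product structure enter, and it is the main step. Suppose $y_1,y_2\in C$ are both minimizers with value $d=\|x-y_i\|$. Then $m=\frac12(y_1+y_2)\in C$ by convexity. The parallelogram identity, which follows by expanding the inner products, gives
\begin{equation}
\|x-m\|^2=\tfrac12\|x-y_1\|^2+\tfrac12\|x-y_2\|^2-\tfrac14\|y_1-y_2\|^2=d^2-\tfrac14\|y_1-y_2\|^2 .
\end{equation}
If $y_1\neq y_2$, this is strictly below $d^2$, contradicting minimality. Hence $y_1=y_2$.

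Equivalently, $y\mapsto\|x-y\|^2$ is $2$-strongly convex, hence strictly convex, and a strictly convex function has at most one minimizer on a convex set. This is where a general norm would fail: for $\ell^1$ or $\ell^\infty$ the minimizer need not be unique. So the argument must use the Hilbert-norm identity rather than only the triangle inequality.

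Together, existence and uniqueness show that $\arg\min$ is a singleton for every $x$, so $\proj_C$ is well defined.
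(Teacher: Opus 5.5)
Your proof is correct and follows the same route as the paper. Existence comes from the direct method: the paper runs the minimizing-sequence, Bolzano--Weierstrass and closedness argument by hand, where you apply \cref{preliminaries:thm:direct_method} to $\|x-\cdot\|^2+\delta_C$. Uniqueness comes from showing that the midpoint of two distinct minimizers does strictly better; your parallelogram identity is slightly cleaner than the paper's Cauchy--Schwarz/Young bound, because it gives the strict decrease $\frac{1}{4}\|y_1-y_2\|^2$ directly without the equality-case (colinearity) analysis.
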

\begin{proof}
    We need to show that the minimum is uniquely attained.
    Let $(y_n)_n$ be a minimizing sequence, \ie,
    \begin{equation}
        \lim_n \|y_n-x\| = \inf_{y\in C}\|y-x\|.
    \end{equation}
    Then $(y_n)_n$ is of course bounded. Therefore, there exists a convergent subsequence $y_{n(k)}\rightarrow \hat y$. By closedness of $C$ it follows $\hat y \in C$. Moreover, by continuity of the norm we have
    \begin{equation}
        \|\hat y-x\| = \lim_n \|y_n-x\| = \inf_{y\in C}\|y-x\|.
    \end{equation}
    This yields existence of the minimizer. Regarding uniqueness, let us assume that $y_1\neq y_2$ were both minimizers. Then, using Young's inequality, we find 
    \begin{equation}
        \begin{aligned}
            \| 1/2 (y_1+y_2) - x\|^2 
            =& \frac{1}{4}\|y_1-x\|^2 + \frac{1}{4}\|y_2-x\|^2 + \frac{1}{2}\inner{y_1-x}{y_2-x}\\
            \leq& \frac{1}{4}\|y_1-x\|^2 + \frac{1}{4}\|y_2-x\|^2 + \frac{1}{4}\|y_1-x\|^2 + \frac{1}{4}\|y_2-x\|^2\\
            \leq& \frac{1}{2}\|y_1-x\|^2 + \frac{1}{2}\|y_2-x\|^2\\
            =& \min_{y\in C}\|y-x\|^2.
        \end{aligned}
    \end{equation}
    Note that the above inequality is strict, whenever $y_1-x$ and $y_2-x$ are not colinear and pointed in the same direction. 
    The strict inequality would imply a contradiction to optimality of $y_1$ and $y_2$ as in this case $1/2(y_1+y_2)\in C$ would yield an even smaller value. So it follows that there exists $\lambda\geq 0$ such that
    \[
        y_2-x = \lambda (y_1-x)
    \]
    But since $\|y_1-x\|=\|y_2-x\|$ it follows that $\lambda=1$, \ie, $y_1=y_2$ concluding the proof.
\end{proof}
Equipped with the projection we have a remedy for the problem of \emph{escaping} $C$: the projected subgradient method, for $k=0,1,\dots$
\begin{equation}\label{subgradient:eq:projectedSG}
    \begin{cases}
        g_k &\in \partial f(x_k)\\
        x_{k+1} &= \proj_C( x_k-\tau_k g_k) .
    \end{cases}
\end{equation}
In order to prove convergence of the method we require some auxiliary results.

\begin{lemma}
    Let $C$ be convex. Then it holds
    \begin{equation}
        \inner{x-\proj_C(x)}{y-\proj_C(x)}\leq 0,\quad y\in C.
    \end{equation}
    In particular, if $C$ is a subspace\footnote{this means $C$ fulfills the conditions of a vector space, \cf~\cref{preliminaries:defn:vectorspace}}, it follows
    \begin{equation}
        \inner{x-\proj_C(x)}{y}= 0,\quad y\in C.
    \end{equation}
\end{lemma}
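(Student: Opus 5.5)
The plan is a variational argument. The projection $p\coloneqq\proj_C(x)$ minimizes $\|x-\cdot\|^2$ over $C$. We perturb $p$ toward an arbitrary $y\in C$ along the segment joining them and compare values. We assume, as in the projection theorem, that $C$ is closed and nonempty, so that $\proj_C$ is well defined.

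\textbf{Step 1.} Fix $y\in C$ and $t\in(0,1)$. By convexity of $C$, the point $p_t\coloneqq p+t(y-p)=ty+(1-t)p$ lies in $C$. Minimality of $p$ gives $\|x-p\|^2\leq\|x-p_t\|^2$.

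\textbf{Step 2.} Expand the right-hand side using bilinearity of the inner product:
\[
\|x-p-t(y-p)\|^2=\|x-p\|^2-2t\inner{x-p}{y-p}+t^2\|y-p\|^2 .
\]
Combining this with Step 1 and cancelling $\|x-p\|^2$ yields
\[
2t\inner{x-p}{y-p}\leq t^2\|y-p\|^2 .
\]
Dividing by $t>0$ and letting $t\to 0$ gives $\inner{x-p}{y-p}\leq 0$, which is the first claim. This limit is the only slightly delicate point, and it is elementary.

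\textbf{Step 3 (subspace case).} Let $C$ be a subspace and $y\in C$. Then $p+y$ and $p-y$ both lie in $C$. Applying the first inequality with $p\pm y$ in place of $y$ gives $\pm\inner{x-p}{y}\leq 0$, hence $\inner{x-p}{y}=0$.

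I do not expect any step to be a real obstacle. The only care needed is to use $t\in(0,1)$ so that $p_t\in C$ by convexity, and to divide by $t$ before passing to the limit.
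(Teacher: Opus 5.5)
Your proof is correct and follows essentially the same route as the paper: compare $\|x-\proj_C(x)\|^2$ with the squared distance from $x$ to $\proj_C(x)+t(y-\proj_C(x))\in C$, expand, divide by $t>0$ and let $t\to 0$. Your treatment of the subspace case by applying the inequality to $\proj_C(x)\pm y\in C$ is also correct; the paper leaves that part as an exercise.
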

\begin{proof}
    We have for any $y\in C$
    \begin{equation}
        \begin{aligned}
            \|\proj_C(x)-x\|^2\leq& \|y-x\|^2\\
            =& \|y - \proj_C(x) + \proj_C(x)-x\|^2\\
            =& \|y-\proj_C(x)\|^2 + 2 \inner{y-\proj_C(x)}{\proj_C(x)-x} + \|\proj_C(x)-x\|^2.
        \end{aligned}
    \end{equation}
    As a consequence we have
    \begin{equation}
        \begin{aligned}
            0\leq \|y-\proj_C(x)\|^2 + 2 \inner{y-\proj_C(x)}{\proj_C(x)-x}
        \end{aligned}
    \end{equation}
    Now let $z\in C$ arbitrary and plug in $y=\proj_C(x) + t(z-\proj_C(x))$ for $t\in (0,1)$ above. Then we find
    \begin{equation}
        \begin{aligned}
            0\leq t^2 \|z-\proj_C(x)\|^2 + 2 t\inner{z-\proj_C(x)}{\proj_C(x)-x}.
        \end{aligned}
    \end{equation}
    Dividing by $t>0$ we find
    \begin{equation}
        \begin{aligned}
            0\leq t \|z-\proj_C(x)\|^2 + 2 \inner{z-\proj_C(x)}{\proj_C(x)-x}.
        \end{aligned}
    \end{equation}
    Letting $t\rightarrow 0$ yields the desired result. The equality in the case of a subspace is left as an exercise.
\end{proof}

\begin{prop}
    The projection onto a convex set is nonexpansive, \ie, Lipschitz continuous with Lipschitz constant one, that is
    \begin{equation}
        \|\proj_C(x_1)-\proj_C(x_2)\|\leq \|x_1-x_2\|.
    \end{equation}
\end{prop}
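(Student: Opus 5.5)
We use the variational characterization of the projection from the preceding lemma. We assume, as in the projection theorem, that $C$ is closed, convex and non-empty, so that $\proj_C$ is well defined. Throughout, write $p_1 = \proj_C(x_1)$ and $p_2=\proj_C(x_2)$. Both lie in $C$.

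The first step is to apply the lemma twice. Applying it at $x_1$ with the test point $y=p_2\in C$ gives
\[
\inner{x_1-p_1}{p_2-p_1}\leq 0 .
\]
Applying it at $x_2$ with the test point $y=p_1\in C$ gives
\[
\inner{x_2-p_2}{p_1-p_2}\leq 0 .
\]
Adding the two inequalities and rearranging the signs yields
\[
\inner{(x_1-x_2)-(p_1-p_2)}{p_1-p_2}\geq 0,
\]
that is,
\[
\|p_1-p_2\|^2\leq \inner{x_1-x_2}{p_1-p_2}.
\]
This is the firm nonexpansiveness inequality, and it is the only nontrivial step; everything rests on choosing the test points in the lemma correctly.

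Finally, we bound the right-hand side using the Cauchy--Schwarz inequality (Corollary in the preliminaries):
\[
\|p_1-p_2\|^2\leq \|x_1-x_2\|\,\|p_1-p_2\|.
\]
If $p_1=p_2$, the claim holds trivially. Otherwise we divide by $\|p_1-p_2\|>0$ and obtain $\|p_1-p_2\|\leq\|x_1-x_2\|$, which is the assertion.

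We do not expect a real obstacle here. The only point requiring care is the sign bookkeeping when adding the two inequalities.
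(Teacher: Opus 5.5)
Your proof is correct and follows essentially the same route as the paper. Both arguments apply the preceding lemma at $x_1$ with test point $\proj_C(x_2)$ and at $x_2$ with test point $\proj_C(x_1)$ to obtain $\|p_1-p_2\|^2\leq\inner{x_1-x_2}{p_1-p_2}$, then finish with Cauchy--Schwarz and a division by $\|p_1-p_2\|$; the paper merely inserts the two inequalities through a chain of inner-product splittings instead of adding them directly.
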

\begin{proof}
    Assume without loss of generality $\|\proj_C(x_2)-\proj_C(x_1)\|^2 \neq 0$ as otherwise there is nothing to prove. We find
    \begin{equation}
        \begin{aligned}
            \|\proj_C(x_2)-\proj_C(x_1)\|^2 
            =& \inner{\proj_C(x_2)-\proj_C(x_1)}{\proj_C(x_2)-x_1+x_1-\proj_C(x_1)}\\
            =& \inner{\proj_C(x_2)-\proj_C(x_1)}{\proj_C(x_2)-x_1} \\
            &+ \inner{\proj_C(x_2)-\proj_C(x_1)}{x_1-\proj_C(x_1)}\\
            \leq& \inner{\proj_C(x_2)-\proj_C(x_1)}{\proj_C(x_2)-x_1}\\
            =& \inner{\proj_C(x_2)-\proj_C(x_1)}{\proj_C(x_2)-x_2+x_2-x_1}\\
            \leq& \inner{\proj_C(x_1)-\proj_C(x_2)}{x_2-\proj_C(x_2)}\\
            &+ \inner{\proj_C(x_2)-\proj_C(x_1)}{x_2-x_1}\\
            \leq& \inner{\proj_C(x_2)-\proj_C(x_1)}{x_2-x_1}\\
            \leq& \|\proj_C(x_2)-\proj_C(x_1)\|\|x_2-x_1\|
        \end{aligned}
    \end{equation}
    Dividing by $\|\proj_C(x_2)-\proj_C(x_1)\|\neq 0$ yields the result.
\end{proof}

\begin{lemma}[Fundamental inequality of the projected subgradient method]\label{lemma:subgradient:fundamental}
    Let $x_k$ denote the iterates of the projected subgradient method~\eqref{subgradient:eq:projectedSG} and $g_k\in\partial f(x_k)$. Then it holds true that 
    \begin{equation}
        \|x_{k+1}-x^*\|^2 
            \leq \|x_k- x^*\|^2 - 2\tau_k (f(x_k)-f(x^*)) + \tau_k^2\|g_k\|^2
    \end{equation}
\end{lemma}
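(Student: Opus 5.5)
The plan is to compare $x_{k+1}$ and $x^*$ after passing both through the projection, then expand a square and apply the subgradient inequality. Throughout, $x^*\in C$ is a minimizer of $f$ over $C$; any point of $C$ would do. Since $x^*\in C$, the minimum in the projection theorem is attained at $x^*$ itself, so $\proj_C(x^*)=x^*$.

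First, by the nonexpansiveness of the projection (the proposition just proved),
\begin{equation*}
    \|x_{k+1}-x^*\|^2 = \|\proj_C(x_k-\tau_k g_k)-\proj_C(x^*)\|^2 \leq \|x_k-\tau_k g_k - x^*\|^2 .
\end{equation*}

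Second, expanding the square on the right gives
\begin{equation*}
    \|x_k-\tau_k g_k - x^*\|^2 = \|x_k-x^*\|^2 - 2\tau_k\inner{g_k}{x_k-x^*} + \tau_k^2\|g_k\|^2 .
\end{equation*}
This expansion is routine, using bilinearity and symmetry of the inner product.

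Third, since $g_k\in\partial f(x_k)$, the subgradient inequality with $y=x^*$ reads $f(x_k)+\inner{g_k}{x^*-x_k}\leq f(x^*)$. Equivalently, $\inner{g_k}{x_k-x^*}\geq f(x_k)-f(x^*)$. Because $\tau_k>0$, this gives $-2\tau_k\inner{g_k}{x_k-x^*}\leq -2\tau_k(f(x_k)-f(x^*))$. Substituting this into the expansion and chaining with the first step yields the claimed inequality.

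There is no real obstacle here. The only points needing care are the following:
\begin{itemize}
    \item $\proj_C(x^*)=x^*$, which needs $x^*\in C$ and $C$ closed, convex and nonempty, so that $\proj_C$ is well defined.
    \item The subgradient inequality must be used at $x_k$. This is legitimate because the iterates lie in $C$ for $k\geq 1$, and we assume $x_0\in C$ as well.
    \item The sign of $\tau_k$ must be positive, so that multiplying the subgradient inequality by $-2\tau_k$ flips it in the right direction.
\end{itemize}
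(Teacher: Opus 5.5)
Your proof is correct and follows the paper's argument step for step. Both use nonexpansiveness of $\proj_C$ together with $\proj_C(x^*)=x^*$, then expand the square, then apply the subgradient inequality at $x_k$ with $y=x^*$; your remarks on $x^*\in C$ and $\tau_k>0$ simply make explicit assumptions the paper uses silently.
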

\begin{proof}
    By definition of the subgradient we have $f(x_k)+ \inner{g_k}{x^*-x_k}\leq f(x^*)$. It follows
    \begin{equation}\label{eq:subgradient:fundamental}
        \begin{aligned}
            \|x_{k+1}-x^*\|^2 
            =& \|\proj_C(x_k-\tau_kg_k) - \proj_C(x^*)\|^2\\
            \leq& \|x_k-\tau_k g_k - x^*\|^2\\
            \leq& \|x_k- x^*\|^2 - 2\tau_k \inner{g_k}{x_k-x^*} + \tau_k^2\|g_k\|^2\\
            \leq& \|x_k- x^*\|^2 - 2\tau_k (f(x_k)-f(x^*)) + \tau_k^2\|g_k\|^2
        \end{aligned}
    \end{equation}
\end{proof}
A natural thing to do is now to use the previous result to deduce an optimal step size. More precisely, minimizing the right-hand side of~\eqref{eq:subgradient:fundamental} with respect to $\tau_k$ we find that the optimal step-size is
\begin{equation}
    \tau_k = \frac{f(x_k)-f(x^*)}{\|g_k\|^2}.
\end{equation}
whenever $g_k\neq 0$, and arbitrary (\eg, $\tau_k=1$) whenever $g_k=0$ as in the latter case the iteration stagnates anyway.
This step size is referred to as Polyak's step size rule. We obtain the following result.

\begin{theorem}
    Assume that $f:C\rightarrow \R$ is Lipschitz continuous with Lipschitz constant $L$. With Polyak's step size rule 
    \begin{equation}
    \tau_k = \frac{f(x_k)-f(x^*)}{\|g_k\|^2}
\end{equation}
    the projected subgradient method~\eqref{subgradient:eq:projectedSG} satisfies
    \begin{enumerate}
        \item $\|x_{k+1}-x^*\|^2\leq \|x_{k}-x^*\|$ with strict inequality whenever $g_k\neq 0$,
        \item $f(x_K)\rightarrow f(x^*)$ as $k\rightarrow\infty$, and
        \item $f_{\mathrm{best}}^N-f(x^*)\leq \frac{L\|x_1-x^*\|}{\sqrt{N}}$ where $f_{\mathrm{best}}^N = \min_{k=1,\dots,N}f(x_k)$.
    \end{enumerate}
\end{theorem}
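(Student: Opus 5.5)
The plan is to substitute Polyak's step size into the fundamental inequality of \cref{lemma:subgradient:fundamental} and then telescope. With $\tau_k = (f(x_k)-f(x^*))/\|g_k\|^2$ and $g_k\neq 0$, the right-hand side of the lemma becomes
\begin{equation*}
    \|x_k-x^*\|^2 - 2\frac{(f(x_k)-f(x^*))^2}{\|g_k\|^2} + \frac{(f(x_k)-f(x^*))^2}{\|g_k\|^2}
    = \|x_k-x^*\|^2 - \frac{(f(x_k)-f(x^*))^2}{\|g_k\|^2}.
\end{equation*}

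For (i), note that $f(x_k)\geq f(x^*)$ since $x_k\in C$ and $x^*$ is a minimizer over $C$. The subtracted term is therefore nonnegative, which gives monotonicity (the statement presumably intends squared norms on both sides). If $g_k=0$, then $0\in\partial f(x_k)$, so $x_k$ is already a minimizer and the iteration stagnates. Strictness requires $f(x_k)>f(x^*)$ whenever $g_k\neq 0$. If instead $f(x_k)=f(x^*)$, then $\tau_k=0$ and $x_{k+1}=\proj_C(x_k)=x_k$, so equality holds; I would record this degenerate case as a caveat rather than attempt to prove strictness there.

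For (ii) and (iii), I need a uniform bound on the subgradients: $\|g_k\|\leq L$. This is the step I expect to need the most care. For $x_k$ in the interior of $C$, take $y=x_k+t g_k/\|g_k\|$ in the subgradient inequality; Lipschitz continuity gives $t\|g_k\|\leq f(y)-f(x_k)\leq Lt$. On the boundary this argument fails, since subgradients of $f+\delta_C$ may be unbounded. I would therefore assume the subgradients are chosen from $\partial f$ of a Lipschitz extension of $f$ to $\R^d$, or simply assume $\|g_k\|\leq L$ as is standard.

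With this bound, each step gives $(f(x_k)-f(x^*))^2\leq L^2(\|x_k-x^*\|^2-\|x_{k+1}-x^*\|^2)$, and this also holds trivially when $g_k=0$. Summing over $k=1,\dots,N$ telescopes to
\begin{equation*}
    \sum_{k=1}^N (f(x_k)-f(x^*))^2 \leq L^2\|x_1-x^*\|^2.
\end{equation*}
The series therefore converges, its terms tend to zero, and (ii) follows. For (iii), bound the left-hand side from below by $N(f^N_{\mathrm{best}}-f(x^*))^2$ and take square roots.
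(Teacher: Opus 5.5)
Your proposal is correct and follows the paper's proof essentially line by line. You insert Polyak's step into the fundamental inequality, use $\|g_k\|\leq L$, telescope to $\sum_k (f(x_k)-f(x^*))^2\leq L^2\|x_1-x^*\|^2$, and read off (ii) and (iii). Both of your caveats are correct and sharpen points the paper glosses over: strictness in (i) fails when $x_k$ is already optimal but a nonzero $g_k$ is chosen, and the paper's exercise $\|g_k\|\leq L$ is false at boundary points of $C$ (normal-cone components), so taking subgradients from an $L$-Lipschitz convex extension of $f$ is the right fix.
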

\begin{proof}
    Without loss of generality, we may assume $f(x^*)=0$. Otherwise simply consider $\tilde{f}(x) \coloneq f(x)-f(x^*)$. By~\cref{lemma:subgradient:fundamental} and the step size rule we obtain
    \begin{equation}
        \begin{aligned}
            \|x_{k+1}-x^*\|^2 
            \leq& \|x_{k} - x^*\|^2 - 2\tau_k f(x_k) +\tau_k^2\|g_k\|^2\\
            =& \|x_{k} - x^*\|^2 - a_k\\
        \end{aligned}
    \end{equation}
    where $a_k=\frac{f(x_k)^2}{\|g_k\|^2}$ if $g_k\neq 0$ and zero else.
    By rearranging and summing over $k$ we find
    \begin{equation}
        \sum_{k=1}^{N-1} a_k \leq \|x_1-x^*\|^2 - \|x_N-x^*\|^2\leq \|x_1-x^*\|^2
    \end{equation}
    Lipschitz continuity of $f$ implies that $\|g_k\|\leq L$ (exercise!) so that we obtain
    \begin{equation}
        \sum_{k=1}^{N-1} f(x_k)^2\leq \sum_{k=1}^{N-1} L^2 a_k \leq L^2 \|x_1-x^*\|^2
    \end{equation}
    Since the above is true for any $N$ we may let $N\rightarrow \infty$. As a consequence we find $f(x_k)\rightarrow 0$. Moreover,
    \begin{equation}
        N (f_{\mathrm{best}}^N)^2\leq \sum_{k=1}^{N} f(x_k) \leq L^2\|x_1-x^*\|^2
    \end{equation}
    \ie, 
    \begin{equation}
        f_{\mathrm{best}}^N\leq \frac{L\|x_1-x^*\|}{\sqrt{N}}.
    \end{equation}
\end{proof}

\begin{cor}
    We require $N=\Oc(\frac{L^2\|x_1-x^*\|^2}{\epsilon^2})$ iterations in order to reach accuracy $f_{\mathrm{best}}^N - f(x^*)\leq \epsilon$.
\end{cor}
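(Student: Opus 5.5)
The corollary follows by inverting the rate in item (iii) of the preceding theorem on Polyak's step size. That theorem gives, for every $N$,
\[
    f_{\mathrm{best}}^N - f(x^*) \leq \frac{L\|x_1-x^*\|}{\sqrt{N}}.
\]
So it suffices to choose $N$ large enough that the right-hand side is at most $\epsilon$.

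Concretely, I impose $\frac{L\|x_1-x^*\|}{\sqrt{N}}\leq\epsilon$. Since both sides are nonnegative, squaring gives the equivalent condition $N\geq \frac{L^2\|x_1-x^*\|^2}{\epsilon^2}$. I therefore take
\[
    N = \left\lceil \frac{L^2\|x_1-x^*\|^2}{\epsilon^2}\right\rceil .
\]
For this $N$ the theorem yields $f_{\mathrm{best}}^N - f(x^*)\leq\epsilon$. Because $\lceil a\rceil\leq a+1$, we have $N = \Oc\big(\frac{L^2\|x_1-x^*\|^2}{\epsilon^2}\big)$, and the ceiling only affects lower-order terms.

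Monotonicity is automatic: $f_{\mathrm{best}}^N$ is a minimum over a growing index set, so it is nonincreasing in $N$. Hence the accuracy $\epsilon$ persists for every larger $N$ and the statement is consistent.

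There is no real obstacle. The only point to check is that the hypotheses of the preceding theorem are in force, namely that $f$ is $L$-Lipschitz, that a minimizer $x^*$ exists, and that Polyak's step size is used. We also need $L\|x_1-x^*\|>0$; in the degenerate case $L\|x_1-x^*\|=0$ a single iteration already suffices.
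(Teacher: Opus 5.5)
Your proof is correct and is exactly the intended argument: the paper gives no separate proof and treats the corollary as an immediate consequence of item (iii) of the Polyak step-size theorem, and solving $L\|x_1-x^*\|/\sqrt{N}\le\epsilon$ for $N$ is precisely that inversion. Your remarks on the ceiling, the monotonicity of $f_{\mathrm{best}}^N$, and the degenerate case $L\|x_1-x^*\|=0$ are harmless refinements.
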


\begin{rem}
    Recall that standard smooth gradient methods achieve complexity $\Oc(\frac{1}{\epsilon})$.
\end{rem}

While the convergent result using Polyak's step size rule is theoretically interesting and yields a best-case complexity result, in practice we will not be able to compute the step size as it requires knowledge of $f(x^*)$. In the following, we provide a more general convergence result.

\begin{theorem}
    Let $f$ be $L$-Lipschitz and assume the step sizes satsify
    \[
        \frac{\sum_{k=1}^n\tau_k}{\sum_{k=1}^n\tau_k^2}\rightarrow \infty
    \]
\end{theorem}
as $n\rightarrow\infty$. Then it holds $f^n_{\mathrm{best}}\rightarrow f(x^*)$.
\begin{proof}
    We assume again for simplicity $f(x^*)=0$.
    Using again $\|g_k\|\leq L$ and summing the fundamental inequality~\cref{lemma:subgradient:fundamental} over $k=1,\dots, N$ yields
    \begin{equation}
        2\sum_{k=1}^N\tau_k f(x_k) \leq \|x_1-x^*\|^2 + L^2 \sum_{k=1}^N \tau_k^2 
    \end{equation}
    Let us denote
    \[
        \sigma_n = \frac{\sum_{k=1}^n\tau_k}{\sum_{k=1}^n\tau_k^2}
    \]
    It follows
    \begin{equation}
        \sigma_n f^n_{\mathrm{best}} 
        \leq \frac{\sum_{k=1}^n\tau_kf^n_{\mathrm{best}}}{\sum_{k=1}^n\tau_k^2}
        \leq \frac{\sum_{k=1}^n\tau_kf(x_k)}{\sum_{k=1}^n\tau_k^2}
        \leq \frac{\|x_1-x^*\|^2}{2\sum_{k=1}^n\tau_k^2} + L^2/2.
    \end{equation}
    Since the right-hand side is bounded as $n\rightarrow\infty$, $\sigma_n\rightarrow\infty$ implies $f^n_{\mathrm{best}}\rightarrow 0$.
\end{proof}

Variants of the convergence results can moreover be obtained if we assume, \eg, that $C$ is compact and we refer to~\cite{beck2017first} for details.

The convergence can be improved and also \emph{transferred} to the iterates $(x_k)_k$ is we additionally assume that $f$ is strongly convex.

\begin{theorem}
    Assume that $f$ is $\mu$ strongly convex and $L$-Lipschitz over $C$\footnote{This implies that $C$ is bounded as every strongly convex function on an unbounded domain admits an unbeunded gradient!}. Then with the step size choice $\tau_k = \frac{2}{\mu k}$ the projected subgradient method~\eqref{subgradient:eq:projectedSG} satisfies
    \begin{enumerate}
        \item $f^n_{\mathrm{best}}-f(x^*)\leq \frac{L^2}{\mu (n-1)}$ and
        \item $\|x_n-x^*\|\leq \frac{2L}{\mu \sqrt{n-1}}$.
    \end{enumerate}
\end{theorem}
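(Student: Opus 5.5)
The plan is to sharpen the fundamental inequality of \cref{lemma:subgradient:fundamental} using strong convexity, and then telescope with suitable weights. First I would note that $\mu$-strong convexity gives a strengthened subgradient inequality: for $g_k\in\partial f(x_k)$,
\[
f(x_k)+\inner{g_k}{x^*-x_k}+\frac{\mu}{2}\|x_k-x^*\|^2\leq f(x^*).
\]
To see this, apply the subgradient inequality at the point $x_k+\lambda(x^*-x_k)$ and combine it with the zero-order strong convexity inequality. Dividing by $\lambda$ and letting $\lambda\to 0$ removes the factor $(1-\lambda)$ in front of the quadratic term. I expect this to be a routine adaptation, in the spirit of the theorem characterizing strong convexity.

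Next I would plug this into the chain \eqref{eq:subgradient:fundamental}, using nonexpansiveness of $\proj_C$ and $\|g_k\|\leq L$ as before. This gives
\[
\|x_{k+1}-x^*\|^2\leq (1-\mu\tau_k)\|x_k-x^*\|^2-2\tau_k\big(f(x_k)-f(x^*)\big)+\tau_k^2L^2 .
\]
With $\tau_k=\frac{2}{\mu k}$ the contraction factor is $1-\mu\tau_k=\frac{k-2}{k}$. I then multiply by $k(k-1)$ to make the distance terms telescope:
\[
k(k-1)\|x_{k+1}-x^*\|^2\leq (k-1)(k-2)\|x_k-x^*\|^2-\frac{4(k-1)}{\mu}\big(f(x_k)-f(x^*)\big)+\frac{4L^2(k-1)}{\mu^2k}.
\]
The case $k=1$, where the factor $1-\mu\tau_1$ is negative, is harmless because the weight $k-1$ vanishes there.

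Summing over $k=1,\dots,n$ and bounding $\frac{k-1}{k}\leq 1$ yields
\[
n(n-1)\|x_{n+1}-x^*\|^2+\frac{4}{\mu}\sum_{k=1}^n(k-1)\big(f(x_k)-f(x^*)\big)\leq \frac{4L^2n}{\mu^2}.
\]
Both assertions follow from this bound by dropping one of the two nonnegative terms on the left. For the function values, I use $f(x_k)-f(x^*)\geq f^n_{\mathrm{best}}-f(x^*)$ and $\sum_{k=1}^n(k-1)=\frac{n(n-1)}{2}$, which gives a bound of order $\frac{L^2}{\mu(n-1)}$. For the iterates, dropping the sum gives $\|x_{n+1}-x^*\|^2\leq\frac{4L^2}{\mu^2(n-1)}$, which is the claimed rate $\frac{2L}{\mu\sqrt{n-1}}$.

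The main obstacle is matching the exact constants and index conventions of the statement. As written, the computation gives $\frac{2L^2}{\mu(n-1)}$ for the function values and bounds $x_{n+1}$ rather than $x_n$. To close the gap I would first try a different weighting, for example multiplying by $k$ and using the unused gain $\frac{\mu}{2}\|x_k-x^*\|^2$ more carefully. Failing that, I would try shifting the summation index, or accept the slightly different constant and index.
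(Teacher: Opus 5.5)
Your argument is the paper's proof. It uses the same steps: the strengthened fundamental inequality with factor $1-\mu\tau_k$ (which you, unlike the paper, actually justify); the weights $k-1$ applied after dividing by $2\tau_k$, which equals your factor $k(k-1)$ up to the constant $\frac{\mu}{4}$; telescoping; and dropping one of the two nonnegative terms.

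The mismatch you flag is in the paper's proof too. Its computation likewise ends with $f^n_{\mathrm{best}}-f(x^*)\leq\frac{2L^2}{\mu(n-1)}$ (the constant printed there is a typo for $2L^2$) and with a bound on $\|x_{n+1}-x^*\|$. So the factor in (i) is a defect of the statement, not of your argument.

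If you want (ii) literally for $x_n$, sum only up to $n-1$ and keep the exact sum $\sum_{k=1}^{n-1}\frac{k-1}{k}=n-1-\sum_{k=1}^{n-1}\frac{1}{k}\leq n-2$. This yields $\|x_n-x^*\|^2\leq\frac{4L^2}{\mu^2(n-1)}$ for $n\geq 3$. The case $n=2$ follows directly from the unweighted inequality at $k=1$, since $1-\mu\tau_1=-1$.
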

\begin{proof}
    As always, without loss of generality $f(x^*)=0$. Under strong convexity the fundamental inequality can in fact be improved to 
    \begin{equation}
        \begin{aligned}
            \|x_{k+1}-x^*\|^2 
            &\leq (1-\mu\tau_k)\|x_k- x^*\|^2 - 2\tau_k f(x_k) + \tau_k^2\|g_k\|^2\\
            &\leq (1-\mu\tau_k)\|x_k- x^*\|^2 - 2\tau_k f(x_k) + \tau_k^2L^2
        \end{aligned}
    \end{equation}
    Rearranging and dividing by $2\tau_k$ yields
    \begin{equation}
        \begin{aligned}
            f(x_k)
            \leq -\frac{\tau_k^{-1}}{2}\|x_{k+1}-x^*\|^2 + \frac{\tau_k^{-1}-\mu}{2}\|x_k- x^*\|^2  + \frac{\tau_k}{2}L^2.
        \end{aligned}
    \end{equation}
    Inserting the step size choice $\tau_k = \frac{2}{\mu k}$ then yields
    \begin{equation}
        \begin{aligned}
            f(x_k)
            \leq -\frac{\mu k}{4} \|x_{k+1}-x^*\|^2 + \frac{\mu(k-2)}{4}\|x_k- x^*\|^2  + \frac{1}{\mu k}L^2.
        \end{aligned}
    \end{equation}
    Multyplying by $k-1$ and summing over $k$ yields
    \begin{equation}
        \sum_{k=1}^{n} (k-1)f(x_k) \leq -\frac{\mu n(n-1)}{4} \|x_{n+1}-x^*\|^2 + \frac{1}{\mu}\sum_{k=1}^{n}\frac{k-1}{k}L^2.
    \end{equation}
    Using that $\sum_{k=1}^{n} (k-1) = n(n-1)/2$ and that $\sum_{k=1}^{n}\frac{k-1}{k}\leq n$ we obtain
    \begin{equation}\label{subgradient:eq:strongly}
        n(n-1)/2 f^n_{\mathrm{best}}\leq \sum_{k=1}^{n} (k-1)f(x_k) = -\frac{\mu n(n-1)}{4} \|x_{n+1}-x^*\|^2 + \frac{1}{\mu} n L^2.
    \end{equation}
    Thus
    \begin{equation}
        f^n_{\mathrm{best}} \leq \frac{"L^2}{\mu(n-1)}.
    \end{equation}
    On the other hand-side, rearranging \eqref{subgradient:eq:strongly} and using that $f^n_{\mathrm{best}}\geq 0$ yields
    \begin{equation}
        \frac{\mu n(n-1)}{4} \|x_{n+1}-x^*\|^2 \leq \frac{1}{\mu} n L^2,
    \end{equation}
    hence, $\|x_{n+1}-x^*\|^2\leq \frac{4L^2}{\mu^2 (n-1)}$.
\end{proof}

\chapter{Proximal Gradient Methods}

We have seen in the proofs (and practical exercises) that the subgradient method does not provide ideal convergence. The convergence of function values is of order $\Oc(1/\sqrt{k})$ and we require diminishing step sizes for convergence. In this chapter, we will introduce an alternative to (explicit) subgradients for optimization.

Consider the unconstrained problem
\begin{equation}\label{proximal:eq:problem}
    \min_x f(x)
\end{equation}
where $f$ is allowed to take on the value $+\infty$. Recall the subgradient method's update $x_{k+1} = x_k - \tau_k g_k$, $g_k\in\partial f(x_k)$, or, equivalently, $x_{k+1}\in x_k -\tau_k\partial f(x_k)$. In the following we instead propose
\begin{equation}\label{proximal:eq:intro}
    x_{k+1}\in x_k - \tau_k\partial f(x_{k+1}).
\end{equation}
At this point it is not clear that such a method is even well-defined. However, in case it is, we may expect on a high level that it is beneficial in terms of stability to \emph{look into the future} when choosing the update direction. This is also in line with the stability of implicit methods for discretizing differential equations (\cf~\emph{implicit Euler}).

Taking a closer look at~\eqref{proximal:eq:intro} we find that it can be written as
\begin{equation}
    0 \in \partial \biggl(\frac{1}{2}\|\cdot - x_k\|^2 + \tau_k f\biggr)(x_{k+1})
\end{equation}
which amounts to the optimality condition of
\begin{equation}
    \min_x \frac{1}{2}\|x - x_k\|^2 + \tau_k f(x)
\end{equation}
and leads to the following
\begin{defn}[Proximal operator]
    We define the proximal operator/mapping of $f:\R^d\rightarrow (-\infty,\infty]$ as the map
    \begin{equation}
        \begin{aligned}
            \prox_{f}:\R^d&\rightarrow 2^{\R^d}\\
            x&\mapsto \arg\min_{y\in\R^d}\frac{1}{2}\|y - x\|^2 + f(y).
        \end{aligned}
    \end{equation}
\end{defn}

We will usually refer to $\prox_f$ as \enquote{the prox}. Note that the prox as a set-valued mapping is always well-defined. However, it will only be interesting when $\prox_f\neq\emptyset$. We can prove the following.

\begin{lemma}
    Let $f:\R^d\rightarrow (-\infty,\infty]$ be proper, convex, and lower semi-continuous, then the proximal mapping is single-valued and we have $y = \prox_f(x)$ iff $0\in (y-x) + \partial f(y)$.
\end{lemma}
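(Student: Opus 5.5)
Fix $x\in\R^d$ and set $\Phi_x(y)\coloneqq\frac12\|y-x\|^2+f(y)$. The plan has three steps: existence of a minimizer via the direct method, uniqueness via strong convexity, and the characterization via the subgradient optimality condition combined with the sum rule.

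\textbf{Existence.} I would apply \cref{preliminaries:thm:direct_method} to $\Phi_x$.
\begin{itemize}
\item $\Phi_x$ is proper, because $f$ is proper and the quadratic term is finite everywhere.
\item $\Phi_x$ is lsc as the sum of a continuous function and an lsc function, since $\liminf$ is superadditive.
\end{itemize}
Coercivity is the main obstacle, because $f$ is only convex and lsc and may tend to $-\infty$ along some directions. I need an affine minorant of $f$. The first thing to try is \cref{convexity:thm:nonempty_subgrad}: if $\dom(f)^\circ\neq\emptyset$, pick $x_0$ in it and $g\in\partial f(x_0)$, which gives $f(y)\geq f(x_0)+\inner{g}{y-x_0}$. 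Then
\begin{equation*}
\Phi_x(y)\geq \tfrac12\|y-x\|^2-\|g\|\|y\|+c,
\end{equation*}
and the right-hand side tends to $\infty$. If the interior is empty, I would use the lemma that a convex set with empty interior lies in a hyperplane, and work inside the affine hull of $\dom(f)$. In that hull the relative interior is nonempty, and the same subgradient argument applies after restricting. Alternatively, one can take an affine minorant directly from Hahn--Banach applied to the closed convex set $\epi(f)$ and a point strictly below the graph, using closedness of $\epi(f)$. Either route gives an affine lower bound, so $\Phi_x$ is coercive and a minimizer exists.

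\textbf{Uniqueness.} $\Phi_x$ is $1$-strongly convex: $\frac12\|\cdot-x\|^2$ is $1$-strongly convex, since the defining identity holds with equality for squared Euclidean norms, and adding the convex $f$ preserves this. Suppose $y_1\neq y_2$ are both minimizers with value $m$. Evaluating the zero-order strong convexity inequality at $\lambda=\frac12$ gives
\begin{equation*}
\Phi_x\bigl(\tfrac12(y_1+y_2)\bigr)\leq m-\tfrac18\|y_1-y_2\|^2<m,
\end{equation*}
which is a contradiction. Hence $\prox_f(x)$ is a singleton.

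\textbf{Characterization.} By the optimality theorem for proper convex functions, $y=\prox_f(x)$ if and only if $0\in\partial\Phi_x(y)$. The quadratic part is differentiable everywhere with gradient $y-x$, and its domain is all of $\R^d$. So the equality case of the sum rule applies, taking $x_0$ in $\dom(\tfrac12\|\cdot-x\|^2)^\circ\cap\dom(f)=\dom(f)\neq\emptyset$ with the roles of the two summands arranged accordingly. Combined with the lemma $\partial h=\{\nabla h\}$ for differentiable convex $h$, this gives
\begin{equation*}
\partial\Phi_x(y)=(y-x)+\partial f(y).
\end{equation*}
Therefore $y=\prox_f(x)$ if and only if $0\in(y-x)+\partial f(y)$.
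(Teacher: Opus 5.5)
Your proof is correct and follows the paper's argument. You get coercivity from an affine minorant built from a subgradient at an interior point of $\dom(f)$ and then apply the direct method, you get uniqueness from a midpoint convexity inequality, and you get the characterization from the optimality condition $0\in\partial\Phi_x(y)$. You also go slightly beyond the paper, which assumes $\dom(f)^\circ\neq\emptyset$ and defers the general case to the literature: you supply the affine minorant in general (via the relative interior of the affine hull, or by separating a point strictly below the graph from the closed epigraph), and you make the sum-rule step $\partial\Phi_x(y)=(y-x)+\partial f(y)$ explicit.
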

\begin{rem}
    Due to the characterization $0\in (y-x) + \partial f(y)$ we often write the prox as $\prox_f(x) = (\Id + \partial f)^{-1}(x)$.
\end{rem}
\begin{proof}
    We assume for simplicity that $\dom(f)^\circ\neq \emptyset$ and refer to~\cite{beck2017first} for the general case. 
    
    Let $g\in \partial f(y_0)$ for some $y_0\in \dom(f)^\circ$. We have that 
    \begin{equation}
        f(y_0) + \inner{g}{y-y_0} + \frac{1}{2}\|x-y\|^2\leq f(y) + \frac{1}{2}\|x-y\|^2.
    \end{equation}
    Therefore, (why?) $y\mapsto f(y) + \frac{1}{2}\|x-y\|^2$ is coercive. Moreover, this map inherits lower semi-continuity and properness from $f$. By the direct method~\cref{preliminaries:thm:direct_method} there exists a solution to 
    \begin{equation}\label{proximal:eq:prox_proof}
        \min_{y\in\R^d}\frac{1}{2}\|y - x\|^2 + f(y).
    \end{equation}
    Uniqueness follows by the usual strategy, for if, $y_1,y_2$ where two distinct solutions, then by strict convexity of $y\mapsto \frac{1}{2}\|y - x\|^2$ and convexity of $f$ we would obtain for $\bar{y} = \frac{y_1+y_2}{2}$
    \begin{equation}
        \begin{aligned}
            \frac{1}{2}\|\bar y - x\|^2 + f(\bar y)
            <& \frac{1}{2}\bigg(\frac{1}{2}\| y_1 - x\|^2 + f(y_1) + \frac{1}{2}\|y_1 - x\|^2 + f(y_1)\bigg)\\
            =&\arg\min_{y\in\R^d}\frac{1}{2}\|y - x\|^2 + f(y)
        \end{aligned}
    \end{equation}
    which is a contradiction. The last assertion is simply the optimality condition of~\eqref{proximal:eq:prox_proof}.
\end{proof}

Equipped with well-definedness of the prox we can now define the proximal-gradient method. More specifically, we consider problems of the form 
\begin{equation}\label{proximal:eq:composite}
    \min_{x\in \R^d} F(x)\coloneqq f(x) + g(x)
\end{equation}
where $f$ is convex and $L$-smooth, that is, differentiable with $L$-Lipschitz continuous gradient and $g$ is proper, convex, and lower semi-continuous. Such problems arise frequently in practice and can be reduced to the originally considered problems by setting $f\equiv 0$. We define the proximal gradient method as for $k=0,1,\dots$
\begin{equation}\label{proximal:eq:prox_grad}
    \begin{aligned}
        x_{k+1} = \prox_{\tau_k g}(x_k - \tau_k \nabla f(x_k)).
    \end{aligned}
\end{equation}
That is, we perform gradient/explicit steps with respect to $f$ and proximal/implicit steps with respect to $g$.

As a first simple result we show that the proximal-gradient method is reasonable in the sense that stationary points are indeed optimal. 
\begin{lemma}
    Assume $x\in\R^d$ is a stationary point of the proximal-gradient method, \ie, 
    \begin{equation}
        x = \prox_{\tau_k g}(x - \tau_k \nabla f(x)).
    \end{equation}
    Then $x$ solves~\eqref{proximal:eq:composite}.
\end{lemma}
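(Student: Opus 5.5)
The plan is to translate the fixed-point equation into a subdifferential inclusion via the characterization of the prox, and then show that this inclusion is exactly the optimality condition $0\in\partial F(x)$.

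First I would invoke the lemma on the proximal mapping, applied to the proper, convex, lsc function $\tau_k g$ (with $\tau_k>0$). It gives $y=\prox_{\tau_k g}(z)$ iff $0\in (y-z)+\partial(\tau_k g)(y)$. With $y=x$ and $z=x-\tau_k\nabla f(x)$ this reads
\begin{equation*}
    0\in \tau_k\nabla f(x) + \partial(\tau_k g)(x) = \tau_k\big(\nabla f(x)+\partial g(x)\big),
\end{equation*}
using the scaling rule $\partial(\alpha g)=\alpha\partial g$ for $\alpha\geq 0$. Dividing by $\tau_k>0$ yields $-\nabla f(x)\in\partial g(x)$. In particular $\partial g(x)\neq\emptyset$, so $x\in\dom(g)$.

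Next I would conclude optimality directly from the subgradient inequalities rather than through the sum rule. The sum rule's equality case needs a qualification condition, but only the easy inclusion is required here. Since $f$ is convex and differentiable on all of $\R^d$, the first-order characterization (\cref{convexity:thm:first_order}) gives $f(x)+\inner{\nabla f(x)}{y-x}\leq f(y)$ for all $y$, i.e.\ $\nabla f(x)\in\partial f(x)$. The weak sum rule $\partial f(x)+\partial g(x)\subset\partial F(x)$ then gives $0=\nabla f(x)+(-\nabla f(x))\in\partial F(x)$. Equivalently, adding the two subgradient inequalities gives
\begin{equation*}
    F(x)=f(x)+g(x)\leq f(y)+g(y)=F(y)\quad \text{for all } y\in\R^d.
\end{equation*}
Finally, the theorem that $0\in\partial F(x^*)$ iff $x^*$ is a minimizer, or the displayed inequality itself, shows that $x$ solves~\eqref{proximal:eq:composite}.

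The argument is essentially bookkeeping. The only step needing care is avoiding the strong sum rule, whose hypotheses we have not checked, and using instead the elementary inclusion direction. That direction suffices because we only need $0\in\partial F(x)$, not a full description of $\partial F(x)$.
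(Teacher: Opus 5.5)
Your proof is correct and takes the same route as the paper. The prox optimality condition together with the scaling rule gives $-\nabla f(x)\in\partial g(x)$, and then $\nabla f(x)\in\partial f(x)$ combined with the elementary inclusion $\partial f(x)+\partial g(x)\subset\partial(f+g)(x)$ yields $0\in\partial F(x)$. Your explicit remark that only this weak direction of the sum rule is needed is a precision the paper leaves implicit.
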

\begin{proof}
    Let $x$ be a stationary point as above. Inserting the optimlity conditions for the prox yields
    \begin{equation}
        x \in x - \tau_k \nabla f(x) - \tau_k \partial g(x).
    \end{equation}
    By the sum and the scaling rule for the subdifferential we have
    \begin{equation}
        0 \in \tau_k \partial (f + g)(x),
    \end{equation}
    \ie, $0 \in \partial (f+g)(x)$ which implies optimality of $x$.
\end{proof}

In addition to the prox we also define the Moreau envelope as the objective value realized by the prox.
\begin{defn}[Moreau envelope]
    Let $f:\R^d\rightarrow (-\infty,\infty]$. We define the Moreau envelope $M_f^\tau:\R^d\rightarrow [-\infty,\infty]$ as 
    \begin{equation}
        M_f^\tau(x) \coloneq \min_{y\in \R^d}\frac{1}{2\tau} \|x-y\|^2 + f(y).
    \end{equation}
\end{defn}

\begin{lemma}
    Let $f:\R^d\rightarrow (-\infty,\infty]$ be proper, convex, and lower semi-continuous. Then the Moreau envelope is always a real (\ie, finite) number and we have
    \begin{equation}
        M_f^\tau(x) = \frac{1}{2\tau} \|x-\prox_{\tau f}(x)\| + f(\prox_{\tau f}(x)).
    \end{equation}
\end{lemma}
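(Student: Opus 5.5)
The plan is to reduce the statement to the previous lemma on well-definedness of the prox, applied to the scaled function $\tau f$. The Moreau envelope is a scaled version of the prox objective:
\begin{equation}
    \frac{1}{2\tau}\|x-y\|^2 + f(y) = \frac{1}{\tau}\Big(\frac12\|x-y\|^2 + \tau f(y)\Big),
\end{equation}
and $\tau>0$. So the minimizers of the envelope's objective coincide with those of $y\mapsto \frac12\|y-x\|^2+\tau f(y)$. The first step is to check that $\tau f$ is again proper, convex and lower semi-continuous. This is immediate: scaling by a positive constant preserves the convexity inequality, the $\liminf$ inequality and finiteness somewhere, and it does not create the value $-\infty$.

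By the previous lemma, $\prox_{\tau f}(x)$ is therefore a single point $\hat y$. It attains the minimum of $\frac12\|\cdot-x\|^2+\tau f$, hence also of $\frac{1}{2\tau}\|x-\cdot\|^2+f$. Consequently the infimum defining $M_f^\tau(x)$ is actually a minimum, attained at $\hat y$, and
\begin{equation}
    M_f^\tau(x) = \frac{1}{2\tau}\|x-\hat y\|^2 + f(\hat y).
\end{equation}
This is the asserted formula, with the norm squared (the display in the statement omits the square, presumably a typo, and I would prove the squared version).

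It remains to show that this value is a real number. It is not $-\infty$ because $f>-\infty$ everywhere by properness and the quadratic term is finite. It is not $+\infty$ because properness gives some $y_0$ with $f(y_0)<\infty$, and then
\begin{equation}
    M_f^\tau(x)\le \frac{1}{2\tau}\|x-y_0\|^2+f(y_0)<\infty.
\end{equation}
In particular $f(\hat y)<\infty$, so the minimizer lies in $\dom(f)$.

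The only substantive ingredient is existence of the minimizer, which comes entirely from the previous lemma (coercivity plus the direct method). If one wants to avoid its simplifying assumption $\dom(f)^\circ\neq\emptyset$, that is where the real obstacle would lie: proving coercivity of $y\mapsto \frac{1}{2\tau}\|x-y\|^2+f(y)$ would require an affine minorant of $f$, which in general comes from separating a point from the closed convex set $\epi(f)$ via the second (strict) form of Hahn–Banach. I would simply invoke the lemma as stated.
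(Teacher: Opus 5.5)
Your proof is correct. The paper leaves this lemma as an exercise, and your argument is the natural intended one: apply the single-valuedness lemma to $\tau f$, which is again proper, convex, and lsc with the same domain as $f$, so the prox lemma's standing simplification carries over. The attained value is then bounded below by properness and above by evaluating at a point of $\dom(f)$.

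You are also right that the display is missing a square. The correct identity is $M_f^\tau(x) = \frac{1}{2\tau}\|x-\prox_{\tau f}(x)\|^2 + f(\prox_{\tau f}(x))$, which matches the paper's later example $M^\tau_{\delta_C} = \frac{1}{2\tau}\mathrm{dist}(\cdot,C)^2$.
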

\begin{proof}
    Exercise.
\end{proof}

\begin{example}
    We consider a few frequently occurring examples of proximal maps/Moreau envelopes:
    \begin{enumerate}
        \item Let $C\subset\R^d$ be closed and convex and consider the indicator function $\delta_C$. Then 
        \begin{equation}
            \prox_{\tau \delta_C}(x) = \proj_C(x),\quad M^{\tau}_{\delta_C}(x) = \frac{1}{2\tau}\mathrm{dist}(x,C)^2.
        \end{equation}
        \item In particular, if $C=\{x\in\R^d\,|\, \|x\|_\infty\leq 1\}$ then
        \begin{equation}
            (\prox_{\tau \delta_C}(x))_i = \frac{x_i}{\max\{1,|x_i|\}}
        \end{equation}
        \item Let $f(x) = \|x\|_1$ then the proximal map is the so-called \emph{soft-thresholding} operator
        \begin{equation}
            (\prox_{\tau f}(x))_i = 
            \begin{cases}
                x_i-\tau\quad &x_i>\tau\\
                x_i+\tau\quad &x_t<-\tau\\
                0\quad &x_i\in[-\tau,\tau]
            \end{cases}
        \end{equation}
        For $d=1$, moreover, the Moreau envelope is exactly the Huber functional
        \begin{equation}
            M^\tau_{|\cdot|}(x) = 
            \begin{cases}
                \frac{|x|^2}{2\tau}\quad &|x|\leq \tau\\
                |x|-\frac{\tau}{2} \quad &\text{else.}
            \end{cases}
        \end{equation}
    \end{enumerate}
\end{example}

\begin{lemma}[Computation rules for the prox and the Moreau envelope]
    The following rules apply
    \begin{enumerate}
        \item $f(x) = \sum_i f_i(x_i)$ with $x = (x_1,x_2,\dots,x_k)\R^d$, $x_i\in\R^{d_i}$, $\sum_i d_i = d$, then $\prox_{f}(x) = ((\prox_{f_i}(x_i))_i)$.
        \item $f(x)=\alpha g(x) + b$ with $\alpha>0$, $b\in \R$, then $\prox_f = \prox_{\alpha g}$.
        \item $f(x)=g(\alpha x+ b)$ with $\alpha\neq 0$, $b\in\R^d$, then $\prox_f(x) = \frac{1}{\alpha}(\prox_{\alpha^2 g}(\alpha x + b)-b)$
        \item $f(x) = g(Qx)$ with $Q\in\R^{d\times d}$ orthonormal, then $\prox_f(x) = Q^T\prox_{g}(Qx)$
        \item $f(x) = g(x) + \inner{a}{x} + b$ with $a\in\R^d$, $b\in R$, then $\prox_{f}(x) = \prox_g (x-a)$
        \item $f(x) = g(x) + \frac{\gamma}{2}\|x-a\|^2$ with $a\in\R^d$, then $\prox_f(x) = \prox_{\tilde{\gamma}g}(\tilde{\gamma}x + \gamma\tilde{\gamma}a)$ with $\tilde{\gamma} = \frac{1}{1+\gamma}$.
    \end{enumerate}
\end{lemma}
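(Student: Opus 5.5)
The plan is to verify each rule directly from the definition
\[
\prox_f(x)=\arg\min_y \tfrac12\|y-x\|^2+f(y),
\]
using two elementary facts. First, adding a constant or a positive multiple to an objective does not change its minimizer. Second, an invertible affine change of variables $y=\phi(z)$ maps minimizers in $z$ bijectively onto minimizers in $y$. Whenever the objective is written in a new form, I will complete the square in $\|\cdot\|^2$ and discard the constants.

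For (i), I expand $\tfrac12\|y-x\|^2=\sum_i\tfrac12\|y_i-x_i\|^2$. The objective then becomes $\sum_i\bigl(\tfrac12\|y_i-x_i\|^2+f_i(y_i)\bigr)$, a sum of functions of disjoint blocks of variables. It is minimized by minimizing each block separately, which gives the componentwise prox.

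For (ii), adding $b$ does not change the minimizer, so $\prox_f=\prox_{\alpha g}$ is immediate.

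For (iii), I substitute $z=\alpha y+b$, so $y=(z-b)/\alpha$. Then
\[
\tfrac12\|y-x\|^2=\tfrac{1}{2\alpha^2}\|z-(\alpha x+b)\|^2 .
\]
Multiplying the whole objective by $\alpha^2>0$ gives $\tfrac12\|z-(\alpha x+b)\|^2+\alpha^2g(z)$. Its minimizer is $z=\prox_{\alpha^2g}(\alpha x+b)$, and transforming back to $y$ yields the claim.

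For (iv), I substitute $z=Qy$ and use $\|y-x\|=\|Q(y-x)\|=\|z-Qx\|$. This gives $z=\prox_g(Qx)$, hence $y=Q^Tz$.

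For (v), I write
\[
\tfrac12\|y-x\|^2+\inner{a}{y}=\tfrac12\|y-(x-a)\|^2+\text{const}.
\]
Together with (ii), this shows the additive constant $b$ is irrelevant.

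For (vi), I complete the square in
\[
\tfrac12\|y-x\|^2+\tfrac\gamma2\|y-a\|^2=\tfrac{1+\gamma}{2}\Bigl\|y-\tfrac{x+\gamma a}{1+\gamma}\Bigr\|^2+\text{const}.
\]
Dividing the objective by $1+\gamma$ gives $\tfrac12\|y-(\tilde\gamma x+\gamma\tilde\gamma a)\|^2+\tilde\gamma g(y)$, which is exactly the stated prox.

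Where the paper's earlier lemma applies (proper, convex, lsc $g$), all proxes are single-valued. Otherwise every identity should be read as an equality of $\arg\min$ sets, which the bijective change of variables still preserves.

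The only step I expect to require care is (iii), in particular tracking the factor $\alpha^2$ and the shift $b$. I would double-check the formula by testing it on the quadratic $g=\tfrac12\|\cdot\|^2$, where everything can be computed explicitly. Finally, I will remark that the scalar $b\in\R$ in (iii) should be read as $b\in\R^d$ for the formula to make sense.
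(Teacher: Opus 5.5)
Your proposal is correct. The paper states this lemma without proof, and your direct verification from the definition of $\prox$ is the standard argument: dropping constants, rescaling by positive factors, invertible affine substitutions, and completing the square. Two small corrections are needed. In (vi), dividing by $1+\gamma$ only preserves the $\arg\min$ when $1+\gamma>0$, so you should state the implicit assumption $\gamma>-1$ (in practice $\gamma\geq 0$). Your closing remark about $b$ is misplaced: (iii) already has $b\in\R^d$, and the scalar $b\in\R$ appears in (ii) and (v), where it is just an additive constant.
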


The Moreau envelope has a smoothing effect on the function $f$ as will be shown in the following result.
\begin{lemma}
    Let $f:\R^d\rightarrow (-\infty,\infty]$ be proper, convex, and lower semi-continuous. Then the Moreau envelope is convex and differentiable. Moreover, its gradient is $\nabla M^\tau_f$ is $\frac{1}{\tau}$-Lipschitz and can be expressed as
    \begin{equation}
        \nabla M^\tau_f(x) = \frac{1}{\tau} (x-\prox_{\tau f}(x)).
    \end{equation}
\end{lemma}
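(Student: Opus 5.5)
Write $p(x)=\prox_{\tau f}(x)$; by the earlier lemma it is well defined and single-valued, and it is characterized by $\frac1\tau(x-p(x))\in\partial f(p(x))$. The plan has three steps: convexity first, then firm nonexpansiveness of the prox, and finally a two-sided estimate that yields differentiability.

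\emph{Convexity.} The map $(x,y)\mapsto \frac{1}{2\tau}\|x-y\|^2+f(y)$ is jointly convex on $\R^d\times\R^d$, as a convex quadratic composed with a linear map plus a convex function of $y$. By the earlier lemma the infimum over $y$ is finite for every $x$. Hence \cref{convexity:thm:minimum} gives that $M^\tau_f$ is convex.

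\emph{Firm nonexpansiveness.} For $x_1,x_2$ set $p_i=p(x_i)$. The optimality conditions give subgradients $\frac1\tau(x_i-p_i)\in\partial f(p_i)$. Writing the two subgradient inequalities at $p_1$ and $p_2$ and adding them (monotonicity of $\partial f$) yields
\begin{equation}
\inner{(x_1-p_1)-(x_2-p_2)}{p_1-p_2}\geq 0,
\end{equation}
that is, $\|p_1-p_2\|^2\le\inner{x_1-x_2}{p_1-p_2}$. A consequence of this, exactly as in the projection case, is that $x\mapsto x-p(x)$ is also firmly nonexpansive, and in particular $1$-Lipschitz. Therefore $G(x):=\frac1\tau(x-p(x))$ is $\frac1\tau$-Lipschitz, once we know it is the gradient.

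\emph{Differentiability with gradient $G$.} I sandwich $M^\tau_f(y)-M^\tau_f(x)$ between two expressions.

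For the upper bound, use $p(x)$ as a feasible point in the minimization defining $M^\tau_f(y)$:
\begin{equation}
M^\tau_f(y)-M^\tau_f(x)\le \frac1{2\tau}\bigl(\|y-p(x)\|^2-\|x-p(x)\|^2\bigr)=\inner{G(x)}{y-x}+\frac1{2\tau}\|y-x\|^2 .
\end{equation}

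For the lower bound, swap the roles of $x$ and $y$ in the same computation:
\begin{equation}
M^\tau_f(y)-M^\tau_f(x)\ge \inner{G(y)}{y-x}-\frac1{2\tau}\|y-x\|^2 .
\end{equation}
Using the Lipschitz bound on $G$, this becomes $\ge\inner{G(x)}{y-x}-\frac{3}{2\tau}\|y-x\|^2$.

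Together the two bounds give $|M^\tau_f(y)-M^\tau_f(x)-\inner{G(x)}{y-x}|\le C\|y-x\|^2=o(\|y-x\|)$. This is exactly Fréchet differentiability with $\nabla M^\tau_f=G$.

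The main obstacle is this last step. The naive route only shows that $G(x)$ is a subgradient, via the upper quadratic bound. The point of the argument is to obtain the matching lower bound, which needs the Lipschitz estimate for $x-p(x)$ established beforehand. I would therefore carefully verify the firm nonexpansiveness of $\Id-p$ by expanding $\|(x_1-x_2)-(p_1-p_2)\|^2$ and applying the inequality above.
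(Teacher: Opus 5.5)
Your proof is correct, but for differentiability it takes a different route from the paper. The paper starts from the same quadratic upper bound,
$\phi(h)\coloneqq M^\tau_f(x+h)-M^\tau_f(x)-\inner{h}{\frac1\tau(x-p(x))}\le\frac1{2\tau}\|h\|^2$.
It then gets the matching lower bound from the convexity of $M^\tau_f$ alone. Since $\phi$ is convex with $\phi(0)=0$, we have $0\le\frac12(\phi(h)+\phi(-h))$, and hence $\phi(h)\ge-\phi(-h)\ge-\frac1{2\tau}\|h\|^2$. The Lipschitz bound is proved only afterwards. It uses the same monotonicity computation for $\partial f$ that you use for firm nonexpansiveness, phrased via $\nabla M^\tau_f(x)\in\partial f(p(x))$ instead of $p$.

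The paper's ordering gives differentiability with the sharp constant $\frac1{2\tau}$, using nothing about $p$ beyond its definition. Yours never uses the convexity of $M^\tau_f$ that you established in your first step. Instead it front-loads the Lipschitz estimate and pays with the constant $\frac3{2\tau}$. You could recover $\frac1{2\tau}$ by using $\inner{G(y)-G(x)}{y-x}\ge0$, which follows from firm nonexpansiveness of $\Id-p$, instead of Cauchy--Schwarz. The advantage of your ordering is that differentiability then rests only on the Lipschitz continuity of $x\mapsto x-p(x)$. So it survives in settings where the envelope is not convex but the prox is still single-valued and Lipschitz, e.g.\ weakly convex $f$ with small $\tau$.

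Your closing remark is inaccurate: the upper bound does not ``only show that $G(x)$ is a subgradient''. On its own it gives no lower bound at all. Combined with the convexity you already proved, it yields differentiability outright, and that is exactly the paper's argument. So the obstacle you identified has a cheaper resolution than going through the Lipschitz estimate.
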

\begin{proof}
    Convexity of the Moreau envelope follows as in~\cref{convexity:thm:minimum}. Regarding the gradient of the Moreau envelope, fix $\tau$ and denote for simplicity in the following $p(x) = \prox_{\tau f}(x)$. We have by definition of the prox
    \begin{equation}
        \begin{aligned}
            M^\tau_f(x+h) = \frac{1}{2\tau}\|x+h-p(x+h)\|^2 + f(p(x+h))
            \leq \frac{1}{2\tau}\|x+h-p(x)\|^2 + f(p(x))\\
            M^\tau_f(x) = \frac{1}{2\tau}\|x-p(x)\|^2 + f(p(x)).
        \end{aligned}
    \end{equation}
    Subtracting the two yields
    \begin{equation}
        \begin{aligned}
            M^\tau_f(x+h) - M^\tau_f(x)
            \leq& \frac{1}{2\tau}(\|x+h-p(x)\|^2 - \|x-p(x)\|^2)\\
            =& \frac{1}{2\tau}(\|x+h\|^2 - \|x\|^2 -2\inner{h}{p(y)})\\
            =& \inner{h}{\frac{1}{\tau}(x-p(x))} + \frac{1}{2\tau}\|h\|^2
        \end{aligned}
    \end{equation}
    or, equivalently
    \begin{equation}
        \begin{aligned}
            M^\tau_f(x+h) - M^\tau_f(x) - \inner{h}{\frac{1}{\tau}(x-p(x))}
            \leq & \frac{1}{2\tau}\|h\|^2
        \end{aligned}
    \end{equation}
    If we can bound
    \begin{equation}\label{proximal:eq:moreau_diff}
        \phi(h)\coloneqq M^\tau_f(x) - M^\tau_f(y) - \inner{x-y}{\frac{1}{\tau}(y-p(y))}
    \end{equation}
    similarly from below, we can conclude. Note that $\phi$ is convex and $\phi(0) = 0$, thus, $0 = \phi(0) = \phi(\frac{1}{2}h + \frac{1}{2}(-h))\leq \frac{1}{2}(\phi(h) + \phi(-h))$, \ie,
    \begin{equation}
        \phi(h)\geq -\phi(-h) \geq -\frac{1}{2\tau}\|h\|^2.
    \end{equation}
    In total we find
    \begin{equation}
        \begin{aligned}
            \left|M^\tau_f(x+h) - M^\tau_f(x) - \inner{h}{\frac{1}{\tau}(x-p(x))}\right|
            \leq & \frac{1}{2\tau}\|h\|^2
        \end{aligned}
    \end{equation}
    implying $\nabla M^\tau_f(x) = \frac{1}{\tau}(x-p(x))$. To prove Lipschitz continuity of the gradient, we note that
    \begin{equation}\label{proximal:eq:moreau_diff2}
        \begin{aligned}
            \|\nabla M^\tau_f(x) - \nabla M^\tau_f(y)\|^2
            =& \inner{\nabla M^\tau_f(x) - \nabla M^\tau_f(y)}{\frac{1}{\tau}(x-p(x)) - \frac{1}{\tau}(y-p(y))}\\
            =& \frac{1}{\tau}\inner{\nabla M^\tau_f(x) - \nabla M^\tau_f(y)}{x - y}
            - \frac{1}{\tau} \inner{\nabla M^\tau_f(x) - \nabla M^\tau_f(y)}{p(x)-p(y)}.
        \end{aligned}
    \end{equation}
    Note that $\nabla M^\tau_f(x) = \frac{1}{\tau}(x-p(x))\in \partial f(x)$ by the optimality conditions for the prox so that the second inner product in~\eqref{proximal:eq:moreau_diff2} is non-negative by monotonicity of the subgradient leading to 
    \begin{equation}\label{proximal:eq:moreau_diff2}
        \begin{aligned}
            \|\nabla M^\tau_f(x) - \nabla M^\tau_f(y)\|^2
            \leq & \frac{1}{\tau}\inner{\nabla M^\tau_f(x) - \nabla M^\tau_f(y)}{x - y}
            \leq\frac{1}{\tau}\|\nabla M^\tau_f(x) - \nabla M^\tau_f(y)\|\|x-y\|
        \end{aligned}
    \end{equation}
    concluding the proof.
\end{proof}

\begin{rem}
    With the above representation of the gradient of the Moreau envelope we can rewrite the prox as 
    \begin{equation}
        \prox_{\tau f}(x) 
        = x-\tau \bigg(\frac{1}{\tau}(x-\prox_{\tau f}(x))\bigg)
        = x-\tau \nabla M^\tau_f(x).
    \end{equation}
    That is, a proximal step is equivalent to a gradient step on the Moreau envelope!
\end{rem}

\begin{example}
    Projection, 1 norm, 2 norm
\end{example}

We will now proof convergence of the proximal gradient method. First we derive an essential lemma which states that for an $L$-smooth function we can bound the error between the function and its linear approximation by a square from above.

\begin{lemma}\label{proximal_gradient:lemma:Lsmoothness}
    Let $F:\R^d\rightarrow \R$ be continuously differentiable with $L$-Lipschitz gradient. Then it holds true that 
    \begin{equation}
        F(y)\leq F(x) + \inner{\nabla F(x)}{y-x} + \frac{L}{2}\|y-x\|^2.
    \end{equation}
\end{lemma}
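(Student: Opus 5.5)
The plan is to express $F(y)-F(x)$ through the fundamental theorem of calculus along the segment from $x$ to $y$, as in the proof of the monotonicity characterization of convexity, and then bound the resulting integrand with the Lipschitz property of $\nabla F$. Concretely, I set $h=y-x$ and $\phi(t)=F(x+th)$ for $t\in[0,1]$. Since $F$ is continuously differentiable, $\phi$ is continuously differentiable with $\phi'(t)=\inner{\nabla F(x+th)}{h}$, so
\begin{equation*}
    F(y)-F(x) = \int_0^1 \inner{\nabla F(x+th)}{h}\,\dd t .
\end{equation*}

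Next I add and subtract $\inner{\nabla F(x)}{h}$ inside the integral. This gives
\begin{equation*}
    F(y)-F(x)-\inner{\nabla F(x)}{y-x} = \int_0^1 \inner{\nabla F(x+th)-\nabla F(x)}{h}\,\dd t .
\end{equation*}
I then bound the integrand by Cauchy--Schwarz and then by $L$-Lipschitz continuity of the gradient:
\begin{equation*}
    \inner{\nabla F(x+th)-\nabla F(x)}{h}\leq \|\nabla F(x+th)-\nabla F(x)\|\,\|h\| \leq L t\|h\|^2 .
\end{equation*}
Integrating over $t\in[0,1]$ gives $\int_0^1 Lt\,\dd t\,\|h\|^2=\frac{L}{2}\|y-x\|^2$. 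Rearranging yields the claimed inequality.

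No step here is a real obstacle. The only point needing care is justifying the chain rule and the fundamental theorem of calculus for $\phi$, and continuous differentiability of $F$ covers both. I would also note that convexity is not needed: the argument only uses that the norm is the one induced by the inner product, so that the Cauchy--Schwarz step matches the Lipschitz constant measured in the same norm.
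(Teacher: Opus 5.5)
Your proof is correct and takes the same route as the paper's: the fundamental theorem of calculus along the segment, adding and subtracting $\inner{\nabla F(x)}{y-x}$, bounding the integrand by $Lt\|y-x\|^2$, and integrating. Your explicit Cauchy--Schwarz step, and your remark that the norm must be the one induced by the inner product, make precise what the paper's proof leaves implicit.
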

\begin{proof}
    By the fundamental theorem of calculus and $L$-smoothness we have
    \begin{equation}
        \begin{aligned}
            F(y)-F(x) 
            =& \int_0^1 \frac{\dd}{\dd t}F(x+t(y-x))\dd t\\
            =& \int_0^1\inner{\nabla F(x+t(y-x))}{y-x}\dd t\\
            =& \int_0^1\inner{\nabla F(x+t(y-x)) - \nabla F(x)}{y-x}\dd t + \int_0^1\inner{\nabla F(x)}{y-x}\dd t\\
            \leq & \int_0^1L t \|y-x\|^2\dd t + \inner{\nabla F(x)}{y-x}\\
            = & \frac{L}{2} \|y-x\|^2 + \inner{\nabla F(x)}{y-x}
        \end{aligned}
    \end{equation}
\end{proof}

For the proof of convergence of the proximal-gradient method we introduce the following notation
\begin{equation}
    T_\tau(x) = \frac{1}{\tau}(x-\prox_{\tau g}(x-\tau \nabla f(x))).
\end{equation}
With this function we may write the update of the proximal-gradient method as 
\begin{equation}
    x_{k+1} = x_k - \tau_k T_{\tau_k}(x_k)
\end{equation}

\begin{theorem}
    Let $f:\R^d\rightarrow\R$ be convex and $L$-smooth and $g:\R^d\rightarrow(-\infty,\infty]$ proper, closed, and convex. Moreover, assume $\tau_k\leq L^{-1}$ for all $k$. Then, with $x^*\in\arg\min_x F(x)$, the proximal gradient algorithm satisfies
    \begin{equation}
        F(x_{n})-F(x^*)\leq \frac{\|x_{0}-x^*\|^2}{2\tau_{\min} n}.
    \end{equation}
    Moreover, there exists $x^*\in\arg\min_x F(x)$ such that $x_k\rightarrow x^*$.
\end{theorem}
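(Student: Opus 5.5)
The plan is to prove a one-step ``sufficient decrease plus distance'' inequality and then telescope it, in the same spirit as the fundamental inequality of the projected subgradient method (\cref{lemma:subgradient:fundamental}).

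Fix $x\in\R^d$ and $\tau\le L^{-1}$, and write $x^+ = \prox_{\tau g}(x-\tau\nabla f(x)) = x-\tau T_\tau(x)$. The optimality condition of the prox gives
\[
    T_\tau(x)-\nabla f(x)\in\partial g(x^+).
\]
Hence, for every $z$,
\[
    g(z)\ge g(x^+)+\inner{T_\tau(x)-\nabla f(x)}{z-x^+}.
\]
\Cref{proximal_gradient:lemma:Lsmoothness} together with $\tau L\le 1$ yields
\[
    f(x^+)\le f(x)-\tau\inner{\nabla f(x)}{T_\tau(x)}+\frac{\tau}{2}\|T_\tau(x)\|^2.
\]
Convexity of $f$ (\cref{convexity:thm:first_order}) gives $f(x)\le f(z)+\inner{\nabla f(x)}{x-z}$. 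Adding these three inequalities, the $\nabla f(x)$ terms cancel and I expect the key estimate
\[
    F(x^+)\le F(z)+\inner{T_\tau(x)}{x-z}-\frac{\tau}{2}\|T_\tau(x)\|^2\qquad\forall z.
\]
Getting the cancellation and signs right here is the main computational step. I would check it by writing $z-x^+ = (z-x)+\tau T_\tau(x)$.

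Two consequences follow. First, with $z=x$ we get $F(x^+)\le F(x)-\frac{\tau}{2}\|T_\tau(x)\|^2$, so $F(x_k)$ is nonincreasing. Second, with $z=x^*$, completing the square gives
\[
    F(x^+)-F(x^*)\le\frac{1}{2\tau}\bigl(\|x-x^*\|^2-\|x^+-x^*\|^2\bigr).
\]
Applying this with $x=x_k$ and $\tau=\tau_k$, I would multiply by $\tau_k$ and use $\tau_k\ge\tau_{\min}$ together with $F(x_{k+1})-F(x^*)\ge0$. Summing over $k=0,\dots,n-1$ then gives
\[
    \tau_{\min}\sum_{k=1}^n\bigl(F(x_k)-F(x^*)\bigr)\le\frac12\|x_0-x^*\|^2.
\]
By monotonicity, the left side is at least $\tau_{\min}\,n\,(F(x_n)-F(x^*))$, which is the claimed rate.

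For convergence of the iterates, the distance inequality shows that $\|x_k-\hat x\|$ is nonincreasing for every minimizer $\hat x$ (Fejér monotonicity), so $(x_k)$ is bounded. By \cref{thm:bolzano} there is a subsequence $x_{k(j)}\to\bar x$. Since $F(x_k)\to F(x^*)$ and $F$ is lsc (as $f$ is continuous and $g$ is closed), we get $F(\bar x)\le\liminf_j F(x_{k(j)})=\min F$, so $\bar x$ is a minimizer. Applying Fejér monotonicity with $\hat x=\bar x$, the sequence $\|x_k-\bar x\|$ is nonincreasing and has a subsequence tending to $0$, so the whole sequence converges to $\bar x$. The main subtlety is this last step: the rate was proved for an arbitrary minimizer, and here we must reuse the same inequality with the cluster point in place of $x^*$.
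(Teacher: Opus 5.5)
Your proposal is correct and follows essentially the same route as the paper: the one-step inequality $F(x^+)\le F(z)+\inner{T_\tau(x)}{x-z}-\frac{\tau}{2}\|T_\tau(x)\|^2$ comes from the prox optimality condition, the descent lemma and convexity of $f$; then $z=x_k$ gives monotonicity, $z=x^*$ with telescoping gives the rate, and Fejér monotonicity plus a cluster-point argument gives convergence of the iterates. The only cosmetic difference is that you multiply by $\tau_k$ and use $F(x_{k+1})\ge F(x^*)$ before summing, whereas the paper bounds $\tau_k^{-1}\le\tau_{\min}^{-1}$ against the nonnegative distance decrements $\|x_{k-1}-x^*\|^2-\|x_k-x^*\|^2$.
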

\begin{proof}
    We first note that by definition of $T_\tau$ we have
    \begin{equation}
        T_{\tau_k}(x_k)-\nabla f(x_k)\in \partial g(x_k-\tau_k T_{\tau_k}(x_k)) = \partial g(x_{k+1}).
    \end{equation}
    Using this, the fact that $\nabla f(x)\in \partial f(x)$,~\cref{proximal_gradient:lemma:Lsmoothness}, and $\tau_k\leq L$, we find for any $z\in\R^d$
    \begin{equation}
        \begin{aligned}
            F(x_{k+1})
            \leq& f(x_k) -\tau_k\inner{\nabla f(x_k)}{T_{\tau_k}(x_k)} + \frac{\tau_k}{2}\|T_{\tau_k}(x_k)\|^2\\
            &+g(z) - \inner{T_{\tau_k}(x_k)-\nabla f(x_k)}{z - (x_k-\tau_k T_{\tau_k}(x_k))}\\
            \leq& f(z) + g(z)\\
            &- \inner{\nabla f(x_k)}{z-x_k}-\tau_k\inner{\nabla f(x_k)}{T_{\tau_k}(x_k)} + \frac{\tau_k}{2}\|T_{\tau_k}(x_k)\|^2\\
            &- \inner{T_{\tau_k}(x_k)-\nabla f(x_k)}{z - (x_k-\tau_k T_{\tau_k}(x_k))}\\
            \leq& F(z) - \inner{T_{\tau_k}(x_k)}{z - x_k} -\frac{\tau_k}{2}\|T_{\tau_k}(x_k)\|^2.
        \end{aligned}
    \end{equation}
    Inserting $z=x_k$ yields
    \begin{equation}
        F(x_{k+1})\leq F(x_k) - \frac{\tau_k}{2}\|T_{\tau_k}(x_k)\|^2
    \end{equation}
    which implies that the proximal-gradient method is a descent method with strict decrease except for the case $T_{\tau_k}(x_k)=0$ which, however, implies that $x_k$ is optimal. On the other hand, inserting an optimal point $z=x^*$ and noting that $T_{\tau_k} = \tau_k^{-1}(x_k-x_{k+1})$, it follows
    \begin{equation}
        \begin{aligned}
            0\leq F(x_{k+1})-F(x^*)
            \leq& - \tau_k^{-1} \inner{x_k-x_{k+1}}{x^* - x_k} -\frac{\tau_k^{-1}}{2}\|x_k-x_{k+1}\|^2\\
            \leq& -\frac{\tau_k^{-1}}{2} \left( \|x_k-x_{k+1}\|^2 - \inner{x_k-x_{k+1}}{x_k-x^*}\right)\\
            \leq& -\frac{\tau_k^{-1}}{2} \left( \|x_{k+1} - x^*\|^2 - \|x_k-x^*\|^2\right)\\
            \leq& \frac{\tau_k^{-1}}{2} \left( \|x_k-x^*\|^2 - \|x_{k+1} - x^*\|^2\right).
        \end{aligned}
    \end{equation}
    In particular, note that $ \|x_k-x^*\|^2 - \|x_{k+1} - x^*\|^2\geq0$ and, thus, $(\|x_{k+1} - x^*\|^2)_k$ is decreasing as well.
    As usual, summing over $k$ yields
    \begin{equation}
        \begin{aligned}
            \sum_{k=1}^n F(x_{k})-F(x^*)
            \leq& \sum_{k=1}^n\frac{\tau_k^{-1}}{2} \left( \|x_{k-1}-x^*\|^2 - \|x_{k} - x^*\|^2\right)\\
            \leq& \frac{\tau_{\min}^{-1}}{2}\sum_{k=1}^n \left( \|x_{k-1}-x^*\|^2 - \|x_{k} - x^*\|^2\right)\\
            =& \frac{\tau_{\min}^{-1}}{2} \left( \|x_{0}-x^*\|^2 - \|x_{n} - x^*\|^2\right)
        \end{aligned}
    \end{equation}
    By the fact that $(F(x_{k})-F(x^*))_k$ is also decreasing we can deduce
    \begin{equation}
        n(F(x_{n})-F(x^*))\leq \sum_{k=1}^n F(x_{k})-F(x^*)
        \leq \frac{\tau_{\min}^{-1}}{2} \|x_{0}-x^*\|^2
    \end{equation}
    and, thus, 
    \begin{equation}
        F(x_{n})-F(x^*)\leq \frac{\|x_{0}-x^*\|^2}{2\tau_{\min} n}.
    \end{equation}
    Lastly, we want to show convergence of the sequence $(x_k)_k$. Note that the above derivations have shown that $(\|x_k-x^*\|)_k$ is monotonically decreasing for every minimizer $x^*$ of $F$. In particular, $(x_k)_k$ is bounded, and thus, admits a convergent subsequence. 
    Let $\hat x$ be an arbitrary accumulation point of $(x_k)_k$, that is $\hat x=\lim_k x_{n(k)}$ for some subsequence. Since $F(x_k)\rightarrow \min F$ we have by lower semi-continuity
    \[
        F(\hat x))\leq \liminf_k F(x_{n(k)}) = \lim F(x_k) = \min F,
    \]
    that is, $\hat x$ is a minimizer of $F$ and $(\|x_k-\hat{x}\|)_k$ is, therefore, decreasing. As a consequence there exists $c\in\R$ such that $\|x_k-\hat{x}\|\rightarrow c$. However, this implies
    \begin{equation}
        c = \lim_k \|x_k-\hat{x}\| = \lim_k \|x_{n(k)}-\hat{x}\| = 0
    \end{equation}
    meaning that already the original sequence converges to $\hat{x}$ concluding the proof.
\end{proof}
\chapter{Acceleration}

By constructing a specifically difficult function to optimize, one can show that for any first order method which satisfies
\begin{equation}
    x_k\in x_0+\mathrm{span}\{\nabla f(x_0),\dots,\nabla f(x_{k-1})\}
\end{equation}
there exists $L$-smooth $f$ such that
\begin{equation}
    f(x_k)-\min f\geq\frac{3L\|x_0-x^*\|}{32(k+1)^2}.
\end{equation}
Note that up until now we have not reached convergence better than $\Oc(1/k)$. Closing the gap to $\Oc(1/k^2)$ is the goal of this section.

\section{Polyak's heavy ball method}

Note that we can interpret gradient descent for minimizing the (unconstrained) problem
\begin{equation}
    \min_x f(x)
\end{equation}
as a discretization of the \emph{gradient flow}
\begin{equation}
    \dot x  = -\nabla f(x).
\end{equation}
Interpreting the function $f$ as a potential, $\nabla f$ corresponds to the force enacted by this potential on a particle (with mass one). It is now natural to include friction into this model. The force caused by friction is typically modelled proportional to the velocity leading to
\begin{equation}\label{acceleration:eq:friction_ode}
    \ddot x = - \gamma \dot x -\nabla f(x).
\end{equation}
Let us discretize this ODE via $\dot x (t)\approx \frac{x(t)-x(t-h)}{h}$ and 
\begin{equation}
    \ddot x(t)
    \approx \frac{\dot x(t+h)-\dot x(t)}{h}
    \approx \frac{x(t+h)-2x(t)+x(t-h)}{h^2}.
\end{equation}
Denoting $x_{t-h}=x_{k-1}$, $x(t)=x_k$ and $x(t+h)=x_{k+1}$ we find via inserting into~\cref{acceleration:eq:friction_ode}
\begin{equation}
    x_{k+1} - 2x_k + x_{k-1} = -h\gamma (x_{k}-x_{k-1}) - h^2\nabla f(x_k)
\end{equation}
which yields after rearranging the update
\begin{equation}
    x_{k+1} = x_{k} + (1-\gamma h)(x_{k}-x_{k-1}) - h^2\nabla f(x_k).
\end{equation}
Relabelling the constants and allowing them also to be iteration depdendent we arrive at \emph{Polyak's heavy ball method}
\begin{equation}
    x_{k+1} = x_{k} + \beta_k(x_{k}-x_{k-1}) - \tau_k\nabla f(x_k).
\end{equation}
Note that even without this derivation using the ODE perspective, the update rule is rather intuitive: The update contains in addition to the descent direction for $f$ also an additional \emph{inertia} term, which adds a compontent in the same direction as the previous step.

The convergence proof of the heavy ball method will rely on the following basic results from linear algebra which we include here for the sake of completeness.

\begin{lemma}\label{acceleration:lemma:spectral_radius}
    For any $A\in \R^{d\times d}$ we have $\lim_{n\rightarrow\infty}\|A^n\|^{1/n} = \rho(A)$ where 
    \begin{equation}
        \rho(A) = \max_{i}|\lambda_i(A)|
    \end{equation}
    with $\lambda_i(A)$ the eigenvalues of $A$.
\end{lemma}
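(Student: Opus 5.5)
To prove Gelfand's formula $\lim_{n\to\infty}\|A^n\|^{1/n}=\rho(A)$, I will first note that the statement is independent of the chosen matrix norm. By equivalence of norms, $m\|A^n\|_a\le \|A^n\|_b\le M\|A^n\|_a$, and $m^{1/n},M^{1/n}\to 1$. I may therefore work with any convenient norm, and in particular with an induced (submultiplicative) norm. Since eigenvalues may be complex, I will regard $A$ as acting on $\mathbb{C}^d$; this changes induced norms only up to constants, which again are harmless by the same $m^{1/n},M^{1/n}$ argument.

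\emph{Lower bound.} Let $\lambda$ be an eigenvalue with $|\lambda|=\rho(A)$ and eigenvector $v\neq 0$. Then $A^n v=\lambda^n v$, so for the induced norm
\[
\|A^n\|\ge \frac{\|A^nv\|}{\|v\|}=|\lambda|^n=\rho(A)^n .
\]
Hence $\|A^n\|^{1/n}\ge \rho(A)$ for every $n$, and consequently $\liminf_n \|A^n\|^{1/n}\ge\rho(A)$.

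\emph{Upper bound.} Fix $\epsilon>0$ and consider $B=A/(\rho(A)+\epsilon)$, which has $\rho(B)<1$. It suffices to show that $B^n\to 0$, since then $\|B^n\|\le C$ for all $n$, giving
\[
\|A^n\|^{1/n}\le C^{1/n}(\rho(A)+\epsilon)\;\longrightarrow\;\rho(A)+\epsilon .
\]
Letting $\epsilon\to 0$ then yields $\limsup_n\|A^n\|^{1/n}\le\rho(A)$, which together with the lower bound proves the claim.

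To show $B^n\to 0$ I will use the Jordan normal form $B=PJP^{-1}$, so that $B^n=PJ^nP^{-1}$. Each Jordan block is $\lambda I+N$ with $N$ nilpotent, $N^k=0$ for $k$ at least the block size $s\le d$. By the binomial theorem,
\[
(\lambda I+N)^n=\sum_{j=0}^{s-1}\binom{n}{j}\lambda^{n-j}N^j ,
\]
and every entry is bounded by $n^{d}|\lambda|^{n-d}$ up to a constant. Since $|\lambda|<1$, this tends to zero.

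The main obstacle is precisely this step: controlling the non-diagonalizable case, where powers of Jordan blocks produce polynomial growth that must be dominated by the geometric decay $|\lambda|^n$. An alternative that avoids the Jordan form is Schur triangularization together with a diagonal scaling $D=\diag(1,\delta,\dots,\delta^{d-1})$. Conjugating by $D$ makes the off-diagonal entries arbitrarily small, which produces a norm $\|x\|_*=\|D^{-1}U^*x\|_\infty$ satisfying $\|A\|_*\le\rho(A)+\epsilon$. Submultiplicativity then gives $\|A^n\|_*\le(\rho(A)+\epsilon)^n$ directly. I would present the Jordan-form route as the primary argument and mention the Schur construction as the alternative.
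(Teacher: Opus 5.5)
Your proof is correct and follows essentially the same route as the paper: the eigenvector argument for the lower bound, and the Jordan normal form with a binomial expansion of $(\Lambda + N)^n$ for the upper bound, where polynomial growth is dominated by geometric decay. Your rescaling $B = A/(\rho(A)+\epsilon)$ is only a cosmetic variation, but it has the small advantage of avoiding the factor $\rho(A)^{-k}$ in the paper's estimate, which is undefined when $\rho(A)=0$.
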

\begin{proof}
    First of all it is easy to see that $\liminf_{n\rightarrow\infty}\|A^n\|^{1/n} \geq \rho(A)$. Indeed, let $v$ be a normalized eigenvector for the largest (in absolute value) eigenvalue of $A$, then we have
    \begin{equation}
        \|A^n\|^{1/n} \geq \|A^n v\|^{1/n} = \rho(A)
    \end{equation}
    and thus also $\liminf_{n\rightarrow\infty}\|A^n\|^{1/n} \geq \rho(A)$.
    The converse inequality is a little more subtle.
    Recall from linar algebra that for every square matrix we can derive the Jordan normal form
    \begin{equation}
        A = B^{-1}D B
    \end{equation}
    with 
    \begin{equation}
        D = \begin{pmatrix}
            \lambda_1& \rho_1 & \dots&\dots&\\
            0 & \lambda_2& \rho_2&\dots&\\
            &&\ddots&\ddots&\\
            & &\dots & \lambda_{d-1}&\rho_{d-1}&\\
            & & &\dots&\lambda_d\\
        \end{pmatrix}
    \end{equation}
    with $\lambda_i$ the eigenvalues (with multiplicity) and $\rho_i\in \{0,1\}$ and which we write as $D=\Lambda + J$ where $\Lambda$ contains the diagonal entries and $J$ the off diagonals. Note that $J^k=0$ whenever $k$ is larger than the largest Jordan block. In particular, $J^k=0$ for $k\geq d-1$. Since $\Lambda$ and $J$ commute, we, thus, find
    \begin{equation}
        \begin{aligned}
            \|D^n\| = 
            \|\sum_{k=0}^{d-2}
            \begin{pmatrix}
                n\\
                k
            \end{pmatrix}\Lambda^{n-k}J^k\|
            \leq \sum_{k=0}^{d-2}
            \begin{pmatrix}
                n\\
                k
            \end{pmatrix}\|\Lambda^{n-k}\|
            \leq& \sum_{k=0}^{d-2} \frac{n^k}{k!}\|\Lambda^{n-k}\|\\
            \leq& \sum_{k=0}^{d-2} \frac{n^k}{k!}\rho(A)^{n-k}\\
            =& \rho(A)^{n}\sum_{k=0}^{d-2} \frac{n^k}{k!}\rho(A)^{-k}\\
        \end{aligned}
    \end{equation}
    which implies
    \begin{equation}
        \begin{aligned}
            \|D^n\|^{1/n}
            \leq& \rho(A)\left(\sum_{k=0}^{d-2} \frac{n^k}{k!}\rho(A)^{-k}\right)^{1/n}
            \leq& \rho(A)\left((d-1)n^{d-2}\max_{k=0,\dots, d-2}\rho(A)^{-k}\right)^{1/n}
        \end{aligned}
    \end{equation}
    where we note that tha maximizing $k$ above is either $k=0$ or $k=d-2$ when $\rho(A)\leq 1$ or $\rho(A)>1$, respectively. Since $\lim_{n\rightarrow \infty} n^{1/n}=1$ it follows \begin{equation}
        \begin{aligned}
            \limsup_n\|D^n\|^{1/n}
            \leq \rho(A).
        \end{aligned}
    \end{equation}
    Lastly, we conclude
    \begin{equation}
        \limsup_n\|A^n\|^{1/n} 
        = \limsup_n \|B^{-1}D^nB\|^{1/n}
        \leq \limsup_n\|B^{-1}\|^{1/n}\|D^n\|^{1/n} \|B\|^{1/n}\leq \rho(A)
    \end{equation}
\end{proof}

\begin{cor}\label{acceleration:cor:spectral_radius}
    It holds that $\lim_k A^k=0$ if and only if $\rho(A)<1$. Moreover, in this case for every $\epsilon>0$ there exists $c(\epsilon)>0$ such that $\|A^k\|\leq c(\epsilon)(\rho(A)+\epsilon)^k$.
\end{cor}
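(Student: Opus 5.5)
The plan is to derive both parts of the statement from \cref{acceleration:lemma:spectral_radius}, which gives $\lim_n\|A^n\|^{1/n}=\rho(A)$. I will prove the quantitative bound first and then obtain the equivalence from it.

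\emph{Quantitative bound.} Fix $\epsilon>0$. Since $\|A^k\|^{1/k}\to\rho(A)<\rho(A)+\epsilon$, there is $K$ such that $\|A^k\|\leq(\rho(A)+\epsilon)^k$ for all $k\geq K$. For the finitely many indices $k<K$, set
\[
c(\epsilon)\coloneqq\max\Bigl\{1,\max_{0\leq k<K}\|A^k\|(\rho(A)+\epsilon)^{-k}\Bigr\},
\]
which is finite because $\rho(A)+\epsilon>0$. Then $\|A^k\|\leq c(\epsilon)(\rho(A)+\epsilon)^k$ for every $k$. This argument does not actually need $\rho(A)<1$.

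\emph{Direction $\rho(A)<1\Rightarrow A^k\to0$.} Choose $\epsilon=(1-\rho(A))/2$, so that $\rho(A)+\epsilon<1$. The bound just proved then gives $\|A^k\|\to0$.

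\emph{Direction $A^k\to0\Rightarrow\rho(A)<1$.} I will argue directly with an eigenvector. Let $\lambda$ be an eigenvalue with $|\lambda|=\rho(A)$ and $v$ a corresponding eigenvector, possibly complex. Then $\|A^kv\|=|\lambda|^k\|v\|$. If $\rho(A)\geq1$, this does not tend to zero, which contradicts $A^k\to0$. The only subtlety is that $v$ may be complex. One can either work in $\mathbb{C}^d$, where $A^k\to0$ entrywise still holds, or apply $A^k$ to the real part and the imaginary part of $v$ separately, which are real vectors. Alternatively, $\|A^k\|\to0$ gives $\|A^k\|^{1/k}\leq1$ for large $k$, so $\rho(A)\leq1$ by the lemma. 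The case $\rho(A)=1$ still has to be excluded, and the eigenvector argument is exactly what handles it.

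The only step needing care is this reverse direction, specifically excluding $\rho(A)=1$ and dealing with complex eigenvectors. The rest is routine.
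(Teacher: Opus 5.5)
Your proof is correct and follows the paper's argument. The bound $\|A^k\|\leq c(\epsilon)(\rho(A)+\epsilon)^k$ is obtained exactly as in the paper: eventually $\|A^k\|^{1/k}\leq\rho(A)+\epsilon$, and the finitely many remaining indices are absorbed into the constant. For the equivalence, the paper only says it follows directly from the lemma, whereas your eigenvector estimate $\|A^k\|\geq\rho(A)^k$ (the same one used in the first half of the lemma's proof) explicitly excludes $\rho(A)=1$, which fills in a step the paper leaves implicit.
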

\begin{proof}
    The \enquote{if and only if}-part of the theorem follows from directly from~\cref{acceleration:lemma:spectral_radius}.
    Moreover, by~\cref{acceleration:lemma:spectral_radius} we can choose $n$ sufficiently large such that $\|A^k\|^{1/k}\leq \rho(A) + \epsilon$ for $k\geq n$.
    For such $k$ it follows
    \begin{equation}
        \|A^k\|\leq (\rho(A)+\epsilon)^k.
    \end{equation}
    Since there also exists some $c>0$ such that 
    \begin{equation}
        \|A^k\|\leq c(\rho(A)+\epsilon)^k.
    \end{equation}
    for $k\leq n$ the result follows.
\end{proof}

We can now prove convergence of the heavy ball method. The technique we use is quite standard: We analyse the eigenvalues of the linearization of the update rule.

\begin{theorem}[Convergence of heavy ball]
    Let 
    \begin{equation}
        0\leq \beta <1, \quad 0<\tau <2(1+\beta)/L,\quad \mu \Id\preceq \nabla^2 f(x^*)\preceq L\Id.
    \end{equation}
    There exists $\epsilon>0$ such that for $x_0,x_1\in B_\epsilon(x^*)$ it holds that $x_k\rightarrow x^*$. More specifically, 
    \begin{equation}
        \|x_k-x^*\|\leq c(\delta)(q+\delta)^k,\quad 0\leq q< 1, 0<\delta<1-q.
    \end{equation}
    Moreover, we can derive the optimal parameters (\ie, the minimal value of $q$) as
    \begin{equation}
        q = \frac{\sqrt{L}-\sqrt{\mu}}{\sqrt{L}+\sqrt{\mu}},
        \quad \tau = \frac{4}{(\sqrt{L} + \sqrt{\mu})^2}
        ,\quad \beta = \left(\frac{\sqrt{L}-\sqrt{\mu}}{\sqrt{L}+\sqrt{\mu}}\right)^2
    \end{equation}
\end{theorem}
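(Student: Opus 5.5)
We write the heavy ball iteration as a first-order recursion on the stacked error $z_k = (x_k - x^*, x_{k-1}-x^*)\in\R^{2d}$. Since $\nabla f(x^*)=0$, a Taylor expansion gives $\nabla f(x_k) = \nabla^2 f(x^*)(x_k-x^*) + o(\|x_k-x^*\|)$ (assuming $f$ twice continuously differentiable near $x^*$). This leads to
\begin{equation}
    z_{k+1} = T z_k + r(z_k),\qquad T = \begin{pmatrix} (1+\beta)\Id - \tau H & -\beta \Id\\ \Id & 0\end{pmatrix},\quad H = \nabla^2 f(x^*),
\end{equation}
with $\|r(z)\| = o(\|z\|)$. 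The plan is to show $\rho(T)<1$ and then use \cref{acceleration:cor:spectral_radius} to obtain $\|T^k\|\leq c(\delta)(q+\delta)^k$ with $q=\rho(T)$. A perturbation argument then handles the remainder $r$.

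\textbf{Spectral analysis.} Diagonalize $H = U\diag(\lambda_i)U^T$ with $\mu\leq\lambda_i\leq L$. Conjugating by $\diag(U,U)$ and permuting coordinates, $T$ becomes block diagonal with $2\times 2$ blocks
\begin{equation}
    T_i = \begin{pmatrix} 1+\beta-\tau\lambda_i & -\beta\\ 1 & 0\end{pmatrix}.
\end{equation}
The characteristic polynomial of $T_i$ is $s^2 - (1+\beta-\tau\lambda_i)s + \beta$. By the Schur--Cohn (Jury) criterion, both roots lie in the open unit disk iff $|\beta|<1$ and $|1+\beta-\tau\lambda_i| < 1+\beta$. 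The latter is equivalent to $0<\tau\lambda_i<2(1+\beta)$, which follows from the hypotheses on $\tau$ and $\beta$ together with $\lambda_i\in[\mu,L]$, $\mu>0$. One can also verify this directly:
\begin{itemize}
    \item For complex roots, $|s|^2=\beta<1$.
    \item For real roots, evaluate the polynomial at $\pm1$. The values are $\tau\lambda_i>0$ and $2(1+\beta)-\tau\lambda_i>0$, and the vertex lies in $(-1,1)$.
\end{itemize}
Hence $q:=\rho(T)=\max_i\rho(T_i)<1$.

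\textbf{Nonlinear perturbation.} Fix $\delta\in(0,1-q)$ and use \cref{acceleration:cor:spectral_radius} with $\delta/2$. The cleanest route is to define an adapted norm $\|z\|_* = \sum_{j\ge 0}(q+\delta/2)^{-j}\|T^jz\|$, truncated or summed. This norm is equivalent to $\|\cdot\|$ and satisfies $\|Tz\|_*\leq (q+\delta/2)\|z\|_*$. Next choose $\epsilon$ so small that $\|r(z)\|_*\leq (\delta/2)\|z\|_*$ on the corresponding ball. Then
\begin{equation}
    \|z_{k+1}\|_*\leq (q+\delta)\|z_k\|_*,
\end{equation}
and by induction the iterates never leave the ball. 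Equivalence of norms then yields $\|x_k-x^*\|\leq c(\delta)(q+\delta)^k$. I expect this step to be the main technical obstacle: one must handle the dependence of $\epsilon$ on $\delta$ and keep the invariance of the neighborhood honest.

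\textbf{Optimal parameters.} We minimize $\max_{\lambda\in[\mu,L]}\rho(T_\lambda)$. In the complex-root regime $\rho(T_\lambda)=\sqrt\beta$, independent of $\lambda$. Complex roots occur iff $(1+\beta-\tau\lambda)^2\leq 4\beta$, i.e. $(1-\sqrt\beta)^2\leq\tau\lambda\leq(1+\sqrt\beta)^2$. Outside this interval the spectral radius exceeds $\sqrt\beta$, so it is optimal to make the interval exactly cover $[\mu,L]$:
\begin{equation}
    \tau\mu=(1-\sqrt\beta)^2,\qquad \tau L=(1+\sqrt\beta)^2.
\end{equation}
Dividing the two equations gives $\sqrt\beta = (\sqrt L-\sqrt\mu)/(\sqrt L+\sqrt\mu)$. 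Adding their square roots gives $\sqrt\tau(\sqrt L+\sqrt\mu)=2$, so $\tau = 4/(\sqrt L+\sqrt\mu)^2$. Finally $q=\sqrt\beta$, which matches the stated values.
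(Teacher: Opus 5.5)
Your convergence argument is correct and follows the paper's route. You stack the errors, linearize at $x^*$, reduce the spectrum of the iteration matrix to the roots of $s^2-(1+\beta-\tau\eta)s+\beta$ for the eigenvalues $\eta$ of $\nabla^2 f(x^*)$, and invoke \cref{acceleration:cor:spectral_radius}. Your Schur--Cohn/evaluation-at-$\pm1$ argument for $\rho(T)<1$ is cleaner and more rigorous than the paper's circle estimate. Your adapted norm also actually carries out the perturbation step that the paper leaves as an exercise. Two small repairs are needed there:
\begin{enumerate}
\item For the untruncated series defining $\|\cdot\|_*$ to converge, apply the corollary with some $\delta'<\delta/2$ rather than with $\delta/2$ itself.
\item The dependence of $\epsilon$ on $\delta$ disappears if you fix one $\delta_0$. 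Its geometric decay brings every trajectory started in $B_{\epsilon(\delta_0)}(x^*)$ into the invariant region for a given $\delta$ after a number of steps depending only on $\delta$, and you absorb those steps into $c(\delta)$.
\end{enumerate}

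The genuine gap is in the optimality part. The paper does not prove it at all, but the statement asserts it. From ``outside $[(1-\sqrt\beta)^2,(1+\sqrt\beta)^2]$ the spectral radius exceeds $\sqrt\beta$'' you can only conclude something about parameters for which this interval contains $\tau[\mu,L]$: among those, the choice making the endpoints coincide is best. It does not rule out a pair $(\tau,\beta)$ with $\sqrt\beta<q^*$, where $q^*=\frac{\sqrt L-\sqrt\mu}{\sqrt L+\sqrt\mu}$, for which some $\lambda\in[\mu,L]$ yield real roots of modulus larger than $\sqrt\beta$ but still below $q^*$. Already for $\beta=0$ your inequality says nothing, and you would have to compare $\frac{L-\mu}{L+\mu}$ with $q^*$ separately.

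What is missing is a lower bound valid for all parameters. Suppose $\rho(T_\mu),\rho(T_L)\le r<1$. Then $\beta=s_1s_2\le r^2$. Real-coefficient quadratics with both roots in the closed disk of radius $r$ satisfy $|s_1+s_2|\le r+s_1s_2/r$. Hence $|1+\beta-\tau\lambda|\le r+\beta/r$ for $\lambda\in\{\mu,L\}$, i.e.\ $\tau\mu\ge(1-r)(1-\beta/r)$ and $\tau L\le(1+r)(1+\beta/r)$. Using $\beta/r\le r$ you obtain
\[
\frac{L}{\mu}\le\frac{(1+r)(1+\beta/r)}{(1-r)(1-\beta/r)}\le\left(\frac{1+r}{1-r}\right)^2,
\]
which is exactly $r\ge q^*$. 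Together with your computation that $q^*$ is attained at the stated $(\tau,\beta)$, this proves optimality.
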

\begin{proof}
    Note that, as is, the scheme is, in fact, a recursion of depth two. We, thus, rewrite the update as
    \begin{equation}
        \begin{aligned}
            \begin{bmatrix}
                x_{k+1}\\
                x_k
            \end{bmatrix}
            &= 
            \begin{bmatrix}
                x_{k} + \beta(x_{k}-x_{k-1}) - \tau\nabla f(x_k)\\
                x_k
            \end{bmatrix}\\
            &=
            \begin{pmatrix}
                1+\beta - \tau \nabla f&-\beta\\
                1&0
            \end{pmatrix}
            \begin{bmatrix}
                x_k\\
                x_{k-1}
            \end{bmatrix}.
        \end{aligned}
    \end{equation}
    Moreover, we can subtract the minimizer $x^*$, \ie, 
    \begin{equation}
        \begin{aligned}
            \begin{bmatrix}
                x_{k+1}-x^*\\
                x_k-x^*
            \end{bmatrix}
            &=
            \begin{pmatrix}
                1+\beta - \tau \nabla f&-\beta\\
                1&0
            \end{pmatrix}
            \begin{bmatrix}
                x_k-x^*\\
                x_{k-1}-x^*
            \end{bmatrix}.
        \end{aligned}
    \end{equation}
    Moreover, we can \emph{linearize} the update as follows: By the fundamental theorem of calculus we have
    \begin{equation}
        \begin{aligned}
            \nabla f(x_k) - \nabla f(x^*) 
            =& \int_0^1 \nabla^2 f(x^* + t(x_k-x^*))\dd t (x_k-x^*)\\
            =& \nabla^2 f(x^*)(x_k-x^*) + \int_0^1 \nabla^2 f(x^* + t(x_k-x^*))-\nabla^2 f(x^*)\dd t (x_k-x^*).
        \end{aligned}
    \end{equation}
    Thus, defining $z_k = \int_0^1 \nabla^2 f(x^* + t(x_k-x^*))-\nabla^2 f(x^*)\dd t (x_k-x^*)$ we have
    \begin{equation}
        \begin{aligned}
            \begin{bmatrix}
                x_{k+1}-x^*\\
                x_k-x^*
            \end{bmatrix}
            &=
            \underbrace{\begin{pmatrix}
                1+\beta - \tau \nabla^2 f(x^*)&-\beta\\
                1&0
            \end{pmatrix}}_{\mathcal{H}}
            \begin{bmatrix}
                x_k-x^*\\
                x_{k-1}-x^*
            \end{bmatrix}
            + \begin{bmatrix}
                z_k\\
                0
            \end{bmatrix}
        \end{aligned}
    \end{equation}
    In view of~\cref{acceleration:cor:spectral_radius}, we analyze the eigenvalues of $\mathcal{H}$. Let $(v,w)$ be an eigenvector of $\mathcal{H}$ with eigenvalue $\lambda$. We find
    \begin{equation}
        \begin{cases}
            (1+\beta -\tau \nabla^2 f(x^*))v - \beta w &= \lambda v\\
            v &= \lambda w.
        \end{cases}
    \end{equation}
    Inserting the second into the first equation it follows
    \begin{equation}
        (1+\beta -\tau \nabla^2 f(x^*))v = \lambda v + \beta\lambda^{-1} v.
    \end{equation}
    This, in turn, implies that there exist an eigenvalue $\eta$ of $\nabla^2f(x^*)$ such that
    \begin{equation}
        1+\beta -\tau \eta = \lambda + \beta\lambda^{-1},
    \end{equation}
    respectively
    \begin{equation}
        \lambda^2 - \lambda(1+\beta -\tau \eta) = -\beta
    \end{equation}
    by completing the square we obtain
    \begin{equation}\label{acceleration:eq:heavy_ball1}
        \left(\lambda - \frac{1+\beta -\tau \eta}{2}\right)^2 = \left(\frac{1+\beta -\tau \eta}{2}\right)^2 - \beta.
    \end{equation}
    Our goal is to ensure $|\lambda|<1$. Note that~\eqref{acceleration:eq:heavy_ball1} shows that $\lambda$ is contained within a circle in the complex plane with center $c \coloneqq \frac{1+\beta -\tau \eta}{2}$ and radius $\sqrt{c^2 - \beta}$. It is easy to see that it suffices to estimate the values of $\lambda$ for maximal and minimal $c$.
    A strict upper bound for $c$ is obtained by the estimate $\tau\eta>0$ yielding $c = (1+\beta)/2$ and 
    \begin{equation}
        \begin{aligned}
            |\lambda|\leq c+c^2-\beta = \frac{3+\beta^2}{4}<1
        \end{aligned}
    \end{equation}
    due to $0\leq \beta<1$.
    Conversely, a strict lower bound for $c$ is obtained for $\tau = 2(1+\beta)/L$ and $\eta = L$ leading to $c = -(1+\beta)$ and a similar estimate as above. Thus, we find $\rho(\mathcal{H})<1$. Noting that $z_k = o(x_k-x^*)$ the result follows where we leave the remaining details as an exercise.
\end{proof}


\section{Nesterov acceleration}
Polyak's method is in fact optimal in the sense that it achieves the best possible convergence rate. However, this is only true for strongly convex and twice continuously differentiable $f$. The next acceleration we consider achieves optimal convergence under significantly weaker conditions. We consider now again problems of the form 
\begin{equation}
    \min_x F(x)\coloneqq f(x) + g(x)
\end{equation}
where only $f$ is $L$-smooth. The update of the the \enquote{fast iterative shrinkage-thresholding algorithm} (FISTA, or fast proximal gradient method) reads as
\begin{equation}
    \begin{cases}
        t_{k+1} = \frac{1+\sqrt{1+4t_k^2}}{2}\\
        \beta_k = \frac{t_k-1}{t_{k+1}}\\
        y_k = x_k + \beta_k(x_k-x_{k-1})\\
        x_{k+1} = \prox_{\tau g}(y_k - \tau \nabla f(y_k)).
    \end{cases}
\end{equation}
In the case $g\equiv 0$ the method closely resebles the heavy ball algorithm. However, the gradient is now evaluated after adding the inertia term. Moreover the convergence relies on a subtle adaptive choice of the inertia parameter.

\begin{lemma}
    Set $t_1 = 1$. It holds true that 
    \begin{equation}
        t_k \geq \frac{k+1}{2}
    \end{equation}
    for all $k$.
\end{lemma}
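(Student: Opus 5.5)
The plan is to prove the claim by induction on $k$, using only the recursion $t_{k+1} = \frac{1+\sqrt{1+4t_k^2}}{2}$ and the monotonicity of the map $s\mapsto \frac{1+\sqrt{1+4s^2}}{2}$ on $[0,\infty)$.

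\textbf{Base case.} For $k=1$ we have $t_1 = 1 = \frac{1+1}{2}$, so the inequality holds with equality.

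\textbf{Inductive step.} Assume $t_k\geq \frac{k+1}{2}$; in particular $t_k\geq 0$. The function $s\mapsto \frac{1+\sqrt{1+4s^2}}{2}$ is non-decreasing for $s\geq 0$. Applying it to the induction hypothesis gives
\begin{equation}
    t_{k+1} = \frac{1+\sqrt{1+4t_k^2}}{2} \geq \frac{1+\sqrt{1+(k+1)^2}}{2}.
\end{equation}
Dropping the $1$ under the root only makes the right-hand side smaller, since $\sqrt{1+(k+1)^2}\geq k+1$. Hence
\begin{equation}
    t_{k+1}\geq \frac{1+(k+1)}{2} = \frac{(k+1)+1}{2},
\end{equation}
which is exactly the claim for $k+1$.

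There is no real obstacle here. The only point needing care is the monotonicity step, which requires $t_k\geq 0$. This is guaranteed both by the induction hypothesis and directly by the recursion, since every $t_{k+1}$ is at least $\frac{1+1}{2}=1>0$.
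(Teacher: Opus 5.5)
Your proposal is correct and follows the same route as the paper: induction from $t_1=1$, using the chain $t_{k+1}\geq \frac{1+\sqrt{1+(k+1)^2}}{2}\geq \frac{(k+1)+1}{2}$. The paper uses the same two inequalities; the only difference is that you spell out the monotonicity and nonnegativity justification for the first one, which the paper leaves implicit.
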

\begin{proof}
    The proof follows by straightforward induction. The assertion holds true for $k=1$ and, using the induction hypothesis, we find
    \begin{equation}
        t_{k+1} = \frac{1+\sqrt{1+4t_k^2}}{2}\geq \frac{1+\sqrt{1+(k+1)^2}}{2}\geq \frac{1+(k+1)}{2}.
    \end{equation}
\end{proof}

\begin{lemma}[Fundamental prox-grad inequality]
    Denote 
    \begin{equation}
        P_\tau(x) = \prox_{\tau g}(x-\tau \nabla f(x)).
    \end{equation}
    For any $\tau\leq \frac{1}{L}$ it holds true that 
    \begin{equation}
        F(x) - F(P_\tau (y))\geq \frac{1}{2\tau}\|x-P_\tau (y)\|^2 - \frac{1}{2\tau}\|x-y\|^2 + \ell_f(x,y)
    \end{equation}
    where $\ell_f(x,y) = f(x) - f(y) - \langle x-y,\nabla f(y)\rangle$
\end{lemma}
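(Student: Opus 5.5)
Set $p \coloneqq P_\tau(y)$. The plan is to combine three ingredients: the optimality condition of the prox, convexity of $g$, and the descent bound of \cref{proximal_gradient:lemma:Lsmoothness}. Then a three-point identity for the squared norms turns the cross term into the difference $\frac{1}{2\tau}\|x-p\|^2-\frac{1}{2\tau}\|x-y\|^2$.

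\textbf{Step 1: bound $F(p)$ from above.} Since $f$ is $L$-smooth and $\tau\le 1/L$, \cref{proximal_gradient:lemma:Lsmoothness} applied at the base point $y$ gives
\begin{equation*}
    f(p)\le f(y)+\inner{\nabla f(y)}{p-y}+\frac{1}{2\tau}\|p-y\|^2 .
\end{equation*}

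\textbf{Step 2: a subgradient of $g$ at $p$.} By the characterization of the prox (the lemma showing $y=\prox_f(x)$ iff $0\in (y-x)+\partial f(y)$, applied to $\tau g$ together with the scaling rule), we have
\begin{equation*}
    \frac{1}{\tau}(y-\tau\nabla f(y)-p)\in\partial g(p).
\end{equation*}
The subgradient inequality then yields
\begin{equation*}
    g(x)\ge g(p)+\frac1\tau\inner{y-p}{x-p}-\inner{\nabla f(y)}{x-p}.
\end{equation*}

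\textbf{Step 3: the $f$-part at $x$.} By the definition of $\ell_f$,
\begin{equation*}
    f(x)=f(y)+\inner{\nabla f(y)}{x-y}+\ell_f(x,y).
\end{equation*}

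\textbf{Step 4: combine.} Add the identities and inequalities from Steps 2 and 3, and subtract the bound from Step 1. All the gradient terms cancel:
\begin{equation*}
    \inner{\nabla f(y)}{x-y}-\inner{\nabla f(y)}{x-p}-\inner{\nabla f(y)}{p-y}=0 .
\end{equation*}
What remains is
\begin{equation*}
    F(x)-F(p)\ge \ell_f(x,y)+\frac1\tau\inner{y-p}{x-p}-\frac{1}{2\tau}\|p-y\|^2 .
\end{equation*}

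\textbf{Step 5: three-point identity.} Write $y-p=(x-p)-(x-y)$ and expand:
\begin{equation*}
    2\inner{y-p}{x-p}-\|p-y\|^2=\|x-p\|^2-\|x-y\|^2 .
\end{equation*}
Substituting this into the bound from Step 4 gives exactly the claimed inequality.

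The only delicate point is bookkeeping. One must get the sign of the prox subgradient right, and check that the $\frac{L}{2}$ term can be replaced by $\frac{1}{2\tau}$ because $\tau\le 1/L$. Everything else is algebra.
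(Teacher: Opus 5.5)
Your proof is correct. The gradient terms cancel as you claim, and the identity $2\inner{y-p}{x-p}-\|p-y\|^2=\|x-p\|^2-\|x-y\|^2$ gives exactly the stated bound.

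The paper shares half of your argument: the descent lemma with $\tau\le 1/L$. It phrases this as $F(p)\le\phi(p)$ for the model $\phi(z)=f(y)+\inner{z-y}{\nabla f(y)}+\frac{1}{2\tau}\|z-y\|^2+g(z)$, whose minimizer is $p=P_\tau(y)$. For the other half it never writes down the prox optimality condition. Instead it notes that $\phi$ is $\frac1\tau$-strongly convex and invokes quadratic growth around the minimizer, $\phi(x)-\phi(p)\ge\frac{1}{2\tau}\|x-p\|^2$. It then expands $\phi(x)$ using $f(y)+\inner{x-y}{\nabla f(y)}=f(x)-\ell_f(x,y)$.

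Your Steps 2 and 5 are precisely a by-hand proof of that growth inequality. They use the subgradient $\frac1\tau(y-p)-\nabla f(y)\in\partial g(p)$, the subgradient inequality, and the three-point identity.

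The paper's route is shorter and highlights the majorize--minimize reading of the prox-gradient step. Its key fact follows from the zero-order definition of strong convexity alone: compare $\phi(p)$ with $\phi(p+t(x-p))$ and let $t\to0$. The notes, however, never state this fact for nonsmooth functions. Your route makes explicit where each term comes from. In exchange, it relies on the subdifferential characterization of the prox, and hence on subdifferential calculus, which the zero-order argument avoids.
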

\begin{proof}
    First note that we can write
    \begin{equation}
        \begin{aligned}
            P_\tau (y) 
            =& \arg\min_z \frac{1}{2\tau}\|z-(y-\tau \nabla f(y))\|^2 + g(z)\\
            =& \arg\min_z f(y) + \langle z-y,\nabla f(y)\rangle + \frac{1}{2\tau}\|z-y\|^2 + g(z).
        \end{aligned}
    \end{equation}
    As a sidenote, this shows that the proximal-gradient method can be interpreted as a proximal method of $g$ plus a linear approximation of $f$. Let us denote 
    \begin{equation}
        \phi(z) = f(y) + \langle z-y,\nabla f(y)\rangle + \frac{1}{2\tau}\|z-y\|^2 + g(z).
    \end{equation}
    By convexity of and $g$, $\phi$ is $1/\tau$-strongly convex and we find
    \begin{equation}
        \phi(x) - \phi(P_\tau (y))\geq \frac{1}{2\tau}\|x-P_\tau (y)\|^2
    \end{equation}
    Moreover, by convexity and $L$-smoothness of $f$ and the fact that $\frac{1}{\tau}\geq L$
    \begin{equation}
        f(z) \leq f(y) + \langle z-y,\nabla f(y)\rangle + \frac{L}{2}\|y-z\| \leq f(y) + \langle z-y,\nabla f(y)\rangle + \frac{1}{2\tau}\|z-y\|^2
    \end{equation}
    and thus
    \begin{equation}
        \phi(x) - F(P_\tau (y))\geq \frac{1}{2\tau}\|x-P_\tau (y)\|^2
    \end{equation}
    Pugging in $\phi$ yields
    \begin{equation}
        f(y) + \langle x-y,\nabla f(y)\rangle + \frac{1}{2\tau}\|x-y\|^2 + g(x) - F(P_\tau (y))\geq \frac{1}{2\tau}\|x-P_\tau (y)\|^2
    \end{equation}
    concluding the proof.
\end{proof}
\begin{theorem}[Convergence of FISTA]
    Let the iterates of FISTA be initialized as $x_0=x_1$ and $t_1=1$ and the step size $\tau\leq \frac{1}{L}$ with $L$ the Lipschitz constant of $\nabla f$. Then we obtain the convergence 
    \begin{equation}
        F(x_k) - F(x^*)\leq \frac{C}{k^2 \tau}
    \end{equation}
    with a constnat $C$ depending on the initialization.
\end{theorem}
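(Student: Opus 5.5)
The plan is the standard Beck--Teboulle Lyapunov argument. Everything rests on the fundamental prox-grad inequality, whose extra term $\ell_f(x,y)\geq 0$ I discard using convexity of $f$ (the first-order characterization, \cref{convexity:thm:first_order}). Write $v_k = F(x_k)-F(x^*)$. I apply the inequality with $y=y_k$, so that $P_\tau(y_k)=x_{k+1}$, and with two choices of $x$.

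The first choice is $x=x_k$:
\[
v_k - v_{k+1}\geq \tfrac{1}{2\tau}\bigl(\|x_{k+1}-x_k\|^2-\|y_k-x_k\|^2\bigr).
\]
The second choice is $x=x^*$:
\[
-v_{k+1}\geq \tfrac{1}{2\tau}\bigl(\|x_{k+1}-x^*\|^2-\|y_k-x^*\|^2\bigr).
\]
Since $t_k\geq 1$, I multiply the first inequality by $t_k-1\geq 0$ and add the second. Equivalently, I use the convex-combination point $x=(1-t_k^{-1})x_k+t_k^{-1}x^*$ together with convexity of $F$. This gives
\[
(t_k-1)v_k - t_k v_{k+1}\geq \tfrac{1}{2\tau t_k}\bigl(\|t_kx_{k+1}-(t_k-1)x_k-x^*\|^2-\|t_ky_k-(t_k-1)x_k-x^*\|^2\bigr).
\]
The second norm comes from the combination of the two $\|y_k-\cdot\|^2$ terms, expanded via the identity $\|a\|^2-\|b\|^2$.

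Next I multiply by $t_k$ and use the defining relation of the sequence. From $t_{k+1}=\tfrac{1+\sqrt{1+4t_k^2}}{2}$ we get $t_{k+1}^2-t_{k+1}=t_k^2$, so $t_k(t_k-1)=t_{k-1}^2$. Now set $u_k = t_kx_{k+1}-(t_k-1)x_k-x^*$. The momentum rule $y_k=x_k+\frac{t_{k-1}-1}{t_k}(x_k-x_{k-1})$ is designed exactly so that
\[
t_ky_k-(t_k-1)x_k-x^* = t_{k-1}x_k-(t_{k-1}-1)x_{k-1}-x^* = u_{k-1}.
\]
Here I have to reconcile the indexing in the statement, $\beta_k=(t_k-1)/t_{k+1}$, with the standard indexing. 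I expect this bookkeeping, and making the algebraic identity come out exactly, to be the main obstacle. I would shift indices so that the coefficient multiplying $x_k-x_{k-1}$ in $y_k$ equals $(t_{k-1}-1)/t_k$, noting that the first step is trivial anyway since $x_0=x_1$.

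With that identity in place I obtain the Lyapunov decrease
\[
t_k^2 v_{k+1} + \tfrac{1}{2\tau}\|u_k\|^2 \leq t_{k-1}^2 v_k + \tfrac{1}{2\tau}\|u_{k-1}\|^2.
\]
Telescoping down to the start gives
\[
t_{k}^2 v_{k+1}\leq t_1^2 v_2+\tfrac{1}{2\tau}\|u_1\|^2.
\]
This base term is bounded by a constant $C_0$ depending only on the initialization. The cleanest way is to apply the second inequality at the first step: with $y_1=x_1$, it gives $v_2\leq \frac{1}{2\tau}\|x_1-x^*\|^2$ up to the norm terms. Finally, the lemma $t_k\geq (k+1)/2$ yields
\[
v_{k+1}\leq \frac{4C_0}{(k+1)^2}.
\]
Since $C_0$ has the form $\frac{1}{2\tau}\|x_0-x^*\|^2$ (possibly plus $F(x_1)-F(x^*)$), this is the claimed bound $C/(k^2\tau)$ after adjusting the constant.
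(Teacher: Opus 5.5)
Your proposal is correct and is essentially the paper's proof. The paper applies the fundamental prox-grad inequality at the convex combination of $x_k$ and $x^*$; your weighted sum of the instances $x=x_k$ and $x=x^*$ is equivalent. It then combines $t_{k+1}^2-t_{k+1}=t_k^2$ with the momentum identity to get a nonincreasing Lyapunov quantity, telescopes, bounds the start with the first-step inequality at $y_1=x_1$, and finishes with $t_k\geq\frac{k+1}{2}$.

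The indexing mismatch you anticipate disappears if you use the weight $t_{k+1}^{-1}$ instead of $t_k^{-1}$ (i.e.\ multiply the $x=x_k$ inequality by $t_{k+1}-1$), as the paper does. The only cost is that the telescoped base term becomes $t_2^2v_2+\frac{1}{2\tau}\|t_2x_2-(t_2-1)x_1-x^*\|^2$ rather than $v_2+\frac{1}{2\tau}\|x_2-x^*\|^2$, and its $v_2$ part is bounded by the same first-step inequality.
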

\begin{proof}
    Choosing $x = t_{k+1}^{-1}x^* + (1-t_{k+1}^{-1})x_k$ and $y=y_k$ in the fundamental prox-grad inequality and noting that by convexity of $f$ $\ell_f(x,y)\geq 0$, we obtain
    \begin{equation}\label{eq:fista1}
        \begin{aligned}
            F(t_{k+1}^{-1}x^* + &(1-t_{k+1}^{-1})x_k) - F(x_{k+1})\\
            \geq& \frac{1}{2\tau}\|t_{k+1}^{-1}x^* + (1-t_{k+1}^{-1})x_k-x_{k+1}\|^2 - \frac{1}{2\tau}\|t_{k+1}^{-1}x^* + (1-t_{k+1}^{-1})x_k-y_k\|^2\\
            \geq& \frac{1}{2\tau t_{k+1}^2}\|x^* + (t_{k+1}-1)x_k-t_{k+1} x_{k+1}\|^2 - \frac{1}{2\tau t_{k+1}^2}\|x^* + (t_{k+1}-1)x_k-t_{k+1}y_k\|^2.
        \end{aligned}
    \end{equation}
    Again, by convexity and the fact that by induction one easily verifies $t_k\geq 1$ for all $k$, it holds true that
    \begin{equation}\label{eq:fista2}
        \begin{aligned}
            F(t_{k+1}^{-1}x^* + (1-t_{k+1}^{-1})x_k) - F(x_{k+1})
            \leq& t_{k+1}^{-1} F(x^*) + (1-t_{k+1}^{-1}) F(x_k) - F(x_{k+1})\\
            \leq& (1-t_{k+1}^{-1}) (F(x_k)-F(x^*)) - (F(x_{k+1})-F(x^*)).
        \end{aligned}
    \end{equation}
    Since $y_k = x_k + \frac{t_k-1}{t_{k+1}}(x_k-x_{k-1})$ we have
    \begin{equation}\label{eq:fista3}
        \begin{aligned}
            \|x^* + (t_{k+1}-1)x_k-t_{k+1}y_k\|^2
            = \|x^* - t_k x_k + (t_k-1)x_{k-1}\|^2
        \end{aligned}
    \end{equation}
    Combining~\eqref{eq:fista1}, \eqref{eq:fista2}, and \eqref{eq:fista3} yields
    \begin{equation}
        \begin{aligned}
            t_{k+1}^2(1-t_{k+1}^{-1}) &(F(x_k)-F(x^*)) - t_{k+1}^2(F(x_{k+1})-F(x^*))\\
            \geq& \frac{1}{2\tau}\|x^* - t_{k+1} x_{k+1}+ (t_{k+1}-1)x_k\|^2 - \frac{1}{2\tau}\|x^* - t_k x_k + (t_k-1)x_{k-1}\|^2.
        \end{aligned}
    \end{equation}
    By the update rule of FISTA, it holds true that $t_{k+1}^2(1-t_{k+1}^{-1}) = t_{k}^2$ and we have
    \begin{equation}
        \begin{aligned}
            t_{k}^2 &(F(x_k)-F(x^*)) - t_{k+1}^2(F(x_{k+1})-F(x^*))\\
            \geq& \frac{1}{2\tau}\|x^* - t_{k+1} x_{k+1}+ (t_{k+1}-1)x_k\|^2 - \frac{1}{2\tau}\|x^* - t_k x_k + (t_k-1)x_{k-1}\|^2
        \end{aligned}
    \end{equation}
    hence,
    \begin{equation}
        \begin{aligned}
            t_{k}^2 &(F(x_k)-F(x^*)) + \frac{1}{2\tau}\|x^* - t_k x_k + (t_k-1)x_{k-1}\|^2\\
            \geq& t_{k+1}^2(F(x_{k+1})-F(x^*)) + \frac{1}{2\tau}\|x^* - t_{k+1} x_{k+1}+ (t_{k+1}-1)x_k\|^2
        \end{aligned}
    \end{equation}
    and by iterating over $k$ and noting that $x_0=x_1$
    \begin{equation}
        \begin{aligned}
            t_{2}^2 &(F(x_2)-F(x^*)) + \frac{1}{2\tau}\|t_2 (x^* - x_2) + (t_2-1)(x_{1}-x^*)\|^2\\
            \geq& t_{k}^2(F(x_{k})-F(x^*)) + \frac{1}{2\tau}\|x^* - t_{k} x_{k}+ (t_{k}-1)x_{k-1}\|^2
        \end{aligned}
    \end{equation}
    Moreover, the fundamental prox-grad inequality with $x=x^*$ and $y=y_1=x_0=x_1$ yields
    \begin{equation}
        F(x^*) - F(x_2)\geq \frac{1}{2\tau}\|x^* - x_2\|^2 - \frac{1}{2\tau}\|x^* - x_1\|^2.
    \end{equation}
    Thus, we find
    \begin{equation}
        \begin{aligned}
            F(x_{k})-F(x^*)
            \leq t_k^{-2}\left(\frac{t_2^2}{2\tau}\|x^* - x_1\|^2 + \frac{1}{2\tau}\|t_2 (x^* - x_2) + (t_2-1)(x_{1}-x^*)\|^2\right)
        \end{aligned}
    \end{equation}
    Since $t_k\geq \frac{k+1}{2}$ the result follows.
\end{proof}
\chapter{Duality}\label{chapter:duality}
In this section $V$ will be a general vector space. We assume, however, that $V^{**} \cong V$. Note that this is trivially true for finite dimensional and for Hilbert spaces, where $V^*\cong V$.

\section{Fenchel duality}

\begin{defn}[Fenchel/convex conjugate]
    Let $f:V\rightarrow [-\infty,\infty]$. We define the Fenchel or convex conjugate as
    \begin{equation}
        \begin{aligned}
            f^*:V^*&\rightarrow[-\infty,\infty]\\
            f^*(x^*) &= \sup_{x\in V} \inner{x^*}{x} - f(x).
        \end{aligned}
    \end{equation}
\end{defn}

\begin{example}
    \begin{enumerate}
        \item Indicator functions: Let $f=\delta_C$ with $C\subset V$ convex. Then $f^*(x^*) = \sup_{x\in C}\inner{x}{x^*}$ is the so-called support function.
        \item Norms: Let $f = \|\emptyarg\|$ some norm. Then $f^* = \delta_{\overline{B}_{\|\emptyarg\|_*,1}(0)}$. That is, $f^*$ is the indicator function on the closed 1-ball with respect to the dual norm.
        \item Conversely, the conjugate of $f = \delta_{\overline{B}_{\|\emptyarg\|_*,1}(0)}$ is $f^* = \|\emptyarg\|_*$.
    \end{enumerate}
\end{example}


\begin{lemma}
    Let $f$ be proper and convex, then $f^*$ is proper, convex, and lsc.
\end{lemma}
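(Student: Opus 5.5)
The plan is to treat $f^*$ as a pointwise supremum of affine functions of $x^*$ and to read off each of the three properties from that representation. For fixed $x\in\dom(f)$ put $\phi_x(x^*) \coloneqq \inner{x^*}{x} - f(x)$. This is an affine, hence continuous and convex, function of $x^*$. For $x\notin\dom(f)$ the term $\inner{x^*}{x}-f(x)$ equals $-\infty$ and does not contribute to the supremum, so
\begin{equation*}
f^*(x^*) = \sup_{x\in\dom(f)} \phi_x(x^*).
\end{equation*}

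\emph{Convexity.} For $\lambda\in(0,1)$ I will use $\phi_x(\lambda x^* + (1-\lambda)y^*) = \lambda\phi_x(x^*)+(1-\lambda)\phi_x(y^*)$. This is bounded above by $\lambda f^*(x^*)+(1-\lambda)f^*(y^*)$, and taking the supremum over $x$ gives the zero-order condition. The argument is the same as in the lemma on maxima of convex functions, but with an arbitrary index set.

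\emph{Lower semicontinuity.} I will show $\epi(f^*) = \bigcap_{x\in\dom(f)} \epi(\phi_x)$. Each $\epi(\phi_x)$ is closed because $\phi_x$ is continuous, so $\epi(f^*)$ is an intersection of closed sets and therefore closed. By the lemma ``closed iff lsc'' this gives lower semicontinuity. Alternatively, one can argue directly: if $x_n^*\to x^*$, then $\phi_x(x^*) = \lim_n \phi_x(x_n^*) \le \liminf_n f^*(x_n^*)$, and one takes the supremum over $x$.

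\emph{Properness, $f^*>-\infty$.} Since $f$ is proper there is $x_0$ with $f(x_0)<\infty$, and $f(x_0)>-\infty$ as well. Then $f^*(x^*)\ge \inner{x^*}{x_0}-f(x_0)>-\infty$ for every $x^*$.

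\emph{Properness, $f^*\not\equiv\infty$.} This requires an affine minorant of $f$. If $x\in\dom(f)^\circ$, \cref{convexity:thm:nonempty_subgrad} gives $g\in\partial f(x)$, that is, $f(y)\ge f(x)+\inner{g}{y-x}$ for all $y$. Rearranging gives $\inner{g}{y}-f(y)\le \inner{g}{x}-f(x)$, so $f^*(g)\le \inner{g}{x}-f(x)<\infty$.

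The \textbf{main obstacle} is the case $\dom(f)^\circ=\emptyset$, where \cref{convexity:thm:nonempty_subgrad} does not apply directly. In finite dimensions I would handle it as follows.
\begin{enumerate}
\item Translate so that $0\in\dom(f)$.
\item Let $W$ be the span of $\dom(f)$, which is convex. By the lemma that a convex set with empty interior lies in a hyperplane, iterated, $\dom(f)$ has nonempty interior relative to $W$.
\item Apply \cref{convexity:thm:nonempty_subgrad} to $f|_W$ on the space $W$ to obtain $g\in W^*$ with $f(y)\ge f(x)+\inner{g}{y-x}$ for all $y\in W$.
\item Extend $g$ to all of $V$, for instance by composing with the orthogonal projection onto $W$. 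For $y\notin W$ we have $f(y)=\infty$, so the subgradient inequality holds trivially, and the previous step then gives $f^*(g)<\infty$.
\end{enumerate}
For general $V$ I would simply point out that this step rests on the existence of a subgradient at a relative interior point, which is the Hahn--Banach argument already used for \cref{convexity:thm:nonempty_subgrad}.
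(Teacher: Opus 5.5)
Your proposal is correct and follows the paper's route. You get $f^*>-\infty$ from a point $x_0\in\dom(f)$ and $f^*(g)<\infty$ from a subgradient $g\in\partial f(\hat x)$, and convexity and lower semicontinuity come from writing $f^*$ as a pointwise supremum of affine functions. The one real difference is that you prove a subgradient exists when $\dom(f)^\circ=\emptyset$, via the relative interior in the span of the translated domain, and you write out the lsc argument; the paper instead cites Beck's Corollary 3.19 for the former and leaves the latter as an exercise.
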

\begin{proof}
    If $f$ is proper, there exists $x_0\in V$ such that $f(x_0)\in \R$ implying that for any $x^*$
    \begin{equation}
        f^*(x^*)\geq \inner{x^*}{x_0} - f(x_0)> -\infty.
    \end{equation}
    Since $f$ is proper and convex, there exists at least one $\hat x\in V$ such that $\partial f(\hat x) \neq \emptyset$ (\cf~\cite[Corollary 3.19]{beck2017first}). Let $g\in \partial f(\hat x)$. We have by definition of the subgradient
    \begin{equation}
        f(\hat x) + \inner{g}{x-\hat x}\leq f(x)
    \end{equation}
    and, thus,
    \begin{equation}
        \begin{aligned}
            f^*(\hat x)
            =& \sup_{x\in V}\inner{g}{x} - f(x)\\
            \leq& \sup_{x\in V}f(x) - f(\hat x) + \inner{g}{\hat x} - f(x)\\
            \leq& \sup_{x\in V}- f(\hat x) + \inner{g}{\hat x} <\infty.
        \end{aligned}
    \end{equation}
    Convexity and lower semi-continuity follow directly by the fact that the convex conjugate is a pointwise supremum of linear, and thus convex and lsc, functions\footnote{We have only shown this property for convexity. For lsc it is left as an exercise.}. 
\end{proof}

\begin{lemma}[Fenchel inequality]
    Let $f$ be proper. Then
    \begin{equation}
        f^*(x^*) + f(x)\geq \inner{x^*}{x}.
    \end{equation}
\end{lemma}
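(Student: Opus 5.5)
The Fenchel inequality follows directly from the definition of the conjugate as a supremum. Fix $x\in V$ and $x^*\in V^*$. By definition, $f^*(x^*) = \sup_{y\in V}\inner{x^*}{y} - f(y)$. Since the supremum dominates every element of the set over which it is taken, choosing $y=x$ gives
\begin{equation*}
    f^*(x^*) \geq \inner{x^*}{x} - f(x).
\end{equation*}
Adding $f(x)$ to both sides then yields the claim.

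The only point requiring care is arithmetic in $[-\infty,\infty]$. Properness of $f$ guarantees $f>-\infty$ everywhere, so $f(x)\in(-\infty,\infty]$ and the expression $\inner{x^*}{x} - f(x)$ is well defined in $[-\infty,\infty)$. We distinguish two cases.

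If $f(x)=\infty$, the left-hand side $f^*(x^*)+f(x)$ must be read with care. Properness gives some $x_0$ with $f(x_0)<\infty$, so $f^*(x^*)\geq \inner{x^*}{x_0}-f(x_0)>-\infty$. Hence the sum is $+\infty$, which is not an undefined $\infty-\infty$ form, and the inequality holds trivially.

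If $f(x)<\infty$, all quantities on the right of the displayed inequality are finite, and $f^*(x^*)>-\infty$ by the same argument. The rearrangement, adding the finite number $f(x)$ to both sides, is therefore legitimate and gives $f^*(x^*)+f(x)\geq\inner{x^*}{x}$.

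I expect no real obstacle here; the whole content is the bookkeeping of extended values, which properness resolves as above.
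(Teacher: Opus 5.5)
Your proof is correct and is exactly the intended argument. The paper leaves this lemma as an exercise, but in its proof of $f^{**}\leq f$ it uses precisely the bound $f^*(x^*)\geq\inner{x^*}{x}-f(x)$, obtained by testing the supremum at $y=x$; your extra step of using properness to rule out $f^*(x^*)=-\infty$, and hence an undefined $\infty-\infty$ when $f(x)=\infty$, is sound and is a detail the paper does not spell out.
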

\begin{proof}
    Exercise.
\end{proof}

We can \emph{iterate} the process of conjugation and consider the \emph{bijonjugate}. We will denote for simplicity $f^{**} \coloneq (f^*)^*$. Since we assume that $V^{**}=V$, the biconjugate is again defined on the original space $V$.

\begin{lemma}
    It holds true that $f^{**}(x)\leq f(x)$ for any $x\in V$.
\end{lemma}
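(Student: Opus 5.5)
The plan is to unfold the definition of the biconjugate and apply the Fenchel inequality pointwise. Since $V^{**}\cong V$, for $x\in V$ we have
\begin{equation*}
    f^{**}(x) = \sup_{x^*\in V^*} \inner{x^*}{x} - f^*(x^*),
\end{equation*}
where, by the identification, $\inner{x}{x^*}$ (application of $x$ viewed as an element of $V^{**}$ to $x^*$) coincides with $\inner{x^*}{x}$. It therefore suffices to show that every term in the supremum is bounded by $f(x)$, that is,
\begin{equation*}
    \inner{x^*}{x} - f^*(x^*)\leq f(x)\quad\text{for all } x^*\in V^*.
\end{equation*}

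For this I would not invoke the Fenchel inequality as stated (it assumes properness), but argue directly from the definition of $f^*$. Taking the particular choice $y=x$ in the supremum defining $f^*(x^*)$ gives $f^*(x^*)\geq \inner{x^*}{x}-f(x)$ for every $x^*$. Rearranging yields the displayed bound, and taking the supremum over $x^*$ gives $f^{**}(x)\leq f(x)$.

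The only real obstacle is bookkeeping with infinite values, since $f$ is an arbitrary extended real-valued function and the rearrangement may involve expressions like $\infty-\infty$. I would handle this by a short case distinction. If $f(x)=+\infty$, there is nothing to prove. If $f(x)=-\infty$, then $f^*(x^*)=+\infty$ for every $x^*$, so each term $\inner{x^*}{x}-f^*(x^*)$ equals $-\infty$ and $f^{**}(x)=-\infty$. If $f(x)\in\R$, then $f^*(x^*)>-\infty$. If $f^*(x^*)=+\infty$, the term is $-\infty$. Otherwise all quantities are finite and the rearrangement is legitimate. In every case each term is at most $f(x)$, which completes the argument.
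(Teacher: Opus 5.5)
Your proposal is correct and is essentially the paper's argument. The paper writes $f^{**}(x)=\sup_{x^*}\inf_y \bigl(\inner{x^*}{x-y}+f(y)\bigr)$ and bounds the inner infimum by its value at $y=x$, which is exactly your choice $y=x$ in the supremum defining $f^*(x^*)$; your case distinction for $f(x)\in\{\pm\infty\}$ and $f^*(x^*)=+\infty$ is careful bookkeeping that the paper leaves implicit.
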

\begin{proof}
    Note first that for the conjugate we have for any $x$
    \begin{equation}
        f^*(x^*)\geq \inner{x^*}{x}-f(x).
    \end{equation}
    It follows
    \begin{equation}
        \begin{aligned}
            f^{**}(x) 
            =& \sup_{x^*\in V^*}\inner{x^*}{x} - f^*(x^*)\\
            =& \sup_{x^*\in V^*}\inf_y \inner{x^*}{x-y} + f(y)\\
            \leq& \sup_{x^*\in V^*} \inner{x^*}{x-x} + f(y)\\
            =& f(x).
        \end{aligned}
    \end{equation}
\end{proof}

\begin{lemma}
    Let $f$ be proper, convex, and lsc. Then $f^{**} = f$.
\end{lemma}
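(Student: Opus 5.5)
Since the previous lemma already gives $f^{**}\leq f$ everywhere, it remains to show $f^{**}(x_0)\geq f(x_0)$ for every $x_0\in V$. I will argue by contradiction via the Hahn-Banach separation theorem applied in $V\times\R$. Suppose there is $x_0$ and a real number $s$ with $f^{**}(x_0)< s < f(x_0)$ (if $f(x_0)=\infty$ simply pick any finite $s>f^{**}(x_0)$; note $f^{**}(x_0)<\infty$ in that case by assumption). Then $(x_0,s)\notin\epi(f)$. Since $f$ is convex, $\epi(f)$ is convex, and since $f$ is lsc it is closed by the earlier lemma (closed iff lsc). The singleton $\{(x_0,s)\}$ is compact and convex, so the second version of Hahn-Banach yields $(a,-\alpha)\in V^*\times\R$ and $\epsilon>0$ with
\[
\inner{a}{y}-\alpha t + \epsilon\leq \inner{a}{x_0}-\alpha s,\quad (y,t)\in\epi(f).
\]

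The key step is to analyse $\alpha$, as in the proof of \cref{convexity:thm:nonempty_subgrad}. Letting $t\to\infty$ with $y\in\dom(f)$ fixed shows $\alpha\geq 0$. If $\alpha>0$, divide by $\alpha$, set $x^*=a/\alpha$, and take $t=f(y)$: we get $\inner{x^*}{y}-f(y)\leq \inner{x^*}{x_0}-s-\epsilon/\alpha$ for all $y$, i.e.\ $f^*(x^*)<\inner{x^*}{x_0}-s$, hence $f^{**}(x_0)\geq \inner{x^*}{x_0}-f^*(x^*)> s$, contradicting the choice of $s$.

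The main obstacle is the degenerate case $\alpha=0$ (a ``vertical'' separating hyperplane), which can only occur when $x_0\notin\dom(f)$. Here the separation reads $\inner{a}{y-x_0}\leq-\epsilon$ for all $y\in\dom(f)$. To fix this I will use that $f^*$ is proper (previous lemma), so there is some $y^*$ with $f^*(y^*)<\infty$, giving the affine minorant $f(y)\geq \inner{y^*}{y}-f^*(y^*)$. For $\lambda>0$ consider $x^*_\lambda=y^*+\lambda a$. Then for $y\in\dom(f)$,
\[
\inner{x^*_\lambda}{y}-f(y)\leq f^*(y^*)+\lambda\inner{a}{y}\leq f^*(y^*)+\lambda\inner{a}{x_0}-\lambda\epsilon,
\]
so $f^*(x^*_\lambda)\leq f^*(y^*)+\lambda\inner{a}{x_0}-\lambda\epsilon$ and therefore $f^{**}(x_0)\geq \inner{y^*}{x_0}-f^*(y^*)+\lambda\epsilon\to\infty$ as $\lambda\to\infty$. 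This contradicts $f^{**}(x_0)<s<\infty$. Both cases give contradictions, so $f^{**}\geq f$, and together with $f^{**}\leq f$ the claim follows. (Throughout I use $V^{**}\cong V$ to identify $(V\times\R)^*$ with $V^*\times\R$ and to regard $f^{**}$ as a function on $V$.)
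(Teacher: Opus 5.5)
Your proof is correct and follows essentially the same route as the paper: you use Hahn--Banach to separate a point lying strictly below $f(x_0)$ from the closed convex set $\epi(f)$, obtain a contradiction with the definition of $f^{**}$ when the vertical component $\alpha$ is positive, and in the vertical case $\alpha=0$ tilt the separating functional by an element of $\dom(f^*)$. Your $y^*+\lambda a$ corresponds, up to the sign convention for the separating functional, to the paper's perturbed element $\hat g=g-\epsilon\hat y$. Choosing an intermediate level $s$ with $f^{**}(x_0)<s<f(x_0)$, and letting $t\to\infty$ only at points $y\in\dom(f)$, is slightly cleaner than the paper, which separates $(x_0,f^{**}(x_0))$ directly and tests with $x=x_0$, a point that may lie outside $\dom(f)$.
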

\begin{proof}
    Since we always have $f^{**}\leq f$ it only remains to show that $f^{**}\geq f$. Assume to the contrary, there exists $x_0\in V$ such that
    \begin{equation}
        f^{**}(x_0) < f(x_0).
    \end{equation}
    This means that $(x_0, f^{**}(x_0))\not\in \epi(f)$. Since $\epi(f)$ is closed and convex by assumption, we may apply Hahn-Banach to find $(g,\alpha)\in V^*\times \R$ and $c_1,c_2\in \R$ such that
    \begin{equation}
        \inner{(g,\alpha)}{(x_0,f^{**}(x_0))}<c_1<c_2<\inner{(g,\alpha)}{(x,t)},\quad (x,t)\in \epi(f).
    \end{equation}
    Plugging in $x=x_0$ and $t$ large enough we find that $\alpha\geq 0$. 
    Now assume $\alpha>0$. Then we may divide by $\alpha$ to obtain
    \begin{equation}
        \inner{\frac{g}{\alpha}}{x_0-x} + f^{**}(x_0)-f(x)\leq \frac{c_1-c_2}{\alpha}<0.
    \end{equation}
    The above is true for every $x\in \dom(f)$ and also trivially for every $x$ with $f(x)=\infty$. Taking the supremum over $x$ yields
    \begin{equation}
        \inner{\frac{g}{\alpha}}{x_0} + f^{**}(x_0)+f^*(-\frac{g}{\alpha})<0
    \end{equation}
    which contradicts Fenchel's inequality.
    In the remaining case, that is, if $\alpha=0$ we have
    \begin{equation}
        \inner{g}{x_0-x} < c_1-c_2<0.
    \end{equation}
    Now take any $\hat y\in \dom(f^*)$. We obtain for $\hat g = g-\epsilon \hat y$
    \begin{equation}
        \begin{aligned}
            \inner{\hat g}{x_0-x} + \epsilon(f^{**}(x_0)-f(x))
            \leq& \inner{g}{x_0-x} + \epsilon( \inner{-\hat y}{x_0-x} + f^{**}(x_0)-f(x))\\
            \leq& c_1-c_2 + \epsilon( \inner{\hat y}{-x_0} + f^{**}(x_0)+f^*(\hat y)).
        \end{aligned}
    \end{equation}
    Note that $\hat y$ and $x_0$ are fixed and we may choose $\epsilon>0$ sufficiently small so that the right-hand side remains strictly negativ. Denote $c \coloneqq c_1-c_2 + \epsilon( \inner{-\hat y}{x_0} + f^{**}(x_0)+f^*(\hat y))<0$. Dividing by $\epsilon$ it follows
    \begin{equation}
        \begin{aligned}
            \inner{\frac{\hat g}{\epsilon}}{x_0-x} + f^{**}(x_0)-f(x)\frac{c}{\epsilon}
        \end{aligned}
    \end{equation}
    and once again taing the supremum over all $x$ yields 
    \begin{equation}
        \begin{aligned}
            \inner{\frac{\hat g}{\epsilon}}{x_0} + f^{**}(x_0)+f^*(-\frac{\hat g}{\epsilon})\frac{c}{\epsilon}<0
        \end{aligned}
    \end{equation}
    contradicting Fenchel's inequality. Thus, in any case the assumption $f^{**}(x_0)<f(x_0)$ leads to a contradiction, implying $f^{**}(x_0)\geq f(x_0)$ and concluding the proof.
\end{proof}

\begin{rem}
    When $f$ is not convex and closed, $f^{**}$ is, in fact, a convex and closed relaxation of $f$, referred to as $\Gamma$-regularization. More precisely, denote
    \begin{equation}
        \Gamma(V)=\{\ell:V\rightarrow [-\infty,\infty]\;|\; \ell\text{ is proper, closed, and convex}\}
    \end{equation}
    then
    \begin{equation}
        f^{**}(x) = \sup_{\substack{\ell\in \Gamma(V),\\
        \ell\leq f}}\ell (x).
    \end{equation}
\end{rem}

The conjugate admits an intricate relation with the subgradient as the following lemma shows.
\begin{lemma}\label{duality:lemma:subdiff_conjugate}
    Let $f$ be proper, convex, and closed. Then the following statements are equivalent:
    \begin{enumerate}
        \item $f(x) + f^*(x^*) = \inner{x^*}{x}$.\label{duality:item1}
        \item $x^*\in \partial f(x)$\label{duality:item2}
        \item $x\in \partial f^*(x^*)$\label{duality:item3}
    \end{enumerate}
\end{lemma}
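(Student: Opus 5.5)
I will prove the cycle of equivalences \ref{duality:item1}$\Leftrightarrow$\ref{duality:item2}, and then obtain \ref{duality:item3} by applying the same equivalence to $f^*$ together with the biconjugation result $f^{**}=f$. No separation argument is needed beyond what is already contained in the lemma $f^{**}=f$.

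\textbf{Step 1: \ref{duality:item2}$\Rightarrow$\ref{duality:item1}.} Assume $x^*\in\partial f(x)$. The subgradient inequality $f(x)+\inner{x^*}{y-x}\le f(y)$ for all $y$ can be rearranged to
\[
\inner{x^*}{y}-f(y)\le \inner{x^*}{x}-f(x)\quad\text{for all } y.
\]
Taking the supremum over $y$ gives $f^*(x^*)\le \inner{x^*}{x}-f(x)$. The Fenchel inequality gives the reverse bound, so equality holds. Note that $x^*\in\partial f(x)$ forces $f(x)<\infty$, so there is no $\infty-\infty$ issue.

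\textbf{Step 2: \ref{duality:item1}$\Rightarrow$\ref{duality:item2}.} Assume equality in \ref{duality:item1}. Properness of $f$ and $f^*$ (the lemma on $f^*$) ensures both values are finite. By the definition of $f^*$ as a supremum, for every $y$
\[
\inner{x^*}{y}-f(y)\le f^*(x^*)=\inner{x^*}{x}-f(x).
\]
This is exactly the subgradient inequality, so $x^*\in\partial f(x)$.

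\textbf{Step 3: \ref{duality:item1}$\Leftrightarrow$\ref{duality:item3}.} The function $f^*$ is proper, convex and lsc by the earlier lemma, so Steps 1 and 2 apply to $f^*$ in place of $f$, using $V^{**}\cong V$. This gives that $x\in\partial f^*(x^*)$ is equivalent to
\[
f^*(x^*)+f^{**}(x)=\inner{x}{x^*}.
\]
Since $f$ is proper, convex and closed, the biconjugation lemma gives $f^{**}=f$. The last display is therefore exactly \ref{duality:item1}.

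The only delicate points are bookkeeping ones. First, one must ensure finiteness of $f(x)$ and $f^*(x^*)$ before subtracting. Second, one must check that the pairing $\inner{x^*}{x}$ is read symmetrically when passing to $V^{**}=V$. The substantive input is $f^{**}=f$, which has already been established, so Step 3 is where the closedness hypothesis is genuinely used.
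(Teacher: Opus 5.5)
Your proof is correct and follows the paper's argument essentially verbatim. You prove (i)$\Leftrightarrow$(ii) from the supremum definition of $f^*$ together with Fenchel's inequality, and (i)$\Leftrightarrow$(iii) by rerunning that argument for the proper, convex, lsc function $f^*$ and invoking $f^{**}=f$; your added remarks on finiteness of $f(x)$ and $f^*(x^*)$ make explicit what the paper leaves implicit.
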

\begin{proof}\
    \begin{enumerate}
        \item[\ref{duality:item1}$\rightarrow$\ref{duality:item2}:] \ref{duality:item1} implies that for any $z\in V$
        \begin{equation}
            \begin{aligned}
                -f(x) + \inner{x^*}{x} = f^*(x^*) \geq \inner{x^*}{z} - f(z)
            \end{aligned}
        \end{equation}
        which by definition implies $x^*\in \partial f(x)$.
        \item[\ref{duality:item2}$\rightarrow$\ref{duality:item1}:]
        Conversely, \ref{duality:item2} yields for any $z$
        \begin{equation}
            \begin{aligned}
                -f(x) + \inner{x^*}{x} \geq \inner{x^*}{z} - f(z).
            \end{aligned}
        \end{equation}
        By taking the supremum over $z$ we obtain $\inner{x^*}{x} \geq f(x) + f^*(x^*)$ which together with Fenchel's inequality shows the result.
        \item[\ref{duality:item1}$\leftrightarrow$\ref{duality:item3}:]
        Since $f$ is proper, convex, and closed, $\ref{duality:item1}$ is equivalent to
        \begin{equation}
            f^{**}(x) + f^*(x^*) = \inner{x^*}{x}.
        \end{equation}
        The proof follows then by repeating the arguments from above.
    \end{enumerate}
\end{proof}

\begin{theorem}[Moreau identity]\label{duality:thm:Moreau:id}
    Let $V$ be an inner product space and $f:V\rightarrow [-\infty,\infty]$ proper, lsc, and convex. Then 
    \begin{equation}
        x = \prox_f(x) + \prox_{f^*}(x).
    \end{equation}
\end{theorem}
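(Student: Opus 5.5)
Set $p \coloneqq \prox_f(x)$. The lemma on the proximal operator (single-valuedness and optimality condition) gives $p = \prox_f(x)$ if and only if $0\in (p-x)+\partial f(p)$, that is, $x-p\in\partial f(p)$. The plan is to show that $q\coloneqq x-p$ satisfies the optimality condition of $\prox_{f^*}$ at $x$. Uniqueness of the prox then yields $\prox_{f^*}(x)=x-\prox_f(x)$, which is the claimed identity.

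\textbf{Step 1 (well-definedness).} We need $\prox_{f^*}$ to be single-valued and characterized by its optimality condition. Since $f$ is proper and convex, the lemma on conjugates shows that $f^*$ is proper, convex and lsc. By Riesz we identify $V^*$ with $V$, so $f^*$ is a function on $V$. Hence the prox lemma applies to $f^*$ as well: $y=\prox_{f^*}(x)$ if and only if $x-y\in\partial f^*(y)$.

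\textbf{Step 2 (switching the subgradient).} We know $q=x-p\in\partial f(p)$. Lemma~\ref{duality:lemma:subdiff_conjugate} applies because $f$ is proper, convex and closed (closedness being equivalent to lsc by the earlier lemma). Its equivalence \ref{duality:item2}$\Leftrightarrow$\ref{duality:item3} turns $q\in\partial f(p)$ into $p\in\partial f^*(q)$. Since $p = x-q$, this reads $x-q\in\partial f^*(q)$, which is exactly the optimality condition for $q=\prox_{f^*}(x)$. Uniqueness from Step 1 gives $\prox_{f^*}(x)=q=x-\prox_f(x)$.

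\textbf{Main obstacle.} The algebra is trivial; the delicate point is justifying the hypotheses. First, the prox lemma was only proved under the simplifying assumption $\dom(f)^\circ\neq\emptyset$ (the general case is deferred to~\cite{beck2017first}). So I would invoke it for general proper, convex, lsc $f$ and $f^*$ as stated, since $\dom(f^*)$ may well have empty interior, e.g.\ for a norm. Second, the prox lemma is formulated on $\R^d$ while the statement is for an inner product space. I would note that the existence and uniqueness argument carries over whenever the direct method applies; in particular it does in finite dimensions, which is the setting of the notes.
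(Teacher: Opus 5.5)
Your proof is correct and follows the same route as the paper. You set $p=\prox_f(x)$, read off $x-p\in\partial f(p)$ from the prox optimality condition, use the subdifferential--conjugate equivalence to get $p\in\partial f^*(x-p)$, and recognize this as the optimality condition for $\prox_{f^*}(x)=x-p$. Your remarks on well-definedness (properness and lower semicontinuity of $f^*$, the identification $V^*\cong V$, and the simplifying assumption in the prox lemma) make explicit what the paper leaves as a parenthetical question.
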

\begin{proof}
    First of all, note that both proximal maps are well-defined and single-valued (why?). We denote $p = \prox_f(x)$ for simplicity. By the optimality conditions we have $x-p\in\partial f(p)$ and by~\cref{duality:lemma:subdiff_conjugate}, thus, $p = x - (x-p)\in \partial f^*(x-p)$. The latter implies $\prox_{f^*}(x) = x-p$ concluding the proof.
\end{proof}

\begin{theorem}[Fenchel-Rockafellar]\label{duality:eq:fenchel_duality}
    Let $f:X\rightarrow [-\infty,\infty]$, $g:Y\rightarrow[-\infty,\infty]$ both proper, convex and, closed and $K:X\rightarrow Y$ a linear, bounded operator. Moreover, assume that there exists $x_0 \in \dom(f) \cap \dom(g\circ K)$ with $K x_0 \in \dom(g)^\circ$. Assume the (primal) problem 
    \begin{equation}
        \min_{x} f(x) + g(Kx).
    \end{equation}
    admits a solution $\hat x$. Then also the dual problem
    \begin{equation}
        \begin{aligned}
            \max_{x^*} -f^*(-K^*x^*) - g^*(x^*).
        \end{aligned}
    \end{equation}
    admits a solution $\hat x^*$ and we have strong duality, that is,
    \begin{equation}
        \begin{aligned}
            \min_{x} f(x) + g(Kx)
            = \max_{x^*} -f^*(-K^*x^*) - g^*(x^*).
        \end{aligned}
    \end{equation}
    Moreover, the primal und dual solutions satisfy
    \begin{equation}
        \begin{cases}
            -K^*\hat x^*&\in \partial f(\hat x),\\
            \hat x^* &\in \partial g(K\hat x).
        \end{cases}
    \end{equation}
\end{theorem}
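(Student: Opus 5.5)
We first establish weak duality, then build a dual solution from the optimality condition of the primal solution $\hat x$ (via the sum and chain rules for subdifferentials), and finally read off strong duality and the extremality relations from the equality cases of Fenchel's inequality.

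\textbf{Weak duality.} For any $x\in X$ and $x^*\in Y^*$, Fenchel's inequality applied to $f$ at $(x,-K^*x^*)$ and to $g$ at $(Kx,x^*)$ gives
\begin{equation*}
    f(x)+f^*(-K^*x^*)\geq \inner{-K^*x^*}{x},\qquad g(Kx)+g^*(x^*)\geq \inner{x^*}{Kx}.
\end{equation*}
Adding the two inequalities, and using $\inner{-K^*x^*}{x}=-\inner{x^*}{Kx}$, yields
\begin{equation*}
    f(x)+g(Kx)\geq -f^*(-K^*x^*)-g^*(x^*).
\end{equation*}
Hence $\inf$ primal $\geq \sup$ dual.

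\textbf{Optimality condition for $\hat x$.} Since $\hat x$ minimizes $f+g\circ K$, the optimality theorem gives $0\in\partial(f+g\circ K)(\hat x)$. We then need to split this subdifferential.
\begin{itemize}
    \item \emph{Chain rule.} The strong chain rule for $h=g\circ K$ gives $\partial h(x)=K^*\partial g(Kx)$. Its qualification is provided by $Kx_0\in\dom(g)^\circ$, which places $x_0$ in the interior of $\dom(g\circ K)$ relative to what is needed.
    \item \emph{Sum rule.} The strong sum rule gives $\partial(f+h)=\partial f+\partial h$. It requires a point of $\dom(h)^\circ\cap\dom(f)$, and $x_0$ serves as that point.
\end{itemize}
Together these give $0\in\partial f(\hat x)+K^*\partial g(K\hat x)$. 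So there exists $\hat x^*\in\partial g(K\hat x)$ with $-K^*\hat x^*\in\partial f(\hat x)$, which are exactly the two stated inclusions.

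\textbf{Main obstacle.} This splitting step is where I expect the difficulty. The condition $Kx_0\in\dom(g)^\circ$ does not by itself say that $x_0$ is an interior point of $\dom(g\circ K)$ in $X$: it only says that $\dom(g\circ K)\supset K^{-1}(\text{open set})$, which is open in $X$ by continuity of $K$. Since $K^{-1}$ of an open set is open, $x_0\in K^{-1}(\dom(g)^\circ)\subset\dom(g\circ K)$, so $x_0$ is indeed interior. This is the point to verify carefully. If the stated strong chain rule is not quite applicable, the fallback is to prove the combined rule directly by a Hahn--Banach separation in $X\times Y\times\R$, mimicking the proof of existence of subgradients.

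\textbf{Strong duality and dual optimality.} By \cref{duality:lemma:subdiff_conjugate}, the two inclusions turn Fenchel's inequalities into equalities:
\begin{equation*}
    f(\hat x)+f^*(-K^*\hat x^*)=\inner{-K^*\hat x^*}{\hat x},\qquad g(K\hat x)+g^*(\hat x^*)=\inner{\hat x^*}{K\hat x}.
\end{equation*}
Summing these and cancelling the inner products gives
\begin{equation*}
    f(\hat x)+g(K\hat x)=-f^*(-K^*\hat x^*)-g^*(\hat x^*).
\end{equation*}
Combined with weak duality, the dual objective at $\hat x^*$ attains the upper bound given by the primal minimum. Therefore $\hat x^*$ maximizes the dual problem and strong duality holds.
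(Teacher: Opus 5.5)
Your proof is correct and follows the paper's argument. You obtain $0\in\partial f(\hat x)+K^*\partial g(K\hat x)$ from the optimality of $\hat x$, then use the equality case of Fenchel's inequality (the subdifferential--conjugate lemma) to show that the dual objective at $\hat x^*$ equals the primal minimum, which together with weak duality gives both dual optimality and strong duality. The differences are minor: you prove weak duality by adding two Fenchel inequalities rather than through $g=g^{**}$ and an $\inf$--$\sup$ exchange, which avoids the biconjugate theorem. You also make explicit the qualification conditions that the paper leaves implicit: $Kx_0\in\dom(g)^\circ$ itself is what qualifies the chain rule, while your observation that $x_0\in K^{-1}(\dom(g)^\circ)\subset\dom(g\circ K)$ with $K^{-1}(\dom(g)^\circ)$ open is exactly what the sum rule needs.
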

\begin{proof}
    First of all one trivially finds
    \begin{equation}\label{duality:eq:fenchel_rocka}
        \begin{aligned}
            \inf_{x} f(x) + g(Kx)
            =& \inf_{x} f(x) + g^{**}(Kx)\\
            =& \inf_{x}\sup_{x^*} f(x) + \inner{Kx}{x^*} - g^*(x^*)\\
            \geq & \sup_{x^*} \inf_{x} f(x) + \inner{Kx}{x^*} - g^*(x^*)\\
            = & \sup_{x^*} -\sup_{x} -f(x) + \inner{x}{-K^*x^*} - g^*(x^*)\\
            = & \sup_{x^*} -f^*(-K^*x^*) - g^*(x^*).
        \end{aligned}
    \end{equation}
    So it remains only to proof the converse inequality.
    For the minimizer $\hat x$ it holds true that $0 \in \partial f(\hat x) + K^*\partial g(K\hat x)$. In other words, there exists $\hat x^*\in \partial g(K\hat x)$ such that $-K^*\hat x^*\in \partial f (\hat x)$. By~\cref{duality:lemma:subdiff_conjugate} this implies
    \begin{equation}
        \begin{aligned}
            g(K\hat x) + g^*(\hat x^*) =& \inner{\hat x^*}{K\hat x},\\
            f(\hat x) + f^*(-K^*\hat x^*) 
            =& -\inner{K^*\hat x^*}{\hat x} = -\inner{\hat x^*}{K\hat x}.
        \end{aligned}
    \end{equation}
    Therefore, 
    \begin{equation}
        \begin{aligned}
            \inf_x f(x) + g(Kx) 
            =& f(\hat x) + g(K\hat x) \\
            =& \big(-f^*(-K^*\hat x^*) - \inner{\hat x^*}{K\hat x}\big) + \big(-g^*(\hat x^*) + \inner{\hat x^*}{K\hat x}\big)\\
            =& -f^*(-K^*\hat x^*) -g^*(\hat x^*)\\
            \leq& \sup_{x^*} -f^*(-K^*x^*) - g^*(x^*)
        \end{aligned}
    \end{equation}
    which implies equality in~\eqref{duality:eq:fenchel_rocka} as well as that $\hat x^*$ is the solution of the dual problem.
\end{proof}

\section{Primal-dual optimization}

\Cref{duality:eq:fenchel_duality} opens up the possibility for several methods to find solutions of problems of the form
\begin{equation}\label{duality:qe:problem}
    \min_x f(x) + g(Kx).
\end{equation}
Indeed, by strong duality we may tackle the primal, the dual, or the saddle point problem
\begin{equation}\label{duality:qe:saddlepoint}
    \min_x \max_y \Lc(x,y)\coloneqq f(x) -g^*(y) + \inner{Kx}{y}
\end{equation}
The main point of switching between primal/dual formulations is to \emph{transfer} the operator $K$. Indeed, if for instance $g$ is non-differentiable so that we would like to apply proximal steps to $g$, the operator $K$ in~\eqref{duality:qe:problem} will render $\prox_{g\circ K}$ inexplicit in general. Conversely, in the saddle-point formulation~\eqref{duality:qe:saddlepoint} the operator $K$ appears only in the scalar product. Note, moreover, that by~\cref{duality:thm:Moreau:id} the prox of a conjugate function is as easy or difficult to compute as the prox of the original function.

We define the primal-dual gap as
\begin{equation}
    \Gc(x,y) = (f(x) + g(Kx)) - (-f^*(y) - g^*(-K^*y)).
\end{equation}

\begin{lemma}
    We have $\Gc(x,y)\geq 0$ and $\Gc(\hat x,\hat y) = 0$ if and only if $\hat x$ solves the primal problem and $\hat y$ solves the dual problem. The gap admits the representation
    \begin{equation}
        \Gc(x,y) = \sup_{y'}\Lc(x,y') - \inf_{x'}\Lc(x',y).
    \end{equation}
    In particular, $(\hat x,\hat y)$ are optimal for the primal and dual problem, respectively, if and only if for any $x,y$
    \begin{equation}
        \Lc(\hat x,y)\leq \Lc(\hat x,\hat y)\leq \Lc(x,\hat y).
    \end{equation}
\end{lemma}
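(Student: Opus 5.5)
The plan is to prove the representation formula first and derive everything else from it. I read the dual objective in $\Gc$ as $-f^*(-K^*y)-g^*(y)$, i.e.\ the dual problem of \cref{duality:eq:fenchel_duality}; this is the only reading under which the representation via $\Lc$ holds. The standing hypotheses are those of \cref{duality:eq:fenchel_duality}: $f,g$ proper, convex and closed, so that $g^{**}=g$.

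\textbf{Step 1 (representation).} Fix $x$. Then
\[
\sup_{y'}\Lc(x,y') = f(x)+\sup_{y'}\big(\inner{Kx}{y'}-g^*(y')\big) = f(x)+g^{**}(Kx) = f(x)+g(Kx),
\]
using the biconjugation lemma. Symmetrically, fix $y$. Then
\[
\inf_{x'}\Lc(x',y) = -g^*(y)-\sup_{x'}\big(\inner{x'}{-K^*y}-f(x')\big) = -g^*(y)-f^*(-K^*y),
\]
where the adjoint moves $K$ across the pairing. Subtracting the two identities gives the claimed representation of $\Gc$.

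\textbf{Step 2 (nonnegativity and the zero-gap characterization).} For any $x,y$ we have
\[
\inf_{x'}\Lc(x',y)\le \Lc(x,y)\le \sup_{y'}\Lc(x,y'),
\]
so $\Gc\ge 0$. Equivalently, this is weak duality, i.e.\ the first chain of inequalities in the proof of \cref{duality:eq:fenchel_duality}: primal value $\ge$ dual value.

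Suppose $\hat x$ is primal optimal and $\hat y$ is dual optimal. By the strong duality of \cref{duality:eq:fenchel_duality}, $\Gc(\hat x,\hat y)=\min P-\max D=0$; this requires the qualification condition and the existence of a primal minimizer assumed there.

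Conversely, suppose $\Gc(\hat x,\hat y)=0$. Weak duality gives, for all $x,y$,
\[
P(\hat x)=D(\hat y)\le P(x) \quad\text{and}\quad D(y)\le P(\hat x)=D(\hat y),
\]
so $\hat x$ and $\hat y$ are optimal. This direction needs no constraint qualification.

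\textbf{Step 3 (saddle points).} Suppose $\Gc(\hat x,\hat y)=0$. By Step 1, $\sup_{y'}\Lc(\hat x,y')=\inf_{x'}\Lc(x',\hat y)$. This common value is sandwiched around $\Lc(\hat x,\hat y)$ by the chain in Step 2, so all three quantities coincide. That yields
\[
\Lc(\hat x,y)\le\Lc(\hat x,\hat y)\le\Lc(x,\hat y)\quad\text{for all } x,y.
\]
Conversely, the saddle inequalities give $\sup_y\Lc(\hat x,y)\le\Lc(\hat x,\hat y)\le\inf_x\Lc(x,\hat y)$. Hence $\Gc(\hat x,\hat y)\le 0$, and together with $\Gc\ge0$ we get $\Gc(\hat x,\hat y)=0$. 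By Step 2 this is equivalent to optimality.

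\textbf{Main obstacle.} The real content lies in two places. The first is $g^{**}=g$ in Step 1, which is needed to recover the primal objective from the supremum over $y'$. The second is the implication ``optimal $\Rightarrow$ zero gap'', which requires strong duality and therefore the assumptions of \cref{duality:eq:fenchel_duality}. All other implications are pure inequality bookkeeping.
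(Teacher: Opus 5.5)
Your proof is correct. The notes leave this lemma as an exercise, and your route is essentially the intended one: the two partial optimizations of $\Lc$ repeat the computation ($g^{**}=g$, and the conjugate of $f$ at $-K^*y$) from the weak-duality chain in the proof of the Fenchel--Rockafellar theorem, after which everything is inequality bookkeeping plus strong duality. You are also right on two points: the printed definition of $\Gc$ should have dual objective $-f^*(-K^*y)-g^*(y)$, and the implication ``optimal $\Rightarrow$ zero gap'' (hence ``optimal $\Rightarrow$ saddle point'') genuinely requires the qualification condition of that theorem.
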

\begin{proof}
    Exercise.
\end{proof}

Performing alternating optimization in~\eqref{duality:qe:saddlepoint} with proximal steps with respect to $f$ and $g$ and gradient steps with respect to the inner product we obtain the following update rule known as the Arrow-Hurwicz method
\begin{equation}\label{duality:eq:AH}
    \begin{cases}
        x_{k+1} =& \prox_{\tau f}(x_k-\tau K^*y_k)\\
        y_{k+1} =& \prox_{\sigma g^*}(y_k+\sigma Kx_{k+1}).
    \end{cases}
\end{equation}
It turns out that in this form, the method admits worse properties. In order to derive a better update we rewrite the algorithm in a clever way. Using that $\prox_{\tau f}(x) = (\Id + \tau \partial f)^{-1}(x)$ one can easily verify that~\eqref{duality:eq:AH} amounts to
\begin{equation}
    \begin{cases}
        x_{k+1} +\tau \partial f(x_{k+1}) =& x_k - \tau K^*y_k\\
        y_{k+1} +\sigma \partial g^*(y_{k+1}) =& y_k + \sigma Kx_{k+1}.
    \end{cases}
\end{equation}
Dividing by $\sigma$ and $\tau$ and using the notation $z = (x,y)$ we may rewrite this as
\begin{equation}
    \begin{bmatrix}
        \frac{1}{\tau}\Id+ \partial f&0\\
        -K&\frac{1}{\sigma}\Id + \partial g^*
    \end{bmatrix}
    z_{k+1} = 
    \begin{bmatrix}
        \frac{1}{\tau}\Id& - K^*\\
        0& \frac{1}{\sigma}\Id
    \end{bmatrix}
    z_k
\end{equation}
Even without thinking about intricacies of a proof, one may expect that a more \emph{symmetrical} update could be preferred (in particular, due to favourable properties of symmetric matrices). Thus, we instead consider the following scheme which does not affect potential fixed points
\begin{equation}
    \begin{bmatrix}
        \frac{1}{\tau}\Id+ \partial f&0\\
        -2K&\frac{1}{\sigma}\Id + \partial g^*
    \end{bmatrix}
    z_{k+1} = 
    \begin{bmatrix}
        \frac{1}{\tau}\Id& -K^*\\
        -K & \frac{1}{\sigma}\Id
    \end{bmatrix}
    z_k
\end{equation}
leading to a symmetric matrix on the right-hand side which is positive definite as long as $\sigma\tau \|K\|^2<1$.
Going back to proximal operators this so-called primal-dual hybrid gradient method, often referred to as the Chambolle-Pock method due to~\cite{chambolle2011first} reads as
\begin{equation}\label{duality:eq:pdhg}
    \begin{cases}
        x_{k+1} =& \prox_{\tau f}(x_k-\tau K^*y_k)\\
        y_{k+1} =& \prox_{\sigma f}(y_k+\sigma K(2x_{k+1}-x_k)).
    \end{cases}
\end{equation}

For the analysis we will rewrite the PDHG in an abstract way as
\begin{equation}\label{duality:eq:pdhg2}
    \begin{cases}
        x^+ =& \arg\min_x \frac{1}{2\tau}|x-x^-|^2 + \inner{x}{K^*\bar y} + f(x)\\
        y^+ =& \arg\min_x \frac{1}{2\sigma}|y-y^-|^2 - \inner{y}{K\bar x} + g^*(y)
    \end{cases}
\end{equation}
where the plus superscript denotes the updated variables, the minus superscript the old iterates and the bar denotes the value of $x$ used for the $y$ update and the value of $y$ used in the $x$ update, respectively.

\begin{lemma}
    The update rule~\eqref{duality:eq:pdhg2} satisfies
    \begin{equation}
        \begin{aligned}
            \Lc(x^+,y) - \Lc(x,y^+)\leq& \frac{1}{2\tau}|x-x^-|^2 + \frac{1}{2\sigma}|y-y^-|^2\\
            &-\frac{1}{2\tau}|x-x^+|^2 - \frac{1}{2\sigma}|y-y^+|^2
            -\frac{1}{2\tau}|x^+-x^-|^2 - \frac{1}{2\sigma}|y^+-y^-|^2\\
            &+\inner{x^+-x}{K^*(y-\bar{y})}  - \inner{y^+-y}{K(x-\bar{x})}.
        \end{aligned}
    \end{equation}
\end{lemma}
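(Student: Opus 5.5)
The plan is to use that each update in~\eqref{duality:eq:pdhg2} minimizes a $\frac{1}{\tau}$- (respectively $\frac{1}{\sigma}$-) strongly convex function. This gives a \enquote{three-point} inequality for each variable, and adding the two will produce the claim. Concretely, $x^+$ minimizes $\phi(x)\coloneqq\frac{1}{2\tau}\|x-x^-\|^2+\inner{x}{K^*\bar y}+f(x)$. Since $f$ is convex and the quadratic is $\frac1\tau$-strongly convex, $\phi$ is $\frac1\tau$-strongly convex, so for every $x$
\begin{equation*}
    \phi(x)\geq \phi(x^+)+\frac{1}{2\tau}\|x-x^+\|^2 .
\end{equation*}
This is the same step used in the proof of the fundamental prox-grad inequality. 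It follows from $0\in\partial\phi(x^+)$ together with the first-order condition for strong convexity, or directly from the optimality condition $\frac1\tau(x^--x^+)-K^*\bar y\in\partial f(x^+)$, the subgradient inequality, and the identity $2\inner{a-b}{c-b}=\|a-b\|^2+\|c-b\|^2-\|a-c\|^2$. Written out, this gives
\begin{equation*}
    f(x^+)+\inner{x^+}{K^*\bar y}\leq f(x)+\inner{x}{K^*\bar y}+\frac{1}{2\tau}\|x-x^-\|^2-\frac{1}{2\tau}\|x-x^+\|^2-\frac{1}{2\tau}\|x^+-x^-\|^2 .
\end{equation*}
In the same way, for the $y$-update and every $y$,
\begin{equation*}
    g^*(y^+)-\inner{y^+}{K\bar x}\leq g^*(y)-\inner{y}{K\bar x}+\frac{1}{2\sigma}\|y-y^-\|^2-\frac{1}{2\sigma}\|y-y^+\|^2-\frac{1}{2\sigma}\|y^+-y^-\|^2 .
\end{equation*}

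Next I write out the left-hand side of the claim using $\Lc(x,y)=f(x)-g^*(y)+\inner{Kx}{y}$:
\begin{equation*}
    \Lc(x^+,y)-\Lc(x,y^+)=f(x^+)-f(x)+g^*(y^+)-g^*(y)+\inner{Kx^+}{y}-\inner{Kx}{y^+}.
\end{equation*}
Adding the two inequalities above bounds $f(x^+)-f(x)+g^*(y^+)-g^*(y)$ by all six quadratic terms of the claim plus the linear terms $\inner{x-x^+}{K^*\bar y}+\inner{y^+-y}{K\bar x}$.

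It then remains to check the bilinear bookkeeping:
\begin{equation*}
    \inner{x-x^+}{K^*\bar y}+\inner{y^+-y}{K\bar x}+\inner{Kx^+}{y}-\inner{Kx}{y^+}=\inner{x^+-x}{K^*(y-\bar y)}-\inner{y^+-y}{K(x-\bar x)}.
\end{equation*}
To verify this, expand both sides with $\inner{Ka}{b}=\inner{a}{K^*b}$. On the right, the term $\inner{x^+}{K^*y}$ cancels against $\inner{Kx^+}{y}$, $-\inner{x}{K^*y}+\inner{y}{Kx}=0$, and $-\inner{y^+}{Kx}=-\inner{Kx}{y^+}$. This sign check is the only place where an error could slip in, so it is the main point to do carefully.

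I do not expect any genuine obstacle. The one subtlety is justifying the strong-convexity inequality when $f$ or $g^*$ takes the value $+\infty$. For this I would rely on the optimality condition and the subgradient inequality, which hold trivially for points outside the domain, rather than on differentiability.
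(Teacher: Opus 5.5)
Your proposal is correct and follows the same route as the paper: it writes down the two strong-convexity (three-point) inequalities for the $x^+$- and $y^+$-subproblems, sums them, and rearranges the bilinear terms, which you carry out explicitly and correctly. Your $y$-inequality also correctly carries the factor $\frac{1}{2\sigma}$ on $\|y-y^+\|^2$, where the paper's proof has the typo $\frac{1}{2\tau}$.
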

\begin{proof}
    By strong convexity the updates $x^+$ and $y^+$ satisfy
    \begin{equation}
        \begin{aligned}
            \frac{1}{2\tau}|x^+-x^-|^2 + \inner{x^+}{K^*\bar y} + f(x^+) + \frac{1}{2\tau}|x-x^+|^2
            \leq& \frac{1}{2\tau}|x-x^-|^2 + \inner{x}{K^*\bar y} + f(x)\\
            \frac{1}{2\sigma}|y^+-y^-|^2 - \inner{y^+}{K\bar x} + g^*(y^+) + \frac{1}{2\tau}|y-y^+|^2
            \leq& \frac{1}{2\sigma}|y-y^-|^2 - \inner{y}{K\bar x} + g^*(y).
        \end{aligned}
    \end{equation}
    Summing the two inequalities and rearranging terms leads to the desired result.
\end{proof}

\begin{theorem}[Convergence of PDHG]
    Consider the updates~\eqref{duality:eq:pdhg} and assume the step sizes satisfy $\tau\sigma\leq \|K\|^{-2}$. Define the ergodic means $X_K = \frac{1}{K}\sum_{k=0}^{K-1}x_k$ and $Y_K = \frac{1}{K}\sum_{k=0}^{K-1}y_k$. Then it holds true that 
    \begin{equation}
        \Lc(X_{K},y) - \Lc(x,Y_{K})\leq \frac{1}{K}\bigg[\frac{1}{2\tau}|x-x_0|^2 + \frac{1}{2\sigma}|y-y_0|^2 - \inner{y-y_0}{K(x-x_{0})}\bigg].
    \end{equation}
\end{theorem}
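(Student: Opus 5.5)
The plan is to apply the preceding lemma (the one-step estimate for the abstract update~\eqref{duality:eq:pdhg2}) once per iteration, choosing the bars to match PDHG, and then telescope. Since the iterates are indexed from $x_0,y_0$ and the ergodic means average $x_0,\dots,x_{K-1}$, I will write the PDHG iteration so that $(x_{k+1},y_{k+1})$ is produced from $(x_k,y_k)$. In the lemma I set
\[
x^-=x_k,\quad y^-=y_k,\quad x^+=x_{k+1},\quad y^+=y_{k+1},\quad \bar y=y_k,\quad \bar x=2x_{k+1}-x_k .
\]
Then the cross terms on the right-hand side of the lemma become
\[
\inner{x_{k+1}-x}{K^*(y-y_k)}-\inner{y_{k+1}-y}{K(x-2x_{k+1}+x_k)} .
\]
Expanding, the terms that are bilinear in $(x,y)$ cancel. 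This leaves the difference
\[
\inner{K(x_{k+1}-x)}{y_{k+1}-y}-\inner{K(x_k-x)}{y_k-y}
\]
plus a remainder $-\inner{K(x_{k+1}-x_k)}{y_{k+1}-y_k}$. The first piece telescopes. The remainder I bound by
\[
\|K\|\,|x_{k+1}-x_k|\,|y_{k+1}-y_k|\leq \frac{1}{2\tau}|x_{k+1}-x_k|^2+\frac{1}{2\sigma}|y_{k+1}-y_k|^2 ,
\]
using Young's inequality and $\tau\sigma\|K\|^2\leq1$. This is absorbed exactly by the negative terms $-\frac{1}{2\tau}|x^+-x^-|^2-\frac{1}{2\sigma}|y^+-y^-|^2$ in the lemma.

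Summing over $k=0,\dots,K-1$ gives
\[
\sum_{k}\big(\Lc(x_{k+1},y)-\Lc(x,y_{k+1})\big)\leq \frac{1}{2\tau}|x-x_0|^2+\frac{1}{2\sigma}|y-y_0|^2-\inner{K(x_0-x)}{y_0-y}+R_K ,
\]
where
\[
R_K=-\frac{1}{2\tau}|x-x_K|^2-\frac{1}{2\sigma}|y-y_K|^2+\inner{K(x_K-x)}{y_K-y} .
\]
The same Young argument with $\tau\sigma\|K\|^2\leq1$ shows $R_K\leq 0$, so it can be dropped. Next, $\Lc(\cdot,y)$ is convex and $\Lc(x,\cdot)$ is concave, so Jensen's inequality gives $\Lc(X_K,y)-\Lc(x,Y_K)\leq \frac1K\sum_k(\ldots)$. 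Dividing by $K$ then yields the claim, with the bracket $-\inner{y-y_0}{K(x-x_0)}$ coming from the $k=0$ boundary term.

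The main obstacle is index bookkeeping. The telescoped sum naturally involves $x_1,\dots,x_K$, whereas the statement averages $x_0,\dots,x_{K-1}$. I would handle this by defining the ergodic averages over the newly produced iterates, or equivalently by relabelling the initial point. I also expect some care with signs in the cross terms and in matching $\bar x=2x_{k+1}-x_k$ with the dual update of~\eqref{duality:eq:pdhg}, which should use $g^*$ rather than $f$. These are where I would check carefully; the rest is routine expansion.
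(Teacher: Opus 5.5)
Your proposal is correct and is essentially the paper's proof. The paper applies the same one-step lemma with $\bar y=y_k$ and $\bar x=2x_{k+1}-x_k$. It then regroups the right-hand side into a telescoping bracket minus $\frac{1}{2\tau}|x_{k+1}-x_k|^2+\frac{1}{2\sigma}|y_{k+1}-y_k|^2-\inner{x_{k+1}-x_k}{K^*(y_{k+1}-y_k)}\geq 0$, telescopes, drops the nonnegative terms using $\tau\sigma\|K\|^2\leq 1$, and uses convexity--concavity of $\Lc$ for the ergodic means.

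Two minor remarks. The leftover cross term actually carries a plus sign, $+\inner{K(x_{k+1}-x_k)}{y_{k+1}-y_k}$; this is harmless, since you bound its absolute value. Your index fix, averaging the produced iterates $x_1,\dots,x_K$, is the right reading: the paper's own telescoping also yields the sum over $k=1,\dots,K$, despite how the statement is indexed.
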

\begin{rem}
    Note that the step sizes do not depend on the functions $f$ and $g$ in any way but only on the operator $K$.

\end{rem}
\begin{proof}
    \begin{equation}
        \begin{aligned}
            \Lc(x_{k+1},y) - \Lc(x,y_{k+1})
            \leq& \frac{1}{2\tau}|x-x_k|^2 + \frac{1}{2\sigma}|y-y_k|^2\\
            &-\frac{1}{2\tau}|x-x_{k+1}|^2 - \frac{1}{2\sigma}|y-y_{k+1}|^2
            -\frac{1}{2\tau}|x_{k+1}-x_k|^2 - \frac{1}{2\sigma}|y_{k+1}-y_k|^2\\
            &+\inner{x_{k+1}-x}{K^*(y-y_k)}  - \inner{y_{k+1}-y}{K(x-x_{k+1})} + \inner{y_{k+1}-y}{K(x_{k+1}-x_k)}\\
            \leq& \bigg[\frac{1}{2\tau}|x-x_k|^2 + \frac{1}{2\sigma}|y-y_k|^2 - \inner{y-y_k}{K(x-x_{k})}\bigg]\\
            &-\bigg[\frac{1}{2\tau}|x-x_{k+1}|^2 + \frac{1}{2\sigma}|y-y_{k+1}|^2 - \inner{y-y_{k+1}}{K(x-x_{k+1})}\bigg]\\
            &-\bigg[ \frac{1}{2\tau}|x_{k+1}-x_k|^2 + \frac{1}{2\sigma}|y_{k+1}-y_k|^2 - \inner{x_{k+1}-x_k}{K^*(y_{k+1}-y_{k})}\bigg].
        \end{aligned}
    \end{equation}
    Note that all expressions within the parentheses are non-negative due to $\inner{Kx}{y}\leq \|Kx\|\|y\|\leq\|K\| (\frac{\delta}{2}\|x\|^2 + \frac{1}{2\delta}\|y\|^2)\leq \frac{1}{2\tau}\|x\|^2 + \frac{1}{2\sigma}\|y\|^2$
    where the last inequality follows with the choice $\delta = \frac{1}{\tau\|K\|}$ using that $\|K\|^2\leq \frac{1}{\tau\sigma}$.
    Summing over $k$ yields
    \begin{equation}
        \sum_{k=0}^{K-1} \Lc(x_{k},y) - \Lc(x,y_{k}) \leq \bigg[\frac{1}{2\tau}|x-x_0|^2 + \frac{1}{2\sigma}|y-y_0|^2 - \inner{y-y_0}{K(x-x_{0})}\bigg].
    \end{equation}
    Since $(x,y)\mapsto \Lc(x,y') - \Lc(x',y)$ is convex, we obtain
    \begin{equation}
        \Lc(X_{K},y) - \Lc(x,Y_{K})\leq \frac{1}{K}\sum_{k=0}^{K-1} \Lc(x_{k},y) - \Lc(x,y_{k}) \leq \frac{1}{K}\bigg[\frac{1}{2\tau}|x-x_0|^2 + \frac{1}{2\sigma}|y-y_0|^2 - \inner{y-y_0}{K(x-x_{0})}\bigg].
    \end{equation}
\end{proof}

\section{Alternating direction method of multipliers (ADMM)}
Consider the constrained optimization problem
\begin{equation}\label{eq:admm1}
    \min_{Ax + By = z} f(x) + g(y).
\end{equation}
We can reformulate this problem as an unconstrained saddle point problem of the \emph{augmented Lagrangian}, that is, for any $\gamma>0$ we consider
\begin{equation}\label{eq:admm2}
    \min_{x,y}\max_{\lambda} L_\gamma (x,y,\lambda) \coloneqq f(x) + g(y) + \inner{\lambda}{Ax + By - z} + \frac{\gamma}{2}\|Ax + By - z\|^2
\end{equation}
One can easily show the following:
\begin{lemma}
    The problems~\eqref{eq:admm1} and \eqref{eq:admm2} are equivalent.
\end{lemma}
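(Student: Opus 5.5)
The plan is to show that, for every fixed pair $(x,y)$, the inner maximization over $\lambda$ in~\eqref{eq:admm2} equals the objective of~\eqref{eq:admm1} plus an indicator of the constraint. That is, we compute
\begin{equation}
    \sup_{\lambda} L_\gamma(x,y,\lambda) = f(x) + g(y) + \delta_{\{(x,y)\,|\, Ax+By=z\}}(x,y).
\end{equation}
Once this identity holds, minimizing the left-hand side over $(x,y)$ is literally the unconstrained reformulation of~\eqref{eq:admm1} via indicator functions, as introduced in the preliminaries. Hence the optimal values coincide, and so do the sets of minimizers.

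To establish the identity, we set $r \coloneqq Ax+By-z$ and distinguish two cases.

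\textbf{Case $r=0$.} Both the linear term $\inner{\lambda}{r}$ and the quadratic penalty $\frac{\gamma}{2}\|r\|^2$ vanish for every $\lambda$. The supremum is therefore exactly $f(x)+g(y)$.

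\textbf{Case $r\neq 0$.} We choose $\lambda = t r$ with $t>0$, which gives
\begin{equation}
    L_\gamma(x,y,tr) = f(x)+g(y) + t\|r\|^2 + \frac{\gamma}{2}\|r\|^2.
\end{equation}
This tends to $+\infty$ as $t\rightarrow\infty$, provided $f(x)+g(y)>-\infty$, which holds since $f,g$ are proper.

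We then treat the value $+\infty$ consistently. If $f(x)=\infty$ or $g(y)=\infty$, both sides equal $+\infty$ regardless of $r$. With this convention the identity holds for all $(x,y)$.

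Finally, we remark on saddle points, which is what is really meant by "equivalent". If $(\hat x,\hat y)$ solves~\eqref{eq:admm1} and $\hat\lambda$ is a multiplier, then $(\hat x,\hat y,\hat\lambda)$ is a saddle point of $L_\gamma$ for every $\gamma>0$. This is because the penalty term is nonnegative and vanishes on the feasible set, so the saddle-point inequalities for the ordinary Lagrangian ($\gamma=0$) carry over. Conversely, the $\lambda$-component of any saddle point forces $A\hat x+B\hat y=z$.

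The main obstacle is not computational. It lies in stating precisely in what sense the problems are equivalent (equal optimal values and equal primal solution sets) and in handling the extended-valued arithmetic carefully. The unboundedness argument in the case $r\neq 0$ is routine.
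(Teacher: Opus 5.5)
Your proof is correct. The paper leaves this lemma as an exercise, and your argument is exactly the intended one: you identify $\sup_{\lambda}L_\gamma(x,y,\lambda)$ with $f(x)+g(y)+\delta_{\{(x,y)\,|\,Ax+By=z\}}(x,y)$ by testing $\lambda=tr$ with $t\rightarrow\infty$, which is the same device the paper uses for $\delta_{\Pi(\alpha,\beta)}$ in the Kantorovich duality sketch, see~\eqref{ot:eq:duality}.

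The only unstated assumption is that $f$ and $g$ are proper, which you borrow from the subsequent ADMM theorem to exclude $\infty-\infty$. Your optional saddle-point remark needs the same assumption in one more place: to ensure $L_\gamma(\hat x,\hat y,\hat\lambda)<\infty$, test the right-hand saddle inequality at a point of $\dom(f)\times\dom(g)$. Only then does boundedness in $\lambda$ force $A\hat x+B\hat y=z$.
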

\begin{proof}
    Exercise!
\end{proof}
A straightforward way to solve the saddle-point problem is by performing in an alternating fashion an optimization with respect to $x$, and $y$, and then a gradient ascent step for $\lambda$.
\begin{equation}\label{duality:eq:admm_algo}
    \begin{cases}
        x_{k+1} = \arg\min_{x} f(x) + \inner{\lambda_k}{Ax} + \frac{\gamma}{2}\|Ax + By_k - z\|^2\\
        y_{k+1} = \arg\min_{x} g(y) + \inner{\lambda_k}{By} + \frac{\gamma}{2}\|Ax_{k+1} + By - z\|^2\\
        \lambda_{k+1} = \lambda_k + \gamma (Ax_{k+1} + By_{k+1} - z).
    \end{cases}
\end{equation}
Note that we used $\gamma$ as the step size for the gradient ascent. An intuitive motivation for this choice is the fact that by by definition of the update we have
\begin{equation}\label{duality:eq:admm3}
    \begin{aligned}
        0 = \nabla g(y_{k+1}) + B^* \lambda_k + \gamma B^*(Ax_{k+1} + By_{k+1}-z).
    \end{aligned}
\end{equation}
Multiplying the $\lambda$-update by $B^*$ and inserting~\eqref{duality:eq:admm3} we find
\begin{equation}
    B^*\lambda_{k+1} = -\nabla g(y_{k+1}).
\end{equation}
This, however, impliey that $\nabla_y L_0(x_{k+1}, y_{k+1},\lambda_{k+1})=0$. If we would perform the $x$ and $y$ optimization in~\eqref{duality:eq:admm_algo} jointly, we would obtain the same with respect to $x$ implying that every iterate of the dual variable $\lambda$ is a critical point.

We can show the following very basic convergence result for ADMM.
\begin{theorem}
    Let $f,g:\R^d\rightarrow [-\infty,\infty]$ be proper, closed, and convex. Assume that there exists a saddle point of $L_0(x,y,\lambda)$. Then it holds true that 
    \begin{equation}
        f(x_k)+g(y_k) \rightarrow \min_{Ax + By = z} f(x) + g(y).
    \end{equation}
\end{theorem}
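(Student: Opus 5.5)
We follow the standard Lyapunov-type argument of Boyd et al. Let $(x^*,y^*,\lambda^*)$ be a saddle point of $L_0$, so $Ax^*+By^*=z$ and $p^*\coloneqq f(x^*)+g(y^*)$ is the optimal value. Write $r_k\coloneqq Ax_k+By_k-z$ for the residual and $p_k\coloneqq f(x_k)+g(y_k)$. The plan has four steps:
\begin{enumerate}
\item Show $p_k-p^*\geq -\inner{\lambda^*}{r_k}$.
\item Show $p_{k+1}-p^*\leq -\inner{\lambda_{k+1}}{r_{k+1}}-\gamma\inner{B(y_{k+1}-y_k)}{r_{k+1}+B(y_{k+1}-y^*)}$.
\item Derive a decreasing Lyapunov function.
\item Conclude $r_k\to 0$ and $B(y_{k+1}-y_k)\to 0$, which sandwiches $p_k\to p^*$.
\end{enumerate}

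\textbf{Lower bound.} The saddle property $L_0(x^*,y^*,\lambda^*)\leq L_0(x,y,\lambda^*)$ for all $(x,y)$ gives, at $(x_{k},y_{k})$,
\[
p^*\leq p_k+\inner{\lambda^*}{r_k}.
\]

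\textbf{Upper bound.} We use the optimality conditions of the subproblems in~\eqref{duality:eq:admm_algo}, written with subdifferentials and the sum and chain rules. For the $y$-step, using the $\lambda$-update,
\[
0\in\partial g(y_{k+1})+B^*\lambda_{k+1}.
\]
For the $x$-step,
\[
0\in\partial f(x_{k+1})+A^*\bigl(\lambda_k+\gamma(Ax_{k+1}+By_k-z)\bigr)=\partial f(x_{k+1})+A^*\bigl(\lambda_{k+1}-\gamma B(y_{k+1}-y_k)\bigr).
\]
So $x_{k+1}$ minimizes $f(x)+\inner{\lambda_{k+1}-\gamma B(y_{k+1}-y_k)}{Ax}$ and $y_{k+1}$ minimizes $g(y)+\inner{\lambda_{k+1}}{By}$. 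Evaluate both at $(x^*,y^*)$, add the two inequalities, and use $Ax^*+By^*=z$; this gives the bound in step 2.

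\textbf{Lyapunov function.} Add the two bounds, multiply by $2$, and substitute $r_{k+1}=(\lambda_{k+1}-\lambda_k)/\gamma$. Expand the inner products via the polarization identity $2\inner{a}{b}=\|a+b\|^2-\|a\|^2-\|b\|^2$, and use monotonicity of $\partial g$ between the two $y$-optimality conditions at steps $k$ and $k+1$ to get $\inner{r_{k+1}}{B(y_{k+1}-y_k)}\leq 0$. The target is the decrease inequality
\[
V_{k+1}\leq V_k-\gamma\|r_{k+1}\|^2-\gamma\|B(y_{k+1}-y_k)\|^2,
\qquad
V_k\coloneqq\frac{1}{\gamma}\|\lambda_k-\lambda^*\|^2+\gamma\|B(y_k-y^*)\|^2 .
\]
This algebraic bookkeeping is the main obstacle. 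I would follow the Boyd--Parikh--Chu--Peleato--Eckstein computation closely: write $\lambda_{k+1}-\lambda^*=(\lambda_k-\lambda^*)+\gamma r_{k+1}$ and $B(y_{k+1}-y^*)=B(y_k-y^*)+B(y_{k+1}-y_k)$, and collect terms until the squared norms telescope.

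\textbf{Conclusion.} Summing the decrease inequality gives
\[
\sum_k\|r_{k}\|^2<\infty,\qquad \sum_k\|B(y_{k+1}-y_k)\|^2<\infty,
\]
so $r_k\to 0$ and $B(y_{k+1}-y_k)\to0$. Boundedness of $V_k$ makes $\lambda_k$ and $B(y_k-y^*)$ bounded, so the extra inner-product terms in the upper bound tend to zero. Both bounds then tend to $0$, and $p_k\to p^*$ follows.
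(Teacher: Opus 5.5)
Your route is the paper's: the same Lyapunov function $V_k=\gamma^{-1}\|\lambda_k-\lambda^*\|^2+\gamma\|B(y_k-y^*)\|^2$, the lower bound from the saddle property, the upper bound from the subproblem optimality conditions rewritten with $\lambda_{k+1}$, and monotonicity of $\partial g$ giving $\langle r_{k+1},B(y_{k+1}-y_k)\rangle\le 0$. There is, however, a sign error in your step~2. Because you defer the step-3 bookkeeping, it goes unnoticed exactly where it breaks the argument.

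Write $s_{k+1}=B(y_{k+1}-y_k)$. Evaluating the two minimization properties at $(x^*,y^*)$ and using $Ax^*+By^*=z$ gives $A(x^*-x_{k+1})=-r_{k+1}+B(y_{k+1}-y^*)$, hence
\[
p_{k+1}-p^*\le-\langle\lambda_{k+1},r_{k+1}\rangle-\gamma\bigl\langle s_{k+1},\,-r_{k+1}+B(y_{k+1}-y^*)\bigr\rangle .
\]
This has $-r_{k+1}$, not $+r_{k+1}$; it is the paper's bound~\eqref{duality:eq:proof_admm3}. Your version is not false: it equals the correct bound plus the nonnegative quantity $-2\gamma\langle s_{k+1},r_{k+1}\rangle$. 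But it is strictly weaker, and that is fatal. Adding the lower bound, doubling, and completing squares with your sign gives
\[
V_{k+1}+\gamma\|r_{k+1}+s_{k+1}\|^2\le V_k .
\]
Now monotonicity points the wrong way, since $\|r_{k+1}+s_{k+1}\|^2=\|r_{k+1}\|^2+\|s_{k+1}\|^2+2\langle r_{k+1},s_{k+1}\rangle\le\|r_{k+1}\|^2+\|s_{k+1}\|^2$. The inequality alone does not exclude $r_{k+1}=-s_{k+1}\neq 0$, so your target decrease cannot be derived from it.

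With the correct sign the same computation closes. Use three identities:
\[
2\langle r_{k+1},\lambda_{k+1}-\lambda^*\rangle=\gamma^{-1}\bigl(\|\lambda_{k+1}-\lambda^*\|^2-\|\lambda_k-\lambda^*\|^2\bigr)+\gamma\|r_{k+1}\|^2,
\]
\[
\gamma\|r_{k+1}\|^2-2\gamma\langle r_{k+1},s_{k+1}\rangle=\gamma\|r_{k+1}-s_{k+1}\|^2-\gamma\|s_{k+1}\|^2,
\]
\[
2\gamma\langle s_{k+1},B(y_{k+1}-y^*)\rangle-\gamma\|s_{k+1}\|^2=\gamma\bigl(\|B(y_{k+1}-y^*)\|^2-\|B(y_k-y^*)\|^2\bigr).
\]
Together they give $V_{k+1}+\gamma\|r_{k+1}-s_{k+1}\|^2\le V_k$. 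Here $-2\langle r_{k+1},s_{k+1}\rangle\ge 0$ yields exactly $V_{k+1}\le V_k-\gamma\|r_{k+1}\|^2-\gamma\|s_{k+1}\|^2$.

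One further detail: the monotonicity step compares the $y$-optimality conditions at iterations $k$ and $k+1$, so it is only available for $k\ge 1$. Start the telescoping sum there. Your concluding sandwich argument then goes through as in the paper.
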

\begin{proof}
    The proof uses an important technique, namely that of \emph{Lyapunv functionals}: We define a functional $V(y,\lambda)$ which is non-negative (lower-bounded) and decreasing along the iteration. Essentially, the Lyapunov functional serves as a substitute for the objective when proving a descent of the objective itself is not possible. In this specific case, we will use the Lyapunov functional 
    \begin{equation}
        V(y,\lambda) = \gamma \|B(y-y^*)\|^2 + \gamma^{-1} \|\lambda-\lambda^*\|^2
    \end{equation}
    where $(x^*,y^*,\lambda^*)$ is a saddle point for $L$. The descent of the Lyapunov functional will yield convergence using typical telescope sum arguments. Showing this descent, however, requires some technical estimates. For simplicity we denote the residuals as $r_k \coloneqq Ax_k + By_k- z$ and the objective value as $p_k \coloneqq f(x_k) + g(y_k)$ and similarly $r^*$ and $p^*$ for the saddle point. The proof is split into three steps.
    \paragraph{Step 1} 
    Since $(x^*,y^*,\lambda^*)$ is a saddle-point for $L_0$ it holds for any $k$, $L_0(x^*,y^*,\lambda^*)\leq L_0(x_{k+1},y_{k+1},\lambda^*)$. This is equivalent to
    \begin{equation}\label{duality:eq:proof_admm1}
        p^*-p_{k+1}\leq \inner{\lambda^*}{r_{k+1}}.
    \end{equation}
    \paragraph{Step 2}
    On the other hand-side, we find by optimality of $x_{k+1}$, $y_{k+1}$ in~\eqref{duality:eq:admm_algo}
    \begin{equation}\label{duality:eq:proof_admm2}
        \begin{aligned}
            0 &\in \partial f(x_{k+1}) + A^*\lambda_k + \gamma A^*(r_{k+1} + B(y_k-y_{k+1}))\\
            0 &\in \partial g(y_{k+1}) + B^*\lambda_k + \gamma B^*r_{k+1}.
        \end{aligned}
    \end{equation}
    Note, moreover, that the $\lambda$ update reads as $\lambda_{k+1} = \lambda_k + \gamma r_{k+1}$ so that
    \begin{equation}
        \begin{aligned}
            A^* \lambda_{k+1} &= A^*\lambda_k + \gamma A^*r_{k+1}\\
            B^* \lambda_{k+1} &= B^*\lambda_k + \gamma B^*r_{k+1}
        \end{aligned}
    \end{equation}
    which, inserted into~\eqref{duality:eq:proof_admm2} yields
    \begin{equation}
        \begin{aligned}
            0 &\in \partial f(x_{k+1}) + A^*(\lambda_{k+1} + \gamma B(y_k-y_{k+1}))\\
            0 &\in \partial g(y_{k+1}) + B^*\lambda_{k+1}.
        \end{aligned}
    \end{equation}
    This implies that $x_{k+1}$ minimizes
    \begin{equation}
        x\mapsto f(x) + \inner{A x}{\lambda_{k+1} + \gamma B(y_k-y_{k+1})}
    \end{equation}
    and $y_{k+1}$ minimizes
    \begin{equation}
        y\mapsto g(y) + \inner{By}{\lambda_{k+1}}.
    \end{equation}
    Consequently, we have by optimality
    \begin{equation}
        \begin{aligned}
            f(x_{k+1}) + &\inner{A x_{k+1}}{\lambda_{k+1} + \gamma B(y_k-y_{k+1})} + g(y_{k+1}) + \inner{By_{k+1}}{\lambda_{k+1}}\\
            &\leq
            f(x^*) + \inner{Ax^*}{\lambda_{k+1} + \gamma B(y_k-y_{k+1})} + g(y^*) + \inner{By^*}{\lambda_{k+1}}
        \end{aligned}
    \end{equation}
    Since the saddle point necessarily satisfies $Ax^* + By^* = z$ by also rearranging a little bit, this implies
    \begin{equation}
        \begin{aligned}
            p_{k+1} - p^*
            \leq \gamma \inner{A (x^*-x_{k+1})}{B(y_k-y_{k+1})} - \inner{r_{k+1}}{\lambda_{k+1}}
        \end{aligned}
    \end{equation}
    Lastly, we may insert $A(x^* - x_{k+1}) = z - By^* - (r_{k+1} - z - By_{k+1}) = -r_{k+1} + By_{k+1}$ to obtain
    \begin{equation}\label{duality:eq:proof_admm3}
        \begin{aligned}
            p_{k+1} - p^*
            \leq -\gamma \inner{-r_{k+1} + B(y_{k+1}-y^*)}{B(y_{k+1}-y_k)} - \inner{r_{k+1}}{\lambda_{k+1}}.
        \end{aligned}
    \end{equation}
    \paragraph{Step 3}
    Summing~\eqref{duality:eq:proof_admm1} and~\eqref{duality:eq:proof_admm3} and multiplying by $2$ yields
    \begin{equation}\label{duality:eq:proof_admm4}
        -2\gamma \inner{r_{k+1}}{B(y_{k+1}-y_k)} + 2\gamma\inner{B(y_{k+1}-y^*)}{B(y_{k+1}-y_k)} + 2\inner{r_{k+1}}{\lambda_{k+1}-\lambda^*}\leq 0.
    \end{equation}
    We will manipulate this inequality. We can rewrite the third term using that $\lambda_{k+1} = \lambda_k + \gamma r_{k+1}$ leading to 
    \begin{equation}
        2\inner{r_{k+1}}{\lambda_{k+1}-\lambda^*}
        = \gamma\|r_{k+1}\|^2 + \gamma\|r_{k+1}\|^2 + 2\inner{r_{k+1}}{\lambda_k -\lambda^*}.
    \end{equation}
    Substituting $r_{k+1} = (\lambda_{k+1}-\lambda_k)/\gamma$ in the last two terms yields
    \begin{equation}
        \gamma\|r_{k+1}\|^2 + \gamma^{-1}\|\lambda_{k+1}-\lambda_k\|^2 + 2\gamma^{-1}\inner{\lambda_{k+1}-\lambda_k}{\lambda_k -\lambda^*}.
    \end{equation}
    By replacing above $\lambda_{k+1}-\lambda_k = \lambda_{k+1}-\lambda^* + \lambda^* - \lambda_k$ this is equivalent to
    \begin{equation}
        \gamma\|r_{k+1}\|^2 + \gamma^{-1}\big(\|\lambda_{k+1}-\lambda^*\|^2 - \|\lambda_{k}-\lambda^*\|^2\big).
    \end{equation}
    Thus,~\eqref{duality:eq:proof_admm4} is equivalent to 
    \begin{equation}\label{duality:eq:proof_admm5}
        \begin{aligned}
            0\geq&-2\gamma \inner{r_{k+1}}{B(y_{k+1}-y_k)} + 2\gamma\inner{B(y_{k+1}-y^*)}{B(y_{k+1}-y_k)} + \gamma\|r_{k+1}\|^2 \\
            &+ \gamma^{-1}\big(\|\lambda_{k+1}-\lambda^*\|^2 - \|\lambda_{k}-\lambda^*\|^2\big).
        \end{aligned}
    \end{equation}
    We will now reqrite the first three terms 
    \begin{equation}\label{duality:eq:proof_admm6}
        -2\gamma \inner{r_{k+1}}{B(y_{k+1}-y_k)} + 2\gamma\inner{B(y_{k+1}-y^*)}{B(y_{k+1}-y_k)} + \gamma\|r_{k+1}\|^2
    \end{equation}
    Noting that 
    \begin{equation}
        -2\inner{r_{k+1}}{B(y_{k+1}-y_k)} + \|r_{k+1}\|^2 
        = \|r_{k+1} - B(y_{k+1}-y_k)\|^2 - \|B(y_{k+1}-y_k)\|^2
    \end{equation}
    we find that~\eqref{duality:eq:proof_admm5} is equivalent to 
    \begin{equation}
        \gamma \|r_{k+1} - B(y_{k+1}-y_k)\|^2 - \gamma\|B(y_{k+1}-y_k)\|^2 + 2\gamma\inner{B(y_{k+1}-y^*)}{B(y_{k+1}-y_k)}
    \end{equation}
    which is equivalent to
    \begin{equation}
        \gamma \|r_{k+1} - B(y_{k+1}-y_k)\|^2 + \gamma\big( \|B(y_{k+1}-y^*)\|^2 - \|B(y_{k}-y^*)\|^2\big).
    \end{equation}
    In total,~\eqref{duality:eq:proof_admm5} then yields
    \begin{equation}
        0\geq \gamma \|r_{k+1} - B(y_{k+1}-y_k)\|^2 + \gamma\big( \|B(y_{k+1}-y^*)\|^2 - \|B(y_{k}-y^*)\|^2\big) + \gamma^{-1}\big(\|\lambda_{k+1}-\lambda^*\|^2 - \|\lambda_{k}-\lambda^*\|^2\big)
    \end{equation}
    Lastly, we note that
    \begin{equation}
        \|r_{k+1} - B(y_{k+1}-y_k)\|^2
        =\|r_{k+1}\|^2 + \|B(y_{k+1}-y_k)\|^2 + 2\inner{r_{k+1}}{B(y_{k+1}-y_k)}
    \end{equation}
    We want to show that the term
    \begin{equation}
        \inner{r_{k+1}}{B(y_{k+1}-y_k)} = \inner{r_{k+1}}{B(y_{k+1}-y_k)}
    \end{equation}
    is non-negative. Recall that $y_k$ minimizes $g(y) + \inner{By}{\lambda_k}$. Thus, we have
    \begin{equation}
        \begin{aligned}
            g(y_k) + \inner{By_k}{\lambda_k}
            &\leq g(y_{k+1}) + \inner{By_{k+1}}{\lambda_k}\\
            g(y_{k+1}) + \inner{By_{k+1}}{\lambda_{k+1}}
            &\leq g(y_k) + \inner{By_k}{\lambda_{k+1}}
        \end{aligned}
    \end{equation}
    and summing both inequalities yields
    \begin{equation}
        \begin{aligned}
            \inner{B(y_{k+1}-y_k)}{\lambda_{k+1}-\lambda_k}
            \leq 0 
        \end{aligned}
    \end{equation}
    and inserting $\lambda_{k+1}-\lambda_k = \gamma r_{k+1}$ yields the desired sign. Thus, we have
    \begin{equation}
        V(y_{k+1},\lambda_{k+1}) + \|r_{k+1}\|^2 + \|B(y_{k+1}-y_k)\|^2 \leq V(y_{k},\lambda_{k}).
    \end{equation}
    Summing over $k$ yields
    \begin{equation}
        \sum_{k=0}^{K-1}\|r_{k+1}\|^2 + \|B(y_{k+1}-y_k)\|^2 \leq V(y_{0},\lambda_{0}).
    \end{equation}
    By the descent of the Lyapunov functional we immediately obtain that $\lambda_k$ and $By_k$ are bounded sequences.
    Moreover, it follows
    \begin{equation}\label{duality:eq:proof_admm6}
        \begin{cases}
            r_k&\rightarrow 0\\
            B(y_{k+1}-y_k)&\rightarrow 0.
        \end{cases}
    \end{equation}
    By~\eqref{duality:eq:proof_admm1} and~\eqref{duality:eq:proof_admm3} we have
    \begin{equation}
        \begin{aligned}
            -\inner{\lambda^*}{r_{k+1}}\leq p_{k+1}-p^*\leq \inner{-r_{k+1} + B(y_{k+1}-y^*)}{B(y_{k+1}-y_k)} - \inner{r_{k+1}}{\lambda_{k+1}}.
        \end{aligned}
    \end{equation}
    Then~\eqref{duality:eq:proof_admm6} and boundedness of $By_k$ imply that also $p_{k}\rightarrow p^*$.
\end{proof}

\chapter{Stochastic Gradient Descent}
Assume we want to solve
\begin{equation}
    \min_x f(x),
\end{equation}
but we do have access only to random but unbiased estimates of $f$, respectively $\partial f$. That is, we assume we can evaluate $G(x,z)$ which satisfies
\begin{equation}\label{stochastic:eq:gradient}
    \E[G(x,Z)] \in \partial f(x)
\end{equation}
where $Z$ is some random variable.
It turns out that many of the convergence proofs can be transferred to this stochastic setting with almost no extra effort. Before showing this, let us motivate this setting.

In modern machine learning, most learning problems can be formulated as
\begin{equation}
    \min_x f(x)\coloneqq \sum_{i=1}^N f_i(x)
\end{equation}
very $N$ is usually the number of training samples. For instance, if we want to learn a parametrized map $f_\theta$ which should approximate some relation $x\mapsto y$ based on the training data $(x_i,y_i)_{i=1}^N$, the corresponding training problem could be
\begin{equation}
    \min_\theta \sum_{i=1}^N \|y_i-f_\theta(x_i)\|^2.
\end{equation}
Due to the encountered sizes of training data, it is often not possible to directly compute gradients of $f$ because of memory limitations.
If $f_i$ are differentiable a possible remedy is the stochastic update
\begin{equation}
    \begin{cases}
        \text{Choose $i\in\Ic$}\\
        x_{k+1} = x_k - \tau \nabla f_i(x_k)
    \end{cases}
\end{equation}
where $\Ic=\{1,\dots,N\}$.
Defining $Z_k$ as a sequence of iid uniformly distributed random variables in $\Ic$ the above can be written as
\begin{equation}
    x_{k+1} = x_k - \tau G(x_k,Z_k)
\end{equation}
with $G(x,z) \coloneqq \nabla f_z(x)$. One can easily check that such $G$ satisfies~\eqref{stochastic:eq:gradient}.

\begin{theorem}
    Let $f$ be $L$-Lipschitz and assume the step sizes satisfy
    \[
        \frac{\sum_{k=1}^n\tau_k}{\sum_{k=1}^n\tau_k^2}\rightarrow \infty
    \]
    as $n\rightarrow\infty$. 
    Moreover, assume the stochastic gradients are unbiased and of bounded variance, specifically, for all $x\in \R^d$ we have
    \begin{equation}
        \begin{cases}
            \E[G(x,Z)] \in &\partial f(x)\\
            \E[\|G(x,Z)\|^2] - \|\E[G(x,Z)]\|^2 &\leq \sigma^2<\infty
        \end{cases}
    \end{equation}
Then it holds $\E[f^n_{\mathrm{best}}-f(x^*)]\rightarrow 0$.
\end{theorem}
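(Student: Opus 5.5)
We follow the proof of the deterministic result with general step sizes almost line by line. The only change is that every inequality is taken in conditional expectation. We consider the (unconstrained, or projected) iteration $x_{k+1}=x_k-\tau_k G(x_k,Z_k)$, where the $Z_k$ are iid and independent of the past. Denote by $\mathcal F_k$ the information generated by $Z_1,\dots,Z_{k-1}$, so that $x_k$ is $\mathcal F_k$-measurable. As before, we assume without loss of generality that $f(x^*)=0$.

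\textbf{Step 1 (stochastic fundamental inequality).} Write $G_k=G(x_k,Z_k)$ and $g_k=\E[G_k\,|\,\mathcal F_k]\in\partial f(x_k)$. This uses unbiasedness together with the independence of $Z_k$ from $x_k$. Expanding the square exactly as in \cref{lemma:subgradient:fundamental}, and using nonexpansiveness of $\proj_C$ if a projection is present, gives
\begin{equation}
    \|x_{k+1}-x^*\|^2\leq \|x_k-x^*\|^2-2\tau_k\inner{G_k}{x_k-x^*}+\tau_k^2\|G_k\|^2.
\end{equation}
Taking the conditional expectation, we get $\E[\inner{G_k}{x_k-x^*}\,|\,\mathcal F_k]=\inner{g_k}{x_k-x^*}\geq f(x_k)$ by the subgradient inequality. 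For the last term, the variance bound gives
\[
    \E[\|G_k\|^2\,|\,\mathcal F_k]\leq \sigma^2+\|g_k\|^2\leq \sigma^2+L^2,
\]
where $\|g_k\|\leq L$ follows from the $L$-Lipschitz property, as in the deterministic case.

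\textbf{Step 2 (telescoping).} Taking full expectations and summing over $k=1,\dots,n$ yields
\begin{equation}
    2\sum_{k=1}^n\tau_k\E[f(x_k)]\leq \|x_1-x^*\|^2+(L^2+\sigma^2)\sum_{k=1}^n\tau_k^2 .
\end{equation}
Since $f^n_{\mathrm{best}}\leq f(x_k)$ pointwise for every $k\leq n$, we have $\E[f^n_{\mathrm{best}}]\sum_{k=1}^n\tau_k\leq \sum_{k=1}^n\tau_k\E[f(x_k)]$. Dividing by $\sum_{k=1}^n\tau_k^2$ then gives, with $\sigma_n$ as in the deterministic proof,
\[
    \sigma_n\E[f^n_{\mathrm{best}}]\leq \frac{\|x_1-x^*\|^2}{2\sum_{k=1}^n\tau_k^2}+\frac{L^2+\sigma^2}{2}.
\]
The right-hand side is bounded, because $\sum_{k=1}^n\tau_k^2$ is nondecreasing and positive. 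Hence $\sigma_n\to\infty$ forces $\E[f^n_{\mathrm{best}}]\to 0$; note that $f^n_{\mathrm{best}}\geq 0$ by optimality of $x^*$.

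\textbf{Main obstacle.} The delicate point is Step 1, namely justifying $\E[G_k\,|\,\mathcal F_k]=g_k\in\partial f(x_k)$ when $x_k$ is itself random. This requires $Z_k$ to be independent of $(Z_1,\dots,Z_{k-1})$, so that conditioning on $\mathcal F_k$ freezes $x_k$ and lets us apply the hypothesis pointwise at $x=x_k$. We will state this independence assumption explicitly and use the tower property. Everything else is the deterministic computation with $L^2$ replaced by $L^2+\sigma^2$.
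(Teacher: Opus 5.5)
Your proof is correct and follows the paper's argument essentially step by step. Like the paper, you use nonexpansiveness of the projection and expand the square, condition on the current iterate to invoke unbiasedness and the subgradient inequality, bound $\E[\|G_k\|^2]\leq\sigma^2+L^2$, telescope, and divide by $\sum_k\tau_k^2$ to exploit $\sigma_n\to\infty$. Your explicit filtration and independence assumption on the $Z_k$ simply make precise what the paper uses implicitly when it conditions on $x_k$.
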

\begin{proof}
    \begin{equation}
        \begin{aligned}
            \|x_{k+1}-x^*\|^2
            &= \|\proj_C(x_{k}-\tau_k G(x_k,Z_k))-\proj_C(x^*)\|^2\\
            &\leq \|x_{k}-\tau_k G(x_k,Z_k)-x^*\|^2\\
            &= \|x_{k}-x^*\|^2 - 2\tau_k\inner{x_k-x^*}{G(x_k,Z_k)} + \tau_k^2\|G(x_k,Z_k)\|^2.
        \end{aligned}
    \end{equation}
    Note that
    \begin{equation}
        \begin{aligned}
            \E[\inner{x_k-x^*}{G(x_k,Z_k)}]
            &= \E[\E[\inner{x_k-x^*}{G(x_k,Z_k)}|x_k]]\\
            &= \E[\inner{x_k-x^*}{\E[G(x_k,Z_k)|x_k]}]\\
            &\geq \E[f(x_k) - f(x^*)]\\
            &= \E[f(x_k)] - f(x^*)
        \end{aligned}
    \end{equation}
    as well as
    \begin{equation}
        \E[\|G(x_k,Z_k)\|^2] 
        = \E[\|G(x_k,Z_k)\|^2] - \|E[G(x_k,Z_k)]\|^2 + \|E[G(x_k,Z_k)]\|^2
        \leq \sigma^2 + L^2
    \end{equation}
    Thus, taking expectation leads to 
    \begin{equation}
        \begin{aligned}
            \E[\|x_{k+1}-x^*\|^2]
            \leq \E[\|x_{k}-x^*\|^2] - 2\tau_k (\E[f(x_k)] - f(x^*)) + \tau_k^2(\sigma^2 + L^2).
        \end{aligned}
    \end{equation}
    The remaining proof is as in the deterministic case. Summing over $k$ leads to
    \begin{equation}
        \begin{aligned}
            \sum_{k=0}^{K-1}\tau_k (\E[f(x_k)] - f(x^*))\leq \frac{1}{2}\E[\|x_{0}-x^*\|^2] + (\sigma^2 + L^2)\sum_{k=0}^{K-1}\tau_k^2
        \end{aligned}
    \end{equation}
    Defining
    \begin{equation}
        \sigma_n = \frac{\sum_{k=0}^{K-1}\tau_k}{\sum_{k=0}^{K-1}\tau_k^2}
    \end{equation}
    we obtain
    \begin{equation}
        \begin{aligned}
            \sigma_n \E[f^n_{\mathrm{best}}-f(x^*)]
            =& \frac{\sum_{k=0}^{K-1}\tau_k\E[f^n_{\mathrm{best}}-f(x^*)]}{\sum_{k=0}^{K-1}\tau_k^2}\\
            \leq& \frac{\sum_{k=0}^{K-1}\tau_k\E[f(x_k)-f(x^*)]}{\sum_{k=0}^{K-1}\tau_k^2}\\
            \leq& \frac{\E[\|x_{0}-x^*\|^2]}{2\sum_{k=0}^{K-1}\tau_k^2} + \frac{(\sigma^2 + L^2)\sum_{k=0}^{K-1}\tau_k^2}{\sum_{k=0}^{K-1}\tau_k^2}\\
        \end{aligned}
    \end{equation}
    and thus
    \begin{equation}
        \E[f^n_{\mathrm{best}}-f(x^*)] \leq \frac{\E[\|x_{0}-x^*\|^2]}{2\sum_{k=0}^{K-1}\tau_k} + \frac{(\sigma^2 + L^2)\sum_{k=0}^{K-1}\tau_k^2}{\sigma_n}
    \end{equation}
    which goes to zero as $n\rightarrow\infty$.
\end{proof}
\newabbreviation{ot}{OT}{optimal transport}

\chapter{Excursion on Optimal Transport}
While not fitting perfectly into this lecture, we finish with a an excursion into \gls{ot}, a subject with immense importance in modern mathematics and machine learning. We emphasize at this point that the exposition here will not be rigorous and in particular we will omit a lot of details regarding specific properties of the underlying spaces.

\paragraph{Notation}
In the following $X$, $Y$ will be measurable spaces\footnote{usually \emph{Polish} spaces, \ie, separable, complete, metric spaces}, $\Pc(X)$, $\Pc(Y)$ will be the space of all probability measures on $X$, respectively $Y$ and $\Mc(X)$, $\Mc(Y)$ the spaces of all finite measures.
Recall that a (signed) measure $\mu$ on a set $X$ is a function $\mu:\Sigma\rightarrow \R$ where $\Sigma\subset 2^X$ is a subset\footnote{specifically, $\Sigma$ needs to be a $\sigma$-algebra, that is, a subset of $2^X$ with certain properties} of the power set of $X$ such that $\mu$ satisfies the following
\begin{enumerate}
    \item $\mu(\emptyset)=0$
    \item For $(A_n)_{n\in \N}\subset\Sigma$ all disjoint it holds $\mu(\bigcup_n A_n) = \sum_n \mu(A_n)$.
\end{enumerate}
A probability measure is a measure with $\mu(A)\geq 0$ for any $A$ and $\mu(X)=1$.

\section{Monge's and Kantorovich's optimal transport}

\begin{figure}
\begin{tikzpicture}[
  >=Stealth,
  font=\sffamily
]

\def\y{0}
\def\xmin{0}
\def\xmax{16}
\def\amean{3}
\def\astd{1}
\def\bmean{12}
\def\bstd{1.4}

\draw[domain=\xmin:\xmax, smooth, samples=150, variable=\x,
      blue!60!black, thick, fill=blue!15]
  plot ({\x}, {\y + 1.6*exp(-0.5*((\x-\amean)/\astd)^2)})
  -- (\xmax,\y) -- (\xmin,\y) -- cycle;

\draw[domain=\xmin:\xmax, smooth, samples=150, variable=\x,
      red!60!black, thick, fill=red!15]
  plot ({\x}, {\y + 1.6*exp(-0.5*((\x-\bmean)/\bstd)^2)})
  -- (\xmax,\y) -- (\xmin,\y) -- cycle;

\draw[gray] (\xmin,\y) -- (\xmax,\y);

\node[blue!60!black] at (\amean, 2.1) {$\alpha$};
\node[red!60!black]  at (\bmean, 2.1) {$\beta$};

\foreach \x/\opacity in {1/0.4, 2/0.6, 3/0.9, 4/0.6, 5/0.4} {
  \pgfmathsetmacro{\Tx}{\bmean + (\bstd/\astd)*(\x-\amean)}
  \draw[-{Stealth[length=2.2mm]}, line width=1pt,
        violet!70!black, opacity=\opacity]
    (\x,\y) to[out=-60, in=-120] (\Tx,\y);
  \filldraw[blue!60!black] (\x,\y) circle (1.3pt);
  \filldraw[red!60!black] (\Tx,\y) circle (1.3pt);
}

\node[violet!70!black, fill=white, fill opacity=0.85, text opacity=1,
      inner sep=2pt] at (8, -3.7) {$T_{\#}\alpha=\beta$};
\node[violet!70!black, font=\sffamily\small, fill=white, fill opacity=0.85,
      text opacity=1, inner sep=2pt] at (8, -4.2) {(optimal transport map)};
\end{tikzpicture}
\caption{Illustration of~\gls{ot}. The map $T$ transports the \emph{pile} from $\alpha$ to $\beta$.}
\label{ot:fig:ot}
\end{figure}

The typical motivation for \gls{ot} is the following. Imagine we want to carry a pile of sand from one spot to another. By conservation of mass we know that the two piles have the same total mass which we assume without loss of generality to be one. Thus, we may describe the initial pile by a probability distribution $\alpha\in\Pc(X)$ and the final pile by a probability distribution $\beta\in\Pc(Y)$. The question to be answered in \gls{ot} is:
\begin{quote}
    \centering
    \emph{How can we transport the pile $\alpha$ to $\beta$ at minimal cost?}
\end{quote}
Careful readers might notice, that it is unclear what the \emph{costs} are. We assume we have a cost function $c$ such that $c(x,y)$ is the cost of transporting one unit of mass from $x\in X$ to $y\in Y$. Each possible transport from $\alpha$ to $\beta$ is modelled by a function $T$. The condition of transporting $x$ to $y$ can mathematically be formulated by requiring that for any set $A$, $\alpha(T^{-1}(A)) = \beta(A)$. We make the following appropriate definition.
\begin{defn}[Push-forward]
    Let $\alpha\in \Pc(X)$ and $T:X\rightarrow Y$. The push-forward measure $T_\sharp\alpha\in \Pc(Y)$ is defined via $T_\sharp \alpha(A) = \alpha(T^{-1}(A))$ for any measurable $A\subset Y$.
\end{defn}

We can no formally introduce the Monge~\gls{ot} problem:
\begin{defn}[Monge \gls{ot}]
    Let $X$ and $Y$ be measurable spaces and $\alpha\in \Pc(X)$ and $\beta\in \Pc(Y)$. The Monge \gls{ot} problem is 
    \begin{equation}\label{ot:eq:monge}
        \min_{T:X\rightarrow Y} \int c(x,T(x))\dd \alpha(x),\quad\text{s.t. }T_\sharp\alpha = \beta.
    \end{equation}
\end{defn}
The Monge \gls{ot} problem admits a few downsides. For instance, one may quickly realize that the function $T$ can map each $x$ only to \emph{one} $y$ which means we cannot distribute mass located at a point $x$ to different points $y$. Therefore, whenever $\alpha$ admits non-zero mass at a specific point but $\beta$ is absolutely continuous with respect to the Lebesgue measure\footnote{that is, $\beta$ has no point masses} there is no admissible plan $T$. Moreover, depending on the choice of $c$, the problem is in general difficult to analyze as the unknown $T$ appears within the cost $c$. 

A formulation of the problem that is more general and easier to analyze is Kantorovich's \gls{ot}. Instead of a map $T:X\rightarrow Y$, we model our transportation via a distribution as well. That is, a transport plan is a probability distribution $\gamma$ on $X\times Y$ which admits $\alpha$ and $\beta$ as its marginals, that is, $\gamma(A\times Y) = \alpha(A)$ and $\gamma(X\times B) = \beta(B)$ for all $A\subset X$, $B\subset Y$. Figuratively speaking, for any $A\subset X$, $B\subset Y$, $\gamma(A\times B)$ is the mass transported from $A$ to $B$. We denote the set of all such distributions with marginals $\alpha$, $\beta$ as $\Pi(\alpha,\beta)$ and refer to them as couplings. We can now define Kantorovich's \gls{ot} problem.
\begin{defn}[Kantorovich's \gls{ot}]
    Let $X$ and $Y$ be measurable spaces and $\alpha\in \Pc(X)$ and $\beta\in \Pc(Y)$. The Kantorovich \gls{ot} problem is 
    \begin{equation}\label{ot:eq:K_ot}
        \min_{\gamma\in\Pi(\alpha,\beta)} \int c(x,y)\dd\gamma(x,y).
    \end{equation}
\end{defn}
Note that~\eqref{ot:eq:K_ot} is highly favorable over~\eqref{ot:eq:monge}. In particular, the objective is now linear.

\begin{rem}
    Note that we can formulate~\eqref{ot:eq:K_ot} very differently using random variables: Every coupling $\gamma$ can be~\emph{realized} as a random vector $(X,Y)\sim\gamma$ where $X\sim \alpha$ and $Y\sim \beta$. Then the cost satisfies
    \begin{equation}
        \int c(x,y)\dd \gamma(x,y) = \E[c(X,Y)].
    \end{equation}
\end{rem}

We state the following result.
\begin{theorem}[Existence of solutions]
    Let $c:X\times Y\rightarrow \R\cup\{\infty\}$ be bounded from below and lower semi-continuous. Then there exists an optimal transport plan $\gamma$, that is, a solution to~\eqref{ot:eq:K_ot}.
\end{theorem}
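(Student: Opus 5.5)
The plan is to follow the \emph{direct method} of \cref{preliminaries:thm:direct_method}, transferred from $\R^d$ to the space of probability measures on $X\times Y$ equipped with \emph{weak (narrow) convergence}. Here $\gamma_n\rightharpoonup\gamma$ means $\int\phi\,\dd\gamma_n\to\int\phi\,\dd\gamma$ for every bounded continuous $\phi$. As in the rest of this chapter, I take $X,Y$ to be Polish spaces. The role of Bolzano--Weierstra\ss{} (\cref{thm:bolzano}) will be played by Prokhorov's theorem. The role of lower semi-continuity of the objective will be played by the lsc of $c$ together with its lower bound. Adding a constant to $c$ changes the objective by the same constant for every coupling, since all couplings are probability measures. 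So I first assume without loss of generality that $c\geq 0$.

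\textbf{Step 1 (minimizing sequence).} The set $\Pi(\alpha,\beta)$ is nonempty because it contains the product coupling $\alpha\otimes\beta$. The functional $\gamma\mapsto\int c\,\dd\gamma$ takes values in $[0,\infty]$. If the infimum is $+\infty$, every coupling is optimal and there is nothing to prove. Otherwise, I pick $(\gamma_n)_n\subset\Pi(\alpha,\beta)$ with $\int c\,\dd\gamma_n\to\inf$.

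\textbf{Step 2 (compactness).} This is the step I expect to be the main obstacle, since it replaces the finite-dimensional boundedness-plus-Bolzano--Weierstra\ss{} argument. On a Polish space every single probability measure is tight. So for any $\epsilon>0$ there are compact sets $K_X\subset X$ and $K_Y\subset Y$ with $\alpha(X\setminus K_X)<\epsilon/2$ and $\beta(Y\setminus K_Y)<\epsilon/2$. Then, for every $\gamma\in\Pi(\alpha,\beta)$,
\[
\gamma\big((X\times Y)\setminus(K_X\times K_Y)\big)\leq \alpha(X\setminus K_X)+\beta(Y\setminus K_Y)<\epsilon .
\]
Thus $\Pi(\alpha,\beta)$ is uniformly tight. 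By Prokhorov's theorem there is a subsequence $\gamma_{n(k)}\rightharpoonup\hat\gamma$ for some probability measure $\hat\gamma$. It remains to check that $\hat\gamma\in\Pi(\alpha,\beta)$, i.e.\ that $\Pi(\alpha,\beta)$ is weakly closed. For bounded continuous $\phi$ on $X$, the function $(x,y)\mapsto\phi(x)$ is bounded and continuous on $X\times Y$. Hence $\int\phi\,\dd\hat\gamma=\lim_k\int\phi(x)\,\dd\gamma_{n(k)}=\int\phi\,\dd\alpha$, so the first marginal of $\hat\gamma$ is $\alpha$, and likewise for $\beta$.

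\textbf{Step 3 (lower semi-continuity).} I need $\int c\,\dd\hat\gamma\leq\liminf_k\int c\,\dd\gamma_{n(k)}$. Since $c\geq 0$ is lsc on a metric space, it is the pointwise increasing limit of bounded Lipschitz functions $c_m$, for instance the inf-convolutions $c_m(z)=\inf_w\min\{c(w),m\}+m\,d(z,w)$. For each fixed $m$, weak convergence gives
\[
\int c_m\,\dd\hat\gamma=\lim_k\int c_m\,\dd\gamma_{n(k)}\leq\liminf_k\int c\,\dd\gamma_{n(k)} .
\]
Letting $m\to\infty$ and applying monotone convergence on the left yields the claim. Exactly as in Step 4 of \cref{preliminaries:thm:direct_method}, this gives $\int c\,\dd\hat\gamma\leq\inf$, so $\hat\gamma$ is an optimal plan.
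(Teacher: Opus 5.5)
Your proposal is correct and takes the same direct-method route as the paper's sketch: a minimizing sequence, weak compactness of $\Pi(\alpha,\beta)$, and weak lower semi-continuity of $\gamma\mapsto\int c\,\dd\gamma$. You also supply the details the paper defers to Villani: uniform tightness of $\Pi(\alpha,\beta)$ combined with Prokhorov's theorem, weak closedness of the marginal constraints, and the increasing approximation of $c$ by bounded Lipschitz inf-convolutions together with monotone convergence.
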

\begin{proof}[Proof sketch]
    For a detailed proof, see for instance~\cite[Theorem 4.1]{villani2009optimal}. Essentially, the proof boils down to the direct method: Assume $(\gamma_n)_n$ is a minimizing sequence. By extracting a subsequence we may assume $\lim_n \int c(x,y)\dd \gamma_n(x,y) = \inf_\gamma \int c(x,y)\dd \gamma(x,y)>-\infty$. We obtain boundedness of the sequence more or less for free, as the $\gamma_n$ are probability measures. More specifically, one can show that the set $\Pi(\alpha,\beta)$ is compact with respect to the weak topology. Therefore, by extracting another subsequence we obtain $\hat\gamma\in\Pi(\alpha,\beta)$ such that $\gamma_n\rightharpoonup \hat \gamma$. Since, moreover, $\gamma\mapsto \int c(x,y)\dd \gamma(x,y)$ is weakly lower semi-continuous, $\hat\gamma$ is a minimizer concluding the proof.
\end{proof}

An important special case of optimal transport surely has to be highlighted.

\begin{defn}[Wasserstein distances]
Let $X=Y$ and $c(x,y) = \|x-y\|^p$, $p\in[1,\infty)$. Then the expression
\begin{equation}
    W_p(\alpha,\beta) \coloneqq \bigg(\inf_{\gamma\in\Pi(\alpha,\beta)}\int c(x,y)\dd \gamma(x,y)\bigg)^{1/p}
\end{equation}
is referred to as the Wasserstein-$p$-distance.
\end{defn}
In particular, $W_p$ is a metric on the space of probability measures with finite $p$-th.

\section{Kantorovich duality}
From~\cref{chapter:duality} we are already familiar with the concept of duality. Duality plays a crucial role in~\gls{ot} and we obtain the following famous result.
\begin{theorem}[Kantorovich duality]
    Let $c:X\times Y\rightarrow \R\cup\{\infty\}$ be lower semi-continuous. We have the following duality
    \begin{equation}
        \begin{aligned}
            \inf_{\gamma\in\Pi(\alpha,\beta)}\int c(x,y) \dd \gamma(x,y) 
            = \sup_{\phi(x)+\psi(y)\leq c(x,y)}\int \phi(x)\dd\alpha(x) + \int \psi(y)\dd \beta(y).
        \end{aligned}
    \end{equation}
\end{theorem}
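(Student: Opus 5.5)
Since the proof in these notes is not meant to be fully rigorous, I would follow the pattern of the proof of \cref{duality:eq:fenchel_duality}: first prove weak duality by a direct computation, then obtain the reverse inequality by writing the primal problem as $\min_\gamma F(\gamma)+G(K\gamma)$ and applying Fenchel--Rockafellar on suitable function spaces. To keep the functional-analytic details manageable, I would first treat compact $X,Y$ with $c$ continuous, and only at the end pass to lower semi-continuous $c$ by approximation.

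\textbf{Weak duality.} Let $\gamma\in\Pi(\alpha,\beta)$ and let $\phi,\psi$ satisfy $\phi(x)+\psi(y)\leq c(x,y)$. Because $\gamma$ has marginals $\alpha$ and $\beta$,
\[
\int\phi\,\dd\alpha+\int\psi\,\dd\beta=\int(\phi(x)+\psi(y))\,\dd\gamma(x,y)\leq\int c\,\dd\gamma .
\]
Taking the supremum over admissible $(\phi,\psi)$ and the infimum over $\gamma$ gives $\sup\leq\inf$. This is the analogue of the $\inf\sup\geq\sup\inf$ step in the proof of \cref{duality:eq:fenchel_duality}.

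\textbf{Strong duality.} I would work on the dual side, with primal space $E=C(X)\times C(Y)$ and $K:E\to C(X\times Y)$, $K(\phi,\psi)(x,y)=\phi(x)+\psi(y)$. Define
\[
f(\phi,\psi)=-\int\phi\,\dd\alpha-\int\psi\,\dd\beta,\qquad g(u)=\delta_{\{u\leq c\}}(u).
\]
Then $-\sup(\text{dual})=\inf_{(\phi,\psi)} f(\phi,\psi)+g(K(\phi,\psi))$. The dual of $C(X\times Y)$ is $\Mc(X\times Y)$ (Riesz--Markov), and the needed conjugates are:
\begin{itemize}
\item $g^*(\gamma)=\int c\,\dd\gamma$ if $\gamma\geq0$, and $+\infty$ otherwise;
\item $f^*(-K^*\gamma)=0$ if $\gamma$ has marginals $\alpha,\beta$, and $+\infty$ otherwise. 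This holds because $K^*\gamma=(\text{marginals of }\gamma)$ and $f$ is linear.
\end{itemize}
The Fenchel--Rockafellar dual $\max_\gamma -f^*(-K^*\gamma)-g^*(\gamma)$ is therefore exactly $-\min_{\gamma\in\Pi(\alpha,\beta)}\int c\,\dd\gamma$, and strong duality gives the claim.

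\textbf{Constraint qualification and existence.} For compact spaces and bounded continuous $c$, the qualification holds: take $\phi\equiv\psi\equiv-\|c\|_\infty-1$. Then $K(\phi,\psi)$ lies in the interior of $\dom g$ in the sup-norm.

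\textbf{Main obstacle.} The difficulty is that \cref{duality:eq:fenchel_duality} as proved assumes a primal minimizer exists, and the minimizer here is $(\phi,\psi)$, which is not obviously attained. The natural fix is to apply the theorem in the opposite direction, i.e.\ to use a Hahn--Banach (perturbation) argument on the value function $v(u)=\inf\{f(\phi,\psi):K(\phi,\psi)\leq c+u\}$. This function is convex and bounded near $0$, so it is continuous there and has a subgradient at $0$, by \cref{convexity:thm:nonempty_subgrad}. That subgradient is the optimal $\gamma$, and this yields equality without attainment on the $(\phi,\psi)$ side.

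\textbf{Extension to lsc $c$.} For lower semi-continuous $c$ bounded below, write $c=\sup_n c_n$ with $c_n$ continuous and bounded (e.g.\ Lipschitz regularizations). Apply the result for each $c_n$, and pass to the limit on the primal side using weak compactness of $\Pi(\alpha,\beta)$, as in the existence theorem, together with monotone convergence. I would only sketch this step, referring to Villani for details.
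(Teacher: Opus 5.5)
Your weak-duality step is the same as the paper's, but you obtain the reverse inequality by a genuinely different route.

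The paper stays on the measure side. It writes $\delta_{\Pi(\alpha,\beta)}$ as a supremum over multipliers $(\phi,\psi)$, turning the primal into an $\inf_{\gamma\geq 0}\sup_{\phi,\psi}$ of a Lagrangian. It then simply \emph{assumes} that infimum and supremum may be exchanged. After that, the inner infimum over $\gamma\geq 0$ is recognized as the indicator of $\phi(x)+\psi(y)\leq c(x,y)$. This shows in a few lines where the dual constraint comes from, but the one nontrivial step is postulated.

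You instead dualize the potentials problem on $C(X)\times C(Y)$, with measures entering only through $C(X\times Y)^*$. This orientation is what makes an interior-point condition available at all: on the measure side the constraint sets have empty interior. Your value-function argument then proves the exchange the paper takes for granted, and it also produces an optimal plan. Write $(\phi\oplus\psi)(x,y)=\phi(x)+\psi(y)$ and take $\ell\in\partial v(0)$. Monotonicity of $v$ gives $\gamma:=-\ell\geq 0$. The identity $v(u+a\oplus b)=v(u)-\int a\,\dd\alpha-\int b\,\dd\beta$ forces $\gamma\in\Pi(\alpha,\beta)$. Testing at $u=-c$, where $v(-c)=0$, gives $\sup\geq\int c\,\dd\gamma\geq\inf$.

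The price of your route is real functional analysis. You need Riesz--Markov, and you need subgradient existence for a convex function bounded above near a point of an infinite-dimensional normed space; the notes prove that result only in finite dimensions.

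Your route also has a narrower scope:
\begin{itemize}
\item The approximation $c_n\uparrow c$ removes continuity of $c$ but not compactness of $X,Y$. For noncompact Polish spaces, $C_b(X\times Y)^*$ is strictly larger than $\Mc(X\times Y)$. You would therefore still need a tightness step that restricts $\alpha,\beta$ to compact sets of almost full mass.
\item Some lower bound on $c$ is needed for $\int c\,\dd\gamma$ to make sense. You add one, but the statement omits it.
\end{itemize}
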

\begin{proof}[Proof sketch]
    We begin with the easy inequlity: Let $(\phi,\psi)$ be such that $\phi(x) + \psi(y)\leq c(x,y)$ and $\gamma\in\Pi(\alpha,\beta)$ arbitrary. We find
    \begin{equation}
        \begin{aligned}
            \int \phi(x)\dd\alpha(x) + \int \psi(y)\dd \beta(y)
            =& \int \phi(x)\dd\gamma(x,y) + \int \psi(y)\dd \gamma(x,y)\\
            =& \int \phi(x)+\psi(y)\dd\gamma(x,y)\\
            \leq& \int c(x,y) \dd \gamma(x,y).
        \end{aligned}
    \end{equation}
    Since $\gamma$ and $(\phi,\psi)$ were arbitrary we may take the infimum on the right and the supremum on the left to obtain
    \begin{equation}
        \begin{aligned}
            \sup_{\phi(x)+\psi(y)\leq c(x,y)}\int \phi(x)\dd\alpha(x) + \int \psi(y)\dd \beta(y)
            \leq& \inf_{\gamma\in\Pi(\alpha,\beta)}\int c(x,y) \dd \gamma(x,y).
        \end{aligned}
    \end{equation}
    The converse inequality turns out to be more intricate.
    First we may rewrite~\eqref{ot:eq:K_ot} as 
    \begin{equation}
        \inf \int c(x,y)\dd \gamma + \delta_{\Pi(\alpha,\beta)}(\gamma)
    \end{equation}
    with $\delta_{\Pi(\alpha,\beta)}$ the indicator function as usual. Note that we can write
    \begin{equation}\label{ot:eq:duality}
        \begin{aligned}
            \delta_{\Pi(\alpha,\beta)}(\gamma)
            =& \sup_{\phi,\psi}\int \phi(x)\dd\alpha(x) + \int \psi(y)\dd \beta(y) - \int \phi(x)+\psi(y)\dd \gamma(x,y)\\
        \end{aligned}
    \end{equation}
    To see this, note that whenever $\gamma\in\Pi(\alpha,\beta)$, the right-hand side evaluates to zero for every $(\phi,\psi)$. On the other hand-side, when $\gamma\notin\Pi(\alpha,\beta)$ then without loss of generality $\gamma_x\neq \alpha$ where $\gamma_x$ denotes the $x$-marginal of $\gamma$. But by duality, this means that there exists $\phi$ such that
    \begin{equation}
        \int \phi \dd(\alpha - \gamma_x) \neq 0.
    \end{equation}
    Considering $t\phi$ with $t\rightarrow \pm \infty$ shows the result.

    Therefore, assuming strong duality, that is, that we may exchange infimum and supremum, we obtain
    \begin{equation}
        \begin{aligned}
            \inf_{\gamma\in\Pi(\alpha,\beta)}\int c(x,y) \dd \gamma(x,y)
            =& \inf_{\gamma\in\Mc_+(X\times Y)}\sup_{\phi,\psi} \int c(x,y) \dd \gamma(x,y)\\
            &+\int \phi(x)\dd\alpha(x) + \int \psi(y)\dd \beta(y) - \int \phi(x)+\psi(y)\dd \gamma(x,y)\\
            =& \sup_{\phi,\psi}\int \phi(x)\dd\alpha(x) + \int \psi(y)\dd \beta(y) \\
            &+ \inf_{\gamma\in\Mc_+(\alpha,\beta)} \int c(x,y)- \phi(x)-\psi(y) \dd \gamma(x,y)\\
            =& \sup_{\phi(x)+\psi(y)\leq c(x,y)}\int \phi(x)\dd\alpha(x) + \int \psi(y)\dd \beta(y)
        \end{aligned}
    \end{equation}
    where in the last equality we again recognize the indicator function of the set $\{(\phi,\psi)\;|\; \phi(x) + \psi(y)\leq c(x,y)\}$ similarly as in~\eqref{ot:eq:duality}.
\end{proof}

\section{Discretizing optimal transport: The Sinkhorn-Knopp algorithm}
To discretize~\eqref{ot:eq:K_ot} we model $\alpha$ and $\beta$ as discrete measures, that is,
\begin{equation}
    \alpha = \sum_{i=1}^n a_i \delta_{x_i}
\end{equation}
for some $a_i\in \Delta_n$ and $x_i\in X$ and similarly 
\begin{equation}
    \beta = \sum_{j=1}^m b_j \delta_{y_j}.
\end{equation}
Moreover, we define $C = [C_{i,j}]_{i,j} = [c(x_i,y_j)]_{i,j}$. A transport plan then is a matrix $P\in \R^{n\times m}$ such that 
\begin{equation}
    \begin{cases}
        \sum_{j=1}^m P_{i,j} &= a_i\\
        \sum_{i=1}^n P_{i,j} &= b_j\\
        P_{i,j}&\geq 0.
    \end{cases}
\end{equation}
The marginal constraints can be compactly written as $P\1=a$, $P^t\1 = b$. The discrete \gls{ot} problem then reads as
\begin{equation}
    \begin{aligned}
        \min_{P\in \R^{n\times m}}\inner{C}{P} = \sum_{i,j}C_{i,j}P_{i,j}\\
        \text{s.t. } P\1=a,\;P^t\1 = b\; P_{i,j}\geq 0.
    \end{aligned}
\end{equation}
In practice, the problem is often regularized. That is, the non-negativity constraints implicitly handled by adding an entropic regularization
\begin{equation}
    E(P) \coloneq \sum_{i,j} P_{i,j}(\log(P_{i,j})-1),
\end{equation}
and considering the problem
\begin{equation}\label{ot:eq:disc_entropic}
    \begin{aligned}
        \min_{P\in \R^{n\times m}}\inner{C}{P} + \epsilon E(P)
        \quad \text{s.t. } P\1=a,\;P^t\1 = b.
    \end{aligned}
\end{equation}

\begin{lemma}
    The solution of the problem~\eqref{ot:eq:disc_entropic} is unique and admits the representation
    \begin{equation}
        P_{i,j} = u_i K_{i,j}v_j
    \end{equation}
    where $u_i,v_j>0$ and $K_{i,j}=\exp\left(-\frac{C_{i,j}}{\epsilon}\right)$.
\end{lemma}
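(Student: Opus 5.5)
Our plan is to argue via strict convexity and a Lagrangian (first-order optimality) computation. The main delicacy is that the entropy forces positivity of the minimizer, and we will treat that step first.

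\textbf{Uniqueness.} The function $t\mapsto t(\log t-1)$ (with $0\log 0=0$) is strictly convex on $[0,\infty)$, since its second derivative is $1/t>0$. Hence $E$ is strictly convex on the nonnegative orthant, and for $\epsilon>0$ so is $P\mapsto\inner{C}{P}+\epsilon E(P)$. The feasible set
\[
U(a,b)=\{P\geq 0:\ P\1=a,\ P^t\1=b\}
\]
is convex, closed and bounded, because $0\le P_{i,j}\le a_i\le 1$. It is also nonempty, since $ab^t$ is feasible. The objective is continuous on $U(a,b)$, so a minimizer exists by compactness; this is essentially the direct method, \cref{preliminaries:thm:direct_method}. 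Uniqueness then follows exactly as in the projection theorem: if $P_1\neq P_2$ were both optimal, strict convexity would make $\tfrac12(P_1+P_2)$ strictly better. Throughout we assume $a_i,b_j>0$; otherwise we drop the corresponding rows or columns.

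\textbf{Positivity of the minimizer.} This is the step we expect to be the main obstacle. Suppose the optimal $P$ had some entry $P_{i,j}=0$. We perturb in the direction $Q-P$ with $Q=ab^t$, which has all entries positive and preserves the marginals. Along $P+t(Q-P)$, the derivative of the entropy term at $t=0^+$ contains the contribution $\epsilon\,(Q_{i,j}-0)\log(tQ_{i,j})\to-\infty$. All other terms stay bounded. So for small $t>0$ the objective strictly decreases, which contradicts optimality. Therefore $P_{i,j}>0$ for all $i,j$, and the minimizer lies in the interior of the orthant, where the objective is differentiable.

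\textbf{Lagrangian and the scaling form.} Since the minimizer is interior and the only remaining constraints are affine, we introduce multipliers $f\in\R^n$ and $g\in\R^m$ for $P\1=a$ and $P^t\1=b$. Minimizing over the affine subspace, the gradient must be orthogonal to the constraint kernel. This is the subspace case of the projection lemma; equivalently, apply the optimality condition with the normal cone of the affine set. The result is
\[
C_{i,j}+\epsilon\log P_{i,j}-f_i-g_j=0 \quad\text{for all } i,j.
\]
Solving gives $P_{i,j}=e^{f_i/\epsilon}\,e^{-C_{i,j}/\epsilon}\,e^{g_j/\epsilon}$. Setting $u_i=e^{f_i/\epsilon}>0$ and $v_j=e^{g_j/\epsilon}>0$ yields $P_{i,j}=u_iK_{i,j}v_j$, as claimed. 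The existence of such $f,g$ is the standard fact that the normal space of $\{P:P\1=a,\ P^t\1=b\}$ is the range of the adjoint of the map $P\mapsto(P\1,P^t\1)$, namely the matrices $(f_i+g_j)_{i,j}$.
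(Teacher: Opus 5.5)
Your proof is correct and follows the same route as the paper: write the Lagrangian, read off the stationarity condition $C_{i,j}+\epsilon\log P_{i,j}=f_i+g_j$, and exponentiate to obtain $P_{i,j}=u_iK_{i,j}v_j$. You also supply what the paper leaves implicit or as an exercise: existence and uniqueness via compactness and strict convexity; strict positivity of the minimizer, which justifies differentiating $\log P_{i,j}$ and yields the multipliers through $\ker(\cdot)^\perp=\mathrm{range}(\cdot^*)$; and the necessary hypothesis $a_i,b_j>0$.
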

\begin{proof}
    Uniqueness is left as an exercise.
    The Lagrangian for the problem reads as
    \begin{equation}
        L(P,\lambda,\mu)
        = \inner{C}{P} + \epsilon E(P) + \inner{\lambda}{P\1-a} + \inner{\mu}{P^t\1-b}.
    \end{equation}
    The corresponding optimality conditions are
    \begin{equation}
        \begin{cases}
            0 =& C_{i,j} + \epsilon\log(P_{i,j}) + \lambda_i + \mu_j\\
            0 =& P\1-a\\
            0 =& P^t\1-b.
        \end{cases}
    \end{equation}
    The first condition implies 
    \begin{equation}
        P_{i,j} = \exp\left(-\frac{\mu_j}{\epsilon}\right)\exp\left(-\frac{C_{i,j}}{\epsilon}\right)\exp\left(-\frac{\lambda_i}{\epsilon}\right)
    \end{equation}
    concluding the proof.
\end{proof}
Since we additionally still have the constraints $P\1 = \diag(u) K \diag(v)\1 = \diag(u) K v = a$ and similarly $\diag(v)K^t u\1 = b$ this motivates the Sinkhorn-Knopp algorithm where we update for $k=1,2,\dots$
\begin{equation}
    \begin{cases}
        u_{k+1} = a \oslash Kv_k\\
        v_{k+1} = b\oslash K^t u_{k+1}
    \end{cases}
\end{equation}
where $\oslash$ denotes element-wise division.

\backmatter
\printbibliography

\end{document}